\title{Stochastic complex Ginzburg-Landau equation with space-time white noise
}
\author{Masato Hoshino, Yuzuru Inahama and Nobuaki Naganuma}
\date{\empty}
\begin{document}
\maketitle

\begin{abstract}
We study the stochastic cubic complex Ginzburg-Landau equation
with complex-valued space-time white noise on the three dimensional torus.
This nonlinear equation is so singular that it can only be understood in a renormalized sense.
In the first half of this paper we prove local well-posedness of this equation
in the framework of regularity structure theory.
In the latter half we prove local well-posedness
in the framework of paracontrolled distribution theory.
\end{abstract}

Keywords:
Stochastic partial differential equation,
Complex Ginzburg-Landau equation,
Regularity structure,
Paracontrolled distribution,
Renormalization.

MSC2010: 60H15, 82C28.

\tableofcontents

%!TEX root = ComplexGinzburgLandauEq.tex

\section{Introduction}\label{sec_1503967525}

The cubic complex Ginzburg-Landau (CGL) equation
is one of the most important nonlinear
partial differential equations (PDEs) in applied mathematics and physics.
It describes various physical phenomena such as
nonlinear waves, second-order phase transition,
superconductivity, superfluidity among others.
See \cite{AransonKramer2002} for example.

There are also many papers on
its stochastic version, the CGL with a noise term
(\cite{Barton-Smith2004b, Barton-Smith2004a, KuksinShirikyan2004, Odasso2006, PuGuo2011, Yang2004}
to name but a few).
In these preceding works, however, the noise is either non-white
or multiplicative.
Except when the space dimension $d=1$ in \cite{Hairer2002},
the stochastic cubic CGL with \textit{additive} space-time white noise
has not been solved.

The difficulty in the case $d \ge 2$ is as follows.
Since space-time white noise is so rough,
a solution $u_t (x)=u(t, x)$ would be a Schwartz distribution in $x$,
not a function, even if it existed.
Consequently, the cubic nonlinear term $|u_t|^2 u_t$
does not make sense in the usual way.
For this reason, well-definedness of the equation itself was unclear
and the cubic CGL with space-time white noise
was considered too singular when $d \ge 2$.

However,
two new theories emerged recently,
which can deal with quite singular stochastic PDEs of this kind.
One is regularity structure theory \cite{Hairer2014a}
 and the other is
 paracontrolled distribution theory \cite{GubinelliImkellerPerkowski2015}.
They are both descendants of rough path theory and
their deterministic part looks somewhat similar
to the counterpart in rough path theory at least in spirit.
However, their probabilistic part is more complicated
than the counterpart  in rough path theory
since non-trivial renormalization of the noise has to be done.
(There is another theory based on the theory of
renormalization groups \cite{Kupiainen2016},
which will not be discussed in this paper, however.)

Although they are clearly different theories,
examples of stochastic PDEs they can deal with are very similar.
A partial list of singular stochastic PDEs
which have been solved (locally in time)
by these theories is as follows:
Parabolic Anderson Model ($d=2, 3$)
\cite{GubinelliImkellerPerkowski2015, Hairer2014a, BailleulBernicotFrey2015Arvix},
KPZ equation and its variants ($d=1$)
\cite{FrizHairer2014, GubinelliPerkowski2017, Hoshino2016, FunakiHoshino2016Arxiv},
the dynamic $\Phi^4_3$-model ($d=3$)  \cite{Hairer2014a, CatellierChouk2013Arxiv},
Navier-Stokes equation with space-time white noise ($d=3$)
\cite{ZhuZhu2015},
FitzHugh-Nagumo equation with space-time
white noise ($d=3$)
\cite{BerglundKuehn2016}.

The main objective of
this paper is to prove local well-posedness of the
stochastic cubic complex Ginzburg-Landau equation
on the three-dimensional torus $\Torus^3=(\RealNum/\Integers)^3$
of the following form by using these two theories:
\begin{align}\label{eq:cgl}
	\partial_t u
	=
		(\ImUnit+\mu)\LaplaceOp u
		+
		\nu(1-|u|^2)u
		+
		\whiteNoise,
	\qquad
	t>0,\quad
	x\in\Torus^3.
\end{align}
Here, $\ImUnit=\sqrt{-1}$, $\mu>0$, $\nu\in\CmplNum$ are constants
and $\xi$ is \textit{complex-valued} space-time white noise, that is,
a centered complex Gaussian random field with covariance
\begin{align*}
	\expect[\xi(t,x)\xi(s,y)]&=0,
	&
	\expect[\xi(t,x)\overline{\xi(s,y)}]
	&
		=\delta(t-s)\delta(x-y),
\end{align*}
where $\delta$ denotes the Dirac delta function.

We replace $\whiteNoise$ by smeared noise $\whiteNoise^\epsilon$ with a parameter $0<\epsilon<1$
so that $\whiteNoise^\epsilon\to\whiteNoise$ as $\epsilon\downarrow 0$ in an appropriate topology
and consider a renormalized equation
\begin{align}\label{eq:rcgl}
	\partial_t u^\epsilon
	=
		(\ImUnit+\mu)\LaplaceOp u^\epsilon
		+
		\nu(1-|u^\epsilon|^2)u^\epsilon
		+
		\nu C^\epsilon u^\epsilon
		+
		\whiteNoise^\epsilon,
	\qquad
	t>0,\quad
	x\in\Torus^3,
\end{align}
where $C^\epsilon$ is a suitably chosen complex
constant (specified later) which diverges as $\epsilon\downarrow 0$.
We show that the solution to \eqref{eq:rcgl}
converges to some process in an appropriate topology.
To this end, we use the theory of regularity structure by Hairer \cite{Hairer2014a}
and the theory of paracontrolled distributions by Gubinelli-Imkeller-Perkowski \cite{GubinelliImkellerPerkowski2015}.
In the two main results (Theorems \ref{0330_main result} and \ref{thm_2017013154042}),
we use different approximations of $\xi$. However, we can choose the same approximation $\xi^\epsilon$ in both theories.
See \rref{0550 tx regularize is ok}.
Consequently, we can see that the solutions obtained in these two theories ``essentially coincide'',
even though the idea behind these theories are quite different.
(It should be noted, however, that we do not have a rigorous proof
of the exact coincidence of the two solutions.
To prove it, a further  investigation of
the renormalization constants is needed, which could be
an interesting future task.)

We now make a comment on the space dimension.
When $d \ge 4$, CGL \eqref{eq:cgl} is not subcritical in the sense of
\cite{Hairer2014a} and therefore the equation cannot be solved
(or does not even make sense) by any existing method.
Though we do not give a proof in this paper,
we believe that
the case $d=2$ is actually much easier than our case $d=3$.

This paper is organized as follows.
In \secref[sec_20161107073826]{sec_20161107073513}, following  \cite{Hairer2014a},
we apply the theory of regularity structures to
the stochastic CGL \eqref{eq:cgl}.
At the beginning of \secref{sec_20161107073826}
we first present our main result (\tref{0330_main result}) in a precise form.
Then we construct a regularity structure for \eqref{eq:cgl}
and prove local-wellposedness of \eqref{eq:cgl}
in
a deterministic way.
\secref{sec_20161107073513} is devoted to the probabilistic step, in particular,
the renormalization procedure.

In \secref[sec_20161107073643]{sec_20161107073600}, we apply the paracontrolled calculus to \eqref{eq:cgl}.
In \secref{sec_20161107073643},
we precisely present our main result (\tref{thm_2017013154042})
and deterministically solve \eqref{eq:cgl} locally in time
in a similar way to \cite{MourratWeber2016Arxiv}.
We prove the probabilistic part in \secref{sec_20161107073600}
using a new method developped in
\cite{GubinelliPerkowski2017}.

\secref{sec:itowienerintegral} is an appendix, in which
 we recall the definition of complex multiple It\^o-Wiener integrals.
The product formula for them
is frequently used in \secref[sec_20161107073513]{sec_20161107073600}.

Notations:~
We use the following notations:
For two functions $f$ and $g$, we write $f\lesssim g$
if there exists a positive constant $C$ such that $f(x)\leq C g(x)$ for any $x$.
We write
$f(x)\approx g(x)$ if both
$f(x)\lesssim g(x)$ and $g(x)\lesssim f(x)$ hold.
To indicate the argument $x$ of a function $f$, we use both symbols
$f(x)$ and $f_x$.

%!TEX root = ComplexGinzburgLandauEq.tex

\section{CGL by the theory of regularity structures}\label{sec_20161107073826}

In this and the next sections, we study CGL equation by the theory of regularity structures. We begin by presenting the main result in \tref{0330_main result} below.

We denote by $\xi$ periodic space-time white noise on $\RealNum\times\Torus^3$, which is extended periodically to $\RealNum^4$.
We replace $\xi$ by space-time smeared noise $\xi^\epsilon=\xi*\rho^\epsilon$ for $\epsilon>0$,
where $\rho$ is non-negative, smooth and compactly supported function on $\RealNum^4$ with $\int\rho=1$,
and $\rho^\epsilon(t,x)=\epsilon^{-5}\rho(\epsilon^{-2}t,\epsilon^{-1}x)$. We consider the classical solution $u^\epsilon$ of the equation
\[\partial_tu^\epsilon=(\mathsf{i}+\mu)\triangle u^\epsilon+\nu(1-|u^\epsilon|^2+C^\epsilon)u^\epsilon+\xi^\epsilon,\quad(t,x)\in\RealNum_+\times\RealNum^3,\]
with initial condition $u_0$,
where $C^\epsilon=2C_1^\epsilon-2\overline{\nu}C_{2,1}^\epsilon-4\nu C_{2,2}^\epsilon$
is a sum of diverging constants as $\epsilon\downarrow0$ and precise behaviors of them are stated in \pref{section4_5:estimate of C}.
We write $\RealNum_+=(0,\infty)$. For $\eta\in\RealNum$, we define
\[
  \mathcal{C}^\eta
  =
    \mathcal{C}^\eta(\Torus^3,\CmplNum)
  =
    \{
      u\in\mathcal{B}_{\infty,\infty}^\eta(\RealNum^3,\CmplNum)\,;\,
      \text{$u(\cdot+k)=u(\cdot)$ for any $k\in\Integers^3$}
    \},
\]
where $\mathcal{B}_{\infty,\infty}^\eta(\RealNum^3,\CmplNum)$ is a usual inhomogeneous Besov space.
We denote by $C([0,T],\mathcal{C}^\eta)$ the set of all $\mathcal{C}^\eta$-valued continuous functions on $[0,T]$
endowed with the supremum norm $\|\cdot\|_{C([0,T],\mathcal{C}^\eta)}$.

\begin{theorem}\label{0330_main result}
Let $\eta\in(-\frac{2}{3},-\frac{1}{2})$. Then for every $u_0\in\mathcal{C}^\eta$,
the sequence $\{u^\epsilon\}$ converges to a limit $u$ in probability as $\epsilon\downarrow0$.
Precisely speaking, this means that there exists an a.s. strictly positive random time $T$ depending on $u_0$ and $\xi$,
such that $u$ and $u^\epsilon$ for every $\epsilon>0$ belong to the space $C([0,T],\mathcal{C}^\eta)$ and we have
\[
\|u^\epsilon-u\|_{C([0,T],\mathcal{C}^\eta)}\to0
\]
in probability. Furthermore, $u$ is independent of the choice of $\rho$.
\end{theorem}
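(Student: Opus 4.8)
The plan is to follow the standard two-step program of regularity structure theory as developed in \cite{Hairer2014a}. The deterministic half (carried out in the remainder of \secref{sec_20161107073826}) produces, for each model, a local solution map that is continuous in the model; the probabilistic half (in \secref{sec_20161107073513}) shows that the renormalized canonical models built from $\xi^\epsilon$ converge in probability in the space of admissible models. Concatenating these two statements yields the convergence claim, and the independence of $\rho$ follows because all limiting models coincide. First I would construct a regularity structure $\mathscr{T}$ rich enough to describe \eqref{eq:cgl}: one needs the symbols generated by the noise $\Xi$ of homogeneity $-\tfrac{5}{2}-\kappa$, the abstract integration map $\mathcal{I}$ of order $2$, and the products dictated by the cubic nonlinearity $|u|^2u=u^2\bar u$ (hence also a conjugate tree structure, reflected in the two species of decorated trees in the paper's TikZ macros). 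Because $\eta>-\tfrac23$, the fixed-point argument for the modelled-distribution equation
\[
  U = \mathcal{P}\mathbf{1}_{t>0}\bigl(\nu(\mathbf{1}-U^2\bar U + C) U + \Xi\bigr) + P u_0
\]
closes in $\sols{}{\gamma}{\eta}$ for suitable $\gamma<\tfrac{3}{2}$, giving a solution on a random interval $[0,T]$ together with the continuity estimate $\|\mathcal{R}U - \mathcal{R}^\epsilon U^\epsilon\|_{C([0,T],\HolBesSp{\eta})}\lesssim \$Z - Z^\epsilon\$$ in the model distance, provided $T$ is chosen uniformly over a bounded set of models.

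Next I would carry out the probabilistic step. One defines the canonical lift $Z^\epsilon$ of the smooth noise $\xi^\epsilon$ and the renormalized model $\hat Z^\epsilon = M^\epsilon Z^\epsilon$, where the renormalization group element $M^\epsilon$ is determined by the constants $C_1^\epsilon, C_{2,1}^\epsilon, C_{2,2}^\epsilon$ appearing in $C^\epsilon$; a Wick-renormalization argument at the level of the relevant trees (the divergent ones being $\ICC$, $\ICCD$ and the analogous conjugate trees, plus the logarithmically divergent triple contractions) shows these are the only subtractions needed, and their precise asymptotics are recorded in \pref{section4_5:estimate of C}. Then one shows $\hat Z^\epsilon \to \hat Z$ in $L^p$ in the model topology: for each tree $\tau$ one writes the component as a finite sum of complex multiple It\^o--Wiener integrals (using the product formula recalled in \secref{sec:itowienerintegral}), estimates the resulting kernels via the standard graph/power-counting bounds of \cite{Hairer2014a}, and applies Gaussian hypercontractivity to pass from $L^2$ to $L^p$ and Kolmogorov's criterion to get the right H\"older regularity in the base variables. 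Independence of $\rho$ is seen because the limiting kernels do not depend on the mollifier.

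The main obstacle I expect is the renormalization bookkeeping forced by the \emph{complex} structure: unlike the real $\Phi^4_3$ model, here the nonlinearity $u^2\bar u$ mixes $U$ and $\bar U$, so one must track trees with mixed edge-types (the $\mathsf{C}$/$\mathsf{D}$ decorations in the macros) and the covariance $\expect[\xi\bar\xi]=\delta$, $\expect[\xi\xi]=0$ dictates exactly which contractions survive. Getting the three renormalization constants $C_1^\epsilon$, $C_{2,1}^\epsilon$, $C_{2,2}^\epsilon$ correct --- with the right combinatorial prefactors $2$, $-2\bar\nu$, $-4\nu$ --- and verifying that after these subtractions \emph{every} tree of negative homogeneity has a convergent, $\rho$-independent limit is the delicate point; the analytic estimates themselves, once the algebraic structure is pinned down, are routine power counting. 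A secondary technical point is the low regularity $\eta\in(-\tfrac23,-\tfrac12)$ of the initial data, which forces the solution map to be set up in a weighted space $\sols{}{\gamma}{\eta}$ with a blow-up of the modelled-distribution norm near $t=0$, so that the fixed-point contraction must be run carefully to keep $T$ bounded below uniformly on model-balls; this is handled as in \cite{Hairer2014a} but must be checked to be compatible with the cubic nonlinearity.
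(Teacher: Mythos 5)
Your proposal follows essentially the same route as the paper: a deterministic fixed-point argument in the weighted spaces $\mathcal{D}_P^{\gamma,\eta}$ with continuity in the model (\tref{3.3:solution map}), identification of the renormalized equation via the group element $M(C_1,C_{2,1},C_{2,2})$ (\pref{0360:renormalized equation}), and convergence of the renormalized Gaussian models by Wiener-chaos decomposition, kernel power counting and hypercontractivity (\pref{3.5:convergence of models}), with $\rho$-independence read off from the limiting kernels. The only points you gloss over are minor: the divergent trees are $\CD$, $\CCJDD$, $\CDICD$ and their relatives rather than $\ICC$, $\ICCD$, and one must impose an extra assumption on $\xi$ (the paper's \aref{3.3:ass on xi}) to reconstruct $\unit_{t>0}\Xi$ since $|\Xi|_s<-2$ puts it outside the scope of \tref{3.1:reconstruction}.
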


We use the following notations in \secref[sec_20161107073826]{sec_20161107073513}:
\begin{itemize}
\item For $z=(t,x_1,x_2,x_3)\in\RealNum^4$, we define $\|z\|_s=|t|^{\frac{1}{2}}+|x_1|+|x_2|+|x_3|$.
\item For $k=(k_i)_{i=0}^3\in\Integers_+^4$, we define $|k|_s=2k_0+k_1+k_2+k_3$ and $\partial^k=\partial_t^{k_0}\partial_{x_1}^{k_1}\partial_{x_2}^{k_2}\partial_{x_3}^{k_3}$. Here $\Integers_+=\{0,1,2,\dots\}$.
\item For $\varphi\in C(\RealNum^4,\CmplNum)$ and $\delta>0$, we define the space-time scaling around $z=(t,x_1,x_2,x_3)\in\RealNum^4$ by
\[\varphi_z^\delta(t',x_1',x_2',x_3')=\delta^{-5}\varphi(\delta^{-2}(t'-t),\delta^{-1}(x_1'-x_1),\delta^{-1}(x_2'-x_2),\delta^{-1}(x_3'-x_3)).\]
\end{itemize}

We define the parabolic H\"older-Besov space $\mathcal{C}_s^\alpha$ on $\RealNum^4$ for $\alpha\in\RealNum$. At this stage, we do not impose periodicity for elements of $\mathcal{C}_s^\alpha$.
\begin{itemize}
\item For $\alpha>0$, we denote by $\mathcal{C}_s^\alpha$ the space of complex-valued functions $\varphi$ on $\RealNum^4$ such that
\begin{align}\label{3.1:def of C^alpha}
\bigl|\partial^k\varphi(z')-\sum_{|k+l|_s<\alpha}\frac{(z'-z)^l}{l!}\partial^{k+l}\varphi(z)\bigr|\lesssim\|z'-z\|_s^{\alpha-|k|_s}
\end{align}
holds locally in $z,z'\in\RealNum^4$ and for every $k$ with $|k|_s<\alpha$.
\item Denote by $\mathcal{C}_s^0=L_{\mathrm{loc}}^\infty(\RealNum^4,\CmplNum)$ the space of locally bounded functions.
\item For $r>0$, let $\mathcal{B}_r$ be the set of complex-valued smooth functions $\varphi$ on $\RealNum^4$ supported in the ball $B_s(0,1)=\{z;\|z\|_s\le1\}$ and such that their derivatives of order up to $r$ are bounded by $1$. Let $\alpha<0$ and $r=\lceil-\alpha\rceil$. Denote by $\mathcal{C}_s^\alpha$ be the space of Schwartz distributions $\xi\in\mathcal{S}'=\mathcal{S}'(\RealNum^4,\CmplNum)$ such that
\[\|\xi\|_{\alpha;K}:=\sup_{z\in K,\varphi\in\mathcal{B}_r,\delta\in(0,1]}\delta^{-\alpha}|\langle\xi,\varphi_z^\delta\rangle|<\infty\]
for every compact set $K\subset\RealNum^4$.
\end{itemize}

\subsection{Results on regularity structures}

First we recall basic concepts from the theory of regularity structures \cite{Hairer2014a}.

\begin{definition}
We say that a triplet $\mathcal{T}=(A,T,G)$ is a \emph{regularity structure} with \emph{index set} $A$, \emph{model space} $T$ and \emph{structure group} $G$, if
\begin{itemize}
\item $A$ is a locally finite set of real numbers bounded from below and $0\in A$.
\item $T=\bigoplus_{\alpha\in A}T_\alpha$ with complex Banach spaces $(T_\alpha,\|\cdot\|_\alpha)$. Furthermore, $T_0\simeq\CmplNum$ and its unit vector is denoted by $\unit$.
\item $G$ is a subgroup of $\mathcal{L}(T)$, the set of continuous linear operators on $T$, such that, for every $\Gamma\in G$, $\alpha\in A$, and $\tau\in T_\alpha$,
\[\Gamma\tau-\tau\in T_\alpha^-:=\bigoplus_{\beta<\alpha}T_\beta.\]
Furthermore, $\Gamma\unit=\unit$ for every $\Gamma\in G$.
\end{itemize}
\end{definition}

\begin{definition}
Let $\mathcal{T}$ be a regularity structure. We say that a subspace $V=\bigoplus_{\beta\in A}V_\beta$ with $V_\beta\subset T_\beta$ is a \emph{sector} of regularity $\alpha\le0$ if $V$ is invariant under $G$ (i.e. $\Gamma V\subset V$ for every $\Gamma\in G$) and $\alpha$ is the minimal index such that $V_\alpha\neq\{0\}$.
A sector with regularity $0$ is called \emph{function-like}.
\end{definition}

%Recall that $\mathcal{B}_r$ is the set of $C^r$ functions $\varphi$ on $\RealNum^4$ which are supported in the unit ball centered at origin and satisfy $\|\varphi\|_{C^r}\le1$.

For $\tau\in T$, we write $\|\tau\|_\alpha=\|\tau_\alpha\|_\alpha$, where $\tau_\alpha$ is the component of $\tau$ in $T_\alpha$.

\begin{definition}
Let $\mathcal{T}$ be a regularity structure and let $r=\lceil-\inf A\rceil$. A \emph{model} $Z=(\Pi,\Gamma)$ is a pair of maps $\Gamma:\RealNum^4\times\RealNum^4\ni(z,z')\mapsto\Gamma_{zz'}\in G$ and $\Pi:\RealNum^4\ni z\mapsto\Pi_z\in\mathcal{L}(T,\mathcal{S}')$, the set of continuous linear operators from $T$ to $\mathcal{S}'$, which satisfy
\[\Gamma_{zz'}\Gamma_{z'z''}=\Gamma_{zz''},\quad
\Pi_z\Gamma_{zz'}=\Pi_{z'}\]
for every $z,z',z''\in\RealNum^4$, and
\begin{align*}
&\|\Gamma\|_{\gamma;K}
:=\sup_{\substack{\beta<\alpha<\gamma,\,
\tau\in T_\alpha,\\
(z,z')\in K^2}}
\frac{\|\Gamma_{zz'}\tau\|_\beta}{\|\tau\|_\alpha\|z-z'\|_s^{\alpha-\beta}}<\infty,\\
&\|\Pi\|_{\gamma;K}
:=\sup_{\substack{\alpha<\gamma,\,\tau\in T_\alpha,\\
z\in K,\,\varphi\in\mathcal{B}_r,\,\delta\in(0,1]}}
\frac{|\langle\Pi_z\tau,\varphi_z^\delta\rangle|}{\|\tau\|_\alpha\delta^\alpha}<\infty
\end{align*}
for every $\gamma>0$ and compact set $K\subset\RealNum^4$. For models $Z=(\Pi,\Gamma)$ and $Z'=(\Pi',\Gamma')$ on $\mathcal{T}$, we write
\[\$Z\$_{\gamma;K}=\|\Gamma\|_{\gamma;K}+\|\Pi\|_{\gamma;K},\quad\$Z-Z'\$_{\gamma;K}=\|\Gamma-\Gamma'\|_{\gamma;K}+\|\Pi-\Pi'\|_{\gamma;K}.\]
\end{definition}

Following \cite[Section~6]{Hairer2014a}, we define the space of modelled distributions with singularity at $P=\{z=(t,x)\in\RealNum^4\,;t=0\}$. For a subset $K\subset\RealNum^4$, we denote by
\[K_P:=\{(z,z')\in(K\setminus P)^2\,;z\neq z',\|z-z'\|_s\le1\wedge|t|^{\frac{1}{2}}\wedge|t'|^{\frac{1}{2}}\}.\]

\begin{definition}
Let $Z=(\Pi,\Gamma)$ be a model on $\mathcal{T}$, $\gamma>0$ and $\eta\in\RealNum$. For a function $f:\RealNum^4\to T_\gamma^-$ and a subset $K\subset\RealNum^4$, we define
\begin{align*}
\|f\|_{\gamma,\eta;K}
&:=\sup_{\substack{\beta<\gamma\\z=(t,x)\in K\setminus P}}
(1\wedge|t|^{\frac{\beta-\eta}{2}\vee0})\|f(z)\|_\beta,\\
\$f\$_{\gamma,\eta;K}
&:=\|f\|_{\gamma,\eta;K}
+\sup_{\substack{\beta<\gamma\\(z,z')\in K_P}}
(1\wedge|t|\wedge|t'|)^{\frac{\gamma-\eta}{2}}\frac{\|f(z)-\Gamma_{zz'}f(z')\|_\beta}{\|z-z'\|_s^{\gamma-\beta}}.
\end{align*}
We write $f\in\mathcal{D}^{\gamma,\eta}_P=\mathcal{D}^{\gamma,\eta}_P(Z)$ if $\$f\$_{\gamma,\eta;K}<\infty$ for every compact subset $K\subset\RealNum^4$. If $f$ takes value in a sector $V$, we write $f\in\mathcal{D}_P^{\gamma,\eta}(V;Z)$.

For models $Z$, $Z'$ and $f\in\mathcal{D}_P^{\gamma,\eta}(Z)$, $f'\in\mathcal{D}_P^{\gamma,\eta}(Z')$, we define
\begin{align*}
\$f;f'\$_{\gamma,\eta;K}
&=\|f-f'\|_{\gamma,\eta;K}\\
&\quad+\sup_{\substack{\alpha<\gamma,\\(z,z')\in K_P}}
(1\wedge|t|\wedge|t'|)^{\frac{\gamma-\eta}{2}}
\frac{\|f(z)-f'(z)-\Gamma_{zz'}f(z')+\Gamma_{zz'}'f'(z')\|_\alpha}{\|z-z'\|_s^{\gamma-\alpha}}.
\end{align*}
We denote by $\mathcal{M}\ltimes\mathcal{D}_P^{\gamma,\eta}$ the set of all pairs $(Z,f)$ of a model $Z$ and $f\in\mathcal{D}_P^{\gamma,\eta}(Z)$. The topology on $\mathcal{M}\ltimes\mathcal{D}_P^{\gamma,\eta}$ is defined by the family of pseudo-metrics $\{\$\cdot\,;\cdot\$_{\gamma,\eta;K}\}$.
\end{definition}

\begin{theorem}[{\cite[Theorem~3.10 and Proposition~6.9]{Hairer2014a}}]\label{3.1:reconstruction}
Let $Z=(\Pi,\Gamma)$ be a model on $\mathcal{T}$. Let $V$ be a sector with regularity $\alpha\le0$ and let $r=\lceil-\alpha\rceil$. If $\gamma>0$, $\eta\le\gamma$, and $\alpha\wedge\eta>-2$, then there exists a unique continuous linear map $\mathcal{R}:\mathcal{D}_P^{\gamma,\eta}(V)\to\mathcal{C}_s^{\alpha\wedge\eta}$ such that, if $K$ and $K'$ are compact subsets of $\RealNum^4$ such that $K$ is included in the interior of $K'$, then we have
\begin{align}\label{eq:reconstruction}
|\langle\mathcal{R}f-\Pi_zf(z),\varphi_z^\delta\rangle|
\lesssim\delta^\gamma\|\Pi\|_{\gamma;K'}\$f\$_{\gamma,\eta;K'},
\end{align}
uniformly over $f\in\mathcal{D}_P^{\gamma,\eta}(V)$, $\delta\in(0,1]$, $z\in K$ and $\varphi\in\mathcal{B}_r$ with $\varphi_z^\delta$ supported in $K'$ and uniformly away from $P$. Furthermore, the map $\mathcal{M}\ltimes\mathcal{D}_P^{\gamma,\eta}(V)\ni(Z,f)\to\mathcal{R}f\in\mathcal{C}_s^{\alpha\wedge\eta}$ is locally uniformly continuous.
\end{theorem}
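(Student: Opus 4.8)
This theorem is a restatement of \cite[Theorem~3.10 and Proposition~6.9]{Hairer2014a}, so I would only outline the argument. The plan has four steps: (i) build $\mathcal{R}f$ on $\RealNum^4\setminus P$, where $P=\{z=(t,x)\,;\,t=0\}$, by the classical (non-singular) reconstruction theorem; (ii) show this distribution admits a canonical extension to all of $\RealNum^4$ lying in $\mathcal{C}_s^{\alpha\wedge\eta}$ and still satisfying \eqref{eq:reconstruction} up to $P$; (iii) deduce uniqueness; (iv) promote the construction to a locally uniformly continuous map on $\mathcal{M}\ltimes\mathcal{D}_P^{\gamma,\eta}(V)$.

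For (i), I would fix a compactly supported multiresolution analysis adapted to the scaling $\|\cdot\|_s$, with scaling functions $\{\psi_x^n\}$ at dyadic scale $2^{-n}$ centred at grid points $x$, and set $\mathcal{R}_nf=\sum_x\langle\Pi_xf(x),\psi_x^n\rangle\,\psi_x^n$. The algebraic identity $\Pi_xf(x)-\Pi_yf(y)=\Pi_x\bigl(f(x)-\Gamma_{xy}f(y)\bigr)$, together with the definition of $\$f\$_{\gamma,\eta;K}$ and the bound $\|\Pi\|_{\gamma;K}$, controls the increments $\mathcal{R}_{n+1}f-\mathcal{R}_nf$ by $O(2^{-n\gamma})$ on compacts at positive distance from $P$; since $\gamma>0$ the telescoping series converges, $\mathcal{R}f:=\lim_n\mathcal{R}_nf$ exists in $\mathcal{C}_s^{\alpha}$ away from $P$, and satisfies \eqref{eq:reconstruction} there. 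Uniqueness on $\RealNum^4\setminus P$ is then immediate: the difference $\zeta$ of two such reconstructions satisfies $|\langle\zeta,\varphi_z^\delta\rangle|\lesssim\delta^\gamma$ with $\gamma>0$ uniformly on compacts away from $P$, which forces $\zeta=0$ there.

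Step (ii) is the heart of the matter. Near $P$ the $\mathcal{D}_P^{\gamma,\eta}$-weights give $\|f(z)\|_\beta\lesssim 1\vee|t|^{((\eta-\beta)/2)\wedge0}$, hence $|\langle\Pi_zf(z),\psi_z^\delta\rangle|\lesssim\delta^{\alpha\wedge\eta}$ when $\delta\sim|t|^{1/2}$. To evaluate $\mathcal{R}f$ against a test function at a scale $\delta$ exceeding the distance to $P$, one covers the parabolic region it meets by boxes whose size matches their distance to $P$, uses the local bound on each box, and sums the contributions dyadically in that distance; using $\eta\le\gamma$, this series converges precisely when $\alpha\wedge\eta>-2$, the threshold $-2$ being the critical negative regularity attached to $\{t=0\}$, which under the parabolic scaling behaves as a set of codimension $2$. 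This produces the canonical extension $\mathcal{R}f\in\mathcal{C}_s^{\alpha\wedge\eta}(\RealNum^4)$ with the bound up to $P$. For global uniqueness, the difference $\zeta$ now lies in $\mathcal{C}_s^{\alpha\wedge\eta}$, vanishes off $P$ by step (i), and being supported on the parabolically codimension-$2$ hyperplane $P$ with regularity $\alpha\wedge\eta>-2$ it must vanish identically.

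Finally, for (iv) all the estimates above are linear in $f$ and polynomial in $\$Z\$_{\gamma;K'}$, so applying the same wavelet decomposition to $\mathcal{R}f-\mathcal{R}'f'=(\mathcal{R}f-\mathcal{R}f')+(\mathcal{R}f'-\mathcal{R}'f')$ — estimating the first difference with $\$f;f'\$$ via the model $Z$ and the second with $\$Z-Z'\$$ — gives local Lipschitz, hence local uniform, continuity in the pseudo-metrics defining $\mathcal{M}\ltimes\mathcal{D}_P^{\gamma,\eta}$. The main obstacle I expect is step (ii): the reconstruction near the singular hyperplane, where the borderline condition $\alpha\wedge\eta>-2$ is used; this is exactly the content of \cite[Proposition~6.9]{Hairer2014a} and is the only genuinely new ingredient beyond the classical reconstruction theorem.
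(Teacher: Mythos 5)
Your outline is correct and follows exactly the route of the cited reference: the paper itself gives no proof of this statement but simply quotes \cite[Theorem~3.10 and Proposition~6.9]{Hairer2014a}, whose argument is precisely your wavelet reconstruction away from $P$ followed by the dyadic extension across the singular hyperplane using $\eta\le\gamma$ and $\alpha\wedge\eta>-2$, with uniqueness from the fact that a distribution of regularity above $-2$ supported on the parabolically codimension-$2$ set $P$ vanishes. Nothing further is needed.
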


\begin{remark}[{\cite[Lemma~6.7]{Hairer2014a}}]
The reconstruction operator $\mathcal{R}$ is local in the sense that, the behavior of $\mathcal{R}f$ on the compact set $K\subset\RealNum^4$ is uniquely determined by the values of $f$ and $\Pi$ in an arbitrary neighborhood of $K$.
\end{remark}

Next we introduce specific symbols and operators to describe \eqref{eq:cgl} by regularity structure: the polynomial structure, product, integration against Green's function, and the complex conjugate.

We have the regularity structure $\mathcal{T}^\poly$ given by all polynomials in the symbols $X_0,X_1,X_2,X_3$, which denote the time and space directions, respectively. Denote $X^k=\prod_{i=0}^3X_i^{k_i}$ for a multi-index $k\in\Integers_+^4$, and $\unit=X^{(0,0,0,0)}$. We endow these with the parabolic degrees $|X^k|_s=|k|_s$. Now we define the model space $T^\poly=\bigoplus_{n\in\Integers_+}T_n^\poly$, where
\[T_n^\poly=\langle X^k\,;|k|_s=n\rangle.\]
The group $G=\RealNum^4$ acts on $T^\poly$ by defining $\Gamma_hX^k=(X-h\unit)^k$ for every $h\in\RealNum^4$. Now we have the regularity structure $\mathcal{T}^\poly=(\Integers_+,T^\poly,\RealNum^4)$. Furthermore, we have the canonial model $(\Pi,\Gamma)$ on $\mathcal{T}^\poly$ given by
\begin{align}\label{3.1:model on poly}
(\Pi_zX^k)(z')=(z'-z)^k,\quad\Gamma_{zz'}=\Gamma_{z'-z},
\end{align}
for every $z,z'\in\RealNum^4$.

Throughout this section, the regularity structure $\mathcal{T}=(A,T,G)$ contains $\mathcal{T}^\poly$, i.e. $T^\poly$ is contained as a sector and the restriction of $G$ on $T^\poly$ coincides with $\{\Gamma_h\,;h\in\RealNum^4\}$. The model $(\Pi,\Gamma)$ acts on $T^\poly$ by \eqref{3.1:model on poly}. Furthermore, we assume that $T_n=T_n^\poly$ for every $n\in\Integers_+$.

\begin{proposition}[{\cite[Proposition~3.28]{Hairer2014a}}]\label{3.1:funclike reconst}
Let $V$ be a function-like sector which contains $T^\poly$ and such that $V\subset T^\poly+T_\alpha^+$ for some $\alpha>0$, where $T_\alpha^+:=\bigoplus_{\alpha\le\beta}T_\beta$. Let $\gamma>\alpha$, $\eta\in\RealNum$. Then for every $f\in\mathcal{D}_P^{\gamma,\eta}(V)$, $\mathcal{R}f$ coincides with the component of $f$ in $V_0=\langle\unit\rangle$ and belongs to $\mathcal{C}_s^\alpha((0,\infty)\times\RealNum^3)$, the space of functions $\varphi$ such that the estimate \eqref{3.1:def of C^alpha} holds uniformly over $z,z'\in K$ for every compact set $K\subset(0,\infty)\times\RealNum^3$.
\end{proposition}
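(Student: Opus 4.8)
The plan is to follow the proof of \cite[Proposition~3.28]{Hairer2014a}. Write $\tilde{f}(z)$ for the component of $f(z)$ in $V_0=\langle\unit\rangle$; this is a $\CmplNum$-valued function on $\RealNum^4\setminus P$. Since $(0,\infty)\times\RealNum^3$ is disjoint from $P=\{z=(t,x)\in\RealNum^4\,;\,t=0\}$, on any compact $K\subset(0,\infty)\times\RealNum^3$ all the $t$-weights appearing in $\$f\$_{\gamma,\eta;K}$ are bounded above and below, so that $\|f(z)\|_\beta\lesssim1$ and $\|f(z)-\Gamma_{zz'}f(z')\|_\beta\lesssim\|z-z'\|_s^{\gamma-\beta}$ hold uniformly over $z,z'\in K$ with $\|z-z'\|_s$ small. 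I would first establish that $\tilde f\in\mathcal{C}_s^\alpha$ locally on $(0,\infty)\times\RealNum^3$, and then identify $\mathcal{R}f$ with $\tilde f$ on that set.

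\emph{Step 1: regularity of $\tilde f$.} Using the standing assumption $T_n=T_n^\poly$ together with $V\subset T^\poly+T_\alpha^+$, decompose $f(z')=\sum_{|k|_s<\gamma}a_k(z')X^k+g(z')$, where the sum carries the integer-degree (hence polynomial) components, $a_0=\tilde f$, and $g(z')$ belongs to the sum of the $T_\beta$ over non-integer $\beta$ with $\alpha\le\beta<\gamma$. Since $G$ preserves $V$ and $\Gamma_{zz'}X^k=(X-(z'-z)\unit)^k$, the $\unit$-component of $\Gamma_{zz'}f(z')$ equals $\sum_k a_k(z')(z-z')^k$ plus the $\unit$-component of $\Gamma_{zz'}g(z')$, and the latter is $O(\|z-z'\|_s^\alpha)$ by the model bound $\|\Gamma_{zz'}\tau\|_0\lesssim\|\tau\|_\beta\,\|z-z'\|_s^\beta$ for $\tau\in T_\beta$ with $\beta\ge\alpha$. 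Projecting the bound $\|f(z)-\Gamma_{zz'}f(z')\|_0\lesssim\|z-z'\|_s^\gamma$ onto $\unit$ then yields $|\tilde f(z)-\sum_k a_k(z')(z-z')^k|\lesssim\|z-z'\|_s^\alpha$ (using $\gamma>\alpha$), and projecting onto $X^l$ with $|l|_s<\alpha$ gives the analogous estimates for the $a_l$. A standard argument reversing the Taylor-expansion characterization \eqref{3.1:def of C^alpha} of $\mathcal{C}_s^\alpha$ then shows that $\{a_l\}_{|l|_s<\alpha}$ is the jet of a function of class $\mathcal{C}_s^\alpha$ whose $\unit$-part is $a_0=\tilde f$ (so in particular $a_l=\partial^l\tilde f/l!$ for $|l|_s<\alpha$), i.e.\ $\tilde f\in\mathcal{C}_s^\alpha((0,\infty)\times\RealNum^3)$.

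\emph{Step 2: $\mathcal{R}f=\tilde f$.} Fix $z$ in the region, a compact neighborhood $K'\subset(0,\infty)\times\RealNum^3$ of it, and a smooth $\varphi\in\mathcal{B}_r$ with $c_\varphi:=\int\varphi\ne0$; for $\delta$ small, $\varphi_z^\delta$ is supported in $K'$ and uniformly away from $P$. By \eqref{eq:reconstruction} of \tref{3.1:reconstruction}, $|\langle\mathcal{R}f-\Pi_zf(z),\varphi_z^\delta\rangle|\lesssim\delta^\gamma$. Since $\Pi_z\unit=1$ gives $\langle\Pi_z\unit,\varphi_z^\delta\rangle=c_\varphi$, a change of variables gives $\langle\Pi_zX^k,\varphi_z^\delta\rangle=O(\delta^{|k|_s})$ for $|k|_s\ge1$, and the model bound for $\Pi$ gives $\langle\Pi_zg(z),\varphi_z^\delta\rangle=O(\delta^\alpha)$, we obtain $\langle\mathcal{R}f,\varphi_z^\delta\rangle=c_\varphi\tilde f(z)+O(\delta^{1\wedge\alpha})$, the error being uniform over $z$ in a compact. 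Hence $c_\varphi^{-1}\langle\mathcal{R}f,\varphi_z^\delta\rangle$ converges to $\tilde f(z)$ uniformly on compact subsets of the region as $\delta\downarrow0$; since $c_\varphi^{-1}\varphi$ is a mollifier, this sequence of functions also converges to $\mathcal{R}f$ in $\mathcal{S}'$, and as $\tilde f$ is continuous by Step 1 we conclude $\mathcal{R}f=\tilde f$ on $(0,\infty)\times\RealNum^3$. Combining with Step 1 gives $\mathcal{R}f\in\mathcal{C}_s^\alpha((0,\infty)\times\RealNum^3)$.

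The main obstacle is Step 1: tracking precisely how the higher-degree components of $f$ (in particular the non-polynomial ones at degrees in $[\alpha,\gamma)$) contribute to the $\unit$-component under the structure-group action $\Gamma$, and, when $\alpha>1$, checking that the polynomial components of $f$ really are the genuine Taylor coefficients of $\tilde f$, so that $\tilde f$ acquires H\"older-continuous derivatives of order up to $\lceil\alpha\rceil-1$ (when $\alpha\le1$ this second point is vacuous and the argument simplifies). This is a bookkeeping argument built from the defining bounds on $\Gamma$ and $\Pi$ in the notion of a model together with the converse Taylor theorem; the remaining steps are direct applications of \tref{3.1:reconstruction}.
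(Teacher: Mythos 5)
Your argument is correct and is essentially the standard proof of the cited result (\cite[Proposition~3.28]{Hairer2014a}), adapted harmlessly to the weighted spaces $\mathcal{D}_P^{\gamma,\eta}$ by noting that the $t$-weights are bounded on compacts of $(0,\infty)\times\RealNum^3$; the paper itself gives no proof and simply cites Hairer. The one step you leave as a black box — the converse Taylor theorem identifying $\{a_l\}_{|l|_s<\alpha}$ as the jet of a $\mathcal{C}_s^\alpha$ function — is exactly the standard lemma used in the reference, so there is no gap.
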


For a pair of sectors $(V,W)$, a \emph{product} $*:V\times W\to T$ is a continuous bilinear map such that
\begin{itemize}
\item $V_\alpha*W_\beta\subset T_{\alpha+\beta}$ for every $\alpha,\beta\in A$,
\item $\unit*w=w$ for every $w\in W$ and $v*\unit=v$ for every $v\in V$,
\item $\Gamma(v*w)=(\Gamma v)*(\Gamma w)$ for every $(v,w)\in V\times W$ and $\Gamma\in G$.
\end{itemize}
The canonical product on $T^\poly$ is given by $X^k*X^l=X^{k+l}$.

\begin{proposition}[{\cite[Proposition~6.12]{Hairer2014a}}]\label{3.1:product}
Let $(V,W)$ be a pair of sectors with regularities $\alpha_1,\alpha_2$, respectively, and product $*:V\times W\to T$. For every $f_1\in\mathcal{D}_P^{\gamma_1,\eta_1}(V)$ and $f_2\in\mathcal{D}_P^{\gamma_2,\eta_2}(W)$, the function $f=f_1*f_2$ (projected onto $T_\gamma^-$) belongs to $\mathcal{D}_P^{\gamma,\eta}$ with $\gamma=(\gamma_1+\alpha_2)\wedge(\gamma_2+\alpha_1)$ and $\eta=(\eta_1+\alpha_2)\wedge(\eta_2+\alpha_1)\wedge(\eta_1+\eta_2)$. Furthermore, this bilinear map is locally uniformly continuous with respect to the topology of $\mathcal{M}\ltimes\mathcal{D}_P^{\gamma,\eta}$.
\end{proposition}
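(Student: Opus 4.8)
The plan is to transcribe the proof of Hairer's multiplication theorem \cite[Theorem~4.7 and Proposition~6.12]{Hairer2014a}; the complex-valued setting changes nothing, since $*$ is bilinear over $\CmplNum$ and only norm estimates intervene. Writing $\mathcal{Q}_{<\gamma}$ for the canonical projection of $T$ onto $T_\gamma^-=\bigoplus_{\beta<\gamma}T_\beta$, the object to analyse is $f:=\mathcal{Q}_{<\gamma}(f_1*f_2)$. I would fix a compact $K\subset\RealNum^4$ and bound the two parts of $\$f\$_{\gamma,\eta;K}$ separately; the local uniform continuity in $(Z,f_1,f_2)$ then follows by running the same estimates on the telescoped differences $f_1*f_2-f_1'*f_2'=(f_1-f_1')*f_2+f_1'*(f_2-f_2')$, replacing $\$f_i\$$ by $\$f_i;f_i'\$$ and the $\Gamma$-increments by $\$Z-Z'\$$.

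For the homogeneity part, for $z=(t,x)\in K\setminus P$ and $\beta<\gamma$, bilinearity of $*$ and the sector property $V_{\beta_1}*W_{\beta_2}\subset T_{\beta_1+\beta_2}$ together with the definitions of $\$f_i\$$ give
\[
\|f(z)\|_\beta\lesssim\sum_{\substack{\beta_1+\beta_2=\beta\\\beta_1\ge\alpha_1,\ \beta_2\ge\alpha_2}}\|f_1(z)\|_{\beta_1}\|f_2(z)\|_{\beta_2}
\lesssim\$f_1\$\,\$f_2\$\sum(1\wedge|t|)^{-(\frac{\beta_1-\eta_1}{2}\vee0)-(\frac{\beta_2-\eta_2}{2}\vee0)}.
\]
It then suffices to verify $\frac{\beta-\eta}{2}\vee0\ge(\frac{\beta_1-\eta_1}{2}\vee0)+(\frac{\beta_2-\eta_2}{2}\vee0)$ for all admissible $\beta_1,\beta_2$, which is a short case split on the signs of $\beta_i-\eta_i$: the three cases are closed respectively by $\eta\le\eta_1+\eta_2$, by $\eta\le\alpha_2+\eta_1$ (plus its mirror, using $\beta_i\ge\alpha_i$), and trivially.

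For the increment part on $K_P$, I would first note that $\Gamma_{zz'}$ never raises homogeneity, hence maps $T_\gamma^-$ into itself, so $f(z)-\Gamma_{zz'}f(z')\in T_\gamma^-$ and
\[
f(z)-\Gamma_{zz'}f(z')=\mathcal{Q}_{<\gamma}\bigl[f_1(z)*f_2(z)-\Gamma_{zz'}(f_1(z')*f_2(z'))\bigr]+\mathcal{Q}_{<\gamma}\Gamma_{zz'}\bigl[(1-\mathcal{Q}_{<\gamma})(f_1(z')*f_2(z'))\bigr].
\]
In the first term I would use that $*$ intertwines $\Gamma_{zz'}$ and telescope into $[f_1(z)-\Gamma_{zz'}f_1(z')]*f_2(z)+(\Gamma_{zz'}f_1(z'))*[f_2(z)-\Gamma_{zz'}f_2(z')]$, bounding each $f_i$-increment by $\$f_i\$$ via the definition of $\mathcal{D}_P^{\gamma_i,\eta_i}$, the factor $f_2(z)$ by the homogeneity bound, and $\Gamma_{zz'}f_1(z')$ by the homogeneity bound and the model estimate $\|\Gamma\|_{\cdot;K}$ (only finitely many homogeneities occur, $A$ being locally finite). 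On $K_P$ one has $\|z-z'\|_s\le|t|^{\frac12}\wedge|t'|^{\frac12}$, whence $|t-t'|\le\|z-z'\|_s^2\le|t|\wedge|t'|$ and $|t|\approx|t'|$, and each surplus half-power of $\|z-z'\|_s$ can be spent as a half-power of $1\wedge|t|$; the remaining exponent inequalities collapse, by the same kind of case split, to $\gamma\le\gamma_1+\alpha_2$, $\gamma\le\gamma_2+\alpha_1$ and the three defining inequalities for $\eta$. For the second term, $(1-\mathcal{Q}_{<\gamma})(f_1(z')*f_2(z'))$ carries only homogeneities $\gamma\le\beta<\gamma_1+\gamma_2$ (finitely many), and for each such $\beta$ and each $\beta'<\gamma$ the model estimate bounds the $T_{\beta'}$-component of $\Gamma_{zz'}$ applied to the $T_\beta$-part of $f_1(z')*f_2(z')$ by $\lesssim\|z-z'\|_s^{\beta-\beta'}\|(f_1*f_2)(z')\|_\beta$ with $\beta-\beta'\ge\gamma-\beta'$; the homogeneity bound for $(f_1*f_2)(z')$ and the same spending of surplus powers then finish it.

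I expect the only real work to be this exponent bookkeeping near the singular hyperplane $P$: one must exploit both constraints in the definition of $K_P$ — namely $\|z-z'\|_s\le1\wedge|t|^{\frac12}\wedge|t'|^{\frac12}$, which simultaneously forces $|t|\approx|t'|$ and converts surplus half-powers of $\|z-z'\|_s$ into half-powers of $1\wedge|t|$ — so that the extra parabolic Hölder regularity coming from $\gamma\le\gamma_i+\alpha_j$ precisely absorbs the negative powers $(1\wedge|t|)^{-(\frac{\beta_j-\eta_j}{2}\vee0)}$ produced by the time-zero blow-up of $f_1$ and $f_2$. Everything else is the non-singular multiplication argument verbatim.
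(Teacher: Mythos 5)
The paper gives no proof of this proposition at all: it is quoted verbatim from Hairer's work (Proposition~6.12 there), so the only "proof" in the paper is the citation. Your sketch is a correct reconstruction of that cited argument — the telescoping $f_1*f_2-\Gamma(f_1'*f_2')=(f_1-\Gamma f_1')*f_2+(\Gamma f_1')*(f_2-\Gamma f_2')$ plus the separate treatment of the truncated part $(1-\mathcal{Q}_{<\gamma})(f_1*f_2)$, and the exponent bookkeeping on $K_P$ using $\|z-z'\|_s\le 1\wedge|t|^{1/2}\wedge|t'|^{1/2}$ — and it matches the source's approach, so there is nothing to compare beyond noting that the authors simply defer to Hairer.
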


We say that a function $K:\RealNum^4\setminus\{0\}\to\CmplNum$ is a \emph{regularizing kernel} (of order $2$) if it can be written by $K=\sum_{n\ge0}K_n$, where $\{K_n\}$ satisfies the following assumptions.

\begin{assumption}\phantomsection\label{3.1:regularizing kernel}
\begin{itemize}
\item $K_n:\RealNum^4\to\CmplNum$ is smooth and supported in a ball $B_s(0,2^{-n})$.
\item There exists a constant $C>0$ such that
$\sup_z|\partial^kK_n(z)|\le C2^{(3+|k|_s)n}$
for every $n\ge0$ and $k\in\Integers_+^4$.
\item There exists $r>0$ such that
$\int_{\RealNum^4}K_n(z)z^kdz=0$
for every $n\ge0$ and $k$ with $|k|_s\le r$.
\end{itemize}
\end{assumption}

For a sector $V$, an \emph{abstract integration map} $\mathcal{I}:V\to T$ is a continuous linear map such that
\begin{itemize}
\item $\mathcal{I}V_\alpha\subset T_{\alpha+2}$ for every $\alpha\in A$ such that $\alpha+2\in A$,
\item $\mathcal{I}\tau=0$ for every $\tau\in V\cap T^\poly$,
\item $(\mathcal{I}\Gamma-\Gamma\mathcal{I})V\subset T^\poly$ for every $\Gamma\in G$.
\end{itemize}
Given a sector $V$ and an abstract integration map $\mathcal{I}$, we say that a model $(\Pi,\Gamma)$ \emph{realizes} a regularizing kernel $K$ for $\mathcal{I}$, if for every $\alpha\in A$, $\tau\in V_\alpha$ and $z\in\RealNum^4$ we have
\[\Pi_z\mathcal{I}\tau=K*(\Pi_z\tau)-\Pi_z\mathcal{J}(z)\tau,\]
where $\mathcal{J}(z)\tau=\sum_{|k|_s<\alpha+2}\frac{1}{k!}X^k(\partial^kK*\Pi_z\tau)(z)$. It is a consequence of \aref{3.1:regularizing kernel} that $(\partial^kK*\Pi_z\tau)(z)$ is defined for all $k$ with $|k|_s<\alpha+2$.

For $f\in\mathcal{D}_P^{\gamma,\eta}(V)$, we define the modelled distribution $\mathcal{K}_\gamma f$ by
\[(\mathcal{K}_\gamma f)(z)=\mathcal{I}f(z)+\mathcal{J}(z)f(z)+(\mathcal{N}_\gamma f)(z),\]
where $(\mathcal{N}_\gamma f)(z)=\sum_{|k|_s<\gamma+2}\frac{1}{k!}X^k\partial^kK*(\mathcal{R}f-\Pi_zf(z))(z)$.

\begin{proposition}[{\cite[Proposition~6.16]{Hairer2014a}}]\label{3.1:integration against K}
Let $V$ be a sector of regularity $\alpha$ and with an abstract integration map $\mathcal{I}$. Let $\gamma>0$ and $\eta<\gamma$. Assume that $\eta\wedge\alpha>-2$ and $\gamma+2,\eta+2\notin\Integers_+$. Then, for every model $Z$ realizing $K$ for $\mathcal{I}$, the operator $\mathcal{K}_\gamma$ maps $\mathcal{D}_P^{\gamma,\eta}(V)$ into $\mathcal{D}_P^{\gamma',\eta'}$ with $\gamma'=\gamma+2$, $\eta'=\eta\wedge\alpha+2$, and for every $f\in\mathcal{D}_P^{\gamma,\eta}(V)$, we have
\[\mathcal{R}\mathcal{K}_\gamma f=K*\mathcal{R}f.\]
Furthermore, the map $\mathcal{K}_\gamma:\mathcal{M}\ltimes\mathcal{D}_P^{\gamma,\eta}(V)\to\mathcal{D}_P^{\gamma',\eta'}$ is locally uniformly continuous.
\end{proposition}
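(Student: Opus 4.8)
The statement is \cite[Proposition~6.16]{Hairer2014a}, so the plan is not to reprove the abstract Schauder estimate from scratch but to verify that Hairer's argument survives passing from the real ground field to $\CmplNum$ and from the classical heat kernel to the Green's function of $\partial_t-(\ImUnit+\mu)\LaplaceOp$. Two things then have to be checked: first, that this complex Green's function admits a decomposition $K=\sum_{n\ge0}K_n$ obeying \aref{3.1:regularizing kernel}; second, that the three quantitative estimates behind the conclusion — a weighted pointwise bound, a weighted increment bound, and the reconstruction identity — go through word for word.

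For the first point I would recall that for $t>0$ the fundamental solution of $\partial_t-(\ImUnit+\mu)\LaplaceOp$ on $\RealNum\times\RealNum^3$ is $p_t(x)=\bigl(4\pi(\ImUnit+\mu)t\bigr)^{-3/2}\exp\!\bigl(-|x|^2/(4(\ImUnit+\mu)t)\bigr)$, and that $\mu>0$ forces $\operatorname{Re}(\ImUnit+\mu)^{-1}=\mu/(1+\mu^2)>0$, so the exponential has genuine Gaussian modulus and $p$ is smooth away from the origin. One then truncates $p$ to a fixed parabolic ball around $0$, pushes the resulting smooth remainder into an auxiliary non-singular term, and runs the parabolic Littlewood--Paley-type decomposition of \cite[Section~5]{Hairer2014a}; the complex constant $\ImUnit+\mu$ enters only as harmless multiplicative factors, so the scaling bounds $\sup_z|\partial^kK_n(z)|\le C2^{(3+|k|_s)n}$ and the vanishing-moment conditions of \aref{3.1:regularizing kernel} hold exactly as in the real case.

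For the second point I would write $(\mathcal{K}_\gamma f)(z)=\sum_{n\ge0}\mathcal{K}^{(n)}_\gamma f(z)$, with each $\mathcal{K}^{(n)}_\gamma$ obtained from convolution against $K_n$ minus the appropriate Taylor jet, so that $\mathcal{I}$, $\mathcal{J}$ and $\mathcal{N}_\gamma$ are assembled level by level. For each homogeneity $m<\gamma'=\gamma+2$ one bounds $\|\mathcal{K}^{(n)}_\gamma f(z)\|_m$ and the increment $\|\mathcal{K}^{(n)}_\gamma f(z)-\Gamma_{zz'}\mathcal{K}^{(n)}_\gamma f(z')\|_m$ using the kernel bounds together with $\$Z\$_{\gamma;K}$ and $\$f\$_{\gamma,\eta;K}$, and then sums over $n$ along the usual dichotomy $2^{-n}\gtrsim\|z-z'\|_s$ (resp. $2^{-n}\gtrsim|t|^{\frac12}$) versus its complement; in each regime the sum is a convergent geometric series precisely because $\eta\wedge\alpha>-2$ (hence $\eta'=\eta\wedge\alpha+2>0$) and $\gamma+2,\eta+2\notin\Integers_+$ (which excludes the logarithmically divergent borderline exponents). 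This yields $\mathcal{K}_\gamma f\in\mathcal{D}_P^{\gamma',\eta'}$. For the identity $\mathcal{R}\mathcal{K}_\gamma f=K*\mathcal{R}f$ I would invoke the uniqueness part of \tref{3.1:reconstruction}, checking that $K*\mathcal{R}f$ satisfies $|\langle K*\mathcal{R}f-\Pi_z\mathcal{K}_\gamma f(z),\varphi_z^\delta\rangle|\lesssim\delta^{\gamma'}$ via the same dyadic split together with \eqref{eq:reconstruction}. Finally, local uniform continuity in $(Z,f)$ follows by rerunning every one of these estimates for differences, exploiting bilinearity of the convolutions and the triangle inequality for the pseudometrics $\$\cdot\,;\cdot\$_{\gamma,\eta;K}$.

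The hard part, exactly as in \cite{Hairer2014a}, will be the bookkeeping near the singular hyperplane $P=\{t=0\}$: one has to keep track, level by level, of whether $K_n$ ``sees'' $P$, and the output exponents $\gamma'=\gamma+2$ and $\eta'=\eta\wedge\alpha+2$ only emerge after carefully matching the weight $(1\wedge|t|\wedge|t'|)^{\bullet}$ against the controlled blow-up of $\$f\$_{\gamma,\eta;K}$ near $P$. By contrast the purely algebraic ingredients — the realization identity $\Pi_z\mathcal{I}\tau=K*\Pi_z\tau-\Pi_z\mathcal{J}(z)\tau$ and the construction of $\mathcal{I}$, $\mathcal{J}$, $\mathcal{N}_\gamma$ — are formal and completely indifferent to whether the scalar field is $\RealNum$ or $\CmplNum$, so no new idea beyond \cite{Hairer2014a} is needed.
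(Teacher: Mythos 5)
The paper offers no proof of this proposition: it is imported verbatim (up to allowing complex Banach spaces $T_\alpha$) from \cite[Proposition~6.16]{Hairer2014a}, and your outline is a faithful summary of exactly that argument — dyadic decomposition of $K$, level-by-level estimates summed over the two regimes $2^{-n}\gtrsim\|z-z'\|_s$ and its complement, the reconstruction identity via uniqueness in \tref{3.1:reconstruction}, and the observation that nothing in the analysis cares whether the scalars are real or complex. The only misplacement is your first paragraph: verifying that the CGL Green's function admits a decomposition satisfying \aref{3.1:regularizing kernel} is not part of this abstract proposition (which takes such a $K$ as given) but belongs to the separate lemma quoted from \cite[Lemma~7.7]{Hairer2014a}.
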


\begin{remark}
Even in the case that $\eta\wedge\alpha\le-2$, if there exists a distribution $\mathcal{R}f\in\mathcal{C}_s^{\eta\wedge\alpha}$ which satisfies \eqref{eq:reconstruction}, then \pref{3.1:integration against K} still holds.
\end{remark}

For a sector $V$, a \emph{complex conjugate} map $V\ni\tau\mapsto\overline{\tau}\in T$ is a map such that
\begin{itemize}
\item $\tau\mapsto\overline{\tau}$ is continuous and antilinear, in the sense that $\overline{\lambda_1\tau_1+\lambda_2\tau_2}=\overline{\lambda_1}\overline{\tau_1}+\overline{\lambda_2}\,\overline{\tau_2}$ for $\lambda_1,\lambda_2\in\CmplNum$ and $\tau_1,\tau_2\in V$,
\item $\overline{V_\alpha}\subset T_\alpha$ for every $\alpha\in A$,
\item $\overline{X^k}=X^k$ for every $k\in\Integers_+^4$,
\item $\Gamma\overline{\tau}=\overline{\Gamma\tau}$ for every $\tau\in V$ and $\Gamma\in G$.
\end{itemize}
For such $V$, the set $\overline{V}=\{\overline{\tau}\,;\tau\in V\}$ is also a sector.
We assume that a model $(\Pi,\Gamma)$ is compatible with the complex conjugate, i.e.
\[\Pi_z\overline{\tau}=\overline{\Pi_z\tau}\]
for every $z\in\RealNum^4$ and $\tau\in T$.
Then we can see that the map $\mathcal{D}_P^{\gamma,\eta}(V)\ni f\mapsto\overline{f}\in\mathcal{D}_P^{\gamma,\eta}$ is continuous antilinear, and $\mathcal{R}\overline{f}=\overline{\mathcal{R}f}$ holds.

\subsection{Regularity structures associated with CGL and admissible models}

For smooth $\xi$, the CGL equation \eqref{eq:cgl} is equivalent to the mild form
\begin{align}\label{3.2:mild form}
u=G*\{\unit_{t>0}(\xi+\nu(1-u\overline{u})u)\}+Gu_0,
\end{align}
where $*$ denotes the space-time convolution, $G$ is the fundamental solution of
\begin{align}\label{3.2:fundamental solution}
\partial_tG=(\mathsf{i}+\mu)\triangle G,\quad (t,x)\in\RealNum_+\times\RealNum^3,
\end{align}
with initial condition $G(0,\cdot)=\delta_0$,
extended into the function $G:\RealNum^4\setminus\{0\}\to\CmplNum$ by $G(t,\cdot)\equiv0$ if $t\le0$, and $Gu_0$ denotes the solution of \eqref{3.2:fundamental solution} with initial condition $u_0$.

We construct a regularity structure associated with \eqref{3.2:mild form} by following \cite[Section~8.1]{Hairer2014a}. We assumed that polynomials $\{X^k\}$ are contained in our regularity structure. Additionally we have symbols $\Xi$ (noise), an abstract integration $\mathcal{I}$ (space-time convolution with $G$), and the complex conjugate. Inspired from \eqref{3.2:mild form}, we can define $\tilde{\cal{F}}$ as the smallest set of symbols such that $\{\Xi,X^k\}\subset\tilde{\mathcal{F}}$ and closed for the operations:
\begin{itemize}
\item If $\tau,\tau'\in\tilde{\mathcal{F}}$, then $\tau\tau'=\tau'\tau\in\tilde{\mathcal{F}}$.
\item If $\tau\in\tilde{\mathcal{F}}$, then $\overline{\tau}\in\tilde{\mathcal{F}}$, where we set $\overline{X^k}=X^k$.
\item If $\tau\in\tilde{\mathcal{F}}\setminus\{X^k\}$, then $\mathcal{I}\tau\in\tilde{\mathcal{F}}$.
\end{itemize}
For a fixed number $\alpha<-\frac{5}{2}$, we define the homogeneity of each variable by
\begin{align*}
&|\Xi|_s=\alpha,\quad|X^k|_s=|k|_s,\quad
|\tau\tau'|_s=|\tau|_s+|\tau'|_s,\quad
|\overline{\tau}|_s=|\tau|_s,\quad
|\mathcal{I}\tau|_s=|\tau|_s+2.
\end{align*}
However, $\tilde{\mathcal{F}}$ is too big. Precisely we consider the subsets $\mathcal{U}$ and $\mathcal{W}$, which are defined by the smallest sets such that $\{\Xi,X^k\}\subset\mathcal{W}$, $\{X^k\}\subset\mathcal{U}$ and
\[\tau\in\mathcal{W}\Rightarrow\mathcal{I}\tau\in\mathcal{U},\quad
\tau_1,\tau_2,\tau_3\in\mathcal{U}\Rightarrow\tau_1\tau_2\overline{\tau_3}\in\mathcal{W}.\]
We set $\mathcal{F}=\mathcal{U}\cup\mathcal{W}$, and define
\[T_\beta=\langle\tau\in\mathcal{F}\,;|\tau|_s=\beta\rangle,\quad
T=\langle\mathcal{F}\rangle,\quad
U=\langle\mathcal{U}\rangle,\quad
W=\langle\mathcal{W}\rangle.\]
We can see that $T$ contains all polynomials $T^\poly$, and furthermore, the abstract integration $\mathcal{I}$, the complex conjugate, and the product $U\times U\times \overline{U}\to W$ are well defined. Here $\overline{U}=\langle\overline{\tau}\,;\tau\in\mathcal{U}\rangle$.

\begin{remark}
We do not assume identifications of symbols
\[\overline{\overline{\tau}}=\tau,\quad\overline{\tau_1\tau_2}=\overline{\tau_1}\,\overline{\tau_2}\]
since $\overline{\overline{\tau}}$ and $\overline{\tau_1}\,\overline{\tau_2}$ are not involved in the definition of $\mathcal{F}$.
\end{remark}

In order to define $T$ as a model space of a regularity structure, the set $\{|\tau|_s\,;\tau\in\mathcal{F}\}$ must be bounded from below. A nonlinear SPDE is called \emph{subcritical}, if the nonlinear terms formally disappear in some scaling which keeps the linear part and the noise term invariant. This is equivalent to the property that all symbols except $\Xi$ defined as above have homogeneities strictly greater than $|\Xi|_s$ (\cite[Assumption~8.3]{Hairer2014a}). In the present case, this is equivalent to $|(\mathcal{I}\Xi)^2\overline{\mathcal{I}\Xi}|_s=3(2+\alpha)>\alpha$, or $\alpha>-3$.

We need to define the structure group acting on $T$. Let $T^+$ be the complex free commutative algebra generated by abstract symbols
\[\mathcal{F}^+=\{X^k\}\cup\{\mathcal{J}_k\tau,\overline{\mathcal{J}_k\tau}\,; \tau\in\mathcal{F}\setminus\{X^k\},|k|_s<|\tau|_s+2\}.\]
We define the homogeneity of each variable by
\[|X^k|_s=|k|_s,\quad|\tau\tau'|_s=|\tau|_s+|\tau'|_s,\quad
|\mathcal{J}_k\tau|_s=|\overline{\mathcal{J}_k\tau}|_s=|\tau|_s+2-|k|_s.\]
In the following, we will view $\mathcal{J}_k$ as a map from $T$ to $T^+$, by defining $\mathcal{J}_k\tau=0$ if $\tau=X^k$ or $|\tau|_s+2-|k|_s\le0$, and linearly extending it for all $\tau\in T$.

We construct two linear maps $\Delta$ and $\Delta^+$ recursively as follows. The linear map $\Delta:T\to T\otimes T^+$ is defined by
\begin{align*}
&\Delta\unit=\unit\otimes\unit,\quad
\Delta X_i=X_i\otimes\unit+\unit\otimes X_i\ \ (i=0,1,2,3),\quad
\Delta\Xi=\Xi\otimes\unit,\\
&\Delta(\tau\tau')=(\Delta\tau)(\Delta\tau'),\quad
\Delta\overline{\tau}=\overline{\Delta\tau},\\
&\Delta\mathcal{I}\tau=(\mathcal{I}\otimes\id_{T^+})\Delta\tau+\sum_{l,m}\frac{X^l}{l!}\otimes\frac{X^m}{m!}\mathcal{J}_{l+m}\tau.
\end{align*}
The linear map $\Delta^+:T^+\to T^+\otimes T^+$ is defined by
\begin{align*}
&\Delta^+\unit=\unit\otimes\unit,\quad
\Delta^+ X_i=X_i\otimes\unit+\unit\otimes X_i\ \ (i=0,1,2,3),\\
&\Delta^+(\tau\tau')=(\Delta^+\tau)(\Delta^+\tau'),\quad
\Delta^+\overline{\tau}=\overline{\Delta^+\tau},\\
&\Delta^+\mathcal{J}_k\tau=\sum_l\left(\mathcal{J}_{k+l}\otimes\frac{(-X)^l}{l!}\right)\Delta\tau+\unit\otimes\mathcal{J}_k\tau.
\end{align*}
Then by \cite[Theorem~8.16]{Hairer2014a}, the pair $(T^+,\Delta^+)$ is a Hopf algebra, i.e. $\Delta^+$ satisfies the identity
\[(\id_{T^+}\otimes\Delta^+)\Delta^+=(\Delta^+\otimes\id_{T^+})\Delta^+\]
and the algebra homomorphism $\unit^*:T^+\to T^+$ defined by $\unit^*(\unit)=1$ and $\unit^*(\tau)=0$ for $\tau\in\mathcal{F}^+\setminus\{\unit\}$ is a counit in the sense that
\[(\unit^*\otimes\id_{T^+})\Delta^+=(\id_{T^+}\otimes\unit^*)\Delta^+=\id_{T^+},\]
and furthermore, the algebra homomorphism $\mathcal{A}:T^+\to T^+$ recursively defined by
\begin{align*}
&\mathcal{A}X^k=(-X)^k,\quad
%\mathcal{A}(\tau\tau')=(\mathcal{A}\tau)(\mathcal{A}\tau'),\quad
\mathcal{A}\overline{\tau}=\overline{\mathcal{A}\tau},\quad
\mathcal{A}\mathcal{J}_k\tau=-\sum_l\mathcal{M}\left(\mathcal{J}_{k+l}\otimes\frac{X^l}{l!}\mathcal{A}\right)\Delta\tau,
\end{align*}
where $\mathcal{M}:T^+\otimes T^+\ni\tau\otimes\tau'\mapsto\tau\tau'\in T^+$, is an antipode of $T^+$ in the sense that
\[\mathcal{M}(\id_{T^+}\otimes\mathcal{A})\Delta^+=\unit^*=\mathcal{M}(\mathcal{A}\otimes \id_{T^+})\Delta^+.\]
The pair $(T,\Delta)$ is a comodule over $T^+$, i.e. $\Delta$ satisfies the identity
\[(\id_{T}\otimes\Delta^+)\Delta=(\Delta\otimes\id_{T^+})\Delta.\]

We denote by $G$ the set of algebra homomorphisms $g:T^+\to\CmplNum$ such that $g(\overline{\tau})=\overline{g(\tau)}$ for every $\tau\in T^+$. Then $G$ is a group with the product $\circ$ defined by
\[g\circ g'=(g\otimes g')\Delta^+,\quad g,g'\in G.\]
The inverse of $g\in G$ is given by $g^{-1}=g\mathcal{A}$. Each $g\in G$ acts on $T$ as the operator $\Gamma_g\in\mathcal{L}(T)$ defined by
\[\Gamma_g\tau=(\id_T\otimes g)\Delta\tau,\quad\tau\in T.\]
%Since $g\mapsto\Gamma_g$ is a group homomorphism (Proposition 8.19 of \cite{Hairer2014a}), we can identify $G$ with $\{\Gamma_g\,;g\in G\}$.

The following theorem is a modification of \cite[Theorem~8.24]{Hairer2014a}.

\begin{theorem}
Let $\alpha\in(-3,-\frac{5}{2})$ and $A=\{|\tau|_s\,;\tau\in\mathcal{F}\}$. Then $\mathcal{T}_\cgl:=(A,T,G)$ is a regularity structure which contains the polynomial structure $\mathcal{T}^\poly$ and has the complex conjugate on $U$, the abstract integration map $\mathcal{I}:W\to T$, and the products $*:U\times U\to UU$ and $*:UU\times\overline{U}\to W$, where $UU=\langle\tau\tau'\,;\tau,\tau'\in U\rangle$.
\end{theorem}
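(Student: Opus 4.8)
The plan is to follow the general construction of regularity structures for subcritical SPDEs from \cite[Section~8]{Hairer2014a}, and in particular the proof of \cite[Theorem~8.24]{Hairer2014a}, while carrying the complex conjugate along at every step. There are three things to verify: (i) that $(A,T,G)$ satisfies the three axioms in the definition of a regularity structure; (ii) that $T^\poly$ sits inside $T$ as a sector on which $G$ restricts to $\{\Gamma_h\,;h\in\RealNum^4\}$, acting as in \eqref{3.1:model on poly}; and (iii) that the complex conjugate, the abstract integration $\mathcal{I}$ and the products satisfy the axioms recalled before the statement. The Hopf algebra structure of $(T^+,\Delta^+)$, the comodule structure of $(T,\Delta)$ and the group structure of $G$ (with product $\circ$ and inverse $g\mapsto g\mathcal{A}$), recorded above following \cite[Theorem~8.16]{Hairer2014a}, may be invoked directly.

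For (i): one has $0\in A$ since $\unit=X^0\in\mathcal{F}$ has homogeneity $0$. The substantive point is that $A=\{|\tau|_s\,;\tau\in\mathcal{F}\}$ is locally finite and bounded from below, and this is exactly where subcriticality is needed: we have already observed that the equation is subcritical precisely when $\alpha>-3$, i.e. $|(\mathcal{I}\Xi)^2\overline{\mathcal{I}\Xi}|_s=3(\alpha+2)>\alpha$. By \cite[Lemma~8.10]{Hairer2014a}, subcriticality forces $\{\tau\in\mathcal{F}\,;|\tau|_s<\gamma\}$ to be finite for every $\gamma$; since $|\overline\tau|_s=|\tau|_s$ the conjugate plays no role and that proof applies here, running the induction over the generations of $\mathcal{U}$ and $\mathcal{W}$ (the positive gap $\alpha+3$ makes $|\tau|_s\to+\infty$ as $\tau$ grows in complexity — for instance, on $b_0=\Xi$, $b_{n+1}=(\mathcal{I}b_n)^2\overline{\mathcal{I}b_n}$ one computes $|b_n|_s=3^n(\alpha+3)-3$). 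Granting this, each $T_\beta$ is a finite-dimensional complex Banach space and $T_0=\langle\unit\rangle\simeq\CmplNum$. To get $\Gamma_g\tau-\tau\in T_\beta^-$ for $\tau\in T_\beta$ and $\Gamma_g\unit=\unit$, I would prove by induction on the complexity of $\tau$, from the recursions defining $\Delta$, the ``triangular'' form
\[
\Delta\tau=\tau\otimes\unit+\textstyle\sum\bigl(\text{first factors of homogeneity strictly below }|\tau|_s\bigr)\otimes(\text{elements of }T^+),
\]
the only delicate term being $\sum_{l,m}\frac{X^l}{l!}\otimes\frac{X^m}{m!}\mathcal{J}_{l+m}\tau$ in $\Delta\mathcal{I}\tau$, which is harmless because $\mathcal{J}_{l+m}\tau\ne0$ forces $|l|_s\le|l+m|_s<|\tau|_s+2=|\mathcal{I}\tau|_s$; together with $\Delta\unit=\unit\otimes\unit$ and $g(\unit)=1$ this gives the claim. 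That $\{\Gamma_g\}$ is a subgroup of $\mathcal{L}(T)$ with $\Gamma_{g\circ g'}=\Gamma_g\Gamma_{g'}$ and $\Gamma_g^{-1}=\Gamma_{g\mathcal{A}}$ then follows from the comodule identity $(\id_T\otimes\Delta^+)\Delta=(\Delta\otimes\id_{T^+})\Delta$ and the counit/antipode properties, and the well-definedness of $\Gamma_g\in\mathcal{L}(T)$ uses once more that $A$ is locally finite and bounded below. Finally $\Delta^+\overline\tau=\overline{\Delta^+\tau}$ and $\mathcal{A}\overline\tau=\overline{\mathcal{A}\tau}$ ensure that $\circ$ and $g\mapsto g\mathcal{A}$ preserve the reality constraint $g(\overline\tau)=\overline{g(\tau)}$ defining $G$.

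For (ii): $\Delta$ restricted to $T^\poly$ is the polynomial coproduct because $\Delta X_i=X_i\otimes\unit+\unit\otimes X_i$ and $\Delta$ is multiplicative, so $T^\poly$ is a sector; for $g\in G$ the numbers $h_i:=-g(X_i)$ are real since $g(X_i)=g(\overline{X_i})=\overline{g(X_i)}$, and $\Gamma_gX^k=(X-h\,\unit)^k=\Gamma_hX^k$, while conversely every $h\in\RealNum^4$ arises from the $g\in G$ sending $X_i\mapsto-h_i$ and all other generators of $\mathcal{F}^+$ to $0$; hence $G|_{T^\poly}=\{\Gamma_h\,;h\in\RealNum^4\}$, giving \eqref{3.1:model on poly}. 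For (iii): the four axioms of the conjugate are immediate ($|\overline\tau|_s=|\tau|_s$, $\overline{X^k}=X^k$, antilinearity by definition, and $\Gamma_g\overline\tau=\overline{\Gamma_g\tau}$ from $\Delta\overline\tau=\overline{\Delta\tau}$ together with $g(\overline\sigma)=\overline{g(\sigma)}$), and $\overline{\mathcal{U}}\subset\mathcal{F}$ because $\overline{\mathcal{I}\sigma}=\unit\cdot\unit\cdot\overline{\mathcal{I}\sigma}\in\mathcal{U}\,\mathcal{U}\,\overline{\mathcal{U}}\subset\mathcal{W}$, so $\overline U$ is a $G$-invariant sector. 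The map $\mathcal{I}$ raises homogeneity by $2$, kills $T^\poly$ by definition, and maps $W$ into $T$ since $\tau\in\mathcal{W}\Rightarrow\mathcal{I}\tau\in\mathcal{U}\subset\mathcal{F}$; applying $\id\otimes g$ to the definition of $\Delta\mathcal{I}\tau$ yields $\Gamma_g\mathcal{I}\tau=\mathcal{I}\Gamma_g\tau+(\text{a polynomial})$, i.e. $(\mathcal{I}\Gamma_g-\Gamma_g\mathcal{I})W\subset T^\poly$. The products are the restrictions to $U\times U$ and to $UU\times\overline U$ of the multiplication of the ambient free commutative $*$-algebra: bilinearity, unitality and $V_\alpha*W_\beta\subset T_{\alpha+\beta}$ are clear, and $\Gamma_g(v*w)=(\Gamma_gv)*(\Gamma_gw)$ because $\Delta$ is multiplicative and each $g$ is an algebra homomorphism; the same inductive argument as for the triangular structure shows that $\Delta$ maps each of $U$, $W$, $UU=\langle\tau\tau'\,;\tau,\tau'\in\mathcal{U}\rangle$, $\overline U$ into its own tensor product with $T^+$, so all four are $G$-invariant sectors and the composite $U\times U\times\overline U\to W\subset T$ is $T$-valued, as the later fixed-point argument requires.

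The hard part will be the local finiteness and boundedness below of $A$ in step (i): although it is by now a standard consequence of subcriticality, it is the only place where the hypothesis $\alpha>-3$ is genuinely used, through the combinatorial induction over $\mathcal{U},\mathcal{W}$ that controls simultaneously the number of noises and of integrations in a symbol. Everything else is a largely mechanical adaptation of \cite[Section~8]{Hairer2014a} with the complex conjugate carried through.
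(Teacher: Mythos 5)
Your proposal is correct and follows exactly the route the paper intends: the theorem is stated there without proof as ``a modification of \cite[Theorem~8.24]{Hairer2014a}'', and your write-up is precisely that adaptation, carrying the complex conjugate through Hairer's construction (subcriticality giving local finiteness and boundedness below of $A$, the triangular form of $\Delta$, and the compatibility of $\Delta$, $\Delta^+$ and $\mathcal{A}$ with $\tau\mapsto\overline{\tau}$ so that $G$ is a group and all the listed sectors are $G$-invariant). Your homogeneity computation $|b_n|_s=3^n(\alpha+3)-3$ and the observation that $\overline{\mathcal{I}\sigma}=\unit\cdot\unit\cdot\overline{\mathcal{I}\sigma}\in\mathcal{W}$, which is what puts $\overline{U}$ inside $T$, are both correct.
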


We introduce a class of suitable models associated with $\mathcal{T}_\cgl$. Let $K$ be a regularizing kernel satisfying \aref{3.1:regularizing kernel} with $r>0$. We denote by $\mathcal{T}_\cgl^{(r)}$ the regularity structure obtained by $T_\gamma=0$ for $\gamma>r$.

\begin{definition}
We say that a model $(\Pi,\Gamma)$ on $\mathcal{T}_\cgl^{(r)}$ is \emph{admissible}, if
\begin{itemize}
\item $\Pi$ realizes $K$ for $\mathcal{I}$, compatible with the complex conjugate, and satisfies \eqref{3.1:model on poly},
\item $\Gamma_{zz'}=(\Gamma_{f_z})^{-1}\Gamma_{f_{z'}}$, where $f_z\in G$ is defined by $f_z(X^k)=(-z)^k$ and
\begin{align*}
f_z(\mathcal{J}_k\tau)=-\partial^k K*(\Pi_z\tau)(z),\quad|\tau|_s+2-|k|_s>0.
\end{align*}
\end{itemize}
\end{definition}

If the model $(\Pi,\Gamma)$ is admissible, then the map $\Pi_z(\Gamma_{f_z})^{-1}:T\to\mathcal{S}'$ is independent to $z$, so that we can write $\mathbf{\Pi}=\Pi_z(\Gamma_{f_z})^{-1}$. Furthermore we have
\[(\mathbf{\Pi}X^k)(z)=z^k,\quad
\mathbf{\Pi}\mathcal{I}\tau=K*(\mathbf{\Pi}\tau),\quad
\mathbf{\Pi}\overline{\tau}=\overline{\mathbf{\Pi}\tau}.\]
Conversely, if a linear map $\mathbf{\Pi}:T\to\mathcal{S}'$ satisfies these conditions, and a family $\{f_z\,;z\in G\}$ satisfies $f_z(X^k)=(-z)^k$ and $f_z(\mathcal{J}_k\tau)=-\partial^k K*(\mathbf{\Pi}\Gamma_{f_z}\tau)(z)$ for $\mathcal{J}_k\tau$ with $|\tau|_s+2-|k|_s>0$,
then the corresponding admissible model $(\Pi,\Gamma)$ is uniquely determined.

We assume that the model is periodic in the space direction. For $n\in\Integers^3$ and $z=(t,x)\in\RealNum^4$, we write $S_nz=(t,x+n)$.

\begin{definition}
We say that a model $(\Pi,\Gamma)$ on $\mathcal{T}_\cgl^{(r)}$ is \emph{periodic} if
\[(\Pi_{S_nz}\tau)(S_nz')=(\Pi_z\tau)(z'),\quad
\Gamma_{(S_nz)(S_nz')}=\Gamma_{zz'},\]
for every $z,z'\in\RealNum^4$ and $n\in\Integers^3$.
\end{definition}

\subsection{Abstract solution map}

In the regularity structures constructed above, we can reformulate \eqref{eq:cgl} as a fixed point problem in the space $\mathcal{D}_P^{\gamma,\eta}$, by following \cite{Hairer2014a}. First, note that the fundamental solution of \eqref{3.2:fundamental solution} is given by
\[G(t,x)=\unit_{t>0}\frac{1}{\sqrt{4\pi(\mathsf{i}+\mu)t}^3}\exp\left(-\frac{|x|^2}{4(\mathsf{i}+\mu)t}\right).\]
Here, for $\lambda=re^{\mathsf{i}\theta}\in\CmplNum\ (r>0,\,\theta\in(-\frac\pi2,\frac\pi2))$, the square root is defined by $\sqrt{\lambda}=\sqrt{r}e^{\mathsf{i}\theta/2}$. Hence $G$ has the form $G(t,x)=t^{-3/2}\hat{G}(t^{-1/2}x)$ for some $\hat{G}\in\mathcal{S}(\RealNum^3,\CmplNum)$ when $t>0$, which satisfies the condition in \cite[Lemma~7.4]{Hairer2014a}.

\begin{lemma}[{\cite[Lemma~7.7]{Hairer2014a}}]
There exist a regularizing kernel $K$ and a smooth function $R$ with compact support such that
\[(G*u)(z)=(K*u)(z)+(R*u)(z)\]
holds for every periodic function $u$ supported in $\RealNum_+\times\RealNum^3$ and $z\in(-\infty,1]\times\RealNum^3$. Furthermore, $K$ and $R$ are supported in $\RealNum_+\times\RealNum^3$, and $K$ satisfies \aref{3.1:regularizing kernel} with arbitrary fixed $r>0$.
\end{lemma}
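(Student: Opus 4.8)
The plan is to derive the decomposition from Hairer's analysis of the heat kernel, \cite[Lemmas~5.5 and~7.7]{Hairer2014a}; since the discussion above has already recorded that $G(t,x)=t^{-3/2}\hat G(t^{-1/2}x)$ with $\hat G\in\mathcal{S}(\RealNum^3,\CmplNum)$ meeting the hypothesis of \cite[Lemma~7.4]{Hairer2014a}, the only genuinely new point is to check that the complex diffusivity $\ImUnit+\mu$ does not spoil the required bounds. The first step I would carry out is to record the size estimates on $G$: since $\frac{-1}{4(\ImUnit+\mu)}=\frac{-\mu+\ImUnit}{4(1+\mu^2)}$ and $\mu>0$, the real part of the exponent defining $G(t,x)$ equals $-\frac{\mu}{4(1+\mu^2)}|x|^2/t<0$ for $t>0$, so that $|\partial^k\hat G(y)|\lesssim e^{-c|y|^2}$ for every $k$, equivalently $|\partial^kG(z)|\lesssim\|z\|_s^{-3-|k|_s}$ on bounded parabolic neighbourhoods of the origin with rapid decay at infinity. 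These are exactly the estimates that make $G$ a singular kernel of order $2$ in the sense of \cite[Section~5]{Hairer2014a}, and they have the same form as in the real heat-kernel case, so nothing new is needed here.

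The second step is to run the dyadic construction of \cite[Lemma~5.5]{Hairer2014a} verbatim. Using a parabolic partition of unity on the annuli $\|z\|_s\approx 2^{-n}$ I would write $G=\sum_{n\ge0}\tilde K_n$ with $\tilde K_n$ supported in $B_s(0,C2^{-n})$ and $\sup_z|\partial^k\tilde K_n(z)|\lesssim 2^{(3+|k|_s)n}$, the convergence and these bounds coming from the first step. Fixing $r>0$, I would then correct each $\tilde K_n$ by subtracting a finite linear combination of bump functions rescaled to level $2^{-n}$, chosen so that $\int_{\RealNum^4}K_n(z)z^k\,dz=0$ for all $|k|_s\le r$ while the support and size estimates are preserved and the accumulated corrections sum to a smooth function. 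After rescaling the support radii from $C2^{-n}$ to $2^{-n}$, this yields $K:=\sum_nK_n$ satisfying \aref{3.1:regularizing kernel} with the prescribed $r$ and with $G-K$ smooth; since $r>0$ is arbitrary this gives the last assertion of the lemma.

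The third and last step is to arrange the supports and pass to a compactly supported remainder. Multiplying $K$ by a smooth cutoff that is $1$ near the origin and supported in $B_s(0,1)$ changes $K$ only by a smooth compactly supported function, does not affect \aref{3.1:regularizing kernel}, and — because $G$ vanishes for $t\le0$ — leaves $K$ supported in $B_s(0,1)\cap(\RealNum_+\times\RealNum^3)$. For the remainder, one observes that when $u$ is space-periodic and supported in $\RealNum_+\times\RealNum^3$ and $z$ has time coordinate $\le 1$, only time-lags in $(0,1)$ contribute to $(G*u)(z)$; on that range I would fold the spatial Gaussian tail of $G-K$ against the periodicity of $u$ and restrict to one period, exactly as in \cite[Lemma~7.7]{Hairer2014a}, to obtain a smooth $R$ with compact support in $\RealNum_+\times\RealNum^3$ such that $(G*u)(z)=(K*u)(z)+(R*u)(z)$ for all such $u$ and $z$. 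I expect the only delicate point to be the bookkeeping in the moment-correction step — keeping the supports, sizes, and joint smoothness of the infinitely many corrections simultaneously under control — but this is precisely \cite[Lemma~5.5]{Hairer2014a} and requires no new idea; the complex coefficient enters only through the Gaussian bounds of the first step, where $\mu>0$ makes everything immediate.
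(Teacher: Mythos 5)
Your proposal is correct and follows the same route the paper takes: the paper simply records that $G(t,x)=t^{-3/2}\hat G(t^{-1/2}x)$ with $\hat G\in\mathcal S(\RealNum^3,\CmplNum)$ (the point being that $\mu>0$ makes $\Re\bigl(-\tfrac{1}{4(\ImUnit+\mu)}\bigr)<0$, exactly your first step) and then invokes \cite[Lemma~7.7]{Hairer2014a} without further argument. Your expansion of the dyadic decomposition, moment corrections, and periodization of the remainder is a faithful unpacking of that citation and introduces no new ideas beyond what the paper relies on.
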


For a periodic distribution $\xi\in\mathcal{S}'$, we define the modelled distribution
\[(R_\gamma \xi)(z)=\sum_{|k|_s<\gamma}\frac{X^k}{k!}\partial^kR*\xi(z).\]

%Denote by $O_T=(-\infty,T]\times\RealNum^3$ and $\$\cdot\$_{\gamma,\eta;T}:=\$\cdot\$_{\gamma,\eta;O_T}$.

%\begin{lemm}[Theorem 7.1 and Lemma 7.3 of \cite{Hairer2014a}]
%Let $V$ be a sector of regularity $\alpha\le0$ such that $\mathcal{I}V\subset T$. Let $\gamma>0$ and $\eta<\gamma$. Assume that $\eta\wedge\alpha>-2$ and $\gamma+2,\eta+2\notin\Integers_+$. Then, for every admissible and periodic model $Z$, there exists a continuous linear map $\mathcal{G}:\mathcal{D}_P^{\theta,\eta}(V)\to\mathcal{D}_P^{\gamma',\eta'}$ with $\gamma'=\gamma+2$, $\eta'=\eta\wedge\alpha+2$ such that for every $f\in\mathcal{D}_P^{\theta,\eta}(V)$ we have
%\begin{enumerate}
%\item $\mathcal{G}f-\mathcal{I}f$ takes values in $\langle X^k\rangle$,
%\item $\mathcal{R}\mathcal{G}f=G*\mathcal{R}f$,
%\item $\$\mathcal{G}\unit_{t>0}f\$_{\gamma',\eta'-\kappa;T}\lesssim T^{\kappa/2}\$f\$_{\gamma,\eta;T}$ for every $\kappa>0$.
%\end{enumerate}
%Furthermore, the map $\mathcal{G}:\mathcal{M}\ltimes\mathcal{D}_P^{\gamma,\eta}\to\mathcal{D}_P^{\gamma',\eta'}$ is locally uniformly continuous.
%\end{lemm}

Now we reformulate \eqref{3.2:mild form} as a fixed point problem in $\mathcal{D}_P^{\gamma,\eta}$. First, for every periodic initial condition $u_0\in\mathcal{C}^\eta$ with $\eta<0$ and $\eta\notin\Integers$, the function $Gu_0$ is canonically lifted to an element of $\mathcal{D}_P^{\gamma,\eta}$ for every $\gamma>\eta$, by defining
$(Gu_0)(z)=\sum_{|k|_s<\gamma}\frac{1}{k!}X^k\partial^kGu_0(z)$ (\cite[Lemma~7.5]{Hairer2014a}). Second, note that by \pref{3.1:product}, the map $u\mapsto u^2\overline{u}$ is locally Lipschitz continuous from $\mathcal{D}_P^{\gamma,\eta}(U)$ to $\mathcal{D}_P^{\gamma+2\alpha+4,3\eta}$, if $\gamma>|2\alpha+4|$ and $\eta\le\alpha+2$. Therefore we can consider the problem
\begin{align}\label{eq:fixed point problem}
u=(\mathcal{K}_{\gamma+2\alpha+4}+R_\gamma\mathcal{R})(\unit_{t>0}F(u))+Gu_0,\quad F(u)=\Xi+\nu(1-u\overline{u})u,
\end{align}
in $u\in\mathcal{D}_P^{\gamma,\eta}$. However, $F(u)$ takes values in the sector $U$ of regularity $\alpha=|\Xi|_s<-\frac{5}{2}<-2$, so that \tref{3.1:reconstruction} is not sufficient to define $\mathcal{R}F(u)$. In order to overcome this problem, we impose the following assumption on the distribution $\xi=\mathbf{\Pi}\Xi=\Pi_z\Xi$. (Since $\Xi$ is $G$-invariant, $\Pi_z\Xi$ is independent to $z$.)

\begin{assumption}\phantomsection\label{3.3:ass on xi}
\begin{enumerate}[(1)]
\item For $\alpha<0$, we denote by $\bar{\mathcal{C}}_s^\alpha$ the completion of smooth functions under the family of norms:
\[\bbb\xi\bbb_{\alpha;K}:=\sup_{s\in\RealNum}\|\unit_{t>s}\xi\|_{\alpha;K}\]
for all compact sets $K\subset\RealNum^4$. We assume that $\xi=\mathbf{\Pi}\Xi$ belongs to $\bar{\mathcal{C}}_s^\alpha$ for $\alpha=|\Xi|_s$.
\item $K*\xi$ belongs to the space $C(\RealNum,\mathcal{C}^{\alpha+2})$.
\end{enumerate}
\end{assumption}

Under the assumption $\xi\in\bar{\mathcal{C}}_s^\alpha$, we can define $\mathcal{R}(\unit_{t>0}\Xi):=\unit_{t>0}\xi$, so that $\mathcal{K}(\unit_{t>0}\Xi)$ is also defined.

The following theorem is a modification of \cite[Theorem~7.8 and Proposition~9.8]{Hairer2014a}. We denote by $O_T=(-\infty,T]\times\RealNum^3$ and $\$\cdot\$_{\gamma,\eta;T}:=\$\cdot\$_{\gamma,\eta;O_T}$.

\begin{theorem}\label{3.3:solution map}
Let $\alpha\in(-\frac{18}{7},-\frac{5}{2})$, $\gamma>|2\alpha+4|$ and $\eta\in(-\frac{2}{3},\alpha+2)$. Assume that the regularizing kernel $K$ satisfies \aref{3.1:regularizing kernel} with $r>\gamma+2\alpha+6(>2)$. Then for every admissible and periodic model $Z=(\Pi,\Gamma)$ satisfying \aref{3.3:ass on xi} and every periodic $u_0\in\mathcal{C}^\eta$, there exists $T\in(0,\infty]$ such that the fixed point problem \eqref{eq:fixed point problem} admits a unique solution $u\in\mathcal{D}_P^{\gamma,\eta}(U)$ on $(0,T)$. The time $T$ can be chosen maximal in the sense that $\lim_{t\uparrow T}\|\mathcal{R}u(t,\cdot)\|_{\mathcal{C}^\eta}=\infty$ unless $T=\infty$.

Furthermore, the solution $u$ and the survival time $T$ depend on $(u_0,Z,\xi,K*\xi)$ locally uniformly continuously and locally uniformly lower semi-continuously, respectively, in the topology of $\mathcal{C}^\eta\times\mathcal{M}\times\bar{\mathcal{C}}_s^\alpha\times C(\RealNum,\mathcal{C}^{\alpha+2})$.
\end{theorem}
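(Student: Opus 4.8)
The plan is to run the Picard iteration of \cite[Section~7]{Hairer2014a} for the mild formulation, adapted to the cubic complex nonlinearity, and then the standard maximality argument. Put $\bar\gamma:=\gamma+2\alpha+4$ and, for an admissible periodic model $Z$ satisfying \aref{3.3:ass on xi} and a periodic $u_0\in\mathcal{C}^\eta$, define
\begin{align*}
\Phi_{u_0,Z}(u)=(\mathcal{K}_{\bar\gamma}+R_\gamma\mathcal{R})\bigl(\unit_{t>0}F(u)\bigr)+Gu_0,\qquad F(u)=\Xi+\nu(1-u\overline u)u.
\end{align*}
First I would verify that $\Phi_{u_0,Z}$ maps $\mathcal{D}_P^{\gamma,\eta}(U)$ into itself. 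The term $Gu_0$ is the canonical lift of the free solution and lies in $\mathcal{D}_P^{\gamma,\eta}$ (in the polynomial sector $\subset U$) for all $\gamma>\eta$ since $\eta<0$, $\eta\notin\Integers$. For the nonlinear part I split $F(u)=\Xi+\nu(1-u\overline u)u$. By \pref{3.1:product} applied along the products $U\times U\to UU$ and $UU\times\overline U\to W$, the map $u\mapsto\nu(1-u\overline u)u$ is locally Lipschitz from $\mathcal{D}_P^{\gamma,\eta}(U)$ into $\mathcal{D}_P^{\bar\gamma,3\eta}$ with values in the subsector of $W$ of regularity $3\alpha+6>-2$; multiplying by $\unit_{t>0}$ keeps it in this space, and then \tref{3.1:reconstruction}, \pref{3.1:integration against K} (whose hypothesis $3\eta\wedge(3\alpha+6)>-2$ holds thanks to $\eta>-\tfrac23$), and convolution with the smooth kernel $R$ return it to $\mathcal{D}_P^{\gamma',\eta'}$ with $\gamma'=\gamma+2\alpha+6>\gamma$ and $\eta'\ge\eta$. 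The term $\unit_{t>0}\Xi$ is the subtle one: its homogeneity $\alpha<-\tfrac52<-2$ is below the reconstruction threshold, so I invoke \aref{3.3:ass on xi}. Since $\Pi_z\Xi=\xi$ does not depend on $z$ and $\unit_{t>0}\xi\in\bar{\mathcal{C}}_s^\alpha\subset\mathcal{C}_s^\alpha$, the distribution $\mathcal{R}(\unit_{t>0}\Xi):=\unit_{t>0}\xi$ satisfies \eqref{eq:reconstruction}; hence, by the remark following \pref{3.1:integration against K}, $\mathcal{K}_{\bar\gamma}$ is well defined on $\unit_{t>0}\Xi$ with $\mathcal{R}\,\mathcal{K}_{\bar\gamma}(\unit_{t>0}\Xi)=K*(\unit_{t>0}\xi)\in C(\RealNum,\mathcal{C}^{\alpha+2})$ by \aref{3.3:ass on xi}(2). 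Assembling the three contributions and checking the constraints $\gamma>|2\alpha+4|$, $\eta\in(-\tfrac23,\alpha+2)$, $r>\gamma+2\alpha+6$ yields $\Phi_{u_0,Z}:\mathcal{D}_P^{\gamma,\eta}(U)\to\mathcal{D}_P^{\gamma,\eta}(U)$.

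The second step is a short-time contraction. Since $\mathcal{K}_{\bar\gamma}$ raises the regularity exponent strictly, from $\bar\gamma$ to $\bar\gamma+2=\gamma+2\alpha+6>\gamma$, and since the indicator $\unit_{t>0}$ makes the non-$\Xi$, non-$Gu_0$ part of $\Phi_{u_0,Z}(u)$ vanish at $t=0$, restricting to $O_T=(-\infty,T]\times\RealNum^3$ and comparing $\mathcal{D}_P$-norms at two regularities produces, exactly as in \cite[Section~7]{Hairer2014a}, a gain $T^{\theta}$ for some $\theta>0$; the fixed $\Xi$- and $Gu_0$-contributions are carried along as inhomogeneous data, and one uses that convolution with $R$ and reconstruction do not destroy the gain. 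Combined with the local Lipschitz bound for $F$ from \pref{3.1:product}, this makes $\Phi_{u_0,Z}$ a contraction of a small closed ball of $\mathcal{D}_P^{\gamma,\eta}(U)$ into itself once $T$ is small enough; Banach's fixed point theorem then gives the unique solution $u\in\mathcal{D}_P^{\gamma,\eta}(U)$ of \eqref{eq:fixed point problem} on $(0,T)$.

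Next, let $T\in(0,\infty]$ be the supremum of the existence times obtained in this way. For the blow-up alternative, suppose $T<\infty$ and $\sup_{t<T}\|\mathcal{R}u(t,\cdot)\|_{\mathcal{C}^\eta}=:M<\infty$. Because the length of the interval produced in the previous step depends on the data only through $\|u_0\|_{\mathcal{C}^\eta}$ and the norms of $(Z,\xi,K*\xi)$ on a fixed compact set, one can restart the fixed point problem at a time $t_0<T$ close enough to $T$, with initial datum $\mathcal{R}u(t_0,\cdot)\in\mathcal{C}^\eta$ (of norm $\le M$), and obtain, by uniqueness in $\mathcal{D}_{P_{t_0}}^{\gamma,\eta}(U)$, an extension of $u$ beyond $T$ — a contradiction. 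Hence $\lim_{t\uparrow T}\|\mathcal{R}u(t,\cdot)\|_{\mathcal{C}^\eta}=\infty$ unless $T=\infty$. The local uniform continuity of $u$ and the lower semicontinuity of $T$ in $(u_0,Z,\xi,K*\xi)\in\mathcal{C}^\eta\times\mathcal{M}\times\bar{\mathcal{C}}_s^\alpha\times C(\RealNum,\mathcal{C}^{\alpha+2})$ then follow by the usual perturbative argument: every ingredient of $\Phi_{u_0,Z}$ — the products of \pref{3.1:product}, the reconstruction of \tref{3.1:reconstruction}, the integration $\mathcal{K}_{\bar\gamma}$ of \pref{3.1:integration against K}, convolution with $R$, the lift $Gu_0$, and (through \aref{3.3:ass on xi}) the assignments $\xi\mapsto\unit_{t>0}\xi$ and $\xi\mapsto K*(\unit_{t>0}\xi)$ — is locally uniformly continuous in the indicated topologies, a fixed point of a uniformly contractive jointly continuous family varies continuously with the parameter, and a solution that survives up to a given time with controlled norm survives up to nearly the same time under small perturbations of the data.

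The heart of the matter is the below-threshold homogeneity $\alpha<-\tfrac52$ of the noise symbol $\Xi$, which prevents a direct application of \tref{3.1:reconstruction} to $F(u)$: one must decompose $F(u)=\Xi+\nu(1-u\overline u)u$ and supply the extra input of \aref{3.3:ass on xi} (that $\unit_{t>0}\xi\in\bar{\mathcal{C}}_s^\alpha$ gives a canonical reconstruction of $\unit_{t>0}\Xi$, and that $K*(\unit_{t>0}\xi)\in C(\RealNum,\mathcal{C}^{\alpha+2})$) in order to close the iteration and to keep the solution map continuous in the topology in which the renormalized models will later be shown to converge. The remaining, more bureaucratic obstacle is to check that the parameter constraints in the statement are precisely those making the cubic term reconstructible ($3\alpha+6>-2$), making the loop close in $\mathcal{D}_P^{\gamma,\eta}(U)$ with $\eta'\ge\eta$, and keeping the short-time gain exponent $\theta$ strictly positive.
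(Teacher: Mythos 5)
Your proposal is correct and follows essentially the same route as the paper's proof: split $F(u)=\Xi+\nu(1-u\overline u)u$, use \aref{3.3:ass on xi}\,(1) to define $\mathcal{R}(\unit_{t>0}\Xi):=\unit_{t>0}\xi$ so that $(\mathcal{K}_{\gamma+2\alpha+4}+R_\gamma\mathcal{R})(\unit_{t>0}\Xi)\in\mathcal{D}_P^{\gamma,\alpha+2}(U)$, close the loop for the cubic part via \pref[3.1:product]{3.1:integration against K} (using $3\eta>-2$), contract on a short interval thanks to the factor $T^{(\alpha+2-\eta)/2}$, and restart at interior times to reach the maximal existence time. The one point you elide, and which the paper spells out before restarting, is why $t\mapsto\mathcal{R}u(t,\cdot)$ is a continuous $\mathcal{C}^\eta$-valued map (needed for both the blow-up criterion and the restart): one writes $u=\mathcal{I}\Xi+u^+$ with $u^+$ taking values in a function-like sector, so that $\mathcal{R}u^+$ is H\"older by \pref{3.1:funclike reconst} while $\mathcal{R}\mathcal{I}\Xi=K*\xi\in C(\RealNum,\mathcal{C}^{\alpha+2})$ by \aref{3.3:ass on xi}\,(2) --- ingredients you already invoke, so this is expository rather than a genuine gap.
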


\begin{proof}
We consider $\Xi$ and $F^+(u)=\nu(1-u\overline{u})u$ separately. Here $F^+(u)$ takes values in the sector $W^+$ with regularity $|(\mathcal{I}\Xi)^2\overline{\mathcal{I}\Xi}|_s=3(\alpha+2)$. Let $\mathcal{G}_\gamma=\mathcal{K}_{\gamma+2\alpha+4}+R_\gamma\mathcal{R}$. The modelled distribution $\unit_{t>0}\Xi$ belongs to $\mathcal{D}_P^{\gamma,\alpha}$ for every $\gamma$. Hence under \aref{3.3:ass on xi}-(1), $\mathcal{G}_\gamma\Xi$ is defined as an element of $\mathcal{D}_P^{\gamma,\alpha+2}(U)$. On the other hand, $\mathcal{G}_\gamma$ maps $\mathcal{D}_P^{\gamma+2\alpha+4,3\eta}(W^+)$ into $\mathcal{D}_P^{\gamma,3\eta+2}(U)$ provided that $3\eta>-2$, as a consequence of \pref{3.1:integration against K}. Furthermore, following the arguments in \cite[Theorem~7.1]{Hairer2014a}, we have the bound
\[\$\mathcal{G}_\gamma(\unit_{t>0}(\Xi+F^+(u)))\$_{\gamma,\eta;T}
\lesssim T^\frac{\alpha+2-\eta}{2}(1+\$F^+(u)\$_{\gamma+2\alpha+4,3\eta;T})\]
for every periodic $u\in\mathcal{D}_P^{\gamma,\eta}(U)$. As in \cite[Theorem~7.8]{Hairer2014a}, this yields that there exists small $T>0$ such that \eqref{eq:fixed point problem} admits a unique solution $u\in\mathcal{D}_P^{\gamma,\eta}(U)$ on $(0,T)$.

To glue local solutions up to maximal time where the solution exists, note that $\mathcal{R}u$ belongs to the space $C((0,T),\mathcal{C}^\eta)$, even though $\eta<0$. Indeed, the solution can be written by $u=\mathcal{I}\Xi+u^+$, where $u^+$ takes values in the function-like sector $U^+$. As in \pref{3.1:funclike reconst}, $\mathcal{R}u^+$ is H\"older continuous. By \aref{3.3:ass on xi}-(2), $\mathcal{R}\mathcal{I}\Xi=K*\xi$ belongs to $C(\RealNum,\mathcal{C}^\eta)$. For $s\in(0,T)$, we start from $u_s\in\mathcal{C}^\eta$ and consider the problem
\[u=\mathcal{G}_\gamma(\unit_{t>s}(\Xi+F^+(u)))+Gu_s,\]
which is well-posed by defining $\mathcal{R}(\unit_{t>s}\Xi):=\unit_{t>s}\xi$. This can extend the time interval where the local solution exists, following \cite[Proposition~7.11]{Hairer2014a}. The existence of maximal solution and its continuity with respect to $(u_0,Z,\xi,K*\xi)$ are obtained by standard arguments in PDE theory.
\end{proof}

\subsection{Renormalization}

For each $\epsilon>0$, the noise $\xi^\epsilon$ defined in the beginning of \secref{sec_20161107073826} can be lifted to an admissible and periodic model $Z^\epsilon=(\Pi^\epsilon,\Gamma^\epsilon)$ on $\mathcal{T}_\cgl^{(r)}$, by defining the linear map $\mathbf{\Pi}^\epsilon:T\to C^\infty(\RealNum^4)$ with the additional assumptions:
\begin{align*}
\mathbf{\Pi}^\epsilon\Xi=\xi^\epsilon,\quad
\mathbf{\Pi}^\epsilon(\tau\tau')=(\mathbf{\Pi}^\epsilon\tau)(\mathbf{\Pi}^\epsilon\tau').
\end{align*}
Furthermore, $Z^\epsilon$ has the property that $\Pi_z\tau$ is a smooth function for every $\tau\in T$ and $z\in\RealNum^4$, then as a consequence, $\mathcal{R}^\epsilon f$ is also smooth and satisfies
\[(\mathcal{R}^\epsilon f)(z)=(\Pi_z^\epsilon f(z))(z)\]
for every modelled distribution $f$ (\cite[Remark~3.15]{Hairer2014a}).

We introduce a renormalization of $Z^\epsilon$ following \cite[Section~8.3]{Hairer2014a}. Let $\mathcal{F}_0\subset\mathcal{F}$ be a subset such that $\{\tau\in\mathcal{F}\,;|\tau|_s\le0\}\subset\mathcal{F}_0$, and there exists a subset $\mathcal{F}_*\subset\mathcal{F}_0$ such that $\Delta\mathcal{F}_0\subset \langle\mathcal{F}_0\rangle\otimes T_0^+$, where $T_0^+$ is the complex free commutative algebra generated by symbols
\[\mathcal{F}_0^+=\{X^k\}\cup\{\mathcal{J}_k\tau,\overline{\mathcal{J}_k\tau}\,; \tau\in\mathcal{F}_*,|k|_s<|\tau|_s+2\}.\]
Let $M:\langle\mathcal{F}_0\rangle\to \langle\mathcal{F}_0\rangle$ be a linear map such that
\[M\mathcal{I}\tau=\mathcal{I}M\tau,\quad
M\overline{\tau}=\overline{M\tau},\quad
MX^k=X^k.\]
Then two linear maps $\hat{M}:T_0^+\to T_0^+$ and $\Delta^M:\langle\mathcal{F}_0\rangle\to \langle\mathcal{F}_0\rangle\otimes T_0^+$ are uniquely determined by
\begin{align*}
&\hat{M}X^k=X^k,\quad
\hat{M}(\tau\tau')=(\hat{M}\tau)(\hat{M}\tau'),\\
&\hat{M}\mathcal{J}_k\tau=\mathcal{M}(\mathcal{J}_k\otimes\id)\Delta^M\tau,\quad
\hat{M}\overline{\mathcal{J}_k\tau}=\overline{\hat{M}\mathcal{J}_k\tau},
\end{align*}
and
\[(\id\otimes\mathcal{M})(\Delta\otimes\id)\Delta^M\tau=(M\otimes\hat{M})\Delta\tau,
%\quad\Delta^M\overline{\tau}=\overline{\Delta^M\tau},
\]
since $(\id\otimes\mathcal{M})(\Delta\otimes\id)$ is invertible. Furthermore, the linear map $\hat{\Delta}^M:T_0^+\to T_0^+\otimes T_0^+$ is defined by
\[(\mathcal{A}\hat{M}\mathcal{A}\otimes\hat{M})\Delta^+=(\id\otimes\mathcal{M})(\Delta^+\otimes\id)\hat{\Delta}^M,\]
since $(\id\otimes\mathcal{M})(\Delta^+\otimes\id)$ is invertible (\cite[Proposition~8.36]{Hairer2014a}).

\begin{theorem}[{\cite[Theorem~8.44]{Hairer2014a}}]\label{extension of renormalized model}
Consider $\mathcal{F}_0$ and $M$ as above. Assume that for every $\tau\in\mathcal{F}_0$ and $\hat{\tau}\in T_0^+$ we can write
\[\Delta^M\tau=\tau\otimes\unit+\sum_{|\tau^{(1)}|_s>|\tau|_s}
\tau^{(1)}\otimes\tau^{(2)},\quad
\hat{\Delta}^M\hat{\tau}=\hat{\tau}\otimes\unit
+\sum_{|\hat{\tau}^{(1)}|_s>|\hat{\tau}|_s}
\hat{\tau}^{(1)}\otimes\hat{\tau}^{(2)}.\]
Then for every admissible model $(\mathbf{\Pi},f)$ on $\mathcal{T}_\cgl^{(r)}$, the maps $\mathbf{\Pi}^M$ and $f_z^M$ defined by
\[\mathbf{\Pi}^M=\mathbf{\Pi}M,\quad f_z^M=f_z\hat{M}\]
are uniquely extended to an admissible model $Z^M=(\Pi^M,\Gamma^M)$ on $\mathcal{T}_\cgl^{(r)}$.
\end{theorem}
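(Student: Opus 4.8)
The plan is to reproduce, in the present complex-valued setting, the construction of the renormalized model from the proof of \cite[Theorem~8.44]{Hairer2014a}, checking at each step that the complex conjugate is carried along without creating new difficulties; throughout we work in the truncated structure $\mathcal{T}_\cgl^{(r)}$, so that all sums below are finite. First I would introduce the renormalized characters $f_z^M:=f_z\circ\hat{M}$ on $T_0^+$. Since $\hat{M}$ is an algebra homomorphism with $\hat{M}X^k=X^k$ and $\hat{M}\overline{\mathcal{J}_k\tau}=\overline{\hat{M}\mathcal{J}_k\tau}$, and since $f_z\in G$ satisfies $f_z(\overline{\tau})=\overline{f_z(\tau)}$, the composite $f_z^M$ is again an algebra homomorphism with $f_z^M(\overline{\tau})=\overline{f_z^M(\tau)}$, i.e.\ $f_z^M\in G$. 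One then sets $\Gamma_{f_z^M}\tau=(\id_T\otimes f_z^M)\Delta\tau$, $\mathbf{\Pi}^M=\mathbf{\Pi}M$, $\Pi_z^M:=\mathbf{\Pi}^M\Gamma_{f_z^M}$ and $\Gamma_{zz'}^M:=(\Gamma_{f_z^M})^{-1}\Gamma_{f_{z'}^M}$. The admissibility requirements on $\mathbf{\Pi}^M$ follow at once from the defining properties of $M$: $M\mathcal{I}=\mathcal{I}M$ gives $\mathbf{\Pi}^M\mathcal{I}\tau=\mathbf{\Pi}\mathcal{I}M\tau=K*(\mathbf{\Pi}^M\tau)$; $M\overline{(\cdot)}=\overline{M(\cdot)}$ together with compatibility of $\mathbf{\Pi}$ gives $\mathbf{\Pi}^M\overline{\tau}=\overline{\mathbf{\Pi}^M\tau}$; and $MX^k=X^k$ gives $(\mathbf{\Pi}^M X^k)(z)=z^k$. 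Periodicity of $Z^M$ is clear since $M$ is symbolic and commutes with the translations $S_n$.

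Next I would establish the key identity. Unfolding $\Pi_z^M\tau=(\mathbf{\Pi}M\otimes f_z\hat{M})\Delta\tau$, invoking the defining relation $(\id\otimes\mathcal{M})(\Delta\otimes\id)\Delta^M=(M\otimes\hat{M})\Delta$, the comodule coassociativity $(\id_T\otimes\Delta^+)\Delta=(\Delta\otimes\id_{T^+})\Delta$, and the multiplicativity of $f_z$, one obtains
\[
\Pi_z^M\tau=(\Pi_z\otimes f_z)\Delta^M\tau,
\]
and an entirely parallel computation based on the definition of $\hat{\Delta}^M$ expresses the characters $f_z^M$, and hence $\Gamma_{zz'}^M$, through $\hat{\Delta}^M$. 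With these representations in hand, the consistency relations $\Pi_z^M\Gamma_{zz'}^M=\Pi_{z'}^M$ and $\Gamma_{zz'}^M\Gamma_{z'z''}^M=\Gamma_{zz''}^M$ are formal consequences of the Hopf-algebra structure of $(T^+,\Delta^+)$ and the comodule structure of $(T,\Delta)$, exactly as in \cite[Section~8.3]{Hairer2014a}; the complex conjugate is preserved automatically because every map occurring here commutes with it.

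The analytic estimates are the main point of the argument, and this is where the triangular hypotheses are used. Combining the key identity with $\Delta^M\tau=\tau\otimes\unit+\sum_{|\tau^{(1)}|_s>|\tau|_s}\tau^{(1)}\otimes\tau^{(2)}$ yields $\Pi_z^M\tau=\Pi_z\tau+\sum_{|\tau^{(1)}|_s>|\tau|_s}(\Pi_z\tau^{(1)})\,f_z(\tau^{(2)})$. Testing against $\varphi_z^\delta$, the first term obeys the model bound for $(\Pi,\Gamma)$, and each correction term is controlled using $|\langle\Pi_z\tau^{(1)},\varphi_z^\delta\rangle|\lesssim\|\Pi\|_{\gamma;K}\delta^{|\tau^{(1)}|_s}$, the local boundedness in $z$ of the coefficients $f_z(\tau^{(2)})$ (finite products of $(-z)^k$ and $-\partial^kK*(\Pi_z(\cdot))(z)$, all continuous in $z$), and the elementary inequality $\delta^{|\tau^{(1)}|_s}\le\delta^{|\tau|_s}$ valid since $\delta\in(0,1]$ and $|\tau^{(1)}|_s>|\tau|_s$; this gives the required homogeneity bound for $\Pi^M$, with constant depending only on $\$Z\$$ and the compact set. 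The identical triangular structure of $\hat{\Delta}^M$ produces the bound on $\|\Gamma^M\|_{\gamma;K}$ in the same way, so that $\$Z^M\$_{\gamma;K}<\infty$ for every $\gamma>0$ and every compact $K\subset\RealNum^4$.

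Finally, the extension of $\mathbf{\Pi}^M=\mathbf{\Pi}M$ from $\langle\mathcal{F}_0\rangle$ to all of $T$, and the uniqueness of the resulting admissible model $Z^M=(\Pi^M,\Gamma^M)$, are obtained as in \cite[Section~8.3]{Hairer2014a} by propagating $\mathbf{\Pi}^M\mathcal{I}\tau=K*\mathbf{\Pi}^M\tau$, $\mathbf{\Pi}^M\overline{\tau}=\overline{\mathbf{\Pi}^M\tau}$ and $(\mathbf{\Pi}^M X^k)(z)=z^k$ through the recursive construction of $\mathcal{F}$, with the hypothesis $\Delta\mathcal{F}_0\subset\langle\mathcal{F}_0\rangle\otimes T_0^+$ guaranteeing consistency and the triangular hypotheses guaranteeing that no analytic bound deteriorates. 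The only feature not already present verbatim in \cite[Theorem~8.44]{Hairer2014a} is the bookkeeping around the complex conjugate, which is purely mechanical; the substantive content—deducing the analytic bounds from triangularity—is essentially unchanged, and I expect the analytic estimate of the previous paragraph to be the only step requiring genuine care.
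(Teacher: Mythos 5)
The paper does not prove this theorem: it is quoted verbatim (up to the complex-conjugate bookkeeping) from Hairer's Theorem~8.44, so there is no in-paper proof to compare against. Your sketch is a faithful reconstruction of the standard argument — defining $f_z^M=f_z\hat{M}$, establishing the key identity $\Pi_z^M\tau=(\Pi_z\otimes f_z)\Delta^M\tau$ via the comodule structure, and deducing the analytic bounds from the triangularity hypotheses together with $\delta^{|\tau^{(1)}|_s}\le\delta^{|\tau|_s}$ — and the only genuinely new ingredient here, the compatibility of every map with the complex conjugate, is correctly identified as mechanical. The one step worth writing out in full if this were expanded is the verification that $f_z^M(\mathcal{J}_k\tau)=-\partial^kK*(\Pi_z^M\tau)(z)$, which follows from $\hat{M}\mathcal{J}_k\tau=\mathcal{M}(\mathcal{J}_k\otimes\id)\Delta^M\tau$ combined with your key identity; you cite both ingredients but do not carry out the computation.
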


Now we give a renormalization map $M$ in a concrete form. In order to simplify notations, we introduce a graphical notation for the element in $\mathcal{F}$. First, we draw a circle to represent $\Xi$. For an element $\mathcal{I}\tau$, we draw a downward black line starting at the root of $\tau$. For a product $\tau\tau'$, we joint these trees at their roots. The complex conjugate $\overline{\tau}$ is denoted by changing the color black and white to each other. For example,
\[\mathcal{I}\Xi=\IX,\quad
\mathcal{I}\Xi^2=\CC,\quad
\mathcal{I}\Xi^2\overline{\mathcal{I}\Xi}=\CCD,\quad
\mathcal{I}\CCD=\ICCD,\quad
\overline{\ICCD}\CC=\CCJDDC.\]
Then we can list all of elements with negative homogeneities as follows:
\begin{center}
\begin{tabular}{cc}\hline
Homogeneity&Symbol\\\hline
$\alpha$&$\Xi$\\
$3(\alpha+2)$&$\CCD$\\
$2(\alpha+2)$&$\CC$, $\CD$\\
$5\alpha+12$&$\CDICCD$, $\CCJDDC$\\
$\alpha+2$&$\IX$, $\JX$\\
$4\alpha+10$&$\CICCD$, $\DICCD$, $\CJDDC$, $\CDICC$, $\CCJDD$, $\CDICD$, $\CCJDC$\\
$2\alpha+5$&$X_i\CC$, $X_i\CD$ ($i=1,2,3$)\\
$0$&$\unit$\\\hline
\end{tabular}
\end{center}
Since $\alpha>-\frac{18}{7}$, the element $\DHE$ has positive homogeneity $7\alpha+18>0$,
so that it does not appear here.

Considering chaos expansions of Gaussian models as in \secref{section 3.5}, we can define the renormalization map $M=M(C_1,C_{2,1},C_{2,2})$ by
\begin{align*}
M\CC&=\CC,\\
M\CD&=\CD-C_1\unit,\\
M\CCD&=\CCD-2C_1\IX,\\
M\CDICC&=\ICC(\CD-C_1\unit)=\CDICC-C_1\ICC,\\
M\CCJDD&=\CCJDD-2C_{2,1}\unit,\\
M\CDICD&=\ICD(\CD-C_1\unit)-C_{2,2}\unit=\CDICD-C_1\ICD-C_{2,2}\unit,\\
M\CCJDC&=\CCJDC,\\
M\CICCD&=(\ICCD-2C_1\IC)\IX=\CICCD-2C_1\CIC,\\
M\DICCD&=(\ICCD-2C_1\IC)\JX=\DICCD-2C_1\DIC,\\
M\CJDDC&=(\JDDC-2\overline{C_1}\JD)\IX=\CJDDC-2\overline{C_1}\CJD,\\
M\CDICCD&=(\ICCD-2C_1\IC)(\CD-C_1\unit)-2C_{2,2}\IX\\
&=\CDICCD-2C_1\CDIC-C_1(\ICCD-2C_1\IC)-2C_{2,2}\IX,\\
M\CCJDDC&=(\JDDC-2\overline{C_1}\JD)\CC-2C_{2,1}\IX=\CCJDDC-2\overline{C_1}\CCJD-2C_{2,1}\IX,\\
MX_i\CC&=X_i\CC,\\
MX_i\CD&=X_i(\CD-C_1\unit)=X_i\CD-C_1X_i
\end{align*}
for some constants $C_1,C_{2,1}$ and $C_{2,2}$. Since $M$ must be closed in the space $\langle\mathcal{F}_0\rangle$, we should choose $\mathcal{F}_0$ by
\begin{multline*}
\mathcal{F}_0=\{\Xi,\CCD,\CC,\CD,\CDICCD,\CCJDDC,\IX,\JX,\CICCD,\DICCD,\CJDDC,\CDICC,\CCJDD,\CDICD,\CCJDC,X_i\CC,X_i\CD,\unit,\\
\CDIC,\CCJD,\ICCD,\CIC,\DIC,\CJD,\ICC,\ICD,X_i\IX,X_i\JX,\IC\, ; i=1,2,3\}.
\end{multline*}
Then it turns out that we can take $\mathcal{F}_*=\{\IX,\JX,\CC,\CD,\CCD\}$. From now on, the subscript $i$ of $X_i$ runs over $\{1,2,3\}$.

\begin{lemma}
The linear map $M$ satisfies the conditions of \tref{extension of renormalized model}. Furthermore, the identity
\begin{align}\label{3.4:PiM}
(\Pi_z^M\tau)(z)=(\Pi_zM\tau)(z)
\end{align}
holds for every $\tau\in\mathcal{F}_0$ and $z\in\RealNum^4$.
\end{lemma}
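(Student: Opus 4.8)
The statement has two halves: that $M$ meets the hypotheses of \tref{extension of renormalized model}, and that the pointwise identity \eqref{3.4:PiM} holds. I would treat them in that order, the first being almost entirely combinatorial bookkeeping and the second a short consequence of it.

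First I would pin down the combinatorial data. From the homogeneity table one reads off $\{\tau\in\mathcal{F}:|\tau|_s\le0\}\subset\mathcal{F}_0$. Next, the displayed formulas together with $M\Xi=\Xi$ determine $M$ on a basis of $\langle\mathcal{F}_0\rangle$, and one checks that $M$ lands in $\langle\mathcal{F}_0\rangle$ and is consistent with $M\mathcal{I}\tau=\mathcal{I}M\tau$, $M\overline{\tau}=\overline{M\tau}$, $MX^k=X^k$; the only points needing care are the symbols admitting more than one description (e.g.\ $\ICCD=\mathcal{I}\CCD$, or symbols lying in the image of the complex conjugate). Then, computing $\Delta\tau$ recursively for each $\tau\in\mathcal{F}_0$, every left tensor factor is again in $\langle\mathcal{F}_0\rangle$, and every right tensor factor is a product of $X^k$, $\mathcal{J}_k\sigma$, $\overline{\mathcal{J}_k\sigma}$ where $\sigma$ runs over those subsymbols occurring under an $\mathcal{I}$ with homogeneity $>-2$; for $\alpha\in(-\tfrac{18}{7},-\tfrac52)$ these are exactly $\IX,\JX,\CC,\CD,\CCD$, which is why $\mathcal{F}_*=\{\IX,\JX,\CC,\CD,\CCD\}$ works and $\Delta\mathcal{F}_0\subset\langle\mathcal{F}_0\rangle\otimes T_0^+$.

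The substance is the triangularity. For $\sigma\in\mathcal{F}_*$ one has $\Delta\sigma=\sigma\otimes\unit$, hence $\Delta^M\sigma=(M\sigma)\otimes\unit$ and $\hat M\mathcal{J}_k\sigma=\mathcal{J}_k(M\sigma)$, $\hat M\overline{\mathcal{J}_k\sigma}=\overline{\mathcal{J}_k(M\sigma)}$, which fixes $\hat M$ on $T_0^+$ by multiplicativity. For each $\tau\in\mathcal{F}_0$ I would then solve $(\id\otimes\mathcal{M})(\Delta\otimes\id)\Delta^M\tau=(M\otimes\hat M)\Delta\tau$, using that the left-hand operator is triangular, hence invertible, for the $T_0^+$-grading; this yields
\[
\Delta^M\tau=(M\tau)\otimes\unit+\sum\nolimits_j\rho_j\otimes\hat\rho_j,
\]
with each $\hat\rho_j$ of strictly positive homogeneity and each $\rho_j\in\langle\mathcal{F}_0\rangle$. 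One observes moreover that every $\rho_j$ is divisible by some $X_i$: the correction terms arise only from decorations $\mathcal{J}_k,\overline{\mathcal{J}_k}$ with $|k|_s\ge1$ carried by the $\mathcal{I}$-containing counterterms of $M$ (the $|k|_s=0$ contributions cancelling between $\Delta(M\tau)$ and $(M\otimes\hat M)\Delta\tau$), and such decorations always enter paired with a monomial $X^k$ of positive degree, which is swept into the left factor. The inequalities $\alpha<-\tfrac52$ and $\alpha>-3$ then give $|\rho_j|_s>|\tau|_s$, the required form of $\Delta^M\tau$; the triangularity of $\hat\Delta^M$ follows in the same way from $(\mathcal{A}\hat M\mathcal{A}\otimes\hat M)\Delta^+=(\id\otimes\mathcal{M})(\Delta^+\otimes\id)\hat\Delta^M$. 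Finally, \eqref{3.4:PiM} is obtained from the standard formula $\Pi_z^M\tau=(\Pi_z\otimes f_z)\Delta^M\tau$ — itself a consequence of $\mathbf{\Pi}^M=\mathbf{\Pi}M$, $f_z^M=f_z\hat M$, $\Pi_z=(\mathbf{\Pi}\otimes f_z)\Delta$, multiplicativity of $f_z$, and the defining identity of $\Delta^M$ — by evaluating at $z$:
\[
(\Pi_z^M\tau)(z)=(\Pi_z M\tau)(z)+\sum\nolimits_j(\Pi_z\rho_j)(z)\,f_z(\hat\rho_j).
\]
Since $(\Pi_z(X_i\sigma))(z')=(z_i'-z_i)(\Pi_z\sigma)(z')$ and each $\rho_j$ is $X_i$-divisible, every $(\Pi_z\rho_j)(z)$ vanishes, so the sum is zero and \eqref{3.4:PiM} holds (this is where the $z$-independence of $C_1,C_{2,1},C_{2,2}$ enters, through $f_z\hat M$).

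The genuine obstacle is the triangularity verification: it is conceptually routine but requires computing $\Delta,\Delta^M,\hat M,\hat\Delta^M$ on all of $\mathcal{F}_0$ and $T_0^+$, keeping exact track of which $\mathcal{J}_k$-decorations survive for the precise range of $\alpha$, and — most annoyingly — carrying $\overline{\mathcal{I}\tau}$ and $\overline{\mathcal{J}_k\tau}$ as independent symbols throughout, since $\overline{\overline{\tau}}=\tau$ and $\overline{\tau_1\tau_2}=\overline{\tau_1}\,\overline{\tau_2}$ are not imposed.
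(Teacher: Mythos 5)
Your proposal follows the same route as the paper: compute (or characterize) $\hat M$, $\Delta^M$, $\hat\Delta^M$ on $\mathcal{F}_0$ and $T_0^+$, observe that all correction terms in $\Delta^M\tau$ beyond $M\tau\otimes\unit$ have left factors divisible by some $X_i$ and right factors $\mathcal{J}_i\IX$ or $\overline{\mathcal{J}_i\IX}$ of positive homogeneity, deduce the triangularity from the range of $\alpha$, and obtain \eqref{3.4:PiM} from $\Pi_z^M\tau=(\Pi_z\otimes f_z)\Delta^M\tau$ together with $(\Pi_zX_i\sigma)(z)=0$. This matches the paper's argument, which simply records the explicit outcomes of these computations.
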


\begin{proof}
Calculations of $\hat{M}$, $\Delta^M$ and $\hat{\Delta}^M$ are completely parallel to those in \cite[Section~9.2]{Hairer2014a}, so here we show only the results. Indeed we have
\[\hat{M}\mathcal{J}\tau=\mathcal{J}M\tau,\quad
\hat{M}\mathcal{J}_k\tau=\mathcal{J}_k\tau,\quad(|k|_s>0)\]
and
\begin{align*}
\Delta^M\ICCD&=M\ICCD\otimes\unit+2C_1X_i\otimes\mathcal{J}_i\IX,\\
\Delta^M\CICCD&=M\CICCD\otimes\unit+2C_1X_i\IX\otimes\mathcal{J}_i\IX,\\
\Delta^M\DICCD&=M\DICCD\otimes\unit+2C_1X_i\JX\otimes\mathcal{J}_i\IX,\\
\Delta^M\CJDDC&=M\CJDDC\otimes\unit+2\overline{C_1}X_i\IX\otimes\overline{\mathcal{J}_i\IX},\\
\Delta^M\CDICCD&=M\CDICCD\otimes\unit+2C_1X_i(\CD-C_1\unit)\otimes\mathcal{J}_i\IX,\\
\Delta^M\CCJDDC&=M\CCJDDC\otimes\unit+2\overline{C_1}X_i\CC\otimes\overline{\mathcal{J}_i\IX},\\
\Delta^M\tau&=M\tau\otimes\unit\quad(\text{otherwise}).
\end{align*}
Furthermore,
\begin{align*}
\hat{\Delta}^M\mathcal{J}\CCD&=\mathcal{J}M\CCD+2C_1(X_i\otimes\mathcal{J}_i\IX-X_i\mathcal{J}_i\IX\otimes\unit),\\
\hat{\Delta}^M\tau&=\hat{M}\tau\otimes\unit\quad(\text{otherwise}).
\end{align*}
(Here and in what follows, summation symbols over the repeated index $i$ are omitted.) Therefore, $M$ satisfies the conditions of \tref{extension of renormalized model}. The relation \eqref{3.4:PiM} is obtained by
\begin{align*}
\Pi_z^M\tau=(\Pi_z\otimes f_z)\Delta^M\tau
\end{align*}
(\cite[equation~(8.34)]{Hairer2014a}) and the fact that $\Pi_zX_i\tau(z)=0$ for every $\tau$ with $X_i\tau\in\mathcal{F}$.
\end{proof}

\begin{proposition}\label{0360:renormalized equation}
Let $Z^\epsilon=(\Pi^\epsilon,\Gamma^\epsilon)$ be a model canonically lifted from a continuous function $\xi^\epsilon$. Let $S:(u_0,Z)\mapsto u$ be the solution map given by \tref{3.3:solution map}. Given constants $C_1,C_{2,1}$ and $C_{2,2}$, denote by $\hat{Z}^\epsilon=(\hat{\Pi}^\epsilon,\hat{\Gamma}^\epsilon)$ the renormalized model given by \tref{extension of renormalized model}. Then for every periodic $u_0\in\mathcal{C}^\eta$, $\hat{u}^\epsilon=\mathcal{R}S(u_0,\hat{Z}^\epsilon)$ solves the equation
\begin{align}\label{3.4:renormalized eq}
\partial_t\hat{u}^\epsilon=(\mathsf{i}+\mu)\triangle\hat{u}^\epsilon+\nu(1-|\hat{u}^\epsilon|^2+(2C_1-2\overline{\nu}C_{2,1}-4\nu C_{2,2}))\hat{u}^\epsilon+\xi^\epsilon.
\end{align}
\end{proposition}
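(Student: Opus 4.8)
The plan is to exploit that $\hat Z^\epsilon$ is a smooth admissible model, so that the reconstruction operator is given by $(\mathcal{R}g)(z)=(\hat\Pi_z^\epsilon g(z))(z)$ for every modelled distribution $g$; in particular $\hat u^\epsilon=\mathcal{R}S(u_0,\hat Z^\epsilon)$ is a smooth function. Writing $u=S(u_0,\hat Z^\epsilon)\in\mathcal{D}_P^{\gamma,\eta}(U)$ for the solution of the fixed point problem \eqref{eq:fixed point problem} and applying $\mathcal{R}$ to both sides, using $\mathcal{R}\mathcal{K}_\gamma g=K*\mathcal{R}g$ (\pref{3.1:integration against K} and the remark after it, applicable since $\mathcal{R}(\unit_{t>0}\Xi)=\unit_{t>0}\xi^\epsilon$ exists), the fact that the reconstruction of a polynomial‑valued modelled distribution is its $\unit$‑component, and the decomposition $G=K+R$, one gets
\[\hat u^\epsilon=G*\bigl(\unit_{t>0}\,\mathcal{R}F(u)\bigr)+Gu_0,\qquad F(u)=\Xi+\nu(1-u\overline u)u,\]
which is the mild form of $\partial_t\hat u^\epsilon=(\mathsf{i}+\mu)\triangle\hat u^\epsilon+\mathcal{R}F(u)$ on $(0,T)\times\RealNum^3$, holding classically since both sides are smooth. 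Since $\mathcal{R}(\unit_{t>0}\Xi)=\unit_{t>0}\xi^\epsilon$ and $\mathcal{R}u=\hat u^\epsilon$, the proof reduces to showing that $\mathcal{R}(u^2\overline u)=|\hat u^\epsilon|^2\hat u^\epsilon-(2C_1-2\overline\nu C_{2,1}-4\nu C_{2,2})\hat u^\epsilon$.

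For this I would compute $\mathcal{R}(u^2\overline u)(z)=(\Pi_z^\epsilon M(u(z)^2\overline{u(z)}))(z)$ via the identity \eqref{3.4:PiM}. For the canonical model, $\tau\mapsto(\Pi_z^\epsilon\tau)(z)$ is an algebra homomorphism $\epsilon_z\colon T\to\CmplNum$ with $\epsilon_z(\unit)=1$, $\epsilon_z(X^k)=0$ for $k\ne0$, $\epsilon_z(\overline\tau)=\overline{\epsilon_z(\tau)}$, $\epsilon_z(\Xi)=\xi^\epsilon(z)$, and $\epsilon_z(\mathcal{I}\sigma)=(K*\Pi_z^\epsilon\sigma)(z)$ if $|\sigma|_s\le-2$ while $\epsilon_z(\mathcal{I}\sigma)=0$ otherwise (the $k=0$ term of $\mathcal{J}(z)\sigma$ cancels $(K*\Pi_z^\epsilon\sigma)(z)$ precisely when $|\sigma|_s>-2$). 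In the present structure $\Xi$ is the only symbol of homogeneity $\le-2$, so $\epsilon_z$ annihilates every $\mathcal{I}\sigma$ with $\sigma\ne\Xi$, hence every symbol containing such a factor; the only symbols it sees are the monomials $\IX^a\JX^b$ (and $\unit$), on which it acts by $\psi^\epsilon(z)^a\overline{\psi^\epsilon(z)}^b$ with $\psi^\epsilon:=K*\xi^\epsilon$. The same cancellation gives $(\hat\Pi_z^\epsilon u(z))(z)=\psi^\epsilon(z)+c_0(z)$, where $c_0(z)$ is the $\unit$‑component of $u(z)$, so $c_0=\hat u^\epsilon-\psi^\epsilon$.

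It then suffices, for each symbol $\tau$ occurring in $u(z)^2\overline{u(z)}$, to know the part of $M\tau$ supported on $\{\IX^a\JX^b\}\cup\{\unit\}$. Inspecting the explicit definition of $M$, such lower‑order corrections occur only for $\CD$ (giving $-C_1\unit$), $\CCD$ (giving $-2C_1\IX$), $\CDICD$ (giving $-C_{2,2}\unit$), $\CCJDD$ (giving $-2C_{2,1}\unit$), $\CDICCD$ (giving $-2C_{2,2}\IX$), $\CCJDDC$ (giving $-2C_{2,1}\IX$), together with complex conjugates; all other $M$‑corrections lie in symbols of the form $(\text{monomial})\cdot\mathcal{I}\sigma$ with $\sigma\ne\Xi$ and are killed by $\epsilon_z$. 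The relevant coefficients in $u^2\overline u$ come from the Picard expansion, in which $u$ has the form $\mathcal{I}\Xi+c_0\unit+\dots$, the dots being of positive homogeneity and including $-\nu\,\mathcal{I}(u^2\overline u)=-\nu\,\ICCD-\nu\overline c_0\,\ICC-2\nu c_0\,\ICD+\dots$ (obtained by applying $\mathcal{K}$ to the $\CCD,\CC,\CD$ parts of $u^2\overline u$): directly the coefficients of $\CCD,\CD,\CC,\IX,\JX,\unit$ in $u^2\overline u$ are $1,\ 2c_0,\ \overline c_0,\ 2|c_0|^2,\ c_0^2,\ c_0^2\overline c_0$, and feeding the above terms of $u$ back into $u\cdot u\cdot\overline u$ gives the (possibly $z$‑dependent) coefficients $-2\nu$ of $\CDICCD$, $-\overline\nu$ of $\CCJDDC$, $-4\nu c_0$ of $\CDICD$ and $-\overline\nu c_0$ of $\CCJDD$ (the conjugate symbols do not appear because $u^2\overline u$ carries only one complex conjugation). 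Applying $\epsilon_z\circ M$ and summing: the conjugation‑free monomial part reassembles into $|\psi^\epsilon+c_0|^2(\psi^\epsilon+c_0)=|\hat u^\epsilon|^2\hat u^\epsilon$; the $C_1$‑terms give $-2C_1\psi^\epsilon-2C_1c_0=-2C_1\hat u^\epsilon$; the $C_{2,1}$‑terms give $2\overline\nu C_{2,1}\psi^\epsilon+2\overline\nu C_{2,1}c_0=2\overline\nu C_{2,1}\hat u^\epsilon$; the $C_{2,2}$‑terms give $4\nu C_{2,2}\psi^\epsilon+4\nu C_{2,2}c_0=4\nu C_{2,2}\hat u^\epsilon$. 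Hence $\mathcal{R}(u^2\overline u)=|\hat u^\epsilon|^2\hat u^\epsilon-(2C_1-2\overline\nu C_{2,1}-4\nu C_{2,2})\hat u^\epsilon$, which substituted back yields \eqref{3.4:renormalized eq}.

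The main obstacle is the bookkeeping in this last step. One must verify that the Picard expansion has been carried far enough that no further symbol of $u^2\overline u$ contributes under $\epsilon_z\circ M$; this is exactly where the subcriticality bound $\alpha>-\frac{18}{7}$ and the table of negative‑homogeneity symbols are used, since it is this bound that keeps the list of contributing symbols finite and excludes $\DHE$. One must also keep track of the combinatorial multiplicities and of the coefficients $c_0,c_i$, so that the $C_1$‑, $C_{2,1}$‑ and $C_{2,2}$‑contributions — which arise partly proportional to $\psi^\epsilon$ (from the $\IX$‑type corrections $M\CCD,M\CDICCD,M\CCJDDC$) and partly to $c_0$ (from the $\unit$‑type corrections $M\CD,M\CDICD,M\CCJDD$) — telescope into clean multiples of $\hat u^\epsilon=\psi^\epsilon+c_0$. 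Everything else is a routine application of the results already established in this section.
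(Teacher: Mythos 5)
Your proposal is correct and follows essentially the same route as the paper's proof: both rest on the explicit Picard expansion of $u$ (equivalently of $F(u)$) in tree symbols up to homogeneity $0^+$, the identity $(\Pi_z^M\tau)(z)=(\Pi_zM\tau)(z)$ from \eqref{3.4:PiM}, and the observation that after pointwise evaluation only the $\unit$- and $\IX$-type corrections of $M$ survive, which telescope into $(2C_1-2\overline{\nu}C_{2,1}-4\nu C_{2,2})\hat{u}^\epsilon$. Your packaging of the pointwise evaluation as a multiplicative character $\epsilon_z$ that kills $\mathcal{I}\sigma$ for $\sigma\neq\Xi$ (and also $X_i$, which your ``$(\text{monomial})\cdot\mathcal{I}\sigma$'' phrasing slightly glosses over but which is harmless) is only a mild reorganization of the paper's explicit term-by-term expansion, and all your coefficients agree with those in the paper.
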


\begin{proof}
Since the fixed point problem \eqref{eq:fixed point problem} can be written by $u=\mathcal{I}F(u)+\cdots$, where $\cdots$ takes values in $T^\poly$, we can find functions $\varphi$ and $\{\varphi^i\}_{i=1}^3$ such that the solution $u\in\mathcal{D}_P^{\gamma,\eta}$ of \eqref{eq:fixed point problem} with $\gamma=1^+$ (greater than but sufficiently close to $1$) can be written by
\[u=\IX+\varphi\unit-\nu\ICCD-2\nu\varphi\ICD-\nu\overline{\varphi}\ICC+\varphi^iX_i.\]
In particular, since $\Pi_z\IX=\Pi_z^M\IX=K*\xi^\epsilon$ we have
\[\mathcal{R}u=\mathcal{R}^Mu=K*\xi^\epsilon+\varphi.\]
On the other hand, by \pref{3.1:integration against K}, $\hat{u}^\epsilon=\mathcal{R}^Mu$ satisfies the equation
\[\hat{u}^\epsilon(t,x)=\int_0^t\int_{\RealNum^4}G(t-s,x-y)(\mathcal{R}^MF(u))(s,y)dsdy+\int_{\RealNum^4}G(t,x-y)u_0(y)dy.\]
Hence it suffices to show that $\mathcal{R}^MF(u)$ coincides with the driving terms of \eqref{3.4:renormalized eq}. We can expand $F(u)=\Xi+\nu(1-u\overline{u})u$ up to homogeneity $0^+$ as follows.
\begin{align*}
F(u)&=\Xi-\nu\CCD-2\nu\varphi\CD-\nu\overline{\varphi}\CC+2\nu^2\CDICCD+\nu\overline{\nu}\CCJDDC\\
&\quad+\nu(1-2\varphi\overline{\varphi})\IX-\nu\varphi^2\JX+2\nu^2\overline{\varphi}\CICCD+2\nu^2\varphi\DICCD+2\nu\overline{\nu}\varphi\CJDDC\\
&\quad+2\nu^2\overline{\varphi}\CDICC+\nu\overline{\nu}\varphi\CCJDD+4\nu^2\varphi\CDICD+2\nu\overline{\nu}\overline{\varphi}\CCJDC\\
&\quad-\nu\overline{\varphi^i}X_i\CC-2\nu\varphi^iX_i\CD+\nu(\varphi-\varphi^2\overline{\varphi})\unit.
\end{align*}
Since $\mathcal{R}^Mu=\mathcal{R}Mu$ follows from \eqref{3.4:PiM}, we have
\begin{align*}
\mathcal{R}^MF(u)&=\mathcal{R}F(u)+2\nu C_1K*\xi^\epsilon+2\nu C_1\varphi\\
&\quad-4\nu^2C_{2,2}K*\xi^\epsilon-2\nu\overline{\nu}C_{2,1}K*\xi^\epsilon-2\nu\overline{\nu}C_{2,1}\varphi-4\nu^2C_{2,2}\varphi\\
&=\xi^\epsilon+\nu(1-\mathcal{R}u\overline{\mathcal{R}u})\mathcal{R}u+2\nu C_1\mathcal{R}u-2\nu\overline{\nu}C_{2,1}\mathcal{R}u-4\nu^2C_{2,2}\mathcal{R}u\\
&=\xi^\epsilon+\nu(1-|\hat{u}^\epsilon|^2+(2C_1-2\overline{\nu}C_{2,1}-4\nu C_{2,2}))\hat{u}^\epsilon.
\end{align*}
This completes the proof.
\end{proof}

\subsection{Convergence of Gaussian models}\label{section 3.5}

Our goal is to show the following renormalization result. We give its proof in the next section since it takes long.

\begin{proposition}\label{3.5:convergence of models}
If we choose $C_1^\epsilon,C_{2,1}^\epsilon$ and $C_{2,2}^\epsilon$ as in \eqref{4:subsection5:C_1C_2C_3}, then there exists a random model $\hat{Z}$ independent of the choice of $\rho$, and for every $\theta\in(0,-\frac{5}{2}-\alpha)$, $\gamma<r$, $p>1$ and every compact set $K\subset\RealNum^4$, we have the bounds
\begin{align}\label{3.5:convergence of models 1}
\boldsymbol{E}[\$\hat{Z}^\epsilon\$_{\gamma;K}^p]\lesssim1,\quad
\boldsymbol{E}[\$\hat{Z}^\epsilon;\hat{Z}\$_{\gamma;K}^p]\lesssim\epsilon^{\theta p}.
\end{align}
Furthermore, for every $T>0$ we have
\begin{align}\label{3.5:convergence of models 2}
\boldsymbol{E}[\bbb\xi^\epsilon\bbb_{\alpha;K}^p]\lesssim1,\quad \boldsymbol{E}[\bbb\xi^\epsilon-\xi\bbb_{\alpha;K}^p]\lesssim\epsilon^{\theta p},
\end{align}
and
\begin{align}\label{3.5:convergence of models 3}
\boldsymbol{E}[\|K*\xi^\epsilon\|_{C([0,T],\mathcal{C}^{\alpha+2})}^p]\lesssim1,\quad \boldsymbol{E}[\|K*\xi^\epsilon-K*\xi\|_{C([0,T],\mathcal{C}^{\alpha+2})}^p]\lesssim\epsilon^{\theta p}.
\end{align}
%Consequently, for every periodic $u_0\in\mathcal{C}^\eta$, the solution $\hat{u}^\epsilon$ of
%\[\partial_t\hat{u}^\epsilon=(\mathsf{i}+\mu)\triangle\hat{u}^\epsilon+\nu(1-|\hat{u}^\epsilon|^2+(2C_1^\epsilon-2\overline{\nu}C_2^\epsilon-4\nu C_3^\epsilon))\hat{u}^\epsilon+\xi^\epsilon\]
%converges to a limit $\hat{u}$ in probability. Furthermore, the limit $\hat{u}$ is independent of the choice of $\rho$.
\end{proposition}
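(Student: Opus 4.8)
The plan is to follow Hairer's strategy for the construction and renormalization of Gaussian models (\cite[Section~10]{Hairer2014a}), adapted to complex noise through the complex multiple It\^o--Wiener integrals recalled in \secref{sec:itowienerintegral}. Since $\hat Z^\epsilon$ arises from the canonical lift of $\xi^\epsilon$ by the renormalization map $M$ of \tref{extension of renormalized model}, the general criterion reduces \eqref{3.5:convergence of models 1} to second-moment bounds of the shape
\[
\boldsymbol{E}\bigl[|\langle\hat\Pi^\epsilon_z\tau,\varphi_z^\delta\rangle|^2\bigr]\lesssim\delta^{2|\tau|_s+\kappa},\qquad
\boldsymbol{E}\bigl[|\langle\hat\Pi^\epsilon_z\tau-\hat\Pi_z\tau,\varphi_z^\delta\rangle|^2\bigr]\lesssim\epsilon^{2\theta}\delta^{2|\tau|_s+\kappa},
\]
together with the corresponding increment bounds controlling $\|\Gamma-\Gamma'\|_{\gamma;K}$, to be proved for the finitely many symbols $\tau$ with $|\tau|_s<0$ in the table of \secref{section 3.5} plus the few extra symbols of $\mathcal{F}_0$ that enter the structure group. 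The restriction $\alpha\in(-\frac{18}{7},-\frac{5}{2})$ is exactly what keeps this list finite, since it forces $|\DHE|_s=7\alpha+18>0$. Upgrading these $L^2$ estimates is then routine: each $\langle\hat\Pi^\epsilon_z\tau,\varphi_z^\delta\rangle$ lives in a fixed complex Wiener chaos of order at most three, so equivalence of moments promotes $L^2$ to $L^p$, and a Kolmogorov/Garsia--Rodemich--Rumsey argument over $z,z'$ then yields the bounds on $\$\hat Z^\epsilon\$_{\gamma;K}$ and $\$\hat Z^\epsilon;\hat Z\$_{\gamma;K}$.

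The heart of the proof is the verification of the displayed $L^2$ bounds. For each relevant $\tau$ I would write $(\hat\Pi^\epsilon_z\tau)(z')$ from the recursive definition of admissible models and expand the ensuing polynomial in $\xi^\epsilon$ into homogeneous complex Wiener chaos by the product formula of \secref{sec:itowienerintegral}; under the complex covariance $\boldsymbol{E}[\xi\xi]=0$ a contraction must pair a black leaf ($\Xi$) with a white one ($\overline\Xi$), which dictates the structure of $M$. The map $M=M(C_1,C_{2,1},C_{2,2})$ is designed to subtract exactly the divergent low-order chaos components of the canonical lift: $C_1^\epsilon$ removes the single self-loop $\int K\,\overline{K}$ concealed in $\CD$ and propagated into $\CCD,\CDICC,\dots$, while $C_{2,1}^\epsilon,C_{2,2}^\epsilon$ remove the two second-order sub-divergences in $\CCJDD$ and $\CDICD$ (including a first-chaos subtraction for the degree-five trees); with the choice \eqref{4:subsection5:C_1C_2C_3} every surviving chaos component becomes an absolutely convergent kernel integral. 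Its second moment is a convolution of products of $K$ with a factor $\rho^\epsilon$ attached to each leaf, estimated by the standard power-counting bound for generalized convolutions (\cite[Section~10]{Hairer2014a}): one checks, uniformly in $\epsilon$, that the degree of the associated labelled graph restricted to every subset of vertices satisfies the required inequalities.

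For the convergence in \eqref{3.5:convergence of models 1} I would run the same computation on $\hat\Pi^\epsilon-\hat\Pi$, telescoping so that every term carries a factor $\rho^\epsilon-\delta$ (or $K*(\rho^\epsilon-\delta)$) at some leaf; the elementary bound $|\widehat{\rho^\epsilon}(\lambda)-1|\lesssim(\epsilon|\lambda|)^{\theta}$ for $\theta\in(0,-\frac{5}{2}-\alpha)$ produces the gain $\epsilon^{2\theta}$ while leaving the power counting intact. The limit $\hat Z$ is independent of $\rho$ because the same telescoping, applied to two mollifiers $\rho,\tilde\rho$ with the constants from \eqref{4:subsection5:C_1C_2C_3}, shows $\$\hat Z^\epsilon(\rho);\hat Z^\epsilon(\tilde\rho)\$_{\gamma;K}\to0$. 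The auxiliary bounds \eqref{3.5:convergence of models 2} and \eqref{3.5:convergence of models 3} involve only first-chaos objects: $\xi^\epsilon$ itself lies in the first chaos, and since the white-noise covariance is local in time the supremum over the cutoffs $\unit_{t>s}$ is harmless, so $\bbb\xi^\epsilon\bbb_{\alpha;K}$ follows from a Kolmogorov argument; $K*\xi^\epsilon$ is a Gaussian field whose spatial $\mathcal{C}^{\alpha+2}$-norm at fixed time and whose time modulus of continuity are again controlled by Kolmogorov, using that $K$ is parabolic of order $2$, with the $\epsilon^\theta$ rate coming from the same estimate on $\widehat{\rho^\epsilon}-1$.

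The hard part will be the power counting for the most singular trees --- $\CCD$ of homogeneity $3(\alpha+2)$ and the degree-four and degree-five trees of homogeneity $4\alpha+10$ and $5\alpha+12$, which sit close to criticality. For these one must carefully enumerate which leaves may be contracted under the complex covariance, check that the subtractions built into $M$ cancel precisely the non-integrable parts of the relevant chaos components, and then verify the uniform-in-$\epsilon$ power-counting inequality for each remaining graph; the correct bookkeeping of the black/white leaf structure is what makes this step the most delicate.
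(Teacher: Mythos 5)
Your proposal is correct and follows essentially the same route as the paper: reduction via Hairer's Theorem~10.7 to symbol-by-symbol second-moment bounds, chaos expansion by the complex product formula with the constants of \eqref{4:subsection5:C_1C_2C_3} cancelling the divergent low-order chaos components (including the $\mathfrak{R}$-type first-chaos renormalization needed for the five-leaf trees), kernel power counting done in the paper by hand via a few dedicated convolution lemmas rather than the general labelled-graph theorem, and the $\epsilon^{\theta}$ gain from $|K^{\epsilon}-K|(z)\lesssim\epsilon^{\theta}\|z\|_s^{-3-\theta}$, with \eqref{3.5:convergence of models 2}--\eqref{3.5:convergence of models 3} likewise delegated to \cite[Proposition~9.5]{Hairer2014a}. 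The only slip is that the relevant inhomogeneous chaoses have order up to five (e.g.\ the $\mathcal{J}_{3,2}$ component of the five-leaf trees), not three, which is immaterial since equivalence of moments only requires the order to be bounded.
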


Combining \pref[0360:renormalized equation]{3.5:convergence of models},
we obtain \tref{0330_main result} if we choose
$C^\epsilon=2C_1^\epsilon-2\overline{\nu}C_{2,1}^\epsilon-4\nu C_{2,2}^\epsilon$.

%!TEX root = ComplexGinzburgLandauEq.tex

\section{Proof of convergence of renormalized models}\label{sec_20161107073513}

In this section, we give a proof of \pref{3.5:convergence of models}. Since the estimates \eqref{3.5:convergence of models 2} and \eqref{3.5:convergence of models 3} are obtained in \cite[Proposition~9.5]{Hairer2014a}, we focus on the estimate \eqref{3.5:convergence of models 1}. By \cite[Theorem~10.7]{Hairer2014a}, it suffices to show that there exist $\kappa,\theta>0$ such that, for every $\tau\in\mathcal{F}$ with $|\tau|_s<0$, every test function $\varphi\in\mathcal{B}_r$ and $z\in\RealNum^4$, there exists a random variable $\langle\hat{\Pi}_z\tau,\varphi\rangle$ such that
\begin{align}\label{section4:conv of models}
\boldsymbol{E}[|\langle\hat{\Pi}_z\tau,\varphi_z^\delta\rangle|^2]\lesssim\delta^{2|\tau|_s+\kappa},\quad \boldsymbol{E}[|\langle\hat{\Pi}_z\tau-\hat{\Pi}^\epsilon_z\tau,\varphi_z^\delta\rangle|^2]\lesssim\epsilon^{2\theta}\delta^{2|\tau|_s+\kappa}.
\end{align}
We fix $z\in\RealNum^4$ throughout this section. The estimates in this section are uniform over $z$.

This section is organized as follows. In \secref{4:subsection1}, we recall the Wiener chaos decomposition of the random variable $\hat{\Pi}_z\tau$ and introduce graphical notations to describe its kernel. In \secref{4:subsection2}, we give some useful estimates to prove \eqref{section4:conv of models}. In \secref{4:subsection3}, we show the required estimate \eqref{section4:conv of models} for each symbol $\tau$. In \secref{4:subsection5}, we show the explicit forms of renormalization constants and their divergence orders.

\subsection{Wiener chaos decomposition}\label{4:subsection1}%%%

The driving noise $\xi$ is space-time white noise on $\RealNum\times\Torus^3$, which is extended periodically to $\RealNum^4$. In precise, we are given the complex multiple Wiener integral $\mathcal{J}_{p,q}$ on $(E,m)=(\RealNum\times\Torus^3,dtdx_1dx_2dx_3)$ (see \secref{sec:itowienerintegral}) and a random distribution $\xi$ is defined by $\langle\xi,\varphi\rangle=\mathcal{J}_{1,0}(\pi\varphi)$, where $\varphi$ is a compactly supported smooth function and $\pi\varphi=\sum_{n\in\Integers^3}S_n\varphi$ is its periodic extension, where $S_n\varphi(t,x)=\varphi(t,x+n)$. Although $\mathcal{J}_{1,0}$ is an isometry from $L^2(E)$ (not $L^2(\RealNum^4)$) to $L^2(\Omega)$, when $\varphi$ is supported in $\RealNum\times[-\frac12,\frac12]^3$ (i.e. $\varphi$ and $S_n\varphi$ have disjoint supports if $n\neq0$) we have the isometry
\begin{align}\label{section4 T R}
\boldsymbol{E}[|\langle\xi,\varphi\rangle|^2]=\|\pi\varphi\|_E^2=\int_{\RealNum^4}|\varphi(z)|^2dz.
\end{align}
The approximation $\xi^\epsilon(z)=\xi*\rho^\epsilon(z)=\mathcal{J}_{1,0}(\rho^\epsilon(z-\cdot))$ belongs to the first Wiener chaos. By definition and the product formula, for each $\tau\in\mathcal{F}$ we have the Wiener chaos decomposition
\begin{align*}
(\hat{\Pi}_z^\epsilon\tau)(z')=\sum_{p,q}\mathcal{J}_{p,q}((\hat{\mathcal{W}}_z^{\epsilon,(p,q)}\tau)(z')),
\end{align*}
where $(\hat{\mathcal{W}}_z^{\epsilon,(p,q)}\tau)(z')\in L_{p,q}^2$ is the kernel function of $\mathcal{J}_{p,q}$-exponent of $(\hat{\Pi}_z^\epsilon\tau)(z')$, parametrized by $z'\in\RealNum^4$. In all these kernels mentioned below, we always assume that $(\hat{\mathcal{W}}_z^{\epsilon,(p,q)}\tau)(z')$ is supported in sufficiently small compact subset as a function of $(\RealNum^4)^{p+q}$, so that we need not distinguish integrals on $\RealNum\times\Torus^3$ and $\RealNum^4$, indeed similarly to \eqref{section4 T R}
\begin{align*}
&\boldsymbol{E}[|\langle\hat{\Pi}_z\tau,\varphi_z^\delta\rangle|^2]
\le p!q!\left\|\int\varphi_z^\delta(z')(\hat{\mathcal{W}}_z^{\epsilon,(p,q)}\tau)(z')dz'\right\|_{L_{pq}^2}^2\\
&\quad=p!q!\iint\varphi_z^\delta(z')\varphi_z^\delta(z'')\langle(\hat{\mathcal{W}}_z^{\epsilon,(p,q)}\tau)(z'),\overline{(\hat{\mathcal{W}}_z^{\epsilon,(p,q)}\tau)(z'')}\rangle_{L^2((\RealNum^4)^{p+q})} dz'dz''.
\end{align*}
This assumption is satisfied if we take the support of $K$ sufficiently small.

Following \cite[Section~10.5]{Hairer2014a}, we introduce the following graphical notations to write integrated kernels. First a dot represents a variable in $\RealNum^4$. A square dot (\begin{tikzpicture}[baseline=-0.1cm]
\putdots{A}{$(0,0)$}{below}{z'}
\end{tikzpicture}) represents a fixed variable. A gray dot (\begin{tikzpicture}[baseline=-0.1cm]
\putdotg{A}{$(0,0)$}{}{}
\end{tikzpicture}) is a variable integrated out on $\RealNum^4$, so it has no label. A variable representing the multiple Wiener integral $\mathcal{J}_{p,q}$ is written by a black dot (\begin{tikzpicture}[baseline=-0.1cm]
\putdot{A}{$(0,0)$}{}{}
\end{tikzpicture}) for a variable in $E^p$, and a white dot (\begin{tikzpicture}[baseline=-0.1cm]
\putdotw{A}{$(0,0)$}{}{}
\end{tikzpicture}) for a variable in $E^q$, respectively. Second an arrow represents a function of two variables which are represented by its vertices. We write
\begin{align*}
K(z'-z'')&=\begin{tikzpicture}[baseline=-0.1cm]
\putdots{A}{$(0,0)$}{left}{z'}
\putdots{B}{$(1,0)$}{right}{z''}
\putb{B}{A}
\end{tikzpicture},
&K^\epsilon(z'-z'')&=\begin{tikzpicture}[baseline=-0.1cm]
\putdots{A}{$(0,0)$}{left}{z'}
\putdots{B}{$(1,0)$}{right}{z''}
\putdb{B}{A}
\end{tikzpicture},\\
\overline{K}(z'-z'')&=\begin{tikzpicture}[baseline=-0.1cm]
\putdots{A}{$(0,0)$}{left}{z'}
\putdots{B}{$(1,0)$}{right}{z''}
\putw{B}{A}
\end{tikzpicture},
&\overline{K^\epsilon}(z'-z'')&=\begin{tikzpicture}[baseline=-0.1cm]
\putdots{A}{$(0,0)$}{left}{z'}
\putdots{B}{$(1,0)$}{right}{z''}
\putdw{B}{A}
\end{tikzpicture},
\end{align*}
where $K^\epsilon=K*\rho^\epsilon$. Moreover, we write
\begin{align*}
&K(z'-z'')-K(z-z'')=\begin{tikzpicture}[baseline=-0.1cm]
\putdots{A}{$(0,0)$}{left}{z'}
\putdots{B}{$(1,0)$}{right}{z''}
\dputb{B}{A}{$(0.5,0)$}
\end{tikzpicture},\\
&\overline{K}(z'-z'')-\overline{K}(z-z'')=\begin{tikzpicture}[baseline=-0.1cm]
\putdots{A}{$(0,0)$}{left}{z'}
\putdots{B}{$(1,0)$}{right}{z''}
\dputw{B}{A}{$(0.5,0)$}
\end{tikzpicture}.
\end{align*}
We note that $z\in\RealNum^4$ is fixed. We write several kernels by combining these notations. For example,
\begin{align*}
&\mathcal{J}_{3,1}(
\begin{tikzpicture}[baseline=-0.1cm]
\putdots{A}{$(0,0)$}{left}{z'}
\putdotg{E}{$(1,0)$}{left}{}
\putdot{F}{$(0.8,-0.3)$}{left}{}
\putdot{B}{$(2,0.3)$}{right}{}
\putdot{C}{$(2,0)$}{right}{}
\putdotw{D}{$(2,-0.3)$}{right}{}
\putdb{B}{E}
\putdb{C}{E}
\putdw{D}{E}
\dputb{E}{A}{$(0.5,0)$}
\putb{F}{A}
\end{tikzpicture})\\
&=\int \left\{\int(K(z'-u)-K(z-u))K^\epsilon(u-w_1)K^\epsilon(u-w_2)\overline{K^\epsilon}(u-w')du\right\}\\
&\quad\times K(z'-w_3):\xi(dw_1)\xi(dw_2)\xi(dw_3)\overline{\xi}(dw'):.
\end{align*}

\subsection{Estimates of singularity of kernels}\label{4:subsection2}

From the scaling property of $K=\sum_nK_n$, we can see that $|K(z)|\lesssim\|z\|_s^{-3}$. It is useful to consider the singularity of kernels like this. The notation $A(z'-z'')=\begin{tikzpicture}[baseline=-0.1cm]
\putdots{C}{$(2,0)$}{right}{z''}
\putdots{B}{$(1,0)$}{left}{z'}
\putl{C}{B}{$(1.3,-0.1)$}{$(1.7,0.1)$}{$(1.5,-0)$}{-\alpha}
\end{tikzpicture}$ implies that $A:\RealNum^4\setminus\{0\}\to\CmplNum$ is a smooth function supported in a ball and has the estimate $|A(z'-z'')|\lesssim\|z'-z''\|_s^{-\alpha}$. We recall some useful estimates from \cite[Section~10.3]{Hairer2014a} and \cite[Section~4.7]{Hoshino2016}.

\begin{lemma}[{\cite[Lemma~10.14]{Hairer2014a}}]
For $\alpha,\beta\in[0,5)$, we have
\begin{align*}
|\begin{tikzpicture}[baseline=-0.1cm]
\putdots{C}{$(2,0)$}{right}{z''}
\putdotg{B}{$(1,0)$}{left}{}
\putdots{A}{$(0,0)$}{left}{z'}
\putl{C}{B}{$(1.3,-0.1)$}{$(1.7,0.1)$}{$(1.5,0)$}{-\beta}
\putl{B}{A}{$(0.3,-0.1)$}{$(0.7,0.1)$}{$(0.5,0)$}{-\alpha}
\end{tikzpicture}|\lesssim
\begin{cases}
\begin{tikzpicture}[baseline=-0.1cm]
\putdots{C}{$(2,0)$}{right}{z''}
\putdots{B}{$(0,0)$}{left}{z'}
\putl{C}{B}{$(0.35,-0.1)$}{$(1.65,0.1)$}{$(1,-0)$}{-\alpha-\beta+5}
\end{tikzpicture},&\alpha+\beta>5,\\
1,&\alpha+\beta<5.
\end{cases}
\end{align*}
\end{lemma}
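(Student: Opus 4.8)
The plan is to read the displayed inequality as a convolution estimate for parabolic singular kernels. Writing the left-hand side as $\Phi(z'-z''):=\int_{\RealNum^4}A(z'-u)\,B(u-z'')\,du$, where $A$ and $B$ are smooth and compactly supported in, say, $B_s(0,1)$ with $|A(w)|\lesssim\|w\|_s^{-\alpha}$ and $|B(w)|\lesssim\|w\|_s^{-\beta}$, I would first note that $\Phi$ is smooth and supported in $B_s(0,2)$, so only the size estimate requires work.

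Next I would record the elementary scaling fact that $(\RealNum^4,\|\cdot\|_s)$ has homogeneous dimension $5$: for $\rho>0$ one has $\int_{\|u\|_s\le\rho}\|u\|_s^{-\gamma}\,du\approx\rho^{5-\gamma}$ when $\gamma\in[0,5)$, while $\int_{\rho\le\|u\|_s\le1}\|u\|_s^{-\gamma}\,du\approx\rho^{5-\gamma}$ when $\gamma>5$ and $\lesssim1$ when $\gamma<5$. These follow from the dilation $u\mapsto\rho u$, under which $du\mapsto\rho^{5}\,du$ and $\|u\|_s\mapsto\rho\|u\|_s$. In particular each factor is locally integrable precisely because $\alpha,\beta<5$, which is where that hypothesis is used.

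Then, with $R:=\|z'-z''\|_s$, I would split the $u$-integral into the three regions $\|u-z'\|_s\le R/2$, $\|u-z''\|_s\le R/2$, and the complement of both. On the first region the triangle inequality for $\|\cdot\|_s$ gives $\|u-z''\|_s\approx R$, so the contribution is at most $R^{-\beta}\int_{\|u-z'\|_s\le R/2}\|u-z'\|_s^{-\alpha}\,du\lesssim R^{5-\alpha-\beta}$; the second region is symmetric and also gives $\lesssim R^{5-\alpha-\beta}$. On the third region $\|u-z'\|_s$ and $\|u-z''\|_s$ are both $>R/2$ and differ by at most $R$, hence are comparable, and both are $\lesssim1$ by the support constraint; so that contribution is $\lesssim\int_{R/2\le\|u-z'\|_s\lesssim1}\|u-z'\|_s^{-\alpha-\beta}\,du$, which by the scaling fact is $\lesssim R^{5-\alpha-\beta}$ when $\alpha+\beta>5$ and $\lesssim1$ when $\alpha+\beta<5$. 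Summing the three: for $\alpha+\beta>5$ we get $\Phi(z'-z'')\lesssim R^{5-\alpha-\beta}=\|z'-z''\|_s^{-(\alpha+\beta-5)}$, and for $\alpha+\beta<5$ we get $\Phi(z'-z'')\lesssim R^{5-\alpha-\beta}+1\lesssim1$ since $5-\alpha-\beta>0$ and $R$ is bounded on the support.

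There is no genuine analytic obstacle here; the only care needed is the bookkeeping in the region decomposition — verifying that the triangle inequality for $\|\cdot\|_s$ yields the claimed comparabilities and that the scaling exponent $5-\gamma$ is applied with the right $\gamma$ in each region — and observing that the standing hypothesis $\alpha+\beta\ne5$ is exactly what excludes the borderline logarithmic correction. Since this estimate is identical to \cite[Lemma~10.14]{Hairer2014a}, one may alternatively just invoke that reference.
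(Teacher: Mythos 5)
Your proof is correct. Note, however, that the paper itself offers no proof of this lemma: it is quoted verbatim from Hairer's Lemma~10.14 and used as a black box, so there is no in-paper argument to compare against. Your self-contained derivation is the standard one for convolutions of parabolic singular kernels: the homogeneous dimension $5$ of $(\RealNum^4,\|\cdot\|_s)$, the triangle inequality for $\|\cdot\|_s$ (valid since $|t+s|^{1/2}\le|t|^{1/2}+|s|^{1/2}$), and the three-region split near $z'$, near $z''$, and the comparable region, each step of which you have carried out with the correct exponents and with the hypotheses $\alpha,\beta<5$ and $\alpha+\beta\neq5$ used exactly where they are needed. For the record, Hairer's own proof runs through the dyadic decomposition $K=\sum_n K_n$ and a summation over scales rather than a direct region decomposition; the two arguments are equivalent in content, and yours is arguably the more elementary. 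Either writing out your argument or simply citing the reference, as the paper does, would be acceptable.
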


\begin{lemma}[{\cite[Lemma~4.31]{Hoshino2016}}]\label{section4 cut lemma}
Let $\alpha,\beta,\alpha',\beta',\gamma\in(0,5)$. If $\zeta\in(0,\alpha\wedge\beta]$ and $\eta\in(0,\alpha'\wedge\beta']$ satisfy
\begin{align*}
\alpha+\beta-5<\zeta,\quad\alpha'+\beta'-5<\eta,\quad\alpha+\beta+\alpha'+\beta'+\gamma-10<\zeta+\eta,
\end{align*}
then we have
\begin{align*}
\left|\begin{tikzpicture}[baseline=-0.1cm]
\putdots{A}{$(0,0.4)$}{left}{z'}
\putdots{A'}{$(0,-0.4)$}{left}{w'}
\putdotg{B}{$(1,0)$}{right}{}
\putdotg{C}{$(2,0)$}{right}{}
\putdots{D}{$(3,0.4)$}{right}{z''}
\putdots{D'}{$(3,-0.4)$}{right}{w''}
\putl{B}{A}{$(0.3,0.1)$}{$(0.7,0.3)$}{$(0.5,0.2)$}{-\alpha}
\putl{B}{A'}{$(0.3,-0.3)$}{$(0.7,-0.1)$}{$(0.5,-0.2)$}{-\beta}
\putcl{C}{$(1.7,0)$}{$(1.3,0)$}{B}{$(1.3,-0.1)$}{$(1.7,0.1)$}{$(1.5,-0)$}{-\gamma}
\putl{C}{D}{$(2.3,0.1)$}{$(2.7,0.3)$}{$(2.5,0.2)$}{-\alpha'}
\putl{C}{D'}{$(2.3,-0.3)$}{$(2.7,-0.1)$}{$(2.5,-0.2)$}{-\beta'}
\end{tikzpicture}\right|\lesssim
\begin{tikzpicture}[baseline=-0.1cm]
\putdots{C}{$(2,0.3)$}{right}{w'}
\putdots{B}{$(1,0.3)$}{left}{z'}
\putl{C}{B}{$(1.3,0.2)$}{$(1.7,0.4)$}{$(1.5,0.3)$}{-\zeta}
\putdots{C'}{$(2,-0.3)$}{right}{w''}
\putdots{B'}{$(1,-0.3)$}{left}{z''}
\putl{C'}{B'}{$(1.3,-0.4)$}{$(1.7,-0.2)$}{$(1.5,-0.3)$}{-\eta}
\end{tikzpicture}.
\end{align*}
\end{lemma}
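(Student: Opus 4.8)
The plan is to reduce the estimate to two applications of the convolution bound \cite[Lemma~10.14]{Hairer2014a}. Decoding the graphical notation and pulling the absolute value inside the two integrals, the left‑hand side is at most a constant multiple of
\[
\int_{\RealNum^4}\!\int_{\RealNum^4}
\|z'-x\|_s^{-\alpha}\,\|w'-x\|_s^{-\beta}\,\|x-y\|_s^{-\gamma}\,\|y-z''\|_s^{-\alpha'}\,\|y-w''\|_s^{-\beta'}\,dx\,dy,
\]
where every factor implicitly carries a cutoff to a fixed ball (the kernels behind the arrows are compactly supported), so that the domain of integration is compact and all the distances below are bounded. The aim is to integrate out $x$ and then $y$, ending up with $\|z'-w'\|_s^{-\zeta}\,\|z''-w''\|_s^{-\eta}$.

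For the inner ($x$‑)integral I would split $\RealNum^4$ into $\{\|z'-x\|_s\le\|w'-x\|_s\}$ and its complement. On the first piece the triangle inequality for $\|\cdot\|_s$ gives $\|w'-x\|_s\gtrsim\max(\|z'-x\|_s,\|z'-w'\|_s)$, hence $\|w'-x\|_s^{-\beta}\lesssim\|z'-w'\|_s^{-\zeta}\,\|z'-x\|_s^{-(\beta-\zeta)}$ because $0\le\zeta\le\beta$; on the complement one instead extracts $\|z'-w'\|_s^{-\zeta}$ from $\|z'-x\|_s^{-\alpha}$ using $\zeta\le\alpha$. Either way the $x$‑part is bounded by $\|z'-w'\|_s^{-\zeta}$ times $\int\|v-x\|_s^{-(\alpha+\beta-\zeta)}\,\|x-y\|_s^{-\gamma}\,dx$ with $v\in\{z',w'\}$. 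Here $\alpha+\beta-\zeta\in(0,5)$ — positivity is immediate and $\alpha+\beta-\zeta<5$ is exactly the hypothesis $\alpha+\beta-5<\zeta$ — and $\gamma\in(0,5)$, so \cite[Lemma~10.14]{Hairer2014a} applies and bounds this by $\|v-y\|_s^{-\mu}$ with $\mu:=(\alpha+\beta+\gamma-\zeta-5)_+$ (the borderline value, which produces a logarithm, being absorbed by enlarging $\mu$ by an arbitrarily small amount), and by a constant when $\mu\le0$.

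I would then repeat the argument for the $y$‑integral: split according to whether $y$ is closer to $z''$ or to $w''$, extract a factor $\|z''-w''\|_s^{-\eta}$ using $\eta\le\alpha'\wedge\beta'$, and reduce to $\int\|v-y\|_s^{-\mu}\,\|y-w\|_s^{-(\alpha'+\beta'-\eta)}\,dy$ with $w\in\{z'',w''\}$. Now $\alpha'+\beta'-\eta\in(0,5)$ by $\alpha'+\beta'-5<\eta$, and the decisive point is that this last integral must contribute \emph{no} pole in the remaining variables: by \cite[Lemma~10.14]{Hairer2014a} it is bounded precisely when $\mu+(\alpha'+\beta'-\eta)<5$, which for $\mu=0$ follows again from $\alpha'+\beta'-5<\eta$ and for $\mu>0$ is exactly the last hypothesis $\alpha+\beta+\alpha'+\beta'+\gamma-10<\zeta+\eta$. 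Summing over the (at most four) regions leaves only $\|z'-w'\|_s^{-\zeta}\,\|z''-w''\|_s^{-\eta}$, which is the claim.

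The argument is not conceptually hard; the main obstacle is bookkeeping. One must carry the compact‑support cutoffs along so that \cite[Lemma~10.14]{Hairer2014a} genuinely applies at each step (the output kernels, being convolutions of compactly supported ones, stay compactly supported), keep straight the case split according to whether $(\alpha+\beta-\zeta)+\gamma$ and $\mu+(\alpha'+\beta'-\eta)$ exceed $5$, and verify that each scalar condition — including the two ``localization'' conditions $\zeta\le\alpha\wedge\beta$ and $\eta\le\alpha'\wedge\beta'$ — is used exactly where indicated above. The strictness of the hypotheses is what rules out the degenerate equalities and the associated logarithmic losses.
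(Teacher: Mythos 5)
The paper does not prove this lemma itself --- it simply cites \cite[Lemma~4.31]{Hoshino2016} --- so there is no in-paper argument to compare against. Your proof is correct and is the standard one (and, as far as I can tell, essentially the one in the cited reference): the pointwise splitting according to which of $z',w'$ (resp.\ $z'',w''$) is closer to the integration variable, together with $\zeta\le\alpha\wedge\beta$ and $\eta\le\alpha'\wedge\beta'$, legitimately extracts the factors $\|z'-w'\|_s^{-\zeta}$ and $\|z''-w''\|_s^{-\eta}$, and the three strict inequalities are used exactly where you say: $\alpha+\beta-\zeta<5$ and $\alpha'+\beta'-\eta<5$ keep each single convolution in the range of \cite[Lemma~10.14]{Hairer2014a}, while the third hypothesis guarantees $\mu+(\alpha'+\beta'-\eta)<5$ so the final $y$-integral contributes no further singularity. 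Your handling of the borderline (logarithmic) cases by an $\epsilon$-perturbation on the compact support is also fine.
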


For $\alpha,\beta\ge0$, we use the notation
\begin{align*}
Q_{\alpha,\beta}(z',z'')=
\begin{tikzpicture}[baseline=-0.1cm]
\putdots{A}{$(0,0)$}{left}{z'}
\putdotg{B}{$(1,0)$}{right}{}
\putcl{B}{$(0.7,0.2)$}{$(0.3,0.2)$}{A}{$(0.3,0.1)$}{$(0.7,0.3)$}{$(0.5,0.2)$}{-\alpha}
\dputcb{B}{$(0.7,-0.2)$}{$(0.3,-0.2)$}{A}{$(0.5,-0.18)$}
\putdots{A'}{$(3,0)$}{right}{z''}
\putdotg{B'}{$(2,0)$}{right}{}
\putcl{B'}{$(2.3,0.2)$}{$(2.7,0.2)$}{A'}{$(2.3,0.1)$}{$(2.7,0.3)$}{$(2.5,0.2)$}{-\alpha}
\dputcw{B'}{$(2.3,-0.2)$}{$(2.7,-0.2)$}{A'}{$(2.5,-0.18)$}
\putl{B}{B'}{$(1.3,-0.1)$}{$(1.7,0.1)$}{$(1.5,-0)$}{-\beta}
\end{tikzpicture}.
\end{align*}

\begin{lemma}\label{section4 glass lemma}
Let $\alpha,\beta\in[0,5)$. For $\theta\in(0,1\wedge(2-\alpha-\frac\beta2))$, we have
\begin{align*}
|Q_{\alpha,\beta}(z',z'')|\lesssim\|z'-z\|_s^\theta\|z''-z\|_s^\theta.
\end{align*}
\end{lemma}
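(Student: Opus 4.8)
The plan is to write $Q_{\alpha,\beta}$ out explicitly and reduce it, by means of a difference estimate for $K$, to a few ordinary convolution integrals that are then handled by \cite[Lemma~10.14]{Hairer2014a} and \lref{section4 cut lemma}. Unfolding the graphical notation,
\[Q_{\alpha,\beta}(z',z'')=\iint A_1(z'-u)\bigl[K(z'-u)-K(z-u)\bigr]B(u-v)A_2(z''-v)\bigl[\overline{K}(z''-v)-\overline{K}(z-v)\bigr]\,du\,dv,\]
where $A_1,A_2,B$ are generic kernels with $|A_i(w)|\lesssim\|w\|_s^{-\alpha}$ and $|B(w)|\lesssim\|w\|_s^{-\beta}$, all compactly supported in small balls, so that the integral effectively runs over a fixed bounded region; moreover the claim is trivial unless $\|z'-z\|_s,\|z''-z\|_s\lesssim1$ (otherwise the integrand vanishes identically, or $Q_{\alpha,\beta}$ is bounded while the right-hand side is $\gtrsim1$), so I assume this from now on. From $K=\sum_nK_n$ and the pointwise bounds and supports provided by \aref{3.1:regularizing kernel} one gets the standard difference estimate $|K(z'-u)-K(z-u)|\lesssim\|z'-z\|_s^{\theta'}(\|z'-u\|_s^{-3-\theta'}+\|z-u\|_s^{-3-\theta'})$ for every $\theta'\in[0,1]$, and likewise for $\overline{K}$. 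Fix $\theta''\in(\theta,1\wedge(2-\alpha-\tfrac\beta2))$ (nonempty by hypothesis), apply this bound with $\theta'=\theta''$ to both differences, and absorb each resulting order-$(3+\theta'')$ factor into the neighbouring generic kernel when they share the anchor $z'$ or $z''$; this gives $|Q_{\alpha,\beta}(z',z'')|\lesssim\|z'-z\|_s^{\theta''}\|z''-z\|_s^{\theta''}(I_{11}+I_{12}+I_{21}+I_{22})$, where the four integrals correspond to the four choices of summand, two of which are
\[I_{11}=\iint\|z'-u\|_s^{-\alpha-3-\theta''}\|u-v\|_s^{-\beta}\|z''-v\|_s^{-\alpha-3-\theta''}\,du\,dv,\qquad I_{22}=\iint\|z'-u\|_s^{-\alpha}\|z-u\|_s^{-3-\theta''}\|u-v\|_s^{-\beta}\|z''-v\|_s^{-\alpha}\|z-v\|_s^{-3-\theta''}\,du\,dv.\]

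Next I would bound $I_{11},I_{12},I_{21}$ by integrating out $v$ and then $u$ with \cite[Lemma~10.14]{Hairer2014a}, also using the elementary bound $\int\prod_i\|x_i-v\|_s^{-\gamma_i}\,dv\lesssim1$ over a bounded region whenever $\sum_i(\gamma_i\vee0)<5$ (split the region according to the nearest $x_i$) and discarding leftover non-negative powers on the bounded domain. The only exponent conditions that come up are $\alpha+3+\theta''<5$ and $2\alpha+2\theta''+\beta+1<5$, i.e.\ $\theta''<2-\alpha$ and $\alpha+\theta''+\tfrac\beta2<2$, both ensured by the choice of $\theta''$; hence $I_{11},I_{12},I_{21}\lesssim1$ uniformly in $z,z',z''$.

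The remaining integral $I_{22}$ is exactly of the ``hourglass'' form of \lref{section4 cut lemma}, with parameters $(\alpha,3+\theta'',\beta,\alpha,3+\theta'')$ and external pairs $\{z',z\}$ and $\{z'',z\}$; in the degenerate cases $\alpha=0$ or $\beta=0$ I first replace the bounded kernels by $\|\cdot\|_s^{-\alpha_0}$ with a small $\alpha_0>0$ (legitimate on their compact supports) so that the exponents become strictly positive. Taking $\zeta=\eta=\theta''-\theta$, chosen small enough to lie in $(0,\alpha\wedge(3+\theta'')]$, the hypotheses $\alpha+(3+\theta'')-5<\zeta$ and $2\alpha+2(3+\theta'')+\beta-10<\zeta+\eta$ of \lref{section4 cut lemma} reduce to $\theta<2-\alpha$ and $\theta<2-\alpha-\tfrac\beta2$, which hold by assumption; therefore $I_{22}\lesssim\|z'-z\|_s^{-(\theta''-\theta)}\|z''-z\|_s^{-(\theta''-\theta)}$. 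Substituting the four estimates back in and using $\|z'-z\|_s,\|z''-z\|_s\lesssim1$ yields $|Q_{\alpha,\beta}(z',z'')|\lesssim\|z'-z\|_s^{\theta}\|z''-z\|_s^{\theta}$.

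The hard part is the doubly off-diagonal term $I_{22}$: the factors $\|z-u\|_s^{-3-\theta''}$ and $\|z-v\|_s^{-3-\theta''}$ produced by the two differences, glued by the middle factor $\|u-v\|_s^{-\beta}$, form a configuration that cannot be untangled by iterating \cite[Lemma~10.14]{Hairer2014a} alone (it is precisely the situation \lref{section4 cut lemma} was designed for), and its third hypothesis is what forces the ceiling $2-\alpha-\tfrac\beta2$ on $\theta$ in the statement. Extracting the slightly larger power $\theta''$ at the outset and spending the surplus $\theta''-\theta>0$ on these barely integrable kernels is the device that makes the estimate close.
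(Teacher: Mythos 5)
Your argument is correct in substance but diverges from the paper's proof at the decisive step. The paper extracts the power $\theta$ itself (no surplus $\theta''$) from both differences via \cite[Lemma~10.18]{Hairer2014a}, leaving four integrals $R(u',u'')$ with $(u',u'')\in\{z',z\}\times\{z'',z\}$, and shows that \emph{all four} are uniformly bounded --- including the doubly $z$-anchored one you call $I_{22}$. The device is the elementary inequality $\|z'-w\|_s^{-\alpha}\|z-w\|_s^{-3-\theta}\lesssim\|z'-w\|_s^{-\alpha-3-\theta}+\|z-w\|_s^{-\alpha-3-\theta}$ applied at each vertex, after which every term is a plain chain $\iint\|x-u\|_s^{-\alpha-3-\theta}\|u-v\|_s^{-\beta}\|v-y\|_s^{-\alpha-3-\theta}\,du\,dv$, bounded by iterating \cite[Lemma~10.14]{Hairer2014a} precisely because $2(\alpha+3+\theta)+\beta<10$, i.e.\ $\theta<2-\alpha-\frac\beta2$. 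So your closing claim that $I_{22}$ ``cannot be untangled by iterating \cite[Lemma~10.14]{Hairer2014a} alone'' is not accurate: the product-to-sum trick suffices, and the ceiling $2-\alpha-\frac\beta2$ arises in the paper as the integrability threshold of that chain rather than as the third hypothesis of \lref{section4 cut lemma}. Your route --- extracting $\theta''>\theta$ and spending the surplus $\theta''-\theta$ through the negative output powers of \lref{section4 cut lemma} --- is nonetheless valid (with $\zeta=\eta=\theta''-\theta$ its hypotheses do reduce exactly to $\theta<2-\alpha-\frac\beta2$, and the degenerate cases $\alpha=0$ or $\beta=0$ are handled by your $\alpha_0$-perturbation), but it buys nothing here over the shorter elementary argument.

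Two soft spots remain. First, your reduction to $\|z'-z\|_s,\|z''-z\|_s\lesssim1$ is genuinely needed in your version (you discard positive surplus powers on $I_{11},I_{12},I_{21}$), but the stated justification is off: for large $\|z'-z\|_s$ the integrand does not vanish identically (only the $K(z-u)$ half of the difference dies), and ``the right-hand side is $\gtrsim1$'' would fail if the other distance were small. What actually saves you is that the compact supports force $\|z'-z''\|_s\lesssim1$, so the two distances are comparable up to an additive constant and the mixed regime cannot occur; in the regime where both are large you still owe $|Q_{\alpha,\beta}|\lesssim1$, which follows from your own integral bounds with $\theta'=0$. The paper avoids this entirely by never discarding a positive power. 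Second, for $I_{12}$ and $I_{21}$ the order of integration matters (integrate first over the vertex carrying only two kernels); the exponent conditions you list are the correct ones, but ``integrating out $v$ and then $u$'' does not work verbatim for both terms.
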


\begin{proof}
Since $|\begin{tikzpicture}[baseline=-0.1cm]
\putdots{A}{$(0,0)$}{left}{z'}
\putdots{B}{$(1,0)$}{right}{u}
\dputb{B}{A}{$(0.5,0)$}
\end{tikzpicture}|\lesssim\|z'-z\|_s^\theta(\|z'-u\|_s^{-3-\theta}+\|z-u\|_s^{-3-\theta})$ by \cite[Lemma~10.18]{Hairer2014a},
\begin{align*}
|Q_{\alpha,\beta}(z',z'')|\lesssim\|z'-z\|_s^\theta\|z''-z\|_s^\theta(R(z',z'')+R(z,z'')+R(z',z)+R(z,z)),
\end{align*}
where
\begin{align*}
R(u',u'')=R_{z',z''}(u',u'')=
\begin{tikzpicture}[baseline=-0.1cm]
\putdots{A}{$(0,0.4)$}{left}{z'}
\putdots{A'}{$(0,-0.4)$}{left}{u'}
\putdotg{B}{$(1,0)$}{right}{}
\putdotg{C}{$(2,0)$}{right}{}
\putdots{D}{$(3,0.4)$}{right}{z''}
\putdots{D'}{$(3,-0.4)$}{right}{u''}
\putl{B}{A}{$(0.3,0.1)$}{$(0.7,0.3)$}{$(0.5,0.2)$}{-\alpha}
\putl{B}{A'}{$(0.1,-0.3)$}{$(0.9,-0.1)$}{$(0.5,-0.2)$}{-3-\theta}
\putcl{C}{$(1.7,0)$}{$(1.3,0)$}{B}{$(1.3,-0.1)$}{$(1.7,0.1)$}{$(1.5,-0)$}{-\beta}
\putl{C}{D}{$(2.3,0.1)$}{$(2.7,0.3)$}{$(2.5,0.2)$}{-\alpha}
\putl{C}{D'}{$(2.1,-0.3)$}{$(2.9,-0.1)$}{$(2.5,-0.2)$}{-3-\theta}
\end{tikzpicture}.
\end{align*}
It suffices to show that $R$ is bounded. By the inequality $\|z'\|_s^{-\alpha}\|z''\|_s^{-\beta}\lesssim\|z'\|_s^{-\alpha-\beta}+\|z''\|_s^{-\alpha-\beta}$ for $\alpha,\beta\ge0$, the function $R$ is bounded by the sum of functions of the form
\begin{align*}
\begin{tikzpicture}[baseline=-0.1cm]
\putdots{D}{$(4,0)$}{right}{}
\putdotg{C}{$(2,0)$}{right}{}
\putdotg{B}{$(1,0)$}{left}{}
\putdots{A}{$(-1,0)$}{left}{}
\putl{D}{C}{$(2.35,-0.1)$}{$(3.65,0.1)$}{$(3,0)$}{-\alpha-3-\theta}
\putl{C}{B}{$(1.3,-0.1)$}{$(1.7,0.1)$}{$(1.5,0)$}{-\beta}
\putl{B}{A}{$(-0.65,-0.1)$}{$(0.65,0.1)$}{$(0,0)$}{-\alpha-3-\theta}
\end{tikzpicture},
\end{align*}
which is bounded by $1$ since $2(-\alpha-3-\theta)-\beta+10>0$.
\end{proof}

\subsection{Proof of $L^2$-estimates \eqref{section4:conv of models}}\label{4:subsection3}

Now we prove the estimate \eqref{section4:conv of models} for every $\tau\in\mathcal{F}$ with $|\tau|_s<0$.

\subsubsection{$\Xi,\IX,\JX,\CC,\CD,\CCD$}\label{4:subsection3_1}

For $\tau=\Xi,\IX,\JX$, the required estimates follow from \cite[Proposition~9.5]{Hairer2014a}. We now treat $\tau=\CC,\CD,\CCD$. By definition,
\begin{align*}
&\hat{\Pi}_z^\epsilon\CC=\Pi_z^\epsilon\CC=(\Pi_z^\epsilon\IX)^2,\\
&\hat{\Pi}_z^\epsilon\CD=\Pi_z^\epsilon\CD-C_1^\epsilon=(\Pi_z^\epsilon\IX)(\Pi_z^\epsilon\JX)-C_1^\epsilon.
\end{align*}
By applying the product formula to
\begin{align*}
&\Pi_z^\epsilon\IX(z')=K*\xi^\epsilon(z')=\mathcal{J}_{1,0}(
\begin{tikzpicture}[baseline=-0.1cm]
\putdots{A}{$(0,0)$}{left}{z'}
\putdot{B}{$(1,0)$}{right}{}
\putdb{B}{A}
\end{tikzpicture}),\\
&\Pi_z^\epsilon\JX(z')=\overline{K*\xi^\epsilon}(z')=\mathcal{J}_{0,1}(
\begin{tikzpicture}[baseline=-0.1cm]
\putdots{A}{$(0,0)$}{left}{z'}
\putdotw{B}{$(1,0)$}{right}{}
\putdw{B}{A}
\end{tikzpicture}),
\end{align*}
we have
\begin{align*}
&\hat{\Pi}_z^\epsilon\CC(z')=\mathcal{J}_{2,0}(
\begin{tikzpicture}[baseline=-0.1cm]
\putdots{A}{$(0,0)$}{left}{z'}
\putdot{B}{$(1,0.2)$}{right}{}
\putdot{C}{$(1,-0.2)$}{right}{}
\putdb{B}{A}
\putdb{C}{A}
\end{tikzpicture}),\\
&\hat{\Pi}_z^\epsilon\CD(z')=\mathcal{J}_{1,1}(
\begin{tikzpicture}[baseline=-0.1cm]
\putdots{A}{$(0,0)$}{left}{z'}
\putdot{B}{$(1,0.2)$}{right}{}
\putdotw{C}{$(1,-0.2)$}{right}{}
\putdb{B}{A}
\putdw{C}{A}
\end{tikzpicture})+
\begin{tikzpicture}[baseline=-0.1cm]
\putdots{A}{$(0,0)$}{left}{z'}
\putdotg{B}{$(1,0)$}{right}{}
\putcdb{B}{$(0.7,0.2)$}{$(0.3,0.2)$}{A}
\putcdw{B}{$(0.7,-0.2)$}{$(0.3,-0.2)$}{A}
\end{tikzpicture}-C_1^\epsilon.
\end{align*}
If we choose $C_1^\epsilon=\begin{tikzpicture}[baseline=-0.1cm]
\putdots{A}{$(0,0)$}{left}{z'}
\putdotg{B}{$(1,0)$}{right}{}
\putcdb{B}{$(0.7,0.2)$}{$(0.3,0.2)$}{A}
\putcdw{B}{$(0.7,-0.2)$}{$(0.3,-0.2)$}{A}
\end{tikzpicture}=\int |K^\epsilon(z)|^2dz$, the required estimates \eqref{3.5:convergence of models 1} for $\tau=\CC,\CD$ easily follow. Indeed,
\begin{align*}
\boldsymbol{E}[|\langle\hat{\Pi}_z^\epsilon\CC,\varphi_z^\delta\rangle|^2]
&=2\iint\varphi_z^\delta(z')\varphi_z^\delta(z'')
\begin{tikzpicture}[baseline=-0.1cm]
\putdots{A}{$(0,0)$}{left}{z'}
\putdotg{B}{$(1,0.2)$}{right}{}
\putdotg{C}{$(1,-0.2)$}{right}{}
\putdots{A'}{$(2,0)$}{right}{z''}
\putdb{B}{A}
\putdb{C}{A}
\putdw{B}{A'}
\putdw{C}{A'}
\end{tikzpicture}dz'dz''\\
&\lesssim\iint\varphi_z^\delta(z')\varphi_z^\delta(z'')
\left|\begin{tikzpicture}[baseline=-0.1cm]
\putdots{A}{$(0,0)$}{left}{z'}
\putdots{B}{$(1,0)$}{right}{z''}
\putcl{B}{$(0.7,0.2)$}{$(0.3,0.2)$}{A}{$(0.3,0.1)$}{$(0.7,0.3)$}{$(0.5,0.2)$}{-1}
\putcl{B}{$(0.7,-0.2)$}{$(0.3,-0.2)$}{A}{$(0.3,-0.3)$}{$(0.7,-0.1)$}{$(0.5,-0.2)$}{-1}
\end{tikzpicture}\right|dz'dz''\\
&\lesssim\iint\varphi_z^\delta(z')\varphi_z^\delta(z'')\|z'-z''\|_s^{-2}dz'dz''\lesssim\delta^{-2}
\end{align*}
and $-2>|\CC|_s=4(\alpha+2)$. Moreover, if we choose
\begin{align*}
\hat{\Pi}_z\CC(z')=\mathcal{J}_{2,0}(
\begin{tikzpicture}[baseline=-0.1cm]
\putdots{A}{$(0,0)$}{left}{z'}
\putdot{B}{$(1,0.2)$}{right}{}
\putdot{C}{$(1,-0.2)$}{right}{}
\putb{B}{A}
\putb{C}{A}
\end{tikzpicture}),
\end{align*}
then we have
\begin{align*}
\boldsymbol{E}[|\langle\hat{\Pi}_z^\epsilon\CC-\hat{\Pi}_z\CC,\varphi_z^\delta\rangle|^2]\lesssim\epsilon^\theta\delta^{-2-\theta}
\end{align*}
for small $\theta>0$. This is obtained by similar argument, since $|(K^\epsilon-K)(z)|\lesssim\epsilon^\theta\|z\|_s^{-3-\theta}$ for $\theta\in(0,1]$ (see \cite[Lemma~10.17]{Hairer2014a}).
The case $\tau=\CD$ is similar.

For $\tau=\CCD$, by the choice of $C_1^\epsilon$ we have
\begin{align*}
\hat{\Pi}_z^\epsilon\CCD(z')&=(\Pi_z^\epsilon\IX)(z')^2(\Pi_z^\epsilon\JX)(z')-2C_1^\epsilon(\Pi_z^\epsilon\IX)(z')\\
&=\mathcal{J}_{2,1}(
\begin{tikzpicture}[baseline=-0.1cm]
\putdots{A}{$(0,0)$}{left}{z'}
\putdot{B}{$(1,0.3)$}{right}{}
\putdot{C}{$(1,0)$}{right}{}
\putdotw{D}{$(1,-0.3)$}{right}{}
\putdb{B}{A}
\putdb{C}{A}
\putdw{D}{A}
\end{tikzpicture}).
\end{align*}
Then the estimate \eqref{3.5:convergence of models 1} for $\tau=\CCD$ easily follows as above. Indeed,
\begin{align*}
\boldsymbol{E}[|\langle\hat{\Pi}_z^\epsilon\CCD,\varphi_z^\delta\rangle|^2]
&=2\iint\varphi_z^\delta(z')\varphi_z^\delta(z'')
\begin{tikzpicture}[baseline=-0.1cm]
\putdots{A}{$(0,0)$}{left}{z'}
\putdotg{B}{$(1,0.3)$}{right}{}
\putdotg{C}{$(1,0)$}{right}{}
\putdotg{D}{$(1,-0.3)$}{right}{}
\putdots{A'}{$(2,0)$}{right}{z''}
\putdb{B}{A}
\putdb{C}{A}
\putdw{D}{A}
\putdw{B}{A'}
\putdw{C}{A'}
\putdb{D}{A'}
\end{tikzpicture}dz'dz''\\
&\lesssim\iint\varphi_z^\delta(z')\varphi_z^\delta(z'')
\left|\begin{tikzpicture}[baseline=-0.1cm]
\putdots{A}{$(0,0)$}{left}{z'}
\putdots{B}{$(1,0)$}{right}{z''}
\putcl{B}{$(0.7,0.3)$}{$(0.3,0.3)$}{A}{$(0.3,0.2)$}{$(0.7,0.4)$}{$(0.5,0.3)$}{-1}
\putcl{B}{$(0.7,-0.3)$}{$(0.3,-0.3)$}{A}{$(0.3,-0.4)$}{$(0.7,-0.2)$}{$(0.5,-0.3)$}{-1}
\putcl{B}{$(0.7,0)$}{$(0.3,0)$}{A}{$(0.3,-0.1)$}{$(0.7,0.1)$}{$(0.5,0)$}{-1}
\end{tikzpicture}\right|dz'dz''\\
&\lesssim\iint\varphi_z^\delta(z')\varphi_z^\delta(z'')\|z'-z''\|_s^{-3}dz'dz''\lesssim\delta^{-3}
\end{align*}
and $-3>|\CCD|_s=6(\alpha+2)$. The estimates for $\hat{\Pi}_z^\epsilon\CCD-\hat{\Pi}_z\CCD$ are similarly obtained by choosing
\begin{align*}
\hat{\Pi}_z^\epsilon\CCD(z')=\mathcal{J}_{2,1}(
\begin{tikzpicture}[baseline=-0.1cm]
\putdots{A}{$(0,0)$}{left}{z'}
\putdot{B}{$(1,0.3)$}{right}{}
\putdot{C}{$(1,0)$}{right}{}
\putdotw{D}{$(1,-0.3)$}{right}{}
\putb{B}{A}
\putb{C}{A}
\putw{D}{A}
\end{tikzpicture}).
\end{align*}

In the subsequent computations, the estimates of $\hat{\Pi}_z^\epsilon\tau-\hat{\Pi}_z\tau$ are obtained by similar arguments to those of $\hat{\Pi}_z^\epsilon\tau$ as above by using the bound of $K^\epsilon-K$, so we  show only the uniform boundedness but not the convergence estimates explicitly. For detailed proofs, see \cite[Section~4.8]{Hoshino2016}.

\subsubsection{$X_i\CC,X_i\CD,\CDICC,\CCJDD,\CDICD,\CCJDC$}\label{4:subsection3_2}

For $\tau=X_i\CC,X_i\CD$, the corresponding estimates are easily obtained. Indeed, since $\hat{\Pi}_z^\epsilon X_i\CC(z')=(x_i'-x_i)\hat{\Pi}_z^\epsilon\CC(z')$ we have
\begin{align*}
&\boldsymbol{E}[|\langle\hat{\Pi}_z^\epsilon X_i\CC,\varphi_z^\delta\rangle|^2]\\
&=2\iint\varphi_z^\delta(z')\varphi_z^\delta(z'')(x_i'-x_i)(x_i''-x_i)
\begin{tikzpicture}[baseline=-0.1cm]
\putdots{A}{$(0,0)$}{left}{z'}
\putdotg{B}{$(1,0.2)$}{right}{}
\putdotg{C}{$(1,-0.2)$}{right}{}
\putdots{A'}{$(2,0)$}{right}{z''}
\putdb{B}{A}
\putdb{C}{A}
\putdw{B}{A'}
\putdw{C}{A'}
\end{tikzpicture}dz'dz''\\
&\lesssim\iint\varphi_z^\delta(z')\varphi_z^\delta(z'')(x_i'-x_i)(x_i''-x_i)\|z'-z''\|_s^{-2}dz'dz''\lesssim1
\end{align*}
and $0>2|X_i\CC|_s=2(2\alpha+5)$. The case $\tau=X_i\CD$ is similar.

Now we turn to $\CDICC,\CCJDD,\CDICD,\CCJDC$. In particular, we consider the renormalizations of $\CCJDD$ and $\CDICD$, since the corresponding chaos decompositions of the two other elements do not have zeroth order terms. By definition,
\begin{align*}
&\hat{\Pi}_z^\epsilon\CCJDD=\Pi_z^\epsilon\CCJDD-2C_{2,1}^\epsilon
=(\Pi_z^\epsilon\CC)(\Pi_z^\epsilon\JDD)-2C_{2,1}^\epsilon,\\
&\hat{\Pi}_z^\epsilon\CDICD=\Pi_z^\epsilon\CDICD-C_1^\epsilon\Pi_z^\epsilon\ICD-C_{2,2}^\epsilon=(\Pi_z^\epsilon\CD-C_1^\epsilon)(\Pi_z^\epsilon\ICD)-C_{2,2}^\epsilon.
\end{align*}
We note that
\begin{align*}
&\Pi_z^\epsilon\JDD(z')=\overline{K*\Pi_z^\epsilon\CC}(z')-\overline{K*\Pi_z^\epsilon\CC}(z)
=\mathcal{J}_{0,2}(\begin{tikzpicture}[baseline=-0.1cm]
\putdots{A}{$(0,0)$}{left}{z'}
\putdotg{D}{$(1,0)$}{right}{}
\putdotw{E}{$(1.8,0.2)$}{right}{}
\putdotw{F}{$(1.8,-0.2)$}{right}{}
\dputw{D}{A}{$(0.5,0)$}
\putdw{E}{D}
\putdw{F}{D}
\end{tikzpicture}),\\
&\Pi_z^\epsilon\ICD(z')=K*\Pi_z^\epsilon\CD(z')-K*\Pi_z^\epsilon\CD(z)
=\mathcal{J}_{1,1}(\begin{tikzpicture}[baseline=-0.1cm]
\putdots{A}{$(0,0)$}{left}{z'}
\putdotg{D}{$(1,0)$}{right}{}
\putdot{E}{$(1.8,0.2)$}{right}{}
\putdotw{F}{$(1.8,-0.2)$}{right}{}
\dputb{D}{A}{$(0.5,0)$}
\putdb{E}{D}
\putdw{F}{D}
\end{tikzpicture}).
\end{align*}
By applying the product formula (\tref{2:product}) we have
\begin{align*}
\hat{\Pi}_z^\epsilon\CCJDD(z')
&=\mathcal{J}_{2,2}(\begin{tikzpicture}[baseline=-0.1cm]
\putdots{A}{$(0,0)$}{left}{z'}
\putdot{B}{$(0.8,0.3)$}{right}{}
\putdot{C}{$(0.8,-0.3)$}{right}{}
\putdotg{D}{$(1,0)$}{right}{}
\putdotw{E}{$(1.8,0.2)$}{right}{}
\putdotw{F}{$(1.8,-0.2)$}{right}{}
\putdb{B}{A}
\putdb{C}{A}
\dputw{D}{A}{$(0.5,0)$}
\putdw{E}{D}
\putdw{F}{D}
\end{tikzpicture})
+4\mathcal{J}_{1,1}(\begin{tikzpicture}[baseline=-0.1cm]
\putdots{A}{$(0,0)$}{left}{z'}
\putdotg{B}{$(0.5,0.4)$}{right}{}
\putdot{C}{$(0.8,-0.3)$}{right}{}
\putdotg{D}{$(1,0)$}{right}{}
\putdotw{F}{$(1.8,0)$}{right}{}
\putdb{B}{A}
\putdb{C}{A}
\dputw{D}{A}{$(0.5,0)$}
\putdw{B}{D}
\putdw{F}{D}
\end{tikzpicture})\\
&\quad+2(\begin{tikzpicture}[baseline=-0.1cm]
\putdots{A}{$(0,0)$}{left}{z'}
\putdotg{B}{$(0.5,0.4)$}{right}{}
\putdotg{C}{$(0.5,-0.4)$}{right}{}
\putdotg{D}{$(1,0)$}{right}{}
\putdb{B}{A}
\putdb{C}{A}
\dputw{D}{A}{$(0.5,0)$}
\putdw{B}{D}
\putdw{C}{D}
\end{tikzpicture}-C_{2,1}^\epsilon)
\end{align*}
and
\begin{align*}
\hat{\Pi}_z^\epsilon\CDICD(z')
&=\mathcal{J}_{2,2}(\begin{tikzpicture}[baseline=-0.1cm]
\putdots{A}{$(0,0)$}{left}{z'}
\putdot{B}{$(0.8,0.3)$}{right}{}
\putdotw{C}{$(0.8,-0.3)$}{right}{}
\putdotg{D}{$(1,0)$}{right}{}
\putdot{E}{$(1.8,0.2)$}{right}{}
\putdotw{F}{$(1.8,-0.2)$}{right}{}
\putdb{B}{A}
\putdw{C}{A}
\dputb{D}{A}{$(0.5,0)$}
\putdb{E}{D}
\putdw{F}{D}
\end{tikzpicture})
+\mathcal{J}_{1,1}(\begin{tikzpicture}[baseline=-0.1cm]
\putdots{A}{$(0,0)$}{left}{z'}
\putdotg{B}{$(0.5,0.4)$}{right}{}
\putdotw{C}{$(0.8,-0.3)$}{right}{}
\putdotg{D}{$(1,0)$}{right}{}
\putdot{F}{$(1.8,0)$}{right}{}
\putdb{B}{A}
\putdw{C}{A}
\dputb{D}{A}{$(0.5,0)$}
\putdw{B}{D}
\putdb{F}{D}
\end{tikzpicture})\\
&\quad+\mathcal{J}_{1,1}(\begin{tikzpicture}[baseline=-0.1cm]
\putdots{A}{$(0,0)$}{left}{z'}
\putdotg{B}{$(0.5,0.4)$}{right}{}
\putdot{C}{$(0.8,-0.3)$}{right}{}
\putdotg{D}{$(1,0)$}{right}{}
\putdotw{F}{$(1.8,0)$}{right}{}
\putdw{B}{A}
\putdb{C}{A}
\dputb{D}{A}{$(0.5,0)$}
\putdb{B}{D}
\putdw{F}{D}
\end{tikzpicture})
+\begin{tikzpicture}[baseline=-0.1cm]
\putdots{A}{$(0,0)$}{left}{z'}
\putdotg{B}{$(0.5,0.4)$}{right}{}
\putdotg{C}{$(0.5,-0.4)$}{right}{}
\putdotg{D}{$(1,0)$}{right}{}
\putdb{B}{A}
\putdw{C}{A}
\dputb{D}{A}{$(0.5,0)$}
\putdw{B}{D}
\putdb{C}{D}
\end{tikzpicture}-C_{2,2}^\epsilon.
\end{align*}
Hence if we choose
\begin{align*}
C_{2,1}^\epsilon=\begin{tikzpicture}[baseline=-0.1cm]
\putdots{A}{$(0,0)$}{left}{z'}
\putdotg{B}{$(0.5,0.4)$}{right}{}
\putdotg{C}{$(0.5,-0.4)$}{right}{}
\putdotg{D}{$(1,0)$}{right}{}
\putdb{B}{A}
\putdb{C}{A}
\putw{D}{A}
\putdw{B}{D}
\putdw{C}{D}
\end{tikzpicture},\quad
C_{2,2}^\epsilon=\begin{tikzpicture}[baseline=-0.1cm]
\putdots{A}{$(0,0)$}{left}{z'}
\putdotg{B}{$(0.5,0.4)$}{right}{}
\putdotg{C}{$(0.5,-0.4)$}{right}{}
\putdotg{D}{$(1,0)$}{right}{}
\putdb{B}{A}
\putdw{C}{A}
\putb{D}{A}
\putdw{B}{D}
\putdb{C}{D}
\end{tikzpicture},
\end{align*}
we have the required bounds. Indeed, since kernels belonging to the same order chaos have the same graphs except for the difference of $K$ and $\overline{K}$, it suffices to show the bounds for one of these kernels for each order chaos. For remaining zeroth order terms, we have the bounds
\begin{align*}
|\begin{tikzpicture}[baseline=-0.1cm]
\putdots{A}{$(0,0)$}{left}{z'}
\putdotg{B}{$(0.5,0.3)$}{right}{}
\putdotg{C}{$(0.5,-0.3)$}{right}{}
\putdotg{D}{$(1,0)$}{right}{}
\putdots{A'}{$(1.8,0)$}{right}{z}
\putdb{B}{A}
\putdb{C}{A}
\putw{D}{A'}
\putdw{B}{D}
\putdw{C}{D}
\end{tikzpicture}|
\lesssim|\begin{tikzpicture}[baseline=-0.1cm]
\putdots{A}{$(0,0)$}{left}{z'}
\putdotg{B}{$(1,0)$}{right}{}
\putdots{C}{$(2,0)$}{right}{z}
\putcl{B}{$(0.7,0.2)$}{$(0.3,0.2)$}{A}{$(0.3,0.1)$}{$(0.7,0.3)$}{$(0.5,0.2)$}{-1}
\putcl{B}{$(0.7,-0.2)$}{$(0.3,-0.2)$}{A}{$(0.3,-0.3)$}{$(0.7,-0.1)$}{$(0.5,-0.2)$}{-1}
\putcl{C}{$(1.7,0)$}{$(1.3,0)$}{B}{$(1.3,-0.1)$}{$(1.7,0.1)$}{$(1.5,0)$}{-3}
\end{tikzpicture}|\lesssim\|z'-z\|_s^{-\kappa}
\end{align*}
for an arbitrary small $\kappa>0$. For the second order terms, by \lref{section4 glass lemma} we have
\begin{align*}
|\begin{tikzpicture}[baseline=0.4cm]
\putdots{A}{$(0,0)$}{left}{z'}
\putdotg{B}{$(-0.4,0.5)$}{right}{}
\putdotg{C}{$(0.7,0)$}{right}{}
\putdotg{D}{$(0,1)$}{right}{}
\putdotg{F}{$(0.7,1)$}{right}{}
\putdots{A'}{$(1.4,0)$}{right}{z''}
\putdotg{B'}{$(1.8,0.5)$}{right}{}
\putdotg{D'}{$(1.4,1)$}{right}{}
\putdb{B}{A}
\putdb{C}{A}
\hputw{D}{A}{$(0,0.5)$}
\putdw{B}{D}
\putdw{F}{D}
\putdw{B'}{A'}
\putdw{C}{A'}
\hputb{D'}{A'}{$(1.4,0.5)$}
\putdb{B'}{D'}
\putdb{F}{D'}
\end{tikzpicture}|&=
|\begin{tikzpicture}[baseline=-0.1cm]
\putdots{A}{$(0,0)$}{left}{z'}
\putdotg{C}{$(0.7,0)$}{right}{}
\putdots{A'}{$(1.4,0)$}{right}{z''}
\putdb{C}{A}
\putdw{C}{A'}
\end{tikzpicture}|\times|Q_{1,1}(z',z'')|\\
&\lesssim\|z'-z\|_s^{\frac12-\kappa}\|z''-z\|_s^{\frac12-\kappa}\|z'-z''\|_s^{-1}
\end{align*}
for small $\kappa>0$. Similarly, for the fourth order terms, we have
\begin{align*}
|\begin{tikzpicture}[baseline=0.4cm]
\putdots{A}{$(0,0)$}{left}{z'}
\putdotg{C}{$(0.7,0.2)$}{right}{}
\putdotg{C'}{$(0.7,-0.2)$}{right}{}
\putdotg{D}{$(0,1)$}{right}{}
\putdotg{F}{$(0.7,0.8)$}{right}{}
\putdotg{F'}{$(0.7,1.2)$}{right}{}
\putdots{A'}{$(1.4,0)$}{right}{z''}
\putdotg{D'}{$(1.4,1)$}{right}{}
\putdb{C}{A}
\putdb{C'}{A}
\hputw{D}{A}{$(0,0.5)$}
\putdw{F}{D}
\putdw{F'}{D}
\putdw{C}{A'}
\putdw{C'}{A'}
\hputb{D'}{A'}{$(1.4,0.5)$}
\putdb{F}{D'}
\putdb{F'}{D'}
\end{tikzpicture}|&=
|\begin{tikzpicture}[baseline=-0.1cm]
\putdots{A}{$(0,0)$}{left}{z'}
\putdotg{C}{$(0.7,0.2)$}{right}{}
\putdotg{C'}{$(0.7,-0.2)$}{right}{}
\putdots{A'}{$(1.4,0)$}{right}{z''}
\putdb{C}{A}
\putdw{C}{A'}
\putdb{C'}{A}
\putdw{C'}{A'}
\end{tikzpicture}|\times|Q_{0,2}(z',z'')|\\
&\lesssim\|z'-z\|_s^{1-\kappa}\|z''-z\|_s^{1-\kappa}\|z'-z''\|_s^{-2}
\end{align*}
for small $\kappa>0$. As a consequence, we have
\begin{align*}
E|\langle\hat{\Pi}_z^\epsilon\CCJDD,\varphi_z^\delta\rangle|^2&\lesssim\iint\varphi_z^\delta(z')\varphi_z^\delta(z'')\{\|z'-z\|_s^{-\kappa}\|z''-z\|_s^{-\kappa}\\
&\quad+\|z'-z\|_s^{\frac12-\kappa}\|z''-z\|_s^{\frac12-\kappa}\|z'-z''\|_s^{-1}\\
&\quad+\|z'-z\|_s^{1-\kappa}\|z''-z\|_s^{1-\kappa}\|z'-z''\|_s^{-2}\}dz'dz''\lesssim\delta^{-2\kappa}
\end{align*}
for an arbitrary small $\kappa>0$. The cases $\tau=\CDICC,\CDICD,\CCJDC$ are similar.

\subsubsection{$\CICCD,\DICCD,\CJDDC,\CDICCD,\CCJDDC$}

We treat the case $\tau=\CICCD$. The cases $\tau=\DICCD,\CJDDC$ are similar. By definition,
\begin{align*}
\hat{\Pi}_z^\epsilon\CICCD(z')&=\Pi_z^\epsilon\CICCD(z')-2C_1^\epsilon(\Pi_z^\epsilon\CIC(z')-f_z^\epsilon(\mathcal{J}_i\IX)(\Pi_z^\epsilon X_i\IX)(z'))\\
&=\Pi_z^\epsilon\IX(z')\{K*\Pi_z^\epsilon\CCD(z')-K*\Pi_z^\epsilon\CCD(z)\\
&\qquad-2C_1^\epsilon(K*\Pi_z^\epsilon\IX(z')-K*\Pi_z^\epsilon\IX(z))\}\\
&=\mathcal{J}_{1,0}(
\begin{tikzpicture}[baseline=-0.1cm]
\putdots{A}{$(0,0)$}{left}{z'}
\putdot{B}{$(1,0)$}{right}{}
\putdb{B}{A}
\end{tikzpicture})
\mathcal{J}_{2,1}(
\begin{tikzpicture}[baseline=-0.1cm]
\putdots{A}{$(0,0)$}{left}{z'}
\putdotg{E}{$(1,0)$}{left}{}
\putdot{B}{$(2,0.3)$}{right}{}
\putdot{C}{$(2,0)$}{right}{}
\putdotw{D}{$(2,-0.3)$}{right}{}
\putdb{B}{E}
\putdb{C}{E}
\putdw{D}{E}
\dputb{E}{A}{$(0.5,0)$}
\end{tikzpicture})\\
&=\mathcal{J}_{3,1}(
\begin{tikzpicture}[baseline=-0.1cm]
\putdots{A}{$(0,0)$}{left}{z'}
\putdotg{E}{$(1,0)$}{left}{}
\putdot{F}{$(0.8,-0.3)$}{left}{}
\putdot{B}{$(2,0.3)$}{right}{}
\putdot{C}{$(2,0)$}{right}{}
\putdotw{D}{$(2,-0.3)$}{right}{}
\putdb{B}{E}
\putdb{C}{E}
\putdw{D}{E}
\dputb{E}{A}{$(0.5,0)$}
\putdb{F}{A}
\end{tikzpicture})
+\mathcal{J}_{2,0}(
\begin{tikzpicture}[baseline=-0.1cm]
\putdots{A}{$(0,0)$}{left}{z'}
\putdotg{E}{$(1,0)$}{left}{}
\putdot{B}{$(2,0.2)$}{right}{}
\putdot{C}{$(2,-0.2)$}{right}{}
\putdotg{D}{$(0.5,0.4)$}{right}{}
\putdb{B}{E}
\putdb{C}{E}
\putdw{D}{E}
\dputb{E}{A}{$(0.5,0)$}
\putdb{D}{A}
\end{tikzpicture}).
\end{align*}
The summation symbol over $i=1,2,3$ is omitted again. For the fourth order term, by \lref{section4 glass lemma} we have
\begin{align*}
|\begin{tikzpicture}[baseline=0.4cm]
\putdots{A}{$(0,0)$}{left}{z'}
\putdotg{E}{$(0,1)$}{left}{}
\putdotg{F}{$(1,0)$}{left}{}
\putdotg{B}{$(1,1.3)$}{right}{}
\putdotg{C}{$(1,1)$}{right}{}
\putdotg{D}{$(1,0.7)$}{right}{}
\putdots{A'}{$(2,0)$}{right}{z''}
\putdotg{E'}{$(2,1)$}{left}{}
\putdb{B}{E}
\putdb{C}{E}
\putdw{D}{E}
\hputb{E}{A}{$(0,0.5)$}
\putdb{F}{A}
\putdw{B}{E'}
\putdw{C}{E'}
\putdb{D}{E'}
\hputw{E'}{A'}{$(2,0.5)$}
\putdw{F}{A'}
\end{tikzpicture}|&=
|\begin{tikzpicture}[baseline=-0.1cm]
\putdots{A}{$(0,0)$}{left}{z'}
\putdotg{C}{$(0.7,0)$}{right}{}
\putdots{A'}{$(1.4,0)$}{right}{z''}
\putdb{C}{A}
\putdw{C}{A'}
\end{tikzpicture}|\times|Q_{0,3}(z',z'')|\\
&\lesssim\|z'-z\|_s^{\frac12-\kappa}\|z''-z\|_s^{\frac12-\kappa}\|z'-z''\|_s^{-1}
\end{align*}
for small $\kappa>0$. For the second order term, we decompose it as
\begin{align}\label{section4_4 second of CICCD}
\begin{tikzpicture}[baseline=-0.1cm]
\putdots{A}{$(0,0)$}{left}{z'}
\putdotg{E}{$(1,0)$}{left}{}
\putdot{B}{$(2,0.2)$}{right}{}
\putdot{C}{$(2,-0.2)$}{right}{}
\putdotg{D}{$(0.5,0.4)$}{right}{}
\putdb{B}{E}
\putdb{C}{E}
\putdw{D}{E}
\dputb{E}{A}{$(0.5,0)$}
\putdb{D}{A}
\end{tikzpicture}=
\begin{tikzpicture}[baseline=-0.1cm]
\putdots{A}{$(0,0)$}{left}{z'}
\putdotg{E}{$(1,0)$}{left}{}
\putdot{B}{$(2,0.2)$}{right}{}
\putdot{C}{$(2,-0.2)$}{right}{}
\putdotg{D}{$(0.5,0.4)$}{right}{}
\putdb{B}{E}
\putdb{C}{E}
\putdw{D}{E}
\putb{E}{A}
\putdb{D}{A}
\end{tikzpicture}-
\begin{tikzpicture}[baseline=-0.1cm]
\putdots{A}{$(0,0)$}{left}{z'}
\putdotg{E}{$(1,0)$}{left}{}
\putdot{B}{$(2,0.2)$}{right}{}
\putdot{C}{$(2,-0.2)$}{right}{}
\putdotg{D}{$(0.5,0.4)$}{right}{}
\putdots{F}{$(0.2,-0.4)$}{left}{z}
\putdb{B}{E}
\putdb{C}{E}
\putdw{D}{E}
\putb{E}{F}
\putdb{D}{A}
\end{tikzpicture}.
\end{align}
By Schwarz's inequality, it suffices to consider the bound for each term. For the first term, we have
\begin{align*}
|\begin{tikzpicture}[baseline=-0.1cm]
\putdots{A}{$(0,0)$}{left}{z'}
\putdotg{E}{$(0.8,0)$}{left}{}
\putdotg{B}{$(1.6,0.2)$}{right}{}
\putdotg{C}{$(1.6,-0.2)$}{right}{}
\putdotg{D}{$(0.4,0.4)$}{right}{}
\putdots{A'}{$(3.2,0)$}{right}{z''}
\putdotg{E'}{$(2.4,0)$}{left}{}
\putdotg{D'}{$(2.8,0.4)$}{right}{}
\putdb{B}{E}
\putdb{C}{E}
\putdw{D}{E}
\putb{E}{A}
\putdb{D}{A}
\putdw{B}{E'}
\putdw{C}{E'}
\putdb{D'}{E'}
\putw{E'}{A'}
\putdw{D'}{A'}
\end{tikzpicture}|\lesssim
|\begin{tikzpicture}[baseline=-0.1cm]
\putdots{A}{$(0,0)$}{left}{z'}
\putdotg{B}{$(1,0)$}{right}{}
\putdotg{C}{$(2,0)$}{right}{}
\putdots{D}{$(3,0)$}{right}{z''}
\putcl{B}{$(0.7,0)$}{$(0.3,0)$}{A}{$(0.3,-0.1)$}{$(0.7,0.1)$}{$(0.5,0)$}{-4}
\putcl{C}{$(1.7,0)$}{$(1.3,0)$}{B}{$(1.3,-0.1)$}{$(1.7,0.1)$}{$(1.5,-0)$}{-2}
\putcl{D}{$(2.7,0)$}{$(2.3,0)$}{C}{$(2.3,-0.1)$}{$(2.7,0.1)$}{$(2.5,0)$}{-4}
\end{tikzpicture}|\lesssim\|z'-z''\|_s^{-\kappa}
\end{align*}
for small $\kappa>0$. For the second term, by \lref{section4 cut lemma} we have
\begin{align*}
|\begin{tikzpicture}[baseline=-0.1cm]
\putdots{A}{$(0,0)$}{left}{z'}
\putdotg{E}{$(0.8,0)$}{left}{}
\putdotg{B}{$(1.6,0.2)$}{right}{}
\putdotg{C}{$(1.6,-0.2)$}{right}{}
\putdotg{D}{$(0.4,0.4)$}{right}{}
\putdots{F}{$(0.16,-0.4)$}{left}{z}
\putdots{A'}{$(3.2,0)$}{right}{z''}
\putdotg{E'}{$(2.4,0)$}{left}{}
\putdotg{D'}{$(2.8,0.4)$}{right}{}
\putdots{F'}{$(3.04,-0.4)$}{right}{z}
\putdb{B}{E}
\putdb{C}{E}
\putdw{D}{E}
\putb{E}{F}
\putdb{D}{A}
\putdw{B}{E'}
\putdw{C}{E'}
\putdb{D'}{E'}
\putw{E'}{F'}
\putdw{D'}{A'}
\end{tikzpicture}|&\lesssim
|\begin{tikzpicture}[baseline=-0.1cm]
\putdots{A}{$(0,0.4)$}{left}{z'}
\putdots{A'}{$(0,-0.4)$}{left}{z}
\putdotg{B}{$(1,0)$}{right}{}
\putdotg{C}{$(2,0)$}{right}{}
\putdots{D}{$(3,0.4)$}{right}{z''}
\putdots{D'}{$(3,-0.4)$}{right}{z}
\putl{B}{A}{$(0.3,0.1)$}{$(0.7,0.3)$}{$(0.5,0.2)$}{-1}
\putl{B}{A'}{$(0.3,-0.3)$}{$(0.7,-0.1)$}{$(0.5,-0.2)$}{-3}
\putcl{C}{$(1.7,0)$}{$(1.3,0)$}{B}{$(1.3,-0.1)$}{$(1.7,0.1)$}{$(1.5,-0)$}{-2}
\putl{C}{D}{$(2.3,0.1)$}{$(2.7,0.3)$}{$(2.5,0.2)$}{-1}
\putl{C}{D'}{$(2.3,-0.3)$}{$(2.7,-0.1)$}{$(2.5,-0.2)$}{-3}
\end{tikzpicture}|\\
&\lesssim\|z'-z\|_s^{-\kappa}\|z''-z\|_s^{-\kappa}
\end{align*}
for small $\kappa>0$. As a consequence, we have
\begin{align*}
E|\langle\hat{\Pi}_z^\epsilon\CICCD,\varphi_z^\delta\rangle|^2\lesssim\delta^{-2\kappa}.
\end{align*}

Finally we treat $\tau=\CDICCD$. The other one is similar. By definition,
\begin{align*}
\hat{\Pi}_z^\epsilon\CDICCD(z')
&=\Pi_z^\epsilon\{(\ICCD-2C_1^\epsilon\IC)(\CD-C_1^\epsilon\unit)-2C_{2,2}^\epsilon\IX\}(z')\\
&\quad+2C_1^\epsilon f_z^\epsilon(\mathcal{J}_i\IX)\Pi_z^\epsilon\{X_i(\CD-C_1^\epsilon\unit)\}(z')\\
&=\Pi_z^\epsilon(\CD-C_1^\epsilon)(z')\{K*\Pi_z^\epsilon\CCD(z')-K*\Pi_z^\epsilon\CCD(z)\\
&\quad-2C_1^\epsilon(K*\Pi_z^\epsilon\IX(z')-K*\Pi_z^\epsilon\IX(z))\}(z')-2C_{2,2}^\epsilon\Pi_z^\epsilon\IX(z')\\
&=\mathcal{J}_{1,1}(
\begin{tikzpicture}[baseline=-0.1cm]
\putdots{A}{$(0,0)$}{left}{z'}
\putdot{B}{$(1,0.2)$}{right}{}
\putdotw{C}{$(1,-0.2)$}{right}{}
\putdb{B}{A}
\putdw{C}{A}
\end{tikzpicture})
\mathcal{J}_{2,1}(
\begin{tikzpicture}[baseline=-0.1cm]
\putdots{A}{$(0,0)$}{left}{z'}
\putdotg{E}{$(1,0)$}{left}{}
\putdot{B}{$(2,0.3)$}{right}{}
\putdot{C}{$(2,0)$}{right}{}
\putdotw{D}{$(2,-0.3)$}{right}{}
\putdb{B}{E}
\putdb{C}{E}
\putdw{D}{E}
\dputb{E}{A}{$(0.5,0)$}
\end{tikzpicture})\\
&\quad
-2\begin{tikzpicture}[baseline=-0.1cm]
\putdots{A}{$(0,0)$}{left}{z'}
\putdotg{B}{$(0.5,0.4)$}{right}{}
\putdotg{C}{$(0.5,-0.4)$}{right}{}
\putdotg{D}{$(1,0)$}{right}{}
\putdb{B}{A}
\putdw{C}{A}
\putb{D}{A}
\putdw{B}{D}
\putdb{C}{D}
\end{tikzpicture}
\mathcal{J}_{1,0}(
\begin{tikzpicture}[baseline=-0.1cm]
\putdots{A}{$(0,0)$}{left}{z'}
\putdot{B}{$(1,0)$}{right}{}
\putdb{B}{A}
\end{tikzpicture})\\
&=\mathcal{J}_{3,2}(\begin{tikzpicture}[baseline=-0.1cm]
\putdots{A}{$(0,0)$}{left}{z'}
\putdotg{E}{$(1,0)$}{left}{}
\putdot{B}{$(2,0.3)$}{right}{}
\putdot{C}{$(2,0)$}{right}{}
\putdotw{D}{$(2,-0.3)$}{right}{}
\putdot{F}{$(0.8,0.3)$}{right}{}
\putdotw{G}{$(0.8,-0.3)$}{right}{}
\putdb{B}{E}
\putdb{C}{E}
\putdw{D}{E}
\dputb{E}{A}{$(0.5,0)$}
\putdb{F}{A}
\putdw{G}{A}
\end{tikzpicture})
+\mathcal{J}_{2,1}(
\begin{tikzpicture}[baseline=-0.1cm]
\putdots{A}{$(0,0)$}{left}{z'}
\putdotg{E}{$(1,0)$}{left}{}
\putdot{B}{$(2,0.2)$}{right}{}
\putdot{C}{$(2,-0.2)$}{right}{}
\putdotg{D}{$(0.5,0.4)$}{right}{}
\putdotw{F}{$(0.8,-0.3)$}{right}{}
\putdb{B}{E}
\putdb{C}{E}
\putdw{D}{E}
\dputb{E}{A}{$(0.5,0)$}
\putdb{D}{A}
\putdw{F}{A}
\end{tikzpicture})\\
&\quad+2\mathcal{J}_{2,1}(
\begin{tikzpicture}[baseline=-0.1cm]
\putdots{A}{$(0,0)$}{left}{z'}
\putdotg{E}{$(1,0)$}{left}{}
\putdot{B}{$(2,0.2)$}{right}{}
\putdotw{C}{$(2,-0.2)$}{right}{}
\putdotg{D}{$(0.5,0.4)$}{right}{}
\putdot{F}{$(0.8,-0.3)$}{right}{}
\putdb{B}{E}
\putdw{C}{E}
\putdb{D}{E}
\dputb{E}{A}{$(0.5,0)$}
\putdw{D}{A}
\putdb{F}{A}
\end{tikzpicture})\\
&\quad+2\{\mathcal{J}_{1,0}(\begin{tikzpicture}[baseline=-0.1cm]
\putdots{A}{$(0,0)$}{left}{z'}
\putdotg{B}{$(0.5,0.4)$}{right}{}
\putdotg{C}{$(0.5,-0.4)$}{right}{}
\putdotg{D}{$(1,0)$}{right}{}
\putdot{E}{$(2,0)$}{right}{}
\putdb{B}{A}
\putdw{C}{A}
\dputb{D}{A}{$(0.5,0)$}
\putdw{B}{D}
\putdb{C}{D}
\putdb{E}{D}
\end{tikzpicture})
-\begin{tikzpicture}[baseline=-0.1cm]
\putdots{A}{$(0,0)$}{left}{z'}
\putdotg{B}{$(0.5,0.4)$}{right}{}
\putdotg{C}{$(0.5,-0.4)$}{right}{}
\putdotg{D}{$(1,0)$}{right}{}
\putdb{B}{A}
\putdw{C}{A}
\putb{D}{A}
\putdw{B}{D}
\putdb{C}{D}
\end{tikzpicture}
\mathcal{J}_{1,0}(
\begin{tikzpicture}[baseline=-0.1cm]
\putdots{A}{$(0,0)$}{left}{z'}
\putdot{B}{$(1,0)$}{right}{}
\putdb{B}{A}
\end{tikzpicture})\}.
\end{align*}
For the fifth order term, we have
\begin{align*}
|\begin{tikzpicture}[baseline=0.4cm]
\putdots{A}{$(0,0)$}{left}{z'}
\putdotg{E}{$(0,1)$}{left}{}
\putdotg{F}{$(1,0.2)$}{left}{}
\putdotg{G}{$(1,-0.2)$}{left}{}
\putdotg{B}{$(1,1.3)$}{right}{}
\putdotg{C}{$(1,1)$}{right}{}
\putdotg{D}{$(1,0.7)$}{right}{}
\putdots{A'}{$(2,0)$}{right}{z''}
\putdotg{E'}{$(2,1)$}{left}{}
\putdb{B}{E}
\putdb{C}{E}
\putdw{D}{E}
\hputb{E}{A}{$(0,0.5)$}
\putdb{F}{A}
\putdw{G}{A}
\putdw{B}{E'}
\putdw{C}{E'}
\putdb{D}{E'}
\hputw{E'}{A'}{$(2,0.5)$}
\putdw{F}{A'}
\putdb{G}{A'}
\end{tikzpicture}|&=
|\begin{tikzpicture}[baseline=-0.1cm]
\putdots{A}{$(0,0)$}{left}{z'}
\putdotg{C}{$(0.7,0.2)$}{right}{}
\putdotg{C'}{$(0.7,-0.2)$}{right}{}
\putdots{A'}{$(1.4,0)$}{right}{z''}
\putdb{C}{A}
\putdw{C}{A'}
\putdw{C'}{A}
\putdb{C'}{A'}
\end{tikzpicture}|\times|Q_{0,3}(z',z'')|\\
&\lesssim\|z'-z\|_s^{\frac12-\kappa}\|z''-z\|_s^{\frac12-\kappa}\|z'-z''\|_s^{-2}
\end{align*}
for small $\kappa>0$. For the third order terms, we note that the required bounds are obtained by multiplying \begin{tikzpicture}[baseline=-0.1cm]
\putdots{A}{$(0,0)$}{left}{z'}
\putdotg{C}{$(0.7,0)$}{right}{}
\putdots{A'}{$(1.4,0)$}{right}{z''}
\putdb{C}{A}
\putdw{C}{A'}
\end{tikzpicture} to \eqref{section4_4 second of CICCD}, so we have the bound
\begin{align*}
(\|z'-z''\|_s^{-2\kappa}+\|z'-z\|_s^{-\kappa}\|z''-z\|_s^{-\kappa})\|z'-z''\|_s^{-1}.
\end{align*}
For the first order terms, we need to introduce the renormalization
\begin{align*}
&\begin{tikzpicture}[baseline=-0.1cm]
\putdots{A}{$(0,0)$}{left}{z'}
\putdotg{B}{$(0.5,0.4)$}{right}{}
\putdotg{C}{$(0.5,-0.4)$}{right}{}
\putdotg{D}{$(1,0)$}{right}{}
\putdot{E}{$(2,0)$}{right}{}
\putdb{B}{A}
\putdw{C}{A}
\dputb{D}{A}{$(0.5,0)$}
\putdw{B}{D}
\putdb{C}{D}
\putdb{E}{D}
\end{tikzpicture}
-\begin{tikzpicture}[baseline=-0.1cm]
\putdots{A}{$(0,0)$}{left}{z'}
\putdotg{B}{$(0.5,0.4)$}{right}{}
\putdotg{C}{$(0.5,-0.4)$}{right}{}
\putdotg{D}{$(1,0)$}{right}{}
\putdb{B}{A}
\putdw{C}{A}
\putb{D}{A}
\putdw{B}{D}
\putdb{C}{D}
\end{tikzpicture}
\begin{tikzpicture}[baseline=0cm]
\putdots{A}{$(0,0)$}{left}{z'}
\putdot{B}{$(1,0)$}{right}{}
\putdb{B}{A}
\end{tikzpicture}\\
&=\mathfrak{R}L^\epsilon*K^\epsilon(z'-
\begin{tikzpicture}[baseline=-0.1cm]
\putdot{A}{$(0,0)$}{left}{}
\end{tikzpicture})-
\begin{tikzpicture}[baseline=-0.1cm]
\putdots{A}{$(0,0)$}{left}{z'}
\putdotg{B}{$(0.5,0.4)$}{right}{}
\putdotg{C}{$(0.5,-0.4)$}{right}{}
\putdotg{D}{$(1,0)$}{right}{}
\putdot{E}{$(2,0.3)$}{right}{}
\putdots{F}{$(2,-0.3)$}{right}{z}
\putdb{B}{A}
\putdw{C}{A}
\putb{D}{F}
\putdw{B}{D}
\putdb{C}{D}
\putdb{E}{D}
\end{tikzpicture},
\end{align*}
where $L^\epsilon(z'-w)=\begin{tikzpicture}[baseline=-0.1cm]
\putdots{A}{$(0,0)$}{left}{z'}
\putdotg{B}{$(0.5,0.4)$}{right}{}
\putdotg{C}{$(0.5,-0.4)$}{right}{}
\putdots{D}{$(1,0)$}{right}{w}
\putdb{B}{A}
\putdw{C}{A}
\putb{D}{A}
\putdw{B}{D}
\putdb{C}{D}
\end{tikzpicture}$ and $\mathfrak{R}L^\epsilon$ is the distribution defined by
\begin{align*}
\langle\mathfrak{R}L^\epsilon,\varphi\rangle=\int L^\epsilon(z)(\varphi(z)-\varphi(0))dz
\end{align*}
for test function $\varphi$, see \cite[Definition~10.15]{Hairer2014a}.
By \cite[Lemma~10.16]{Hairer2014a}, we have the bound
\begin{align*}
|\mathfrak{R}L^\epsilon*K^\epsilon(z)|\lesssim\|z\|_s^{-3}.
\end{align*}
For the remaining term, we have
\begin{align*}
|\begin{tikzpicture}[baseline=-0.1cm]
\putdots{A}{$(0,0)$}{left}{z'}
\putdotg{B}{$(0.5,0.4)$}{right}{}
\putdotg{C}{$(0.5,-0.4)$}{right}{}
\putdotg{D}{$(1,0)$}{right}{}
\putdotg{E}{$(2,0.3)$}{right}{}
\putdots{F}{$(1.8,-0.3)$}{below}{z}
\putdots{A'}{$(4,0)$}{right}{z''}
\putdotg{B'}{$(3.5,0.4)$}{right}{}
\putdotg{C'}{$(3.5,-0.4)$}{right}{}
\putdotg{D'}{$(3,0)$}{right}{}
\putdots{F'}{$(2.2,-0.3)$}{below}{z}
\putdb{B}{A}
\putdw{C}{A}
\putb{D}{F}
\putdw{B}{D}
\putdb{C}{D}
\putdb{E}{D}
\putdw{B'}{A'}
\putdb{C'}{A'}
\putw{D'}{F'}
\putdb{B'}{D'}
\putdw{C'}{D'}
\putdw{E}{D'}
\end{tikzpicture}|&\lesssim
|\begin{tikzpicture}[baseline=-0.1cm]
\putdots{A}{$(0,0.4)$}{left}{z'}
\putdots{A'}{$(0,-0.4)$}{left}{z}
\putdotg{B}{$(1,0)$}{right}{}
\putdotg{C}{$(2,0)$}{right}{}
\putdots{D}{$(3,0.4)$}{right}{z''}
\putdots{D'}{$(3,-0.4)$}{right}{z}
\putl{B}{A}{$(0.3,0.1)$}{$(0.7,0.3)$}{$(0.5,0.2)$}{-2}
\putl{B}{A'}{$(0.3,-0.3)$}{$(0.7,-0.1)$}{$(0.5,-0.2)$}{-3}
\putcl{C}{$(1.7,0)$}{$(1.3,0)$}{B}{$(1.3,-0.1)$}{$(1.7,0.1)$}{$(1.5,-0)$}{-1}
\putl{C}{D}{$(2.3,0.1)$}{$(2.7,0.3)$}{$(2.5,0.2)$}{-2}
\putl{C}{D'}{$(2.3,-0.3)$}{$(2.7,-0.1)$}{$(2.5,-0.2)$}{-3}
\end{tikzpicture}|\\
&\lesssim\|z'-z\|_s^{-\frac12-\kappa}\|z''-z\|_s^{-\frac12-\kappa}.
\end{align*}
As a consequence, we have
\begin{align*}
\boldsymbol{E}[|\langle\hat{\Pi}_z^\epsilon\CDICCD,\varphi_z^\delta\rangle|^2]\lesssim\delta^{-1-2\kappa}.
\end{align*}

\subsection{Behaviors of renormalization constants}\label{4:subsection5}

In \secref[4:subsection3_1]{4:subsection3_2}, we obtained renormalization constants
\begin{equation}\label{4:subsection5:C_1C_2C_3}
\begin{aligned}
C_1^\epsilon&=\begin{tikzpicture}[baseline=-0.1cm]
\putdots{A}{$(0,0)$}{left}{z'}
\putdotg{B}{$(1,0)$}{right}{}
\putcdb{B}{$(0.7,0.2)$}{$(0.3,0.2)$}{A}
\putcdw{B}{$(0.7,-0.2)$}{$(0.3,-0.2)$}{A}
\end{tikzpicture}=\int|K^\epsilon(z)|^2dz,\\
C_{2,1}^\epsilon&=\begin{tikzpicture}[baseline=-0.1cm]
\putdots{A}{$(0,0)$}{left}{z'}
\putdotg{B}{$(0.5,0.4)$}{right}{}
\putdotg{C}{$(0.5,-0.4)$}{right}{}
\putdotg{D}{$(1,0)$}{right}{}
\putdb{B}{A}
\putdb{C}{A}
\putw{D}{A}
\putdw{B}{D}
\putdw{C}{D}
\end{tikzpicture}=\int\overline{K}(z)(Q^\epsilon(z))^2dz,\\
C_{2,2}^\epsilon&=\begin{tikzpicture}[baseline=-0.1cm]
\putdots{A}{$(0,0)$}{left}{z'}
\putdotg{B}{$(0.5,0.4)$}{right}{}
\putdotg{C}{$(0.5,-0.4)$}{right}{}
\putdotg{D}{$(1,0)$}{right}{}
\putdb{B}{A}
\putdw{C}{A}
\putb{D}{A}
\putdw{B}{D}
\putdb{C}{D}
\end{tikzpicture}=\int K(z)|Q^\epsilon(z)|^2dz,
\end{aligned}
\end{equation}
where
\[
Q^\epsilon(z)=\int K^\epsilon(z-w)\overline{K^\epsilon}(-w)dw.
\]
Note that $Q^\epsilon=Q*\pi^\epsilon$, where $\pi(t,x)=\epsilon^{-5}\pi(\epsilon^{-2}t,\epsilon^{-1}x)$ and
\begin{align*}
Q(z)=\int K(z-w)\overline{K}(-w)dw,\quad
\pi(z)=\int\rho(z-w)\rho(-w)dw,
\end{align*}

\begin{proposition}\label{section4_5:estimate of C}
There exist constants $C_1,C_{2,1}$ and $C_{2,2}$ independent of $\epsilon$ such that
\[
C_1^\epsilon\sim C_1\epsilon^{-1},\quad
C_{2,1}^\epsilon\sim C_{2,1}\log\epsilon^{-1},\quad
C_{2,1}^\epsilon\sim C_{2,2}\log\epsilon^{-1}
\]
as $\epsilon\downarrow0$. Here for two functions $A_\epsilon$ and $B_\epsilon$ of $\epsilon$, we write $A_\epsilon\sim B_\epsilon$ if there exists a constant $C$ independent of $\epsilon$ and $|A_\epsilon-B_\epsilon|\le C$ holds.
\end{proposition}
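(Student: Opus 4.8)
The plan is to prove the three asymptotics by parabolic power counting, isolating in each integral the leading homogeneous singularity, reading off the divergent factor and its coefficient by rescaling, and collecting everything else into a bounded remainder (recall $A_\epsilon\sim B_\epsilon$ means $A_\epsilon-B_\epsilon$ is bounded uniformly in $\epsilon$; I write $O(1)$ for such a quantity). The effective dimension of the parabolic scaling is $5$; $K$ has parabolic degree $-3$ and the double kernel $Q$ has degree $-1$, so $C_1^\epsilon=\int|K^\epsilon|^2$ has effective degree $2(-3)+5=-1$ and must diverge like $\epsilon^{-1}$, whereas $C_{2,1}^\epsilon=\int\overline K(Q^\epsilon)^2$ and $C_{2,2}^\epsilon=\int K|Q^\epsilon|^2$ have effective degree $(-3)+2(-1)+5=0$, the marginal case, and diverge logarithmically.

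First I would record the kernel estimates. Write $\mathcal{S}_\delta(t,x)=(\delta^2 t,\delta x)$. Recall that $K$ is compactly supported and agrees near the origin, up to a smooth function, with the heat kernel $G$ of $\partial_t-(\mathsf{i}+\mu)\triangle$, which is parabolically homogeneous of degree $-3$; so for a smooth cutoff $\chi$ equal to $1$ on a small parabolic ball about $0$ I would write $K=G_{\mathrm{sing}}+K_{\mathrm{reg}}$ with $G_{\mathrm{sing}}=\chi G$ and $K_{\mathrm{reg}}$ bounded with compact support. From $|K(z)|\lesssim\|z\|_s^{-3}$ and $K^\epsilon=K*\rho^\epsilon$ one gets $|K^\epsilon(z)|\lesssim(\|z\|_s+\epsilon)^{-3}$ and $|K^\epsilon(z)-K(z)|\lesssim\epsilon^\theta(\|z\|_s+\epsilon)^{-3-\theta}$ for $\theta\in(0,1]$ (cf.\ \cite[Lemmas~10.14 and 10.17]{Hairer2014a}). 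The integral defining $Q(z)=\int K(z-w)\overline K(-w)\,dw$ converges for $z\neq0$, and by \cite[Lemma~10.14]{Hairer2014a} one has $|\partial^kQ(z)|\lesssim\|z\|_s^{-1-|k|_s}$; moreover $Q=\overline{Q}_0+Q_{\mathrm{reg}}$, where $\overline{Q}_0(z)=\int G(z-w)\overline G(-w)\,dw$ is parabolically homogeneous of degree $-1$ (the degree count $-3-3+5=-1$ avoids the integer anomaly) and $Q_{\mathrm{reg}}=Q-\overline{Q}_0$ is \emph{bounded} near the origin, since $K-G$ is bounded there and the residual convolution integrals are uniformly bounded in $z$. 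Finally $Q^\epsilon=Q*\pi^\epsilon$ inherits $|Q^\epsilon(z)|\lesssim(\|z\|_s+\epsilon)^{-1}$ and $|Q^\epsilon(z)-Q(z)|\lesssim\epsilon^\theta\|z\|_s^{-1-\theta}$ for $\|z\|_s\ge\epsilon$.

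For $C_1^\epsilon$ the key is a scaling identity: by homogeneity of $G$ and the scaling of $\rho^\epsilon$, $G^\epsilon:=G*\rho^\epsilon$ satisfies $G^\epsilon(z)=\epsilon^{-3}(G*\rho)(\mathcal{S}_\epsilon^{-1}z)$, hence $\int|G^\epsilon|^2=\epsilon^{-1}\int|G*\rho|^2=:C_1\epsilon^{-1}$ (the integral is finite because $G*\rho$ is smooth with $\|z\|_s^{-3}$ decay). Then I would check $C_1^\epsilon-\int|G^\epsilon|^2=O(1)$: since $K-G$ is bounded near $0$ and decays like $\|z\|_s^{-3}$ at infinity, this difference is $\lesssim\int(1\wedge\|z\|_s^{-3})(\|z\|_s+\epsilon)^{-3}\,dz$, which is $O(1)$ uniformly in $\epsilon$ (near $0$ it is $\lesssim\int_{\|z\|_s\le1}(\|z\|_s+\epsilon)^{-3}\,dz=O(1)$, at infinity $\lesssim\int_{\|z\|_s\ge1}\|z\|_s^{-6}\,dz<\infty$). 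For $C_{2,1}^\epsilon$ (and identically $C_{2,2}^\epsilon$, with $(Q^\epsilon)^2,\overline K$ replaced by $|Q^\epsilon|^2,K$) I would restrict to the ball $\{\|z\|_s\le R\}$ carrying $\operatorname{supp}\overline K$, discard $\{\|z\|_s\le\epsilon\}$ (there the integrand is $\lesssim\|z\|_s^{-3}\epsilon^{-2}$ and $\int_{\|z\|_s\le\epsilon}\|z\|_s^{-3}\,dz\lesssim\epsilon^2$, so $O(1)$), then on $\{\epsilon<\|z\|_s\le R\}$ replace $Q^\epsilon$ by $Q$ at cost $\lesssim\epsilon^\theta\int_{\epsilon<\|z\|_s\le R}\|z\|_s^{-5-\theta}\,dz=O(1)$, and replace $\overline K,Q$ by $\overline{G_{\mathrm{sing}}},\overline{Q}_0$ at cost $O(1)$ because the errors produce integrands $\lesssim\|z\|_s^{-2}$ and $\lesssim\|z\|_s^{-4}$, both integrable. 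This reduces $C_{2,1}^\epsilon$ to $\int_{\epsilon<\|z\|_s\le R}\overline{G_{\mathrm{sing}}}\,\overline{Q}_0^{\,2}\,dz+O(1)$; since $\overline{G_{\mathrm{sing}}}\,\overline{Q}_0^{\,2}$ is homogeneous of degree $-5$ on the ball where $\chi\equiv1$, the substitution $z\mapsto\mathcal{S}_{2^{-k}}z$ shows that $\int_{\{2^{-k-1}<\|z\|_s\le2^{-k}\}}\overline{G_{\mathrm{sing}}}\,\overline{Q}_0^{\,2}\,dz$ equals a $k$-independent constant $c$ for all large $k$, so summing over the $\approx\log_2\epsilon^{-1}$ dyadic shells (the finitely many small-$k$ shells being $O(1)$) gives $C_{2,1}^\epsilon=(c/\log2)\log\epsilon^{-1}+O(1)$, and likewise $C_{2,2}^\epsilon\sim C_{2,2}\log\epsilon^{-1}$.

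The hard part will be the second paragraph: verifying that $Q$ has a \emph{clean} homogeneous leading term $\overline{Q}_0$ of degree exactly $-1$ with a bounded remainder — which rests on the leading singularity of $K$ being exactly that of the heat kernel $G$ and on showing the various residual convolution integrals are uniformly bounded in $z$ — together with the uniform-in-$\epsilon$ control of $Q^\epsilon$ near the origin that makes the sub-$\epsilon$ region harmless. These are precisely the singular-kernel estimates developed in \cite[Section~10.3]{Hairer2014a} and \cite[Section~4.7]{Hoshino2016}, on which the argument would lean.
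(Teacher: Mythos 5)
Your proposal is correct and follows essentially the same route as the paper: extract an exactly parabolically homogeneous leading part of the kernels ($K\approx G$, $Q\approx \overline{Q}_0$ of degrees $-3$ and $-1$), control all remainders by $O(1)$ through the same power-counting and $|Q^\epsilon-Q|\lesssim\epsilon^\theta\|z\|_s^{-1-\theta}$ estimates, and then read off the divergence by exact rescaling — $\epsilon^{-1}$ for $C_1^\epsilon$ and a $k$-independent dyadic-shell contribution summed over $\approx\log_2\epsilon^{-1}$ shells for $C_{2,1}^\epsilon,C_{2,2}^\epsilon$. The only difference is packaging: the paper formalizes your ``homogeneous part plus bounded remainder'' decomposition as the class $\mathfrak{S}_\alpha$ (closed under products and convolutions, via \cite[Lemma~4.14]{Hoshino2016}) and writes your dyadic shells as $S_+=\sum_n\widetilde{R}\varphi_n$, which is exactly the technical input you correctly identified as the remaining work.
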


In order to prove the above estimates, we prepare some notations. For $\alpha>0$ and a compactly supported function $A\in C^\infty(\RealNum^4\setminus\{0\},\CmplNum)$, we say that $A\in\mathfrak{S}_\alpha$ if there exists a function $\widetilde{A}\in C^\infty(\RealNum^4\setminus\{0\},\CmplNum)$ and $A_-\in L^\infty(\RealNum^4,\CmplNum)$ such that
\begin{itemize}
\item $\widetilde{A}=A+A_-$ on $\RealNum^4\setminus\{0\}$,
\item $\widetilde{A}(\lambda^2t,\lambda x)=\lambda^{-\alpha}\widetilde{A}(t,x)$ for every $\lambda>0$ and $(t,x)\neq0$,
\item $|A_-(z)|\lesssim\|z\|_s^{-\alpha}$ for every $z\in\RealNum^4$.
\end{itemize}
The second scaling property of $\widetilde{A}$ ensures that $|\widetilde{A}(z)|\lesssim\|z\|_s^{-\alpha}$ for every $z\in\RealNum^4$ (see \cite[Lemma~5.5]{Hairer2014a}).

\begin{proposition}
Let $\alpha,\beta\in(0,5)$.
\begin{enumerate}
\item[(1)] If $A\in\mathfrak{S}_\alpha$, $B\in\mathfrak{S}_\beta$ and $\alpha+\beta>5$, then $A*B\in\mathfrak{S}_{\alpha+\beta-5}$.
\item[(2)] If $A\in\mathfrak{S}_\alpha$ and $B\in\mathfrak{S}_\beta$, then $AB\in\mathfrak{S}_{\alpha+\beta}$.
\end{enumerate}
\end{proposition}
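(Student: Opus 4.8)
The proposition is the $\mathfrak{S}_\bullet$-version of two elementary scaling facts: the product of parabolically homogeneous functions of degrees $-\alpha$ and $-\beta$ is homogeneous of degree $-(\alpha+\beta)$, and, when $\alpha+\beta>5$, their convolution is homogeneous of degree $-(\alpha+\beta-5)$ (here $5$ is the homogeneous dimension of $\RealNum^4$ for the parabolic scaling $\|\cdot\|_s$). Accordingly, in each case the plan is to produce the leading homogeneous part by hand — as $\widetilde{A}\widetilde{B}$ for the product and $\widetilde{A}*\widetilde{B}$ for the convolution — verify the scaling identity it must satisfy, and bound the remainder with the power-counting estimates already at hand, in particular \cite[Lemma~10.14]{Hairer2014a} recalled above together with the elementary bound $\int_{\|z\|_s\le1}\|z\|_s^{-\gamma}\,dz<\infty$ for $\gamma<5$.

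\emph{Part (2).} As $A,B$ are compactly supported and smooth off the origin, so is $AB$. Put $\widetilde{AB}:=\widetilde{A}\widetilde{B}\in C^\infty(\RealNum^4\setminus\{0\})$; multiplying the two scaling identities gives $\widetilde{AB}(\lambda^2t,\lambda x)=\lambda^{-(\alpha+\beta)}\widetilde{AB}(t,x)$, and then $|\widetilde{AB}(z)|\lesssim\|z\|_s^{-(\alpha+\beta)}$ by \cite[Lemma~5.5]{Hairer2014a}. Writing $A=\widetilde{A}-A_-$ and $B=\widetilde{B}-B_-$ on $\RealNum^4\setminus\{0\}$,
\[
(AB)_-:=\widetilde{AB}-AB=\widetilde{A}B_-+A_-\widetilde{B}-A_-B_-,
\]
and each summand is $\lesssim\|z\|_s^{-\alpha}\|z\|_s^{-\beta}=\|z\|_s^{-(\alpha+\beta)}$ by the bounds $|\widetilde{A}(z)|,|A_-(z)|\lesssim\|z\|_s^{-\alpha}$ and their analogues for $B$; one checks $(AB)_-\in L^\infty$ in the same fashion. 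Hence $AB\in\mathfrak{S}_{\alpha+\beta}$.

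\emph{Part (1), and where the work is.} Since $\alpha,\beta<5$, the functions $A,B$ are locally integrable, so $A*B$ is a well-defined compactly supported function; smoothness off the origin follows from the standard fact that the singular support of a convolution lies in the sum of the singular supports, here $\{0\}$. The integral $\widetilde{A}*\widetilde{B}(z)$ converges absolutely for $z\ne0$: near $w=z$ and near $w=0$ it is integrable by $\alpha<5$ and $\beta<5$ respectively, and for large $\|w\|_s$ it is $\lesssim\|w\|_s^{-(\alpha+\beta)}$, integrable by $\alpha+\beta>5$. Rescaling $w$ and using the scaling identities gives $(\widetilde{A}*\widetilde{B})(\lambda^2t,\lambda x)=\lambda^{-(\alpha+\beta-5)}(\widetilde{A}*\widetilde{B})(t,x)$, and smoothness off the origin follows as before; so set $\widetilde{A*B}:=\widetilde{A}*\widetilde{B}$. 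Expanding $A*B=(\widetilde{A}-A_-)*(\widetilde{B}-B_-)$ (all four convolutions converging for $z\ne0$ by the same power-counting) gives
\[
(A*B)_-:=\widetilde{A}*\widetilde{B}-A*B=\widetilde{A}*B_-+A_-*\widetilde{B}-A_-*B_-.
\]
It then suffices to show each of these three convolutions is bounded: $(A*B)_-$ is then in $L^\infty$, and, as $A*B$ has compact support, $(A*B)_-$ agrees outside a fixed compact set with the homogeneous function $\widetilde{A}*\widetilde{B}$, so that $|(A*B)_-(z)|\lesssim\|z\|_s^{-(\alpha+\beta-5)}$ everywhere (the exponent being positive). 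Boundedness of $A_-*B_-$ comes from splitting the integral at $\|w\|_s=R$; boundedness of $\widetilde{A}*B_-$ and $A_-*\widetilde{B}$ uses in addition the local integrability of the singularity of $\widetilde{A}$ (resp. $\widetilde{B}$), i.e. $\alpha<5$ (resp. $\beta<5$). The main obstacle is exactly this last point: $\widetilde{A},\widetilde{B},A_-,B_-$ are \emph{not} compactly supported, so \cite[Lemma~10.14]{Hairer2014a} does not apply verbatim; one must first split each factor into a part supported near the origin and a bounded, fast-decaying tail, track which pairs of pieces generate the integrable near-collision behaviour and which only bounded far-field contributions, and verify that the tails sum thanks to $\alpha+\beta>5$ and $\alpha,\beta<5$. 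The rest is routine homogeneity bookkeeping.
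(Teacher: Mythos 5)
Your part (1) is correct and follows exactly the paper's route: the same four-term expansion $A*B=\widetilde{A}*\widetilde{B}-A_-*\widetilde{B}-\widetilde{A}*B_-+A_-*B_-$, the same choice $\widetilde{A*B}=\widetilde{A}*\widetilde{B}$, and then boundedness of the three remainder convolutions. The only difference is that the paper outsources those bounds to \cite[Lemma~4.14]{Hoshino2016} (a version of the convolution estimate valid for kernels that are not compactly supported but decay like $\|z\|_s^{-\gamma}$), whereas you re-derive them by splitting near/far regions; your observation that the near-origin integrability uses $\alpha,\beta<5$ and the far-field summability uses $\alpha+\beta>5$ is exactly the content of that lemma, and your trick of getting the decay bound from homogeneity outside $\supp(A*B)$ is sound.

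In part (2), however, the sentence ``one checks $(AB)_-\in L^\infty$ in the same fashion'' is false as written. The term $\widetilde{A}B_-$ behaves like $\|z\|_s^{-\alpha}$ near the origin (since $\widetilde{A}$ is exactly homogeneous of negative degree and $B_-$ has no reason to vanish there), so $(AB)_-=\widetilde{A}B_-+A_-\widetilde{B}-A_-B_-$ is in general unbounded near $0$; what you actually get is $|(AB)_-(z)|\lesssim\|z\|_s^{-\max(\alpha,\beta)}$ near the origin together with the decay $\|z\|_s^{-(\alpha+\beta)}$ at infinity. This is not a defect of your strategy alone --- the paper's one-line ``the assertion (2) is similarly obtained'' hides the same issue, and with the definition of $\mathfrak{S}_{\alpha+\beta}$ read literally (remainder in $L^\infty(\RealNum^4)$) the multiplicative statement cannot hold, because any admissible $\widetilde{AB}$ would have to differ from $\widetilde{A}\widetilde{B}$ by a homogeneous function of degree $-(\alpha+\beta)$ that is bounded, hence zero. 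The honest fix, which is all the later arguments (the $\log\epsilon^{-1}$ estimates for $C_{2,1}^\epsilon$, $C_{2,2}^\epsilon$) actually use, is to weaken the requirement on the remainder to local integrability, i.e. $|(AB)_-(z)|\lesssim\|z\|_s^{-\gamma}$ for some $\gamma<5$; with $\gamma=\max(\alpha,\beta)$ your computation delivers exactly this. You should either record that weaker conclusion explicitly or flag that the $L^\infty$ clause is lost under pointwise multiplication.
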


\begin{proof}
For (1), in the decomposition
\[
A*B=\widetilde{A}*\widetilde{B}-A_-*\widetilde{B}-\widetilde{A}*B_-+A_-*B_-,
\]
we see that the last three terms are bounded by $\|z\|_s^{5-\alpha-\beta}$
by using \cite[Lemma~4.14]{Hoshino2016}. Hence it suffices to set $\widetilde{A*B}=\widetilde{A}*\widetilde{B}$.

The assertion (2) is similarly obtained.
\end{proof}

\begin{proof}[Proof of \pref{section4_5:estimate of C}]
First we show the estimate of $C_1^\epsilon$. Since $K,\overline{K}\in\mathfrak{S}_3$, we have $Q\in\mathfrak{S}_1$. Hence we have
\begin{align*}
C_1^\epsilon&=Q^\epsilon(0)=\int Q(-z)\pi^\epsilon(z)dz\\
&=\int \widetilde{Q}(-z)\pi^\epsilon(z)dz-\int Q_-(-z)\pi^\epsilon(z)dz\\
&=\epsilon^{-1}\int\widetilde{Q}(-z)\pi(z)dz+\mathcal{O}(1).
\end{align*}
The last equality follows from the scaling property of $\widetilde{Q}$ and the boundedness of $Q_-$.

Next we show the estimate of $C_{2,1}^\epsilon$. Note that
\begin{align*}
C_{2,1}^\epsilon&=\int\overline{K}(z)(Q^\epsilon(z))^2dz\sim\int_{\|z\|_s>\epsilon}\overline{K}(z)(Q(z))^2dz.
\end{align*}
Indeed, since $|Q^\epsilon(z)|\lesssim\epsilon^{-1}$ and $|Q(z)-Q^\epsilon(z)|\lesssim\epsilon^\theta\|z\|_s^{-1-\theta}$ for every $\theta\in(0,1]$ (see \cite[Lemma~10.17]{Hairer2014a}), we have
\begin{align*}
\left|\int_{\|z\|_s\le\epsilon}\overline{K}(z)(Q^\epsilon(z))^2dz\right|\lesssim\epsilon^{-2}\int_{\|z\|_s\le\epsilon}\|z\|_s^{-3}dz\lesssim1
\end{align*}
and
\begin{align*}
&\left|\int_{\|z\|_s>\epsilon}\overline{K}(z)\{(Q^\epsilon(z))^2-(Q(z))^2\}dz\right|\lesssim\epsilon^\theta\int_{\|z\|_s>\epsilon}\|z\|_s^{-5-\theta}dz\lesssim1.
\end{align*}
Hence it suffices to consider $\int_{\|z\|_s>\epsilon}R(z)dz$, where $R=\overline{K}Q^2\in\mathfrak{S}_5$. However, we replace $R$ by a function $S_+$ defined below. Let $\varphi$ be a smooth and nonnegative function such that $\supp(\varphi)\subset\{\frac12\le\|z\|_s\le2\}$ and $\sum_{n=-\infty}^\infty\varphi(2^{2n}t,2^nx)=1$ for all $(t,x)\neq0$. Define
\[
S_+=\sum_{n=0}^\infty\widetilde{R}\varphi_n,\quad
S_-=\sum_{n=-\infty}^{-1}\widetilde{R}\varphi_n,
\]
where $\varphi_n(t,x)=\varphi(2^{2n}t,2^nx)$. Note that $R+R_-=S_++S_-$. Since $\supp(S_--R_-)$ is compact, we have
\[
\int_{\|z\|_s>\epsilon}R(z)dz=\int_{\|z\|_s>\epsilon}S_+(z)dz+\mathcal{O}(1).
\]
By the scaling property of $\widetilde{R}$, we have $S_+=\sum_{n=0}^\infty S_n$, where
\[
S_n(t,x)=2^{5n}S_0(2^{2n},2^nx),\quad
S_0=\widetilde{R}\varphi.
\]
Since $S_0\in C_0^\infty$, we have
\begin{align*}
\int_{\|z\|_s>\epsilon}S_+(z)dz
&=\sum_{n=0}^\infty\int_{\|z\|_s>\epsilon}S_n(z)dz\\
&=\sum_{n=0}^\infty\int_{\|z\|_s>\epsilon2^n}S_0(z)dz
=\sum_{n=0}^{N(\epsilon)}\int_{\|z\|_s>\epsilon2^n}S_0(z)dz,
\end{align*}
where $N(\epsilon)\in\NaturalNum$ is the largest number such that
$\{\|z\|_s>\epsilon2^{N(\epsilon)}\}\cap\supp(S_0)\neq\emptyset$,
so there exists a constant $C>0$ such that $N(\epsilon)\sim C\log\epsilon^{-1}$. Since
\begin{align*}
&\sum_{n=0}^{N(\epsilon)}\left|\int_{\|z\|_s>\epsilon2^n}S_0(z)dz-\int S_0(z)dz\right|\\
&\lesssim\sum_{n=0}^{N(\epsilon)}\int_{\|z\|_\epsilon\le\epsilon2^n}dz
\lesssim\sum_{n=0}^{N(\epsilon)}\epsilon^52^{5n}\lesssim\epsilon^52^{5N(\epsilon)}\lesssim1,
\end{align*}
we have the estimate
\begin{align*}
C_{2,1}^\epsilon=N(\epsilon)\int S_0(z)dz+\mathcal{O}(1)\sim C\int S_0(z)dz\log\epsilon^{-1}.
\end{align*}

The estimate of $C_{2,2}^\epsilon$ is similar.
\end{proof}

%!TEX root = ComplexGinzburgLandauEq.tex

\section{CGL by the theory of paracontrolled distributions}\label{sec_20161107073643}

In \secref[sec_20161107073643]{sec_20161107073600}, we study well-posedness of CGL \eqref{eq:cgl}
by using the paracontrolled distribution theory introduced by \cite{GubinelliImkellerPerkowski2015}.
In that paper, they studied some problems such as differential equations driven by fractional Brownian motion,
a Burgers-type stochastic PDE, and a nonlinear version of the parabolic Anderson model.
After that Catellier-Chouk \cite{CatellierChouk2013Arxiv} showed local well-posedness of
the three-dimensional stochastic quantization equation
(the dynamic $\Phi^4_3$ model), which is an $\RealNum$-valued version of CGL.

Our proof of the local well-posedness of CGL consists of two parts:
a deterministic and a probabilistic part.

In \secref{sec_20161107073643}, we deal with a deterministic version of CGL
and construct a solution map from a space of driving vectors to a space of solutions.
We also see that the solution map is continuous.
In this section, $\whiteNoise$ is a deterministic distribution
which takes values in the H\"older-Besov space $\HolBesSp{-\frac{5}{2}-\kappa}$ for any $\kappa>0$ small enough.
To construct the solution map, we rely on the method introduced by Mourrat-Weber \cite{MourratWeber2016Arxiv}.
We state the precise assertion concerning the well-posedness in \tref{thm_20160920085605}.
In \tref{thm_20160920094302}, we see that the solution obtained in \tref{thm_20160920085605} solves
the renormalized equation \eqref{eq:rcgl} in the usual sense.

\secref{sec_20161107073600} is the probabilistic part
and devoted to constructing a driving vector $X$ associated to the space-time white noise $\xi$ defined on $\RealNum\times\Torus^3$.
We follow the approach as in \cite{GubinelliPerkowski2017}
and obtain the  driving vector in \tref{thm_20161128051824}.
Here we explain how to mollify the white noise $\xi$.
Let $\chi$ be a smooth real-valued function defined on $\RealNum^3$ such that
(1) $\supp\chi\subset B(0,1)$, where $B(x,r)$ denotes the open ball of radius $r>0$ and center $x\in\RealNum^3$,
(2) $\chi(0)=1$.
We set $\chi^\epsilon(k)=\chi(\epsilon k)$ for every $k\in\Integers^3$.
Define $\FourierBase[k](x)=e^{2\pi\ImUnit k\cdot x}$ for every $k\in\Integers^3$ and $x\in\Torus^3$.
Here, the dot $\cdot$ denotes the usual inner product.
We define $\xi^\epsilon$ by
\begin{align}\label{eq_20161201075726}
 	\whiteNoise^\epsilon
 	=
 		\sum_{k\in\Integers^3}
 			\chi^\epsilon(k)
 			\FourierCoeff{\whiteNoise}(k)
 			\FourierBase[k].
\end{align}
Here, $\{\FourierCoeff{\whiteNoise}(k)\}_{k\in\Integers^3}$ denotes the Fourier transform of $\whiteNoise$
and it has the same law with independent copies of the complex white noise on $\RealNum$.
We see that $\whiteNoise^\epsilon\to\whiteNoise$ in an appropriate topology.
For the smeared noise $\whiteNoise^\epsilon$, we define a family of processes $\{X^\epsilon\}_{0<\epsilon<1}$.
In this definition of $X^\epsilon$, we will use the dyadic partition of unity $\{\DyaPartOfUnit[m]\}_{m=-1}^\infty$ via the resonant $\reso$
and renormalization constants $\constRe[1]{\epsilon}$, $\constRe[2,1]{\epsilon}$ and $\constRe[2,2]{\epsilon}$;
see \secref{sec_1501991661} for the definitions of $\{\DyaPartOfUnit[m]\}_{m=-1}^\infty$ and $\reso$
and see \eqref{eq_20161015073329} for the renormalization constants.
We obtain the driving vector $X$ as a limit of $\{X^\epsilon\}_{0<\epsilon<1}$.
By setting
$
  \constRe{\epsilon}
  =
    2
    (
      \constRe[1]{\epsilon}
      -\CmplConj{\nu} \CmplConj{\constRe[2,1]{\epsilon}}
      -2\nu \constRe[2,2]{\epsilon}
    )
$,
we have $|\constRe{\epsilon}|\to\infty$ as $\epsilon\to 0$.

By combining \trefs{thm_20160920085605}{thm_20160920094302}{thm_20161128051824},
we obtain the following main theorem in \secref[sec_20161107073643]{sec_20161107073600}:
\begin{theorem}\label{thm_2017013154042}
  Let $0<\kappa'<1/18$ and $u_0\in\HolBesSp{-\frac{2}{3}+\kappa'}$.
	Consider the renormalized equation \eqref{eq:rcgl} with $C^\epsilon=\constRe{\epsilon}$.
  Then, for every $0<\epsilon<1$,
  there exist a unique process $u^\epsilon$ and a random time $T_\ast^\epsilon\in(0,1]$ such that
	\begin{itemize}
		\item	$u^\epsilon$ solves \eqref{eq:rcgl} on $[0,T_\ast^\epsilon]\times\Torus^3$,
		\item	$T_\ast^\epsilon$ converges to some a.s.~positive random time $T_\ast$ in probability,
		\item	$u^\epsilon$ converges to some process $u$ defined on $[0,T_\ast)\times\Torus^3$ in the following sense:
				\begin{align*}
					\lim_{\varepsilon\downarrow 0}
					\sup_{0\leq s\leq T_\ast/2}
						\|u^\epsilon_s-u_s\|_{\HolBesSp{-\frac{2}{3}+\kappa'}}
					=
						0
				\end{align*}
				in probability.
        Here, we set
        $
          \sup_{0\leq s\leq T_\ast/2}
            \|u^\epsilon_s-u_s\|_{\HolBesSp{-\frac{2}{3}+\kappa'}}
          =
            \infty
        $
        on the event $\{T_\ast^\epsilon<T_\ast/2\}$.
				Furthermore, $u$ is independent of the choice of $\{\DyaPartOfUnit[m]\}_{m=-1}^\infty$ and $\chi$.
	\end{itemize}
\end{theorem}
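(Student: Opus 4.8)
The plan is to derive the theorem by patching together the three ingredients announced just before its statement: the deterministic solution theory of \tref{thm_20160920085605}, the identification of that solution with the classical renormalized PDE given by \tref{thm_20160920094302}, and the probabilistic construction of the enhanced noise in \tref{thm_20161128051824}. Write $\Phi$ for the solution map of \tref{thm_20160920085605}, which assigns to an initial datum $v\in\HolBesSp{-\frac{2}{3}+\kappa'}$ and a driving vector $\mathbf{X}$ a maximal survival time $T_\ast(v,\mathbf{X})\in(0,1]$ together with the solution of the abstract paracontrolled fixed point problem on $[0,T_\ast(v,\mathbf{X}))$; recall that $\mathbf{X}\mapsto\Phi(v,\mathbf{X})$ is locally uniformly continuous and that $\mathbf{X}\mapsto T_\ast(v,\mathbf{X})$ is locally uniformly lower semicontinuous in the relevant topology on the space of driving vectors. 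Let $X^\epsilon$ and $X$ denote the driving vectors attached to $\xi^\epsilon$ and to $\xi$ by \tref{thm_20161128051824}, built from the renormalization constants $\constRe[1]{\epsilon},\constRe[2,1]{\epsilon},\constRe[2,2]{\epsilon}$; by that theorem $X^\epsilon\to X$ in probability in this topology, $X$ is an a.s.\ finite driving vector, and $X$ is independent of the choice of $\{\DyaPartOfUnit[m]\}$ and of $\chi$.

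First I would set $u^\epsilon=\Phi(u_0,X^\epsilon)$, $T_\ast^\epsilon=T_\ast(u_0,X^\epsilon)$, $u=\Phi(u_0,X)$ and $T_\ast=T_\ast(u_0,X)$. Because $X^\epsilon$ is the canonical lift of the smooth function $\xi^\epsilon$ together with the prescribed subtractions, \tref{thm_20160920094302} shows that $u^\epsilon$ solves \eqref{eq:rcgl} with $C^\epsilon=\constRe{\epsilon}=2(\constRe[1]{\epsilon}-\CmplConj{\nu}\,\CmplConj{\constRe[2,1]{\epsilon}}-2\nu\constRe[2,2]{\epsilon})$ in the usual sense on $[0,T_\ast^\epsilon]\times\Torus^3$, while uniqueness of $u^\epsilon$ in the class of paracontrolled solutions is exactly the uniqueness part of \tref{thm_20160920085605}. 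Since $u_0\in\HolBesSp{-\frac{2}{3}+\kappa'}$ with $0<\kappa'<1/18$ lies in the range of validity of \tref{thm_20160920085605} and $X$ is a.s.\ finite, $T_\ast>0$ and $T_\ast^\epsilon>0$ almost surely, so all the objects above are well defined.

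Next I would transfer the convergence $X^\epsilon\to X$ to the level of solutions via the subsequence criterion. Given an arbitrary subsequence, extract a further subsequence along which $X^\epsilon\to X$ almost surely. On the corresponding full-measure event, local uniform continuity of $\Phi$ and lower semicontinuity of $T_\ast$ give, for each such $\omega$, that $\liminf_{\epsilon\downarrow0}T_\ast^\epsilon(\omega)\ge T_\ast(\omega)$ and that $\sup_{0\le s\le t}\|u^\epsilon_s-u_s\|_{\HolBesSp{-\frac{2}{3}+\kappa'}}\to0$ for every $t<T_\ast(\omega)$; taking $t=T_\ast(\omega)/2$ and using that $T_\ast^\epsilon(\omega)>T_\ast(\omega)/2$ for all small $\epsilon$, the supremum over $[0,T_\ast/2]$ is eventually finite and tends to $0$, with the convention that it equals $+\infty$ on the (eventually empty) event $\{T_\ast^\epsilon<T_\ast/2\}$. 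Hence
\[
  \sup_{0\le s\le T_\ast/2}\|u^\epsilon_s-u_s\|_{\HolBesSp{-\frac{2}{3}+\kappa'}}\to0
  \qquad\text{and}\qquad
  T_\ast^\epsilon\to T_\ast
\]
almost surely along that subsequence; since every subsequence admits such a further subsequence, both convergences hold in probability for the full family $\{\epsilon\}$. Finally, $u$ is a measurable function of $(u_0,X)$ alone, so the independence of $X$ from $\{\DyaPartOfUnit[m]\}$ and $\chi$ asserted in \tref{thm_20161128051824} passes to $u$.

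The step that requires the most care is not a computation but the compatibility of the two halves of the construction: one must check that the topology on driving vectors in which \tref{thm_20161128051824} establishes $X^\epsilon\to X$ is exactly the one with respect to which the solution map of \tref{thm_20160920085605} is continuous and $T_\ast$ is lower semicontinuous --- that is, that the components of $\mathbf{X}$ (the various iterated paraproducts and resonant products carrying their renormalizations) are normed in matching negative-regularity H\"older--Besov spaces --- and that the handling of the random, $\epsilon$-dependent times $T_\ast^\epsilon$ is consistent with the convention stated in the theorem. Once these matchings are in place, the argument is the routine transfer of a deterministic continuity statement through an almost surely convergent subsequence.
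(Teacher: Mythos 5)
Your overall strategy is exactly the paper's: define $u^\epsilon$ and $u$ by feeding $X^\epsilon$ and $X$ into the solution map of \tref{thm_20160920085605}, identify $u^\epsilon$ with the classical solution of \eqref{eq:rcgl} via \tref{thm_20160920094302}, and transfer the convergence $X^\epsilon\to X$ of \tref{thm_20161128051824} through the (locally uniformly) continuous solution map and the lower semicontinuous survival time, passing from convergence in probability to almost sure convergence along subsequences. That part is fine and matches the intended proof.

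There is, however, one genuine flaw, in your last sentence. You claim that $X$ is independent of both $\{\DyaPartOfUnit[m]\}_{m=-1}^\infty$ and $\chi$ and that this independence ``passes to $u$.'' But \tref{thm_20161128051824} (see the remark following it) only asserts independence of $X$ from $\chi$; the driving vector genuinely depends on the dyadic partition of unity, since its resonant-product components (e.g.\ $X^{\IAAoAB}$, built from $\psi_\circ$ in \eqref{eq_20161206032529}) are defined through the Littlewood--Paley blocks. So the independence of $u$ from $\{\DyaPartOfUnit[m]\}$ cannot be read off from $X$. The paper's argument goes the other way around: by \pref{prop_20161107071120} the renormalization constant $\constRe{\epsilon}$ admits an explicit expression not involving $\{\DyaPartOfUnit[m]\}$, hence the \emph{classical} renormalized equation \eqref{eq:rcgl} --- and therefore its unique classical solution $u^\epsilon$ for smooth noise --- is independent of the partition; the independence then passes to the limit $u$. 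You should replace your final step by this argument (keeping your reasoning only for the independence from $\chi$, where it is valid).
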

Here, we will make comments on this theorem.
Note that the process $u^\epsilon$ and $u$ are obtained by substituting $X^\epsilon$ and $X$ into the solution map, respectively.
Since $X^\epsilon$ converges to $X$ and the solution map is continuous, we see that $u^\epsilon$ converges to $u$.
In addition, $u^\epsilon$ solves \eqref{eq:rcgl} in the usual sense, hence we see the theorem.
We need to pay attention to the assertion that $u$ is independent of the choice of $\whiteNoise^\epsilon$.
Recall that $X^\epsilon$ depends on $\{\DyaPartOfUnit[m]\}_{m=-1}^\infty$.
Hence $u^\epsilon$ may, too.
However, we see that $u^\epsilon$ does not.
In fact, we obtain an expression of the renormalization constant $\constRe{\epsilon}$
which does not depend on $\{\DyaPartOfUnit[m]\}_{m=-1}^\infty$ in \pref{prop_20161107071120}.
Hence, \eqref{eq:rcgl} is independent of $\{\DyaPartOfUnit[m]\}_{m=-1}^\infty$ and so is the solution $u^\epsilon$.
As a consequence, the limit $u$ is independent of $\{\DyaPartOfUnit[m]\}_{m=-1}^\infty$.
In addition, the limit $u$ is independent of $\chi$ because the driving vector $X$ is independent of $\chi$ (\tref{thm_20161128051824}).
Hence we see the solution does not depend on $\{\DyaPartOfUnit[m]\}_{m=-1}^\infty$ or $\chi$.

\begin{remark}\label{0550 tx regularize is ok}
As stated in \secref{sec_1503967525},
we can choose common approximation noise for the renormalized equation \eqref{eq:rcgl}
to obtain the solutions in \tref[0330_main result]{thm_2017013154042}.
In this sense, the solutions in \tref[0330_main result]{thm_2017013154042} "essentially coincide," or at least look very similar.

In \tref{thm_2017013154042}, the noise $\whiteNoise$ is smeared only in spatial direction.
However, we can consider the case that the noise is smeared both in temporal and spatial directions.
For a non-negative Schwartz function $\varrho$ on $\RealNum^4$ such that $\int\varrho=1$,
we consider the scaling $\varrho^\epsilon(t,x)=\epsilon^{-5}\varrho(\epsilon^{-2}t,\epsilon^{-1}x)$,
which is the mollifier considered in \tref{0330_main result},
and replace $\whiteNoise$ by smooth noise
\begin{align}\label{eq_1503279335}
    \tilde{\whiteNoise}^\epsilon(t,x)
    =
        \int_{\RealNum\times\RealNum^3}
            \varrho^\epsilon(t-s,x-y)
            \xi(s,y)\,
            dsdy.
\end{align}
Then the same claim as \tref{thm_2017013154042} holds for the renormalized equation \eqref{eq:rcgl} with $\tilde{\xi}^\epsilon$,
under well-adjusted choice of $C^\epsilon$. Moreover, the limit process coincides with that in \tref{thm_2017013154042}.
This is because the limit driving vector dose not change
under the different choice of approximations (\rref{0550 X is invariant under diff approx}).
\end{remark}

Finally, we should note that Hoshino showed
the global-in-time well-posedness of CGL \eqref{eq:cgl}
in the case that $\mu>1/\sqrt{8}$ and $\Re \nu>0$ \cite{Hoshino2017b}.

\secref[sec_20161107073643]{sec_20161107073600} are independent of \secref[sec_20161107073826]{sec_20161107073513}.
We do not use the symbols introduced in \secref[sec_20161107073826]{sec_20161107073513}.

\subsection{Besov-H\"older spaces and paradifferential calculus}\label{sec_1501991661}
In this section, we introduce the Besov-H\"older spaces and paradifferential calculus.
The results in this section can be found in \cite{GubinelliImkellerPerkowski2015,BahouriCheminDanchin2011}
or follow from them easily.

\subsubsection{Besov spaces}
We introduce the Besov spaces and recall their basic properties.
Let $\cD\equiv\cD(\Torus^3,\CmplNum)$ be
the space of all smooth $\CmplNum$-valued functions on $\Torus^3$
and $\cD'$ its dual of $\cD$.
We set $\FourierBase[k](x)=e^{2\pi\ImUnit k\cdot x}$ for every $k\in\Integers^3$ and $x\in\Torus^3$.
The Fourier transform $\FourierTrans{f}$ for $f\in\cD$ is defined by
$
	\FourierTrans f(k)
	=
		\int_{\Torus^3}
			\FourierBase[-k](x)
			f(x)\,
			dx
$
and its inverse $\FourierTrans^{-1}g$ for a rapidly decreasing sequence $\{g(k)\}_{k\in\Integers^3}$ is defined by
$
	\FourierTrans^{-1} g(x)
	=
		\sum_{k\in\Integers^3}
			g(k)
			\FourierBase[k](x)
$.
For every rapidly decreasing smooth function $\phi$, we set
$
	\phi(D)f
	=
		\FourierTrans^{-1}\phi\FourierTrans f
	=
		\sum_{k\in\Integers^3}
			\phi(k)
			\FourierCoeff{f}(k)
			\FourierBase[k]
$.

We denote by $\{\DyaPartOfUnit[m]\}_{m=-1}^\infty$ a dyadic partition of unity, that is,
it satisfies the following:
(1) $\DyaPartOfUnit[m]\colon\RealNum^3\to[0,1]$ is radial and smooth,
(2) $
		\supp(\DyaPartOfUnit[-1])
		\subset
			B(0,\frac{4}{3})
	$,
	$
		\supp(\DyaPartOfUnit[0])
		\subset
			B(0,\frac{8}{3})\setminus B(0,\frac{3}{4})
	$,
(3) $\DyaPartOfUnit[m](\cdot)=\DyaPartOfUnit[0](2^{-m}\cdot)$ for $m\geq 0$,
(4) $
		\sum_{m=-1}^\infty
			\DyaPartOfUnit[m](\cdot)
		=
			1
	$.
Here $B(0,r)=\{x\in\RealNum^3;|x|<r\}$.
The Littlewood-Paley blocks $\{\LPBlock[m]\}_{m=-1}^\infty$ are defined by $\LPBlock[m]=\DyaPartOfUnit[m](D)$.

We are ready to define Besov space $\HolBesSp{\alpha}=\HolBesSp{\alpha}(\Torus^3,\CmplNum)$
for $\alpha\in\RealNum$.
It is defined as the completion of $\cD$ under the norm
\begin{align*}
	\|f\|_{\HolBesSp{\alpha}}
	=
		\sup_{m\geq -1}
			2^{m\alpha}
			\|\LPBlock[m] f\|_{L^\infty}.
\end{align*}
The next is frequently used results on Besov spaces:
\begin{proposition}[{\cite[Theorem~2.80]{BahouriCheminDanchin2011}}]\label{prop_20160928054620}
	We have the following:
	\begin{itemize}
		\item	$
					\|f\|_{\HolBesSp{\alpha}}
					\lesssim
						\|f\|_{\HolBesSp{\beta}}
				$
				if $\alpha<\beta$.
		\item	Let $\alpha,\alpha_1,\alpha_2\in\RealNum$ satisfy
				$
					\alpha
					=
						(1-\theta)\alpha_1+\theta\alpha_2
				$
				for some $0<\theta<1$. Then, we have
				\begin{align*}
					\|f\|_{\HolBesSp{\alpha}}
					\leq
						\|f\|_{\HolBesSp{\alpha_1}}^{1-\theta}
						\|f\|_{\HolBesSp{\alpha_2}}^\theta.
				\end{align*}
	\end{itemize}
\end{proposition}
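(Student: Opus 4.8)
The plan is to argue directly from the definition $\|f\|_{\HolBesSp{\alpha}}=\sup_{m\geq-1}2^{m\alpha}\|\LPBlock[m]f\|_{L^\infty}$ in terms of the Littlewood--Paley blocks, since both assertions are elementary consequences of it (and are, of course, contained in \cite[Theorem~2.80]{BahouriCheminDanchin2011}, which one could simply invoke).

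For the embedding I would assume $\alpha<\beta$ and split the supremum over $m$ according to whether $m\geq0$ or $m=-1$. When $m\geq0$ we have $2^m\geq1$, hence $2^{m\alpha}=2^{m(\alpha-\beta)}\,2^{m\beta}\leq2^{m\beta}$, so that $2^{m\alpha}\|\LPBlock[m]f\|_{L^\infty}\leq\|f\|_{\HolBesSp{\beta}}$. When $m=-1$ one only picks up the fixed factor $2^{\beta-\alpha}$, giving $2^{-\alpha}\|\LPBlock[-1]f\|_{L^\infty}\leq2^{\beta-\alpha}\|f\|_{\HolBesSp{\beta}}$. Taking the supremum over $m$ then yields $\|f\|_{\HolBesSp{\alpha}}\leq 2^{(\beta-\alpha)\vee0}\,\|f\|_{\HolBesSp{\beta}}$, which is the claimed estimate $\|f\|_{\HolBesSp{\alpha}}\lesssim\|f\|_{\HolBesSp{\beta}}$.

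For the interpolation inequality the key observation will be that for each fixed $m\geq-1$,
\[
2^{m\alpha}\|\LPBlock[m]f\|_{L^\infty}
=
\bigl(2^{m\alpha_1}\|\LPBlock[m]f\|_{L^\infty}\bigr)^{1-\theta}
\bigl(2^{m\alpha_2}\|\LPBlock[m]f\|_{L^\infty}\bigr)^{\theta},
\]
which follows immediately from $\alpha=(1-\theta)\alpha_1+\theta\alpha_2$ together with the trivial splitting $\|\LPBlock[m]f\|_{L^\infty}=\|\LPBlock[m]f\|_{L^\infty}^{1-\theta}\,\|\LPBlock[m]f\|_{L^\infty}^{\theta}$. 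Bounding the two factors on the right-hand side by $\|f\|_{\HolBesSp{\alpha_1}}$ and $\|f\|_{\HolBesSp{\alpha_2}}$ respectively and then taking the supremum over $m$ gives $\|f\|_{\HolBesSp{\alpha}}\leq\|f\|_{\HolBesSp{\alpha_1}}^{1-\theta}\|f\|_{\HolBesSp{\alpha_2}}^{\theta}$, in fact with constant $1$.

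I do not expect any real obstacle here, as both bounds are immediate from the definition of the norm. The only mild point to watch is the low-frequency block $m=-1$ in the embedding, where the monotonicity $2^{m\alpha}\leq2^{m\beta}$ fails and one must absorb the harmless finite constant $2^{\beta-\alpha}$; the interpolation estimate has no such issue and holds with the sharp constant $1$.
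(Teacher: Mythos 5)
Your proof is correct and is exactly the standard argument: the paper itself offers no proof of this proposition, merely citing \cite[Theorem~2.80]{BahouriCheminDanchin2011}, and both of your steps (absorbing the harmless factor $2^{\beta-\alpha}$ from the $m=-1$ block, and the block-wise splitting $\|\LPBlock[m]f\|_{L^\infty}=\|\LPBlock[m]f\|_{L^\infty}^{1-\theta}\|\LPBlock[m]f\|_{L^\infty}^{\theta}$ followed by $\sup_m(a_mb_m)\leq(\sup_m a_m)(\sup_m b_m)$) are precisely how the cited result is established.
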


\subsubsection{Paraproducts and Commutator estimates}
For every $f\in\HolBesSp{\alpha}$, $g\in\HolBesSp{\beta}$, we set
\begin{gather*}
	\begin{aligned}
		f\lpara g
		&=
			\sum_{m_1\geq m_2+2}
				\LPBlock[m_1]f
				\LPBlock[m_2]g,
		&
		f\rpara g
		&=
			\sum_{m_1+2\leq m_2}
				\LPBlock[m_1]f
				\LPBlock[m_2]g,
	\end{aligned}\\
	f\reso g
	=
		\sum_{|m_1-m_2|\leq 1}
			\LPBlock[m_1]f
			\LPBlock[m_2]g.
\end{gather*}

The following are properties of paraproduct.
\begin{proposition}[Paraproduct and resonant estimate {\cite[Theorem~2.82 and 2.85]{BahouriCheminDanchin2011}}]\label{prop_20160926062106}
	We have the following:
	\begin{enumerate}[(1)]
		\item	For every $\beta\in\RealNum$,
				$
					\|f\rpara g\|_{\HolBesSp{\beta}}
					\lesssim
						\|f\|_{L^\infty}
						\|g\|_{\HolBesSp{\beta}}
				$.
		\item	For every $\alpha <0$ and $\beta\in\RealNum$,
				$
					\|f\rpara g\|_{\HolBesSp{\alpha+\beta}}
					\lesssim
						\|f\|_{\HolBesSp{\alpha}}
						\|g\|_{\HolBesSp{\beta}}
				$.
		\item	If $\alpha+\beta>0$, then
				$
					\|f\reso g\|_{\HolBesSp{\alpha+\beta}}
					\lesssim
						\|f\|_{\HolBesSp{\alpha}}
						\|g\|_{\HolBesSp{\alpha}}
				$.
	\end{enumerate}
\end{proposition}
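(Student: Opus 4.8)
The plan is to reduce everything to elementary Littlewood--Paley estimates, following the standard Bony-decomposition argument (essentially that of \cite{BahouriCheminDanchin2011}). The two structural ingredients are: the spectral-support property of the dyadic blocks --- for a fixed annulus $\mathcal{A}$ and a fixed ball $\mathcal{B}$ centered at the origin, the Fourier support of $\LPBlock[m]h$ lies in $2^m\mathcal{A}$ when $m\geq 0$ and in $\mathcal{B}$ when $m=-1$ --- together with the uniform bound $\|\sum_{m\leq j}\LPBlock[m]h\|_{L^\infty}\lesssim\|h\|_{L^\infty}$, which holds because the partial sum is convolution against an $L^1$-normalized kernel. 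I will also use repeatedly that $\|\LPBlock[m]h\|_{L^\infty}\lesssim 2^{-m\gamma}\|h\|_{\HolBesSp{\gamma}}$ directly from the definition of the norm.

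For item (1), write $f\rpara g=\sum_{m_2}\bigl(\sum_{m_1\leq m_2-2}\LPBlock[m_1]f\bigr)\,\LPBlock[m_2]g$. Each summand has Fourier support in an annulus $2^{m_2}\tilde{\mathcal{A}}$, since the low-frequency factor is spectrally supported in a ball of radius $\lesssim 2^{m_2}$ while $\LPBlock[m_2]g$ is supported in $2^{m_2}\mathcal{A}$; hence for each $m$ only a bounded number (independent of $m$) of values of $m_2$ contribute to $\LPBlock[m](f\rpara g)$, all with $|m_2-m|\leq N$. Applying the triangle inequality, the uniform bound on partial sums for the $f$-factor, and the norm definition for the $g$-factor gives $\|\LPBlock[m](f\rpara g)\|_{L^\infty}\lesssim\|f\|_{L^\infty}\sum_{|m_2-m|\leq N}2^{-m_2\beta}\|g\|_{\HolBesSp{\beta}}\lesssim 2^{-m\beta}\|f\|_{L^\infty}\|g\|_{\HolBesSp{\beta}}$, which is the claim after taking the supremum over $m$. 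Item (2) is identical except that the low-frequency factor is estimated by $\sum_{m_1\leq m_2-2}2^{-m_1\alpha}\|f\|_{\HolBesSp{\alpha}}$; since $\alpha<0$ this geometric series is dominated by its top term $\lesssim 2^{-m_2\alpha}\|f\|_{\HolBesSp{\alpha}}$, and one concludes exactly as before with $\beta$ replaced by $\alpha+\beta$.

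For item (3), write $f\reso g=\sum_{m_2}\sum_{|i|\leq 1}\LPBlock[m_2+i]f\,\LPBlock[m_2]g$. Now each summand has Fourier support merely in a ball $2^{m_2}\mathcal{B}$ of radius $\lesssim 2^{m_2}$ (the sum of two such balls is again a ball), so $\LPBlock[m](f\reso g)$ receives contributions from \emph{all} $m_2\geq m-N'$; there is no cancellation at high frequencies, and this is precisely where the hypothesis $\alpha+\beta>0$ enters. Estimating each factor by the norm definition,
\[
\|\LPBlock[m](f\reso g)\|_{L^\infty}\lesssim\sum_{m_2\geq m-N'}2^{-m_2\alpha}2^{-m_2\beta}\|f\|_{\HolBesSp{\alpha}}\|g\|_{\HolBesSp{\beta}},
\]
and since $\alpha+\beta>0$ this series converges and is bounded by $\lesssim 2^{-m(\alpha+\beta)}\|f\|_{\HolBesSp{\alpha}}\|g\|_{\HolBesSp{\beta}}$, giving the assertion (the norm on the right-hand side of (3) in the stated form being read as $\|g\|_{\HolBesSp{\beta}}$). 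The only genuinely delicate point in the whole argument is bookkeeping the spectral supports correctly so as to pass from the infinite sum defining the paraproduct/resonant term to the effectively finite (for $\rpara$) resp.\ one-sided convergent (for $\reso$) sum in each Littlewood--Paley block; everything else is summation of geometric series whose convergence is governed by the sign of the relevant exponent dictated by the hypotheses.
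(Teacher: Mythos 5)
Your proof is correct and is exactly the standard Bony/Littlewood--Paley argument from the cited reference \cite[Theorems~2.82 and~2.85]{BahouriCheminDanchin2011}; the paper itself offers no proof beyond that citation, so there is nothing different to compare against. You also correctly flag the typo in item (3), whose right-hand side should read $\|f\|_{\HolBesSp{\alpha}}\|g\|_{\HolBesSp{\beta}}$.
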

\begin{remark}
	Let $f\in\HolBesSp{\alpha}$ and $g\in\HolBesSp{\beta}$ for $\alpha<0$ and $\beta>0$ with $\alpha+\beta>0$.
	Then the product $fg$ is well-defined as an element in $\HolBesSp{\alpha}$.
\end{remark}

\begin{proposition}[Commutator estimates, {\cite[Lemma~2.4]{GubinelliImkellerPerkowski2015}}]\label{prop_comm_20160919055939}
	Let $0<\alpha<1$, $\beta,\gamma\in\RealNum$ satisfy $\beta+\gamma<0$ and $\alpha+\beta+\gamma>0$.
	Define the map $\comR$ by
	\begin{align*}
		\comR(f,g,h)
		=
			(f\rpara g)\reso h-f(g\reso h)
	\end{align*}
	for $f,g,h\in\SmoothFunc{\infty}{\Torus^3}{\CmplNum}$.
	Then $\comR$ is uniquely extended to a continuous trilinear map from
	$
		\HolBesSp{\alpha}
		\times\HolBesSp{\beta}
		\times\HolBesSp{\gamma}
	$
	to
	$\HolBesSp{\alpha+\beta+\gamma}$.
\end{proposition}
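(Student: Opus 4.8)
The plan is to reduce the statement to one uniform dyadic estimate and then to careful frequency bookkeeping. Since $\HolBesSp{\alpha+\beta+\gamma}$ is by definition the completion of $\cD$ under its norm, and $\rpara$, $\reso$ and the pointwise product are unambiguously defined on $\cD$, it suffices to prove the quantitative bound
\begin{align*}
	\|\LPBlock[q]\comR(f,g,h)\|_{L^\infty}
	\lesssim
		2^{-q(\alpha+\beta+\gamma)}
		\|f\|_{\HolBesSp{\alpha}}
		\|g\|_{\HolBesSp{\beta}}
		\|h\|_{\HolBesSp{\gamma}},
	\qquad
	q\geq-1,
\end{align*}
for $f,g,h\in\SmoothFunc{\infty}{\Torus^3}{\CmplNum}$, uniformly in $q$; the unique continuous trilinear extension then follows by density. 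For smooth arguments all the series below converge absolutely, so there is no issue of definition here: the whole content is the estimate, which survives passage to the completion even though the two summands defining $\comR$ do not individually extend continuously.

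First I would expand the three operations into Littlewood--Paley blocks: $f\rpara g=\sum_{j}(S_{j-1}f)\,\LPBlock[j]g$ with $S_{j-1}=\sum_{k\leq j-2}\LPBlock[k]$ (so the frequency gap matches the definition of $\rpara$), $g\reso h=\sum_{|i-j|\leq1}\LPBlock[i]g\,\LPBlock[j]h$, and in $f(g\reso h)=\sum_{|i-j|\leq1}f\,\LPBlock[i]g\,\LPBlock[j]h$ split $f=S_{j-1}f+(f-S_{j-1}f)$. The piece arising from $f-S_{j-1}f=\sum_{k\geq j-1}\LPBlock[k]f$ is a sum of products of three Littlewood--Paley blocks in which the frequency carried by $f$ is at least comparable to the (mutually comparable) frequencies of $g$ and $h$; estimating $\LPBlock[q]$ of it block by block with the elementary bounds $\|\LPBlock[k]f\|_{L^\infty}\lesssim2^{-k\alpha}\|f\|_{\HolBesSp{\alpha}}$ and the analogues for $g,h$, then summing the resulting geometric series, is routine: $\beta+\gamma<0$ makes the ``$q$ below the common scale'' tail summable and $\alpha+\beta+\gamma>0$ the ``$q$ near the common scale'' tail, both giving exactly $2^{-q(\alpha+\beta+\gamma)}$.

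It then remains to control $(f\rpara g)\reso h-\sum_{|i-j|\leq1}(S_{i-1}f)\,\LPBlock[i]g\,\LPBlock[j]h$, where the second sum is the ``$k\leq j-2$'' part of $f(g\reso h)$ (the mismatch between $S_{i-1}$ and $S_{j-1}$ for $|i-j|\leq1$ contributes only finitely many extra blocks at each scale, absorbed into the previous step). Here is the one real difficulty: the paraproduct $f\rpara g$ is \emph{not} dyadically localised — its $i$-th summand $(S_{i-1}f)\LPBlock[i]g$ has Fourier support in a ball of radius $\lesssim2^{i}$, not an annulus — so $\LPBlock[m](f\rpara g)=\sum_{i\geq m-N_{0}}\LPBlock[m]\big((S_{i-1}f)\LPBlock[i]g\big)$ for a fixed $N_{0}$, and the ``$i$ much larger than $m$'' tail, once paired against $\LPBlock[m']h$ with $|m-m'|\leq1$, is individually divergent when $\beta<0$. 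The heart of the argument is that this tail is cancelled, up to harmless lower-order terms, by the matching part of $\sum_{|i-j|\leq1}(S_{i-1}f)\LPBlock[i]g\,\LPBlock[j]h$: after reindexing, the difference reorganises into a sum of terms — products of Littlewood--Paley blocks in which the effective highest frequency sits on a block $\LPBlock[k]f$ with $k$ comparable to the frequencies of $g$ and $h$, together with a genuine commutator between multiplication by $S_{i-1}f$ and the projection $\LPBlock[m]$ — each of which is directly estimable. Summing these with the same elementary block bounds closes the argument, the hypotheses $0<\alpha<1$ and $\alpha+\beta+\gamma>0$ again ensuring convergence, with the positivity of $\alpha$ in particular supplying the $i$-summability of the tail. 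I expect this reorganisation of $(f\rpara g)\reso h-f(g\reso h)$ into its cancelling and commutator parts, with the attendant frequency-support accounting for the non-localised paraproduct, to be the main obstacle; the remainder is summation of standard Littlewood--Paley estimates, and one finishes by density of $\cD$ in $\HolBesSp{\alpha},\HolBesSp{\beta},\HolBesSp{\gamma}$.
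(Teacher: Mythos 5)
The paper offers no proof of this proposition at all — it is imported verbatim from Gubinelli--Imkeller--Perkowski (Lemma~2.4 there) — so your proposal has to be measured against that argument. Your overall architecture is the correct and standard one: reduce to a uniform bound on $\|\LPBlock[q]\comR(f,g,h)\|_{L^\infty}$, peel off the part of $f(g\reso h)$ in which $f$ carries the dominant frequency (where $\beta+\gamma<0$ and $\alpha+\beta+\gamma>0$ do exactly the jobs you assign them), and extend by density. The problems are in the second half.

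First, the premise on which you build ``the heart of the argument'' is false: the summand $(S_{i-1}f)\LPBlock[i]g$ of the paraproduct is Fourier-supported in an \emph{annulus} of radius $\sim 2^{i}$, not merely a ball, because $\widehat{S_{i-1}f}$ lives in $\{|\xi|\lesssim \tfrac{2}{3}2^{i}\}$ while $\widehat{\LPBlock[i]g}$ lives in $\{\tfrac{3}{4}2^{i}\le|\xi|\le\tfrac{8}{3}2^{i}\}$, so the convolution cannot reach frequency zero. Hence $\LPBlock[m](f\rpara g)$ involves only the finitely many $i$ with $|i-m|\le N_{0}$; the ``individually divergent tail $i\gg m$'' you propose to cancel is empty, and the cancellation you identify as the main obstacle is vacuous. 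Second — and this is the genuine gap — the step that actually carries the proof is the commutator estimate $\|[\LPBlock[m],u]v\|_{L^\infty}\lesssim 2^{-m\alpha}\|u\|_{\HolBesSp{\alpha}}\|v\|_{L^\infty}$ for $\alpha\in(0,1)$, applied with $u=S_{i-1}f$ and $v=\LPBlock[i]g$. It is proved by writing $\LPBlock[m]$ as convolution against a mean-zero kernel and Taylor-expanding $u$ to first order; this is precisely where $0<\alpha<1$ enters (one cannot gain more than one derivative this way), and the resulting factor $2^{-m\alpha}$ is what makes the remaining sums converge under $\alpha+\beta+\gamma>0$. You mention this commutator only in passing, declare such terms ``directly estimable'' without stating or proving the bound, and attribute the hypothesis $\alpha<1$ vaguely to ``ensuring convergence''. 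Without that lemma the argument does not close.
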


\subsubsection{Regularity of $\HolBesSp{\alpha}$-valued functions}
Here we consider $\HolBesSp{\alpha}$-valued functions and introduce several classes of them.
Let $0<\delta\leq 1$ and $\eta\geq 0$ and define these classes as follows:
\begin{itemize}
	\item	$C_T\HolBesSp{\alpha}$ is the space of all continuous functions from $[0,T]$ to $\HolBesSp{\alpha}$
			which is equipped with the supremum norm
			\begin{align*}
				\|u\|_{C_T\HolBesSp{\alpha}}
				=
					\sup_{0\leq t\leq T}
						\|u_t\|_{\HolBesSp{\alpha}},
			\end{align*}
	\item	$C_T^\delta\HolBesSp{\alpha}$ is the space of all $\delta$-H\"older continuous functions
			from $[0,T]$ to $\HolBesSp{\alpha}$ which is equipped with the seminorm
			\begin{align*}
				\|u\|_{C_T^\delta\HolBesSp{\alpha}}
				=
					\sup_{0\leq s<t\leq T}
						\frac{\|u_t-u_s\|_{\HolBesSp{\alpha}}}{|t-s|^\delta},
			\end{align*}
	\item	$
				\cE_T^\eta\HolBesSp{\alpha}
				=
					\{
						u\in \SmoothFunc{}{(0,T]}{\HolBesSp{\alpha}};
						\|u\|_{\cE_T^\eta\HolBesSp{\alpha}}<\infty
					\}
			$,
			where
			\begin{align*}
				\|u\|_{\cE_T^\eta\HolBesSp{\alpha}}
				=
					\sup_{0<t\leq T}
						t^\eta\|u_t\|_{\HolBesSp{\alpha}},
			\end{align*}
	\item	$
				\cE_T^{\eta,\delta}\HolBesSp{\alpha}
				=
					\{
						u\in \SmoothFunc{}{(0,T]}{\HolBesSp{\alpha}};
						\|u\|_{\cE_T^{\eta,\delta}\HolBesSp{\alpha}}<\infty
					\}
			$,
			where
			\begin{align*}
				\|u\|_{\cE_T^{\eta,\delta}\HolBesSp{\alpha}}
				=
					\sup_{0<s<t\leq T}
						s^\eta
						\frac{\|u_t-u_s\|_{\HolBesSp{\alpha}}}{|t-s|^\delta},
			\end{align*}
	\item	$
				\cL_T^{\alpha,\delta}
				=
					C_T\HolBesSp{\alpha}
					\cap
					C_T^\delta\HolBesSp{\alpha-2\delta}
			$,
	\item	$
				\cL_T^{\eta,\alpha,\delta}
				=
					\cE_T^\eta\HolBesSp{\alpha}
					\cap
					C_T\HolBesSp{\alpha-2\eta}
					\cap
					\cE_T^{\eta,\delta}\HolBesSp{\alpha-2\delta}
			$.
\end{itemize}

\begin{remark}
	We introduced the norms on the spaces $\cE_T^\eta\HolBesSp{\alpha}$ and $\cE_T^{\eta,\delta}\HolBesSp{\alpha}$
	in order to control explosion at $t=0$.
	The definition of $\cL_T^{\alpha,\delta}$ is natural from the time-space scaling of CGL.
\end{remark}

For $\mu>0$, we set $\mathcal{L}^1=\partial_t-\{(\ImUnit+\mu)\LaplaceOp-1\}$, $P^1_t=e^{t\{(\ImUnit+\mu)\LaplaceOp-1\}}$.
We present results on smoothing effects of semigroup $\{P^1_t\}_{t\geq 0}$.
\begin{proposition}[Effects of heat semigroup]\label{prop_20160927051055}
	Let $\alpha\in\RealNum$.
	\begin{enumerate}[(1)]
		\item	For every $\delta>0$,
				$
					\|P^1_t u\|_{\HolBesSp{\alpha+2\delta}}
					\lesssim
						t^{-\delta}
						\|u\|_{\HolBesSp{\alpha}}
				$
				uniformly in $t>0$.
		\item	For every $\delta\in[0,1]$,
				$
					\|(P^1_t-1)u\|_{\HolBesSp{\alpha-2\delta}}
					\lesssim
						t^\delta
						\|u\|_{\HolBesSp{\alpha}}
				$
				uniformly in $t>0$.
	\end{enumerate}

	\begin{proposition}[Schauder estimates]\label{prop_20160930075129}
		Let $T>0$.
		We see the following:
		\begin{enumerate}[(1)]
			\item	Let $u\in\HolBesSp{\alpha}$.
					For every $\beta\geq \alpha$ and $\delta\in[0,1]$, we have
					\begin{align*}
						\|
							(t\mapsto P^1_t u)_{t\geq 0}
						\|_{\cL_T^{\frac{\beta-\alpha}{2},\beta,\delta}}
						\lesssim
							\|u\|_{\HolBesSp{\alpha}}.
					\end{align*}
			\item	Let $\alpha\neq\beta$.
					Let $u\in\cE_T^\eta\HolBesSp{\alpha}$ for $\eta\in[0,1)$ and set
					\begin{align*}
						U_t
						=
							\int_0^t
								P^1_{t-s}
								u_s\,
								ds.
					\end{align*}
					Then for every $\gamma\in[\alpha,\alpha-2\eta+2)$,
					$\beta\in[\gamma,\alpha+2)$ and $\delta\in(0,\frac{\beta-\alpha}{2}]$,
					we have
					\begin{align*}
						\|U\|_{\cL_T^{\frac{\beta-\gamma}{2},\beta,\delta}}
						\lesssim
							T^{\frac{\alpha-2\eta+2-\gamma}{2}}
							\|u\|_{\cE_T^\eta\HolBesSp{\alpha}}.
					\end{align*}
		\end{enumerate}
	\end{proposition}

	\begin{proposition}[Commutation between paraproduct and heat semigroup]
		\label{prop_20160927051159}
		Let $\alpha<1$, $\beta\in\RealNum$, $\delta\geq 0$.
		Define
		\begin{align*}
			[P^1_t,u\rpara]v
			=
				P^1_t (u\rpara v)
				-
				u\rpara P^1_t v.
		\end{align*}
		Then we have
		\begin{align*}
			\|
				[P^1_t,u\rpara]v
			\|_{\HolBesSp{\alpha+\beta+2\delta}}
			\lesssim
				t^{-\delta}
				\|u\|_{\HolBesSp{\alpha}}
				\|v\|_{\HolBesSp{\beta}}
		\end{align*}
		uniformly over $t>0$.
	\end{proposition}
\end{proposition}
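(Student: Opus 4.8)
The plan is to prove the three assertions of the (nested) proposition one after another, each time reducing matters to a Fourier--multiplier estimate on a single Littlewood--Paley block and then resumming. The crucial preliminary fact is the block bound: there is $c>0$, depending only on $\mu$, such that
$
\|\LPBlock[m] P^1_t f\|_{L^\infty}\lesssim e^{-ct2^{2m}}\|f\|_{L^\infty}
$
for every $m\geq-1$, $t>0$ (with $f$ replaced on the right by a slightly fattened block of $f$). To see this I would write $\LPBlock[m]P^1_t=\LPBlock[m]P^1_t$ composed with a fattened block, rescale the frequency variable $k=2^m\eta$, so that the symbol becomes $\DyaPartOfUnit[0](\eta)\,e^{-t(4\pi^2 2^{2m}|\eta|^2(\ImUnit+\mu)+1)}$ supported in a fixed annulus; its modulus is at most $e^{-t}e^{-ct2^{2m}}$ there because only the real part $\mu\LaplaceOp$ enters, and each $\eta$-derivative costs a power of $t2^{2m}$ which is absorbed into the exponential. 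Hence the rescaled symbol is bounded in $C^4$ by $e^{-c't2^{2m}}$, its (periodized) kernel has $L^1$-norm $\lesssim e^{-c't2^{2m}}$, and the block bound follows from Young's inequality, the case $m=-1$ being handled directly via the factor $e^{-t}$. Part~(1) of \pref{prop_20160927051055} then follows from
$
\|P^1_tu\|_{\HolBesSp{\alpha+2\delta}}=\sup_{m}2^{m(\alpha+2\delta)}\|\LPBlock[m]P^1_tu\|_{L^\infty}\lesssim\|u\|_{\HolBesSp{\alpha}}\sup_m 2^{2m\delta}e^{-ct2^{2m}}\lesssim t^{-\delta}\|u\|_{\HolBesSp{\alpha}}.
$

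For Part~(2), I would use $|e^{-z}-1|\leq 2^{1-\delta}|z|^\delta$ for $\Re z\geq0$, $\delta\in[0,1]$; on $\supp\DyaPartOfUnit[m]$ one has $|z|=t|4\pi^2|k|^2(\ImUnit+\mu)+1|\lesssim t2^{2m}$, so the symbol $\DyaPartOfUnit[m](k)(e^{-z}-1)$ has modulus $\lesssim (t2^{2m})^\delta$, and when $t2^{2m}\leq1$ the rescaled symbol divided by $(t2^{2m})^\delta$ is still bounded in $C^4$; when $t2^{2m}>1$ the same bound $\|\LPBlock[m](P^1_t-1)u\|_{L^\infty}\lesssim (t2^{2m})^\delta\|u\|$ comes for free from the triangle inequality and Part~(1) with $\delta=0$. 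Summing with weight $2^{m(\alpha-2\delta)}$ yields $\|(P^1_t-1)u\|_{\HolBesSp{\alpha-2\delta}}\lesssim t^\delta\|u\|_{\HolBesSp{\alpha}}$. The Schauder estimates of \pref{prop_20160930075129} are then essentially bookkeeping: for Part~(1), the $\cE_T^{\frac{\beta-\alpha}{2}}\HolBesSp{\beta}$-component is exactly Part~(1) above, the $C_T\HolBesSp{\alpha}$-component uses the $\delta=0$ contraction together with continuity in $t$ (for $t>0$ from the semigroup property and smoothing, at $t=0$ from $\|P^1_tu-u\|_{\HolBesSp{\alpha}}\to0$, which holds because $\HolBesSp{\alpha}$ is the completion of $\cD$ and $\{P^1_t\}$ is uniformly bounded on it), and the $\cE_T^{\eta,\delta}\HolBesSp{\beta-2\delta}$-component follows from $P^1_tu-P^1_su=(P^1_{t-s}-1)P^1_su$ and Parts~(2) then~(1). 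For Part~(2), $\|U_t\|_{\HolBesSp{\beta}}\leq\int_0^t\|P^1_{t-s}u_s\|_{\HolBesSp{\beta}}\,ds\lesssim\|u\|_{\cE_T^\eta\HolBesSp{\alpha}}\int_0^t(t-s)^{-\frac{\beta-\alpha}{2}}s^{-\eta}\,ds$; the conditions $\beta<\alpha+2$ and $\eta<1$ make this Beta integral finite and equal to a constant times $t^{\frac{\alpha-2\eta+2-\beta}{2}}$, which gives the $\cE_T^{\frac{\beta-\gamma}{2}}\HolBesSp{\beta}$- and $C_T\HolBesSp{\gamma}$-components after multiplying by $t^{\frac{\beta-\gamma}{2}}$ and using $t\leq T$, and the time-increment component follows from $U_t-U_s=\int_s^t P^1_{t-r}u_r\,dr+(P^1_{t-s}-1)U_s$ together with Part~(2) of \pref{prop_20160927051055}.

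For \pref{prop_20160927051159} I would expand $u\rpara v=\sum_j S_{j-1}u\,\LPBlock[j]v$ with the low-pass $S_{j-1}u=\sum_{m\leq j-2}\LPBlock[m]u$, so that $[P^1_t,u\rpara]v=\sum_j w_j$ with $w_j=P^1_t(S_{j-1}u\,\LPBlock[j]v)-S_{j-1}u\,P^1_t\LPBlock[j]v$. The gap of $2$ in $\rpara$ forces each $w_j$ to be spectrally supported in an annulus of size $\sim2^j$, so $\|[P^1_t,u\rpara]v\|_{\HolBesSp{\alpha+\beta+2\delta}}\lesssim\sup_j 2^{j(\alpha+\beta+2\delta)}\|w_j\|_{L^\infty}$. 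Since $S_{j-1}u\,\LPBlock[j]v$ and $\LPBlock[j]v$ are both localized at frequency $\sim2^j$, I may replace $P^1_t$ in both terms of $w_j$ by $P^1_t$ composed with a fattened block; denoting by $q_{j,t}$ the kernel of that operator, $w_j(x)=\int q_{j,t}(y)\,(S_{j-1}u(x-y)-S_{j-1}u(x))\,\LPBlock[j]v(x-y)\,dy$, whence by the mean value inequality $\|w_j\|_{L^\infty}\leq\|\nabla S_{j-1}u\|_{L^\infty}\|\LPBlock[j]v\|_{L^\infty}\int|y|\,|q_{j,t}(y)|\,dy$. Here $\|\nabla S_{j-1}u\|_{L^\infty}\lesssim\sum_{m\leq j-2}2^{m(1-\alpha)}\|u\|_{\HolBesSp{\alpha}}\lesssim 2^{j(1-\alpha)}\|u\|_{\HolBesSp{\alpha}}$ — this is exactly where $\alpha<1$ is used, to sum the series — and $\|\LPBlock[j]v\|_{L^\infty}\lesssim 2^{-j\beta}\|v\|_{\HolBesSp{\beta}}$; finally, because $q_{j,t}$ is frequency-localized at scale $2^j$ and $P^1_t$ contributes the decay $e^{-ct2^{2j}}$, the rescaling argument from the first paragraph gives $\int|y|\,|q_{j,t}(y)|\,dy\lesssim 2^{-j}e^{-ct2^{2j}}$. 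Multiplying, $2^{j(\alpha+\beta+2\delta)}\|w_j\|_{L^\infty}\lesssim 2^{2j\delta}e^{-ct2^{2j}}\|u\|_{\HolBesSp{\alpha}}\|v\|_{\HolBesSp{\beta}}$, and taking the supremum over $j$ produces the factor $t^{-\delta}$.

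The main obstacle, on which everything rests, is the family of complex-symbol kernel bounds --- in particular $\int|y|\,|q_{j,t}(y)|\,dy\lesssim 2^{-j}e^{-ct2^{2j}}$ uniformly in $j$ and $t$. One must verify that composing the dispersive--parabolic semigroup $P^1_t=e^{t\{(\ImUnit+\mu)\LaplaceOp-1\}}$ with a frequency projector at scale $2^j$ re-localizes its kernel to spatial scale $2^{-j}$ while retaining the parabolic decay $e^{-ct2^{2j}}$; the point is that the imaginary part $\ImUnit\LaplaceOp$ of the generator oscillates but does not enlarge the modulus of the symbol, so the $C^N$-bounds that control the $L^1$-norm (and first moment) of the kernel are unaffected by it, and the polynomial growth in $t2^{2j}$ produced by differentiating the Gaussian factor is swallowed by $e^{-ct2^{2j}}$. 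Once these uniform block and kernel estimates are established, all the resummations above and the Beta-function computations for the Schauder estimates are routine.
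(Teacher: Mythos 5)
Your proposal is correct, but it takes a different route from the paper only in the sense that the paper does not actually write a proof: it simply remarks that these statements follow ``in a similar way'' as the corresponding estimates for the real heat semigroup in the cited references (Hoshino's KPZ paper and Mourrat--Weber's Proposition A.15), the whole point being that $\mu>0$. You instead supply a self-contained Littlewood--Paley argument, and you correctly isolate exactly the fact the paper's one-line remark rests on: the symbol of $P^1_t=e^{t\{(\ImUnit+\mu)\LaplaceOp-1\}}$ has modulus $e^{-t(4\pi^2\mu|k|^2+1)}$, so the oscillatory part $\ImUnit\LaplaceOp$ does not affect the $C^N$ bounds of the rescaled symbol on each block, and the kernel bounds $\|\LPBlock[m]P^1_t\|_{L^\infty\to L^\infty}\lesssim e^{-ct2^{2m}}$ and $\int|y|\,|q_{j,t}(y)|\,dy\lesssim 2^{-j}e^{-ct2^{2j}}$ hold exactly as for the real heat kernel. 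Given these, your derivations of the smoothing bound, of $\|(P^1_t-1)u\|_{\HolBesSp{\alpha-2\delta}}\lesssim t^\delta\|u\|_{\HolBesSp{\alpha}}$ via $|e^{-z}-1|\lesssim|z|^\delta$ for $\Re z\ge0$, and of the commutator estimate (where the gap of $2$ in $\rpara$ localizes $w_j$ in an annulus and $\alpha<1$ is used to sum $\|\nabla S_{j-1}u\|_{L^\infty}\lesssim 2^{j(1-\alpha)}\|u\|_{\HolBesSp{\alpha}}$) are all sound, and they mirror the proofs in the references the paper points to. One small remark: in the Schauder estimate (2), the bound on the increment component $\cE_T^{\frac{\beta-\gamma}{2},\delta}\HolBesSp{\beta-2\delta}$ for the term $\int_s^t P^1_{t-r}u_r\,dr$ is not quite a one-line consequence of your Beta-integral computation; one needs a short case distinction (say $t\le 2s$ versus $t>2s$) to trade the factors $s^{\frac{\beta-\gamma}{2}}$, $s^{-\eta}$ and $(t-s)^{1-\frac{\beta-2\delta-\alpha}{2}}$ against the prefactor $T^{\frac{\alpha-2\eta+2-\gamma}{2}}$, using $\gamma<\alpha-2\eta+2$ and $\beta<\alpha+2$. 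This is indeed routine bookkeeping, as you say, so it is not a gap, but it is the one place where ``multiplying by $t^{\frac{\beta-\gamma}{2}}$'' alone does not finish the argument.
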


We can show the above results in a similar way as
\cite[Corollary~2.6, Proposition~2.8]{Hoshino2016arXiv}
and \cite[Propsosition~A.15]{MourratWeber2016Arxiv},
because $\mu>0$.

\subsection{Definitions of driving vectors and solutions}

First of all, we give the definition of a driving vector.
We set
\begin{gather}
	\label{eq_20161010092955}
	I(u_0,v)_t
	=
		P^1_t u_0
		+
		\int_0^t
			P^1_{t-s} v_s\,
			ds,\\
	\label{eq_20161006075406}
	I(v)_t
	=
		\int_{-\infty}^t
			P^1_{t-s} v_s\,
			ds,
\end{gather}
whenever they are well-defined.
Note that if we can choose
$
	u_0
	=
		\int_{-\infty}^0
			P^1_{-s}v_s\,
			ds
$,
then $I(u_0,v)=I(v)$.

Let $0<\kappa<\kappa'<1/18$ and $T>0$.
The following is the definition of a driving vector.
\begin{definition}
	We call a vector of space-time distributions
	\begin{multline*}
		X
		=
			(
				X^\A,
				X^\AA,
				X^\AB,
				X^\IAA,
				X^\IAB,
				X^\IAAB,\\
				X^\IAABoA,
				X^\IAABoB,
				X^\IAAoAB,
				X^\IAAoBB,
				X^\IABoAB,
				X^\IABoBB,
				X^\IAABoAB,
				X^\IAABoBB
		)\\
		\in
			C_T\HolBesSp{-\frac{1}{2}-\kappa}
			\times
			(C_T\HolBesSp{-1-\kappa})^2
			\times
			(C_T\HolBesSp{1-\kappa})^2
			\times
			\cL_T^{\frac{1}{2}-\kappa,\frac{1}{4}-\frac{1}{2}\kappa}
			\times
			(C_T\HolBesSp{-\kappa})^6
			\times
			(C_T\HolBesSp{-\frac{1}{2}-\kappa})^2
	\end{multline*}
	which satisfies
	$
		I(X^\IAA_0,X^\AA)=X^\IAA
	$
	and
	$
		I(X^\IAB_0,X^\AB)=X^\IAB
	$
	a \emph{driving vector} of CGL.
	We denote by $\drivers{T}{\kappa}$ the set of all driving vectors.
	We define the norm $\|\cdot\|_{\drivers{T}{\kappa}}$ by the sum of the norm of each component.
\end{definition}
Note that we assume that the component $X^\IAAB$ has H\"older continuity
and it belongs to $\cL_T^{\frac{1}{2}-\kappa,\frac{1}{4}-\frac{1}{2}\kappa}$.
We easily see that the space $\drivers{T}{\kappa}$ is a closed set of the product Banach spaces.
Next we define the space of solutions and give the notion of a solution.

We describe the tree-like symbols $\A$, $\AA$, $\AB$, $\IAA$,\dots in the definition.
The dot and the line denote the white noise and the operation $I$, respectively.
Hence, $\A$ represents $I(\whiteNoise)=Z$.
The symbols $\B$ and $\AB$ stand for the complex conjugate of $Z$ and the product $Z\CmplConj{Z}$, respectively.
So $\IAAB$ means $I(Z^2\CmplConj{Z})$.
Finally, $\IAABoA$ denotes the resonance term of $I(Z^2\CmplConj{Z})$ and $Z$.
\begin{definition}
	We set
	\begin{align*}
		\sols{T}{\kappa}{\kappa'}
		=
			\cL_T^{\frac{5}{6}-\kappa',1-\kappa',1-\frac{1}{2}\kappa'}
			\times
			\cL_T^{1-\kappa'+\kappa,\frac{3}{2}-2\kappa',1-\kappa'}.
	\end{align*}
\end{definition}

Next, we fix $X\in\drivers{T}{\kappa}$ and set $Z=X^\A$ and $W=X^\IAAB$.
Define $F$ and $G$ on $\sols{T}{\kappa}{\kappa'}$ by
\begin{align}
	\label{eq_20161002045615}
	F(v,w)
	&=
		-
		\nu
		\{
			2(-\nu X^{\IAAB}+v+w)\rpara X^\AB
			+(-\CmplConj{\nu}\CmplConj{X^{\IAAB}}+\CmplConj{v}+\CmplConj{w})\rpara X^\AA
		\},\\
	\label{eq_20161002050746}
	G(v,w)
	&=
		G_1(v,w)
		+
		\cdots
		+
		G_8(v,w).
\end{align}
Here $G_1(v,w),\dots,G_8(v,w)$ will be defined shortly.

Since $Z_t\in\HolBesSp{-\frac{1}{2}-\kappa}$ and $W_t\in\HolBesSp{\frac{1}{2}-\kappa}$,
the product $W_tZ_t$ and $W_t\CmplConj{Z}_t$ are not defined a priori.
We define them by
\begin{align*}
	WZ
	&=
		W(\rpara+\lpara)Z+X^\IAABoA,
	&
	W\CmplConj{Z}
	&=
		W(\rpara+\lpara)Z+X^\IAABoB.
\end{align*}
The products $W^2\CmplConj{Z}$ and $W\CmplConj{W}Z$ are also defined by
\begin{gather*}
	\begin{aligned}
		W^2\CmplConj{Z}
		&=
			2WX^\IAABoB
			+\comR(W,\CmplConj{Z},W)\\
		&\phantom{=}\quad\qquad\qquad
			+(W\rpara\CmplConj{Z})(\rpara+\lpara)W
			+W(W\lpara\CmplConj{Z}),
	\end{aligned}\\
	\begin{aligned}
		W\CmplConj{W}Z
		&=
			\CmplConj{W}X^\IAABoA
			+W\CmplConj{X^\IAABoB}
			+\comR(\CmplConj{W},Z,W)\\
		&\phantom{=}\quad\qquad\qquad
			+(\CmplConj{W}\rpara Z)(\rpara+\lpara)W
			+W(\CmplConj{W}\lpara Z).
	\end{aligned}
\end{gather*}
It follows from \pref{prop_20160926062106} that
$
	(WZ)_t,
	(W\CmplConj{Z})_t,
	(W^2\CmplConj{Z})_t,
	(W\CmplConj{W}Z)_t
	\in
		\HolBesSp{-\frac{1}{2}-\kappa}
$
hold.

In order to define $G_6(v,w)$, we use $\com(v,w)$ defined as follows.
For every $v_0\in\HolBesSp{-\frac{2}{3}+\kappa'}$ and $(v,w)\in\sols{T}{\kappa}{\kappa'}$, we set
\begin{align*}
	\FourierCoeff{v}_t
	&=
		P^1_tv_0
		+
		\int_0^t
			P^1_{t-s}
			[F(v,w)(s)]\,
			ds.
\end{align*}
Define
\begin{align}\label{eq_20161129003748}
	\com(v,w)
	=
		\FourierCoeff{v}
		+
		\nu
		\{
			2(-\nu W+v+w)\rpara X^\IAB
			+(-\CmplConj{\nu}\CmplConj{W}+\CmplConj{v}+\CmplConj{w})\rpara X^\IAA
		\}.
\end{align}
From \lref{lem_20161003140013}, we see that $\com(v,w)\reso X^\AB$
and $\CmplConj{\com(v,w)}\reso X^\AA$ are well-defined.
Roughly speaking, $\com(v,w)_t$ is something like
\begin{align*}
  [[I(\cdot),-2\nu(-\nu W+v+w)\rpara]X^\AB]_t
  +[[I(\cdot),-\nu(-\CmplConj{\nu}\CmplConj{W}+\CmplConj{v}+\CmplConj{w})\rpara]X^\AA]_t.
\end{align*}
Here, $[[I(\cdot),u\rpara] v]_t=I(u\rpara v)_t-u_t\rpara I(v)_t$.

We are in a position to define $G_1$,\dots,$G_8$.
We write $u_2=v+w$ and set
\begin{align*}
	G_1(v,w)
	&=
		-\nu u_2^2\CmplConj{u_2},\\
	G_2(v,w)
	&=
		-
		\nu
		\{
			u_2^2
			(\CmplConj{Z}-\CmplConj{\nu} \CmplConj{W})
			+
			2
			u_2
			\CmplConj{u_2}
			(Z-\nu W)
		\},\\
	G_3(v,w)
	&=
		-
		\nu
		\big\{
			u_2
			(
				2\nu\CmplConj{\nu}W\CmplConj{W}
				-2\nu W\CmplConj{Z}
				-2\CmplConj{\nu}\CmplConj{W}Z
				-4\nu X^\IABoAB
				-\CmplConj{\nu} \CmplConj{X^\IAAoBB}
			)\\
	&\phantom{=}\qquad\qquad\qquad\quad
			+
			\CmplConj{u_2}
			(
				\nu^2W^2
				-2\nu WZ
				-2\CmplConj{\nu} \CmplConj{X^\IABoBB}
				-2\nu X^\IAAoAB
			)
		\big\}\\
	&\phantom{=}\quad
		+
		(\nu+1)u_2,\\
	G_4(v,w)
	&=
		-
		\nu
		\big\{
			-\nu^2\CmplConj{\nu}W^2\CmplConj{W}
			+\nu^2W^2\CmplConj{Z}
			+2\nu\CmplConj{\nu}W\CmplConj{W}Z\\
	&\phantom{=}\quad\qquad
			+4\nu^2 WX^\IABoAB
			+4\nu^2 \comR(W,X^\IAB,X^\AB)\\
	&\phantom{=}\quad\qquad
			+2\CmplConj{\nu}^2 \CmplConj{W}\CmplConj{X^\IABoBB}
			+2\CmplConj{\nu}^2 \comR(\CmplConj{W},\CmplConj{X^\IAB},X^\AA)\\
		&\phantom{=}\quad\qquad
			+2\nu\CmplConj{\nu} \CmplConj{W}X^\IAAoAB
			+2\nu\CmplConj{\nu} \comR(\CmplConj{W},X^\IAA,X^\AB)\\
	&\phantom{=}\quad\qquad
			+\nu\CmplConj{\nu} W\CmplConj{X^\IAAoBB}
			+\nu\CmplConj{\nu} \comR(W,\CmplConj{X^\IAA},X^\AA)\\
	&\phantom{=}\quad\qquad
			-2\nu X^\IAABoAB
			-2\nu W\lpara X^{\AB}\\
	&\phantom{=}\quad\qquad
			-\CmplConj{\nu}\CmplConj{X^\IAABoBB}
			-\CmplConj{\nu}\CmplConj{W}\lpara X^{\AA}
		\big\}\\
	&\phantom{=}\quad
		+(\nu+1)(Z-\nu W),\\
	G_5(v,w)
	&=
		-
		\nu
		\big\{
			-4\nu \comR(u_2,X^\IAB,X^\AB)
			-2\nu \comR(\CmplConj{u_2},X^\IAA,X^\AB)\\
	&\phantom{=}\quad\qquad\qquad
			-2\CmplConj{\nu}\comR(\CmplConj{u_2},\CmplConj{X^\IAB},X^\AA)
			-\CmplConj{\nu}\comR(u_2,\CmplConj{X^\IAA},X^\AA)
		\big\},\\
	G_6(v,w)
	&=
		-\nu
		\{
			2\com(v,w)\reso X^\AB
			+
			\CmplConj{\com(v,w)}\reso X^\AA
		\},\\
	G_7(v,w)
	&=
		-\nu
		\{
			2w\reso X^\AB
			+
			\CmplConj{w}\reso X^\AA
		\},\\
	G_8(v,w)
	&=
		-\nu
		\{
			2u_2\lpara  X^\AB
			+
			\CmplConj{u_2}\lpara  X^\AA
		\}.
\end{align*}

The map
$
	\mathcal{M}
	=
		(\mathcal{M}^1,\mathcal{M}^2)
$
is defined on $\sols{T}{\kappa}{\kappa'}$ by
\begin{align}
	\label{eq_20161003034438}
	[\mathcal{M}^1(v,w)](t)
	&=
		P^1_tv_0
		+
		\int_0^t
			P^1_{t-s}
			F(v,w)(s)\,
			ds,\\
	\label{eq_20161003034500}
	[\mathcal{M}^2(v,w)](t)
	&=
		P^1_t w_0
		+
		\int_0^t
			P^1_{t-s}
			G(v,w)(s)\,
			ds
\end{align}
for every
$
	(v_0,w_0)
	\in
		\HolBesSp{-\frac{2}{3}+\kappa'}
		\times
		\HolBesSp{-\frac{1}{2}-2\kappa}
$.
We will use \pref{prop_20160930075129} to check that
the map is well-defined map from $\sols{T}{\kappa}{\kappa'}$ to itself
and has good property.

\begin{definition}\label{eq_20160921004656}
	For every
	$
		(v_0,w_0)
		\in
			\HolBesSp{-\frac{2}{3}+\kappa'}
			\times
			\HolBesSp{-\frac{1}{2}-2\kappa}
	$
	and
	$
		X\in\drivers{T}{\kappa}
	$,
	we consider the system
	\begin{gather}\label{eq_20160920090410}
		\left\{
			\begin{aligned}
				v_t
				&=
					[\mathcal{M}^1(v,w)](t),\\
				w_t
				&=
					[\mathcal{M}^2(v,w)](t).
			\end{aligned}
		\right.
	\end{gather}

	If there exists $(v,w)\in\sols{T_\ast}{\kappa}{\kappa'}$ for some $T_\ast>0$,
	then we call $(v,w)$ the solution to \eqref{eq:cgl} on $[0,T_\ast]$.
\end{definition}

In \tref{thm_20160920094302}, we see that the solution obtained in the sense of this definition solves
the renormalized equation \eqref{eq:rcgl} in the usual sense.
Hence, this definition is proper.

We interpret \eqref{eq_20160920090410} as a fixed point problem $\mathcal{M}:(v,w)\mapsto(\mathcal{M}^1(v,w),\mathcal{M}^2(v,w))$.
We show that the map $\mathcal{M}$ is well-defined
and a contraction in \secref{sec_20161002045325}.
\secref{sec_20160920093609} is devoted to the construction and the uniqueness of the solution.
We show that the solution to \eqref{eq:cgl} satisfies a renormalized equation in \secref{sec_20160920094017}.
In that section, we see the validity of the notion of the solution to CGL.

Before starting our discussion,
we will remark on the function spaces
we have just introduced.
\begin{remark}
	We make several comments on $\cL_T^{\eta,\alpha}$ and $\cL_T^{\eta,\alpha,\delta}$.
	\begin{itemize}
		\item	The inclusion
				$
					\cL_T^{\alpha,\delta}
					\subset
					\cL_T^{\alpha,\delta'}
				$
				holds for every $0<\delta'\leq\delta$.
				To prove this assertion, we use \pref{prop_20160928054620}.
				Set $\theta=\delta'/\delta$.
				Then $\alpha-2\delta'=(\alpha-2\delta)\theta+\alpha(1-\theta)$.
				For every $W\in\cL_T^{\alpha,\delta}$, we see
				\begin{align*}
					\|W_t-W_s\|_{\HolBesSp{\alpha-2\delta'}}
					&\leq
						\|W_t-W_s\|_{\HolBesSp{\alpha-2\delta}}^\theta
						\|W_t-W_s\|_{\HolBesSp{\alpha}}^{1-\theta}\\
					&\lesssim
						\{(t-s)^\delta\|W\|_{C_T^\delta\HolBesSp{\alpha-2\delta}}\}^\theta
						\|W\|_{C_T\HolBesSp{\alpha}}^{1-\theta}\\
					&\lesssim
						(t-s)^{\delta'}
						\|W\|_{\cL_T^{\alpha,\delta}}.
				\end{align*}
		\item	The inclusion
				$
					\cL_T^{\eta,\alpha,\delta}
					\subset
					\cL_T^{\eta,\alpha,\delta'}
				$
				holds for $0<\delta'\leq\delta$.
				Indeed, for every $v\in\cL_T^{\eta,\alpha,\delta}$ and $0<s<t\leq T$, we have
				\begin{gather*}
					\|v_t-v_s\|_{\HolBesSp{\alpha-2\delta}}
					\leq
						s^{-\eta}|t-s|^\delta
						\|v\|_{\cE_T^{\eta,\delta}\HolBesSp{\alpha-2\delta}},\\
					\|v_t-v_s\|_{\HolBesSp{\alpha}}
					\leq
						\|v_t\|_{\HolBesSp{\alpha}}
						+\|v_s\|_{\HolBesSp{\alpha}}
					\leq
						t^{-\eta}\|v\|_{\cE_T^\eta\HolBesSp{\alpha}}
						+s^{-\eta}\|v\|_{\cE_T^\eta\HolBesSp{\alpha}}.
				\end{gather*}
				Hence, for $\theta=\delta'/\delta$, we see
				\begin{align*}
					\|v_t-v_s\|_{\HolBesSp{\alpha-2\delta'}}
					&\leq
						\|v_t-v_s\|_{\HolBesSp{\alpha-2\delta}}^\theta
						\|v_t-v_s\|_{\HolBesSp{\alpha}}^{1-\theta}\\
					&\lesssim
						\{
							s^{-\eta}
							|t-s|^\delta
							\|v\|_{\cE_T^{\eta,\delta}\HolBesSp{\alpha-2\delta}}
						\}^\theta
						\{
							s^{-\eta}
							\|v\|_{\cE_T^\eta\HolBesSp{\alpha}}
						\}^{1-\theta}\\
					&\lesssim
						s^{-\eta}
						|t-s|^{\delta'}
						\|v\|_{\cL_T^{\eta,\alpha,\delta}},
				\end{align*}
				which implies $v\in\cL_T^{\eta,\alpha,\delta'}$.
		\item	For every $v\in\cL_T^{\eta,\alpha,\delta}$
				and $\alpha-2\eta\leq\gamma\leq\alpha$,
				we have $v_t\in\HolBesSp{\gamma}$ and
				\begin{align*}
					\|v_t\|_{\HolBesSp{\gamma}}
					\leq
						t^{-\frac{1}{2}\left(\gamma-(\alpha-2\eta)\right)}
						\|v\|_{\cL_T^{\eta,\alpha,\delta}}
				\end{align*}
				for any $0<t\leq T$.
				Since $v_t\in\HolBesSp{\alpha-2\eta}\cap\HolBesSp{\alpha}$,
				we take $\theta$ such that
				$
					\gamma
					=
						(\alpha-2\eta)(1-\theta)
						+\alpha\theta
				$
				and	use \pref{prop_20160928054620} to obtain
				\begin{align*}
					\|v_t\|_{\HolBesSp{\gamma}}
					\leq
						\|v_t\|_{\HolBesSp{\alpha-2\eta}}^{1-\theta}
						\|v_t\|_{\HolBesSp{\alpha}}^\theta
					\leq
						\|v\|_{C_T\HolBesSp{\alpha-2\eta}}^{1-\theta}
						\{t^{-\eta}\|v\|_{\cE_T^\eta\HolBesSp{\alpha}}\}^\theta
					\leq
						t^{-\eta\theta}
						\|v\|_{\cL_T^{\eta,\alpha,\delta}}.
				\end{align*}
				Combining this with
				$
					\eta\theta
					=
						\frac{1}{2}\left(\gamma-(\alpha-2\eta)\right)
				$,
				we see the assertion.
	\end{itemize}
\end{remark}
\begin{remark}\label{rem_20160921064624}
	We make several comments on
	$
		\cL_T^{\frac{1}{2}-\kappa,\frac{1}{4}-\frac{1}{2}\kappa}
	$
	and
	$
		\sols{T}{\kappa}{\kappa'}
	$.
	Recall that
	\begin{align*}
		\cL_T^{\frac{1}{2}-\kappa,\frac{1}{4}-\frac{1}{2}\kappa}
		&=
			C_T\HolBesSp{\frac{1}{2}-\kappa}
			\cap
			C_T^{\frac{1}{4}-\frac{1}{2}\kappa}\HolBesSp{0},\\
		\cL_T^{\frac{5}{6}-\kappa',1-\kappa',1-\frac{1}{2}\kappa'}
		&=
			\mathcal{E}_T^{\frac{5}{6}-\kappa'}\HolBesSp{1-\kappa'}
			\cap
			C_T\HolBesSp{-\frac{2}{3}+\kappa'}
			\cap
			\mathcal{E}_T^{\frac{5}{6}-\kappa',1-\frac{1}{2}\kappa'}\HolBesSp{-1},\\
		\cL_T^{1-\kappa'+\kappa,\frac{3}{2}-2\kappa',1-\kappa'}
		&=
			\mathcal{E}_T^{1-\kappa'+\kappa}\HolBesSp{\frac{3}{2}-2\kappa'}
			\cap
			C_T\HolBesSp{-\frac{1}{2}-2\kappa}
			\cap
			\mathcal{E}_T^{1-\kappa'+\kappa,1-\kappa'}\HolBesSp{-\frac{1}{2}}.
	\end{align*}

	\begin{itemize}
		\item	For every $0<\kappa<\kappa'$, we have
				$
					\cL_T^{\frac{1}{2}-\kappa,\frac{1}{4}-\frac{1}{2}\kappa}
					\subset
					\cL_T^{\frac{1}{2}-\kappa,\frac{1}{4}-\frac{1}{2}\kappa'}
					=
						C_T\HolBesSp{\frac{1}{2}-\kappa}
						\cap
						C_T^{\frac{1}{4}-\frac{1}{2}\kappa'}\HolBesSp{\kappa'-\kappa}
				$.
				This inclusion implies
				\begin{align*}
					\|X^\IAAB_t-X^\IAAB_s\|_{\HolBesSp{\kappa'-\kappa}}
					\lesssim
						(t-s)^{\frac{1}{4}-\frac{1}{2}\kappa'}
						\|X^\IAAB\|_{C_T^{\frac{1}{4}-\frac{1}{2}\kappa}\HolBesSp{\kappa'-\kappa}}.
				\end{align*}
		\item	Note that
				\begin{align*}
					\sols{T}{\kappa}{\kappa'}
					&=
						\cL_T^{\frac{5}{6}-\kappa',1-\kappa',1-\frac{1}{2}\kappa'}
						\times
						\cL_T^{1-\kappa'+\kappa,\frac{3}{2}-2\kappa',1-\kappa'}\\
					&\subset
						\cL_T^{\frac{5}{6}-\kappa',1-\kappa',\frac{1}{2}-\kappa'}
						\times
						\cL_T^{1-\kappa'+\kappa,\frac{3}{2}-2\kappa',\frac{1}{2}-\kappa'}
				\end{align*}
				holds and,
				for every $(v,w)\in\sols{T}{\kappa}{\kappa'}$, we have
				\begin{gather*}
					\|v_t-v_s\|_{L^\infty}
					\lesssim
						\|v_t-v_s\|_{\HolBesSp{\kappa'}}
					\lesssim
						s^{-\left(\frac{5}{6}-\kappa'\right)}
						(t-s)^{\frac{1}{2}-\kappa'}
						\|v\|_{\cL_T^{\frac{5}{6}-\kappa',1-\kappa',1-\frac{1}{2}\kappa'}},\\
					\|w_t-w_s\|_{L^\infty}
					\lesssim
						\|w_t-w_s\|_{\HolBesSp{\kappa'}}
					\lesssim
						s^{-(1-\kappa'+\kappa)}
						(t-s)^{\frac{1}{2}-\kappa'}
						\|w\|_{\cL_T^{1-\kappa'+\kappa,\frac{3}{2}-2\kappa',1-\kappa'}}.
				\end{gather*}
		\item	For every $(v,w)\in\sols{T}{\kappa}{\kappa'}$,
				we have
				\begin{gather*}
					\|v_t\|_{\HolBesSp{\alpha}}
					\leq
						t^{
							-\frac{1}{2}
							\left(
								\alpha+\frac{2}{3}-\kappa'
							\right)
						}
						\|v\|_{\cL_T^{\frac{5}{6}-\kappa',1-\kappa',1-\frac{1}{2}\kappa'}},\\
					\|w_t\|_{\HolBesSp{\beta}}
					\leq
						t^{
							-\frac{1}{2}
							\left(
								\beta+\frac{1}{2}+2\kappa
							\right)
						}
						\|w\|_{\cL_T^{1-\kappa'+\kappa,\frac{3}{2}-2\kappa',1-\kappa'}},
				\end{gather*}
				where $\alpha$ and $\beta$ satisfy
				$
					-\frac{2}{3}+\kappa'
					\leq
						\alpha
					\leq
						1-\kappa'
				$
				and
				$
					-\frac{1}{2}-2\kappa
					\leq
						\beta
					\leq
						\frac{3}{2}-2\kappa'
				$.

				In particular, for $\alpha=\beta=\kappa'-\kappa$, we have
				\begin{gather*}
					\|v_t\|_{L^\infty}
					\leq
						\|v_t\|_{\HolBesSp{\kappa'-\kappa}}
					\leq
						t^{-\frac{2-3\kappa}{6}}
						\|v\|_{\cL_T^{\frac{5}{6}-\kappa',1-\kappa',1-\frac{1}{2}\kappa'}},\\
					\|w_t\|_{L^\infty}
					\leq
						\|w_t\|_{\HolBesSp{\kappa'-\kappa}}
					\leq
						t^{
							-\frac{1}{2}
							\left(
								\frac{1}{2}+\kappa'+\kappa
							\right)
						}
						\|w\|_{\cL_T^{1-\kappa'+\kappa,\frac{3}{2}-2\kappa',1-\kappa'}},\\
					\|v_t+w_t\|_{L^\infty}
					\leq
						\|v_t\|_{L^\infty}+\|w_t\|_{L^\infty}
					\lesssim
						t^{-\frac{2-3\kappa}{6}}
						\|(v,w)\|_{\sols{T}{\kappa}{\kappa'}}.
				\end{gather*}
				In the last estimate, we used $0<\kappa<\kappa'<1/18$
				and $0<t\leq T$.
				We also see
				\begin{align*}
					\|v_t+w_t\|_{\HolBesSp{\gamma}}
					\lesssim
						t^{
							-\frac{1}{2}
							\left(
								\gamma+\frac{2}{3}-\kappa'
							\right)
						}
						\|(v,w)\|_{\sols{T}{\kappa}{\kappa'}}
				\end{align*}
				if
				$
					-\frac{1}{2}-2\kappa
					\leq
						\gamma
					\leq
						1-\kappa'
				$.
	\end{itemize}
\end{remark}

\subsection{Properties of the integration map}\label{sec_20161002045325}

Let $0<\kappa<\kappa'<1/18$ and  $0<T\leq 1$.
We fix $X,X^{(1)},X^{(2)}\in\drivers{1}{\kappa}$
and set $Z=X^\A$, $W=X^{\IAAB}$, $Z^{(i)}=X^{\A,(i)}$ and $W^{(i)}=X^{(i),\IAAB}$ for $i=1,2$.
We sometimes use the symbol $F_X$, $G_X$ and
$
	\mathcal{M}_{(v_0,w_0),X}
	=
		(\mathcal{M}_{(v_0,w_0),X}^1,\mathcal{M}_{(v_0,w_0),X}^2)
$
to indicate the dependence on the driving vector $X$ and the initial data $(v_0,w_0)$.

\subsubsection{Properties of $\mathcal{M}^1$}
Let us start our discussion with $F$.
\begin{lemma}\label{lem_20161002130857}
	For any $(v,w)\in\sols{T}{\kappa}{\kappa'}$ and $0<t\leq T$,
	we have $F(v,w)(t)\in\HolBesSp{-1-\kappa}$ and
	\begin{align*}
		\|F(v,w)(t)\|_{\HolBesSp{-1-\kappa}}
		\leq
			\const
			(
				\|X\|_{\drivers{1}{\kappa}}
				+
				t^{-\frac{2-3\kappa}{6}}
				\|(v,w)\|_{\sols{T}{\kappa}{\kappa'}}
			)
			\|X\|_{\drivers{1}{\kappa}},
	\end{align*}
	where $\const$ is a positive constant depending only on $\kappa$, $\kappa'$, $\mu$ and $\nu$.
\end{lemma}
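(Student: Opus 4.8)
The plan is to estimate $F(v,w)(t)$ directly from its definition \eqref{eq_20161002045615}, using that it is a linear combination of two paraproducts $f\rpara g$ in which the ``rough'' factor $g$ is one of the driving-vector components $X^{\AB}_t$ or $X^{\AA}_t$. By the definition of $\drivers{1}{\kappa}$ we have $X^{\AA},X^{\AB}\in C_T\HolBesSp{-1-\kappa}$, so $\|X^{\AA}_t\|_{\HolBesSp{-1-\kappa}}+\|X^{\AB}_t\|_{\HolBesSp{-1-\kappa}}\lesssim\|X\|_{\drivers{1}{\kappa}}$ uniformly on $[0,T]$ (here $T\le 1$). The first step is to invoke part~(1) of \pref{prop_20160926062106}, which gives $\|f\rpara g\|_{\HolBesSp{-1-\kappa}}\lesssim\|f\|_{L^\infty}\|g\|_{\HolBesSp{-1-\kappa}}$; this reduces the whole bound to an $L^\infty$-estimate of the smooth factors $-\nu X^{\IAAB}_t+v_t+w_t$ and $-\CmplConj{\nu}\CmplConj{X^{\IAAB}}_t+\CmplConj{v}_t+\CmplConj{w}_t$.

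The second step handles those $L^\infty$-norms termwise. Since $X^{\IAAB}\in\cL_T^{\frac{1}{2}-\kappa,\frac{1}{4}-\frac{1}{2}\kappa}\subset C_T\HolBesSp{\frac{1}{2}-\kappa}$ and $\frac{1}{2}-\kappa>0$, the embedding $\HolBesSp{\frac{1}{2}-\kappa}\hookrightarrow\HolBesSp{0}\subset L^\infty$ (a consequence of \pref{prop_20160928054620}) yields $\|X^{\IAAB}_t\|_{L^\infty}\lesssim\|X\|_{\drivers{1}{\kappa}}$, uniformly over $t\in[0,T]$. For the remaining part I would quote the last $L^\infty$-estimate in \rref{rem_20160921064624}, namely $\|v_t+w_t\|_{L^\infty}\lesssim t^{-\frac{2-3\kappa}{6}}\|(v,w)\|_{\sols{T}{\kappa}{\kappa'}}$; since complex conjugation is an isometry on each $\HolBesSp{\alpha}$, the conjugated factor obeys the same two bounds.

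The third step is to assemble everything: multiplying the $L^\infty$-bound on the smooth factor by $\|X\|_{\drivers{1}{\kappa}}$, absorbing the powers of $|\nu|$ (and, harmlessly, $\mu$) into a single constant, and using $T\le 1$, one obtains
\[
	\|F(v,w)(t)\|_{\HolBesSp{-1-\kappa}}
	\lesssim
		\bigl(
			\|X\|_{\drivers{1}{\kappa}}
			+t^{-\frac{2-3\kappa}{6}}\|(v,w)\|_{\sols{T}{\kappa}{\kappa'}}
		\bigr)
		\|X\|_{\drivers{1}{\kappa}},
\]
which is the assertion. The argument is entirely soft and I do not expect a genuine obstacle; the only point requiring a little care is the regularity bookkeeping — checking that each rough factor really lives in $\HolBesSp{-1-\kappa}$, so that part~(1) rather than part~(2) of \pref{prop_20160926062106} is the correct tool, and recognising that the blow-up $t^{-(2-3\kappa)/6}$ as $t\downarrow 0$ stems solely from $v$ and $w$, the driving-vector components being continuous up to $t=0$.
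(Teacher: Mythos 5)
Your proof is correct and follows essentially the same route as the paper: set $\Phi_t=-\nu X^{\IAAB}_t+v_t+w_t$, apply part~(1) of \pref{prop_20160926062106} to the two paraproducts against $X^{\AB}_t$ and $X^{\AA}_t$, bound $\|X^{\IAAB}_t\|_{L^\infty}$ by $\|X\|_{\drivers{1}{\kappa}}$ via the embedding $\HolBesSp{\frac12-\kappa}\subset L^\infty$, and bound $\|v_t\|_{L^\infty}+\|w_t\|_{L^\infty}$ by $t^{-\frac{2-3\kappa}{6}}\|(v,w)\|_{\sols{T}{\kappa}{\kappa'}}$ using \rref{rem_20160921064624}. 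No discrepancies.
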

\begin{proof}
	Set	$\Phi=-\nu W+v+w$.
	From $W_t\in\HolBesSp{\frac{1}{2}-\kappa}$,
	$v_t\in\HolBesSp{1-\kappa'}$
	and $w_t\in\HolBesSp{\frac{3}{2}-2\kappa'}$,
	we see that	$\Phi_t\in\HolBesSp{\frac{1}{2}-\kappa}$
	and
	$
		\|\Phi_t\|_{L^\infty}
		\leq
			\|W_t\|_{L^\infty}
			+\|v_t\|_{L^\infty}
			+\|w_t\|_{L^\infty}
	$
	hold for every $(v,w)\in\sols{T}{\kappa}{\kappa'}$.
	Note that
	$
		\|W_t\|_{L^\infty}
		\lesssim
			\|W_t\|_{\HolBesSp{\frac{1}{2}-\kappa}}
		\leq
			\|X\|_{\drivers{T}{\kappa}}
	$
	holds from \pref{prop_20160928054620}.
	From \rref{rem_20160921064624}, we see
	\begin{gather*}
		\|v_t\|_{L^\infty}
		\lesssim
			t^{-\frac{2-3\kappa}{6}}
			\|v\|_{\cL_T^{\frac{5}{6}-\kappa',1-\kappa',1-\frac{1}{2}\kappa'}}
		\leq
			t^{-\frac{2-3\kappa}{6}}
			\|(v,w)\|_{\sols{T}{\kappa}{\kappa'}},\\
		\|w_t\|_{L^\infty}
		\lesssim
			t^{
				-\frac{1}{2}
				\left(
					\frac{1}{2}+\kappa'+\kappa
				\right)
			}
			\|w\|_{\cL_T^{1-\kappa'+\kappa,\frac{3}{2}-2\kappa',1-\kappa'}}
		\leq
			t^{-\frac{2-3\kappa}{6}}
			\|(v,w)\|_{\sols{T}{\kappa}{\kappa'}}.
	\end{gather*}
	Combining this with $X^\AB_t\in\HolBesSp{-1-\kappa}$ and
	using \pref{prop_20160926062106},
	we see
	$
		\Phi_t
		\rpara
		X^\AB_t
		\in
			\HolBesSp{-1-\kappa}
	$
	and
	\begin{align*}
		\|
			\Phi_t
			\rpara
			X^\AB_t
		\|_{\HolBesSp{-1-\kappa}}
		&\lesssim
			\|\Phi_t\|_{L^\infty}
			\|X^\AB_t\|_{\HolBesSp{-1-\kappa}}\\
		&\leq
			(
				\|X\|_{\drivers{1}{\kappa}}
				+
				t^{-\frac{2-3\kappa}{6}}
				\|(v,w)\|_{\sols{T}{\kappa}{\kappa'}}
			)
			\|X\|_{\drivers{1}{\kappa}}.
	\end{align*}
	The term
	$
		\CmplConj{\Phi_t}
		\rpara
		X^\AA_t
	$
	also has a similar bound.
	From the defintion of $F(v,w)$, we see the assertion.
\end{proof}

\begin{proposition}\label{prop_20161003143833}
	The map
	$
		\mathcal{M}^1:
			\sols{T}{\kappa}{\kappa'}
			\to
			\cL_T^{\frac{5}{6}-\kappa',1-\kappa',1-\frac{1}{2}\kappa'}
	$
	is well-defined and,
	for any
	$
		(v,w)\in\sols{T}{\kappa}{\kappa'}
	$,
	we have
	\begin{align*}
		\|\mathcal{M}^1(v,w)\|_{\cL_T^{\frac{5}{6}-\kappa',1-\kappa',1-\frac{1}{2}\kappa'}}
		\leq
			\const[1]
			(
				1+\|v_0\|_{\HolBesSp{-\frac{2}{3}+\kappa'}}
			)
			+
			\const[2]
			T^{\frac{1-\kappa'}{2}}
			\|(v,w)\|_{\sols{T}{\kappa}{\kappa'}}.
	\end{align*}
	Here, $\const[1]$ and $\const[2]$ are positive constants depending only on $\kappa$, $\kappa'$, $\mu$, $\nu$ and $\|X\|_{\drivers{1}{\kappa}}$.
	In particular, they are given by at most second-order polynomials in $\|X\|_{\drivers{1}{\kappa}}$.
\end{proposition}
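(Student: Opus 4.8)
The plan is to write $\mathcal{M}^1(v,w)$ as the sum of the free evolution $t\mapsto P^1_tv_0$ and the Duhamel term $t\mapsto\int_0^tP^1_{t-s}F(v,w)(s)\,ds$, and to estimate each summand in $\cL_T^{\frac{5}{6}-\kappa',1-\kappa',1-\frac{1}{2}\kappa'}$ by one application of the Schauder estimates of \pref{prop_20160930075129}. All of the genuine analytic input is already contained in \lref{lem_20161002130857}; what remains is a careful matching of exponents, and it is precisely here that the constraints $0<\kappa<\kappa'<1/18$ enter.

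For the free evolution I would invoke \pref{prop_20160930075129}(1) with $\alpha=-\frac{2}{3}+\kappa'$ (the regularity of $v_0$), $\beta=1-\kappa'$ and $\delta=1-\frac{1}{2}\kappa'$. Here $\beta\geq\alpha$ since $\kappa'<\frac{5}{6}$, $\delta\in[0,1]$, and one computes
\[
\tfrac{\beta-\alpha}{2}=\tfrac{5}{6}-\kappa',
\qquad
\beta-2\bigl(\tfrac{5}{6}-\kappa'\bigr)=-\tfrac{2}{3}+\kappa',
\qquad
\beta-2\delta=-1,
\]
so the output space $\cL_T^{\frac{\beta-\alpha}{2},\beta,\delta}$ is exactly $\cL_T^{\frac{5}{6}-\kappa',1-\kappa',1-\frac{1}{2}\kappa'}$, giving $\|t\mapsto P^1_tv_0\|_{\cL_T^{\frac{5}{6}-\kappa',1-\kappa',1-\frac{1}{2}\kappa'}}\lesssim\|v_0\|_{\HolBesSp{-\frac{2}{3}+\kappa'}}$.

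For the Duhamel term I would first record, from \lref{lem_20161002130857} and $T\leq1$ (so that $s^{-\frac{2-3\kappa}{6}}\geq1$ on $(0,T]$), that $F(v,w)\in\cE_T^{\eta}\HolBesSp{-1-\kappa}$ with $\eta:=\frac{2-3\kappa}{6}\in[0,1)$ and $\|F(v,w)\|_{\cE_T^{\eta}\HolBesSp{-1-\kappa}}\lesssim(\|X\|_{\drivers{1}{\kappa}}+\|(v,w)\|_{\sols{T}{\kappa}{\kappa'}})\|X\|_{\drivers{1}{\kappa}}$, the continuity of $s\mapsto F(v,w)(s)$ into $\HolBesSp{-1-\kappa}$ on $(0,T]$ being inherited from that of $v,w,W$ and from boundedness of the paraproduct. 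I would then apply \pref{prop_20160930075129}(2) with $\alpha=-1-\kappa$, this $\eta$, $\gamma=-\frac{2}{3}+\kappa'$, $\beta=1-\kappa'$, $\delta=1-\frac{1}{2}\kappa'$. Since $\alpha-2\eta+2=\frac{1}{3}$, one checks $\gamma\in[\alpha,\alpha-2\eta+2)$ (using $\kappa,\kappa'>0$ and $\kappa'<1$), $\beta\in[\gamma,\alpha+2)$ (this is where $\kappa<\kappa'$ is needed), $\delta\in(0,\frac{\beta-\alpha}{2}]$ (since $\kappa\geq0$), and that $\frac{\beta-\gamma}{2}=\frac{5}{6}-\kappa'$, $\beta-2\delta=-1$, $\beta-(\beta-\gamma)=\gamma$, so the output space is again $\cL_T^{\frac{5}{6}-\kappa',1-\kappa',1-\frac{1}{2}\kappa'}$. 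As $\frac{\alpha-2\eta+2-\gamma}{2}=\frac{1-\kappa'}{2}$, this yields
\[
\Bigl\|t\mapsto\int_0^tP^1_{t-s}F(v,w)(s)\,ds\Bigr\|_{\cL_T^{\frac{5}{6}-\kappa',1-\kappa',1-\frac{1}{2}\kappa'}}
\lesssim T^{\frac{1-\kappa'}{2}}\bigl(\|X\|_{\drivers{1}{\kappa}}+\|(v,w)\|_{\sols{T}{\kappa}{\kappa'}}\bigr)\|X\|_{\drivers{1}{\kappa}}.
\]
Adding the two bounds and using $T\leq1$ to absorb $T^{\frac{1-\kappa'}{2}}\|X\|_{\drivers{1}{\kappa}}^2\leq\|X\|_{\drivers{1}{\kappa}}^2$ into the $(v,w)$-independent part, I would collect that part together with $\|v_0\|_{\HolBesSp{-\frac{2}{3}+\kappa'}}$ and the constant $1$ into $\const[1](1+\|v_0\|_{\HolBesSp{-\frac{2}{3}+\kappa'}})$, and the remaining term into $\const[2]T^{\frac{1-\kappa'}{2}}\|(v,w)\|_{\sols{T}{\kappa}{\kappa'}}$, with $\const[1]$ and $\const[2]$ manifestly at most quadratic in $\|X\|_{\drivers{1}{\kappa}}$; well-definedness of $\mathcal{M}^1$ as a map into $\cL_T^{\frac{5}{6}-\kappa',1-\kappa',1-\frac{1}{2}\kappa'}$ drops out of the same estimate.

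The main (and, I expect, only) subtlety is the exponent bookkeeping just described — namely verifying that all the admissibility intervals for $\gamma$, $\beta$ and $\delta$ (together with $\eta<1$) in \pref{prop_20160930075129}(2) are non-empty and contain the chosen values — alongside the routine point of checking time-continuity of $F(v,w)$ on $(0,T]$ so that it legitimately belongs to $\cE_T^{\eta}\HolBesSp{-1-\kappa}$. I do not anticipate any serious obstacle.
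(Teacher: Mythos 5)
Your proposal is correct and follows essentially the same route as the paper: the same decomposition into free evolution plus Duhamel term, the same use of \lref{lem_20161002130857} to place $F(v,w)$ in $\cE_T^{\frac{2-3\kappa}{6}}\HolBesSp{-1-\kappa}$, and the same two applications of \pref{prop_20160930075129} with identical parameter choices. Your explicit verification of the admissibility intervals for $\gamma$, $\beta$, $\delta$ is more detailed than what the paper records, but it confirms rather than alters the argument.
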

\begin{proof}
	Applying the first assertion of \pref{prop_20160930075129}
	with $\alpha=-\frac{2}{3}+\kappa'$, $\beta=1-\kappa'$ and $\delta=1-\frac{1}{2}\kappa'$
	to $v_0\in\HolBesSp{-\frac{2}{3}+\kappa'}$, we see
	\begin{align*}
		\|
			(t\mapsto P^1_tv_0)_{t\geq 0}
		\|_{\cL_T^{\frac{5}{6}-\kappa',1-\kappa',1-\frac{1}{2}\kappa'}}
		\lesssim
			\|v_0\|_{\HolBesSp{-\frac{2}{3}+\kappa'}}.
	\end{align*}

	From \lref{lem_20161002130857}, we see
	$F(v,w)\in\cE_T^{\frac{2-3\kappa}{6}}\HolBesSp{-1-\kappa}$
	and its norm has an upper bound
	$
		\const[1]
		(1+\|(v,w)\|_{\sols{T}{\kappa}{\kappa'}})
	$.
	Here, $\const[1]$ is positive and is given by a polynomial in $\|X\|_{\drivers{1}{\kappa}}$.
	Applying the second assertion in \pref{prop_20160930075129}
	with $\alpha=-1-\kappa$, $\beta=1-\kappa'$,
	$\gamma=-\frac{2}{3}+\kappa'$,
    $\delta=1-\frac{1}{2}\kappa'$
	and $\eta=\frac{2-3\kappa}{6}$
	to $F(v,w)\in\cE_T^{\frac{2-3\kappa}{6}}\HolBesSp{-1-\kappa}$, we see
	\begin{multline*}
		\left\|
			{
				(
					t
					\mapsto
						\int_0^t
							P^1_{t-s} F(v,w)(s)\,
							ds
				)_{t\geq 0}
			}
		\right\|_{\cL_T^{\frac{5}{6}-\kappa',1-\kappa',1-\frac{1}{2}\kappa'}}\\
		\lesssim
			T^{\frac{1-\kappa'}{2}}
			\|F(v,w)\|_{\cE_T^{\frac{2-3\kappa}{6}}\HolBesSp{-1-\kappa}}
		\leq
			T^{\frac{1-\kappa'}{2}}
			\const[2]
			(1+\|(v,w)\|_{\sols{T}{\kappa}{\kappa'}}).
	\end{multline*}
	The proof is completed.
\end{proof}

Next we show that $\mathcal{M}^1$ is Lipschitz continuous.
\begin{lemma}\label{lem_20161003043650}
	For any
	$
		(v^{(1)},w^{(1)}),(v^{(2)},w^{(2)})\in\sols{T}{\kappa}{\kappa'}
	$
	and
	$0<t\leq T$,
	we have
	\begin{multline*}
		\|
			F_{X^{(1)}}(v^{(1)},w^{(1)})(t)
			-
			F_{X^{(2)}}(v^{(2)},w^{(2)})(t)
		\|_{\HolBesSp{-1-\kappa}}\\
		\leq
			\const
			(1+t^{-\frac{2-3\kappa}{6}})
			\left\{
				\|X^{(1)}-X^{(2)}\|_{\drivers{1}{\kappa}}
				+\|(v^{(1)},w^{(1)})-(v^{(2)},w^{(2)})\|_{\sols{T}{\kappa}{\kappa'}}
			\right\}.
	\end{multline*}
	Here, $\const$ is a positive constant depending only on $\kappa$, $\kappa'$, $\mu$, $\nu$,
	$\|X^{(i)}\|_{\drivers{1}{\kappa}}$ and	$\|(v^{(i)},w^{(i)})\|_{\sols{T}{\kappa}{\kappa'}}$.
	In particular, it is given by a first-order polynomial in
	$\|X^{(i)}\|_{\drivers{1}{\kappa}}$ and	$\|(v^{(i)},w^{(i)})\|_{\sols{T}{\kappa}{\kappa'}}$.
\end{lemma}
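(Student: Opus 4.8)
The plan is to treat $F$ as a sum of two paraproducts --- one bilinear in $(\Phi,X^\AB)$, one built from $(\CmplConj{\Phi},X^\AA)$ --- and to bound the difference by the usual telescoping trick, reusing the estimates already established in the proof of \lref{lem_20161002130857}. Writing $\Phi^{(i)}=-\nu W^{(i)}+v^{(i)}+w^{(i)}$ for $i=1,2$, we have
\begin{align*}
  F_{X^{(i)}}(v^{(i)},w^{(i)})(t)
  =
    -2\nu\,\Phi^{(i)}_t\rpara X^{(i),\AB}_t
    -\nu\,\CmplConj{\Phi^{(i)}_t}\rpara X^{(i),\AA}_t .
\end{align*}
First I would decompose the difference of the first terms as
\begin{align*}
  \Phi^{(1)}_t\rpara X^{(1),\AB}_t-\Phi^{(2)}_t\rpara X^{(2),\AB}_t
  =
    (\Phi^{(1)}_t-\Phi^{(2)}_t)\rpara X^{(1),\AB}_t
    +\Phi^{(2)}_t\rpara(X^{(1),\AB}_t-X^{(2),\AB}_t),
\end{align*}
and decompose the difference of the $X^\AA$-terms in the same way; the conjugation plays no role in the estimates below since $\|\CmplConj{g}\|_{L^\infty}=\|g\|_{L^\infty}$ and $\CmplConj{\Phi^{(1)}_t}-\CmplConj{\Phi^{(2)}_t}=\CmplConj{\Phi^{(1)}_t-\Phi^{(2)}_t}$.

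Next I would apply \pref{prop_20160926062106}(1) with $\beta=-1-\kappa$, i.e. $\|f\rpara g\|_{\HolBesSp{-1-\kappa}}\lesssim\|f\|_{L^\infty}\|g\|_{\HolBesSp{-1-\kappa}}$, to each of the four paraproducts, and estimate the factors separately. For the coefficient difference, $\Phi^{(1)}_t-\Phi^{(2)}_t=-\nu(W^{(1)}_t-W^{(2)}_t)+(v^{(1)}_t-v^{(2)}_t)+(w^{(1)}_t-w^{(2)}_t)$; since $W^{(1)}-W^{(2)}$ is the $X^\IAAB$-component of $X^{(1)}-X^{(2)}$ and $(v^{(1)}-v^{(2)},w^{(1)}-w^{(2)})$ again lies in $\sols{T}{\kappa}{\kappa'}$, the chain of inequalities used in \lref{lem_20161002130857} --- the embedding of \pref{prop_20160928054620} together with the $L^\infty$-bounds of \rref{rem_20160921064624}, which give the common blow-up rate $t^{-\frac{2-3\kappa}{6}}$ thanks to $0<\kappa<\kappa'<1/18$ --- yields
\begin{align*}
  \|\Phi^{(1)}_t-\Phi^{(2)}_t\|_{L^\infty}
  \lesssim
    \bigl(1+t^{-\frac{2-3\kappa}{6}}\bigr)
    \bigl(
      \|X^{(1)}-X^{(2)}\|_{\drivers{1}{\kappa}}
      +\|(v^{(1)},w^{(1)})-(v^{(2)},w^{(2)})\|_{\sols{T}{\kappa}{\kappa'}}
    \bigr).
\end{align*}
For the remaining factors I would use $\|X^{(1),\AB}_t\|_{\HolBesSp{-1-\kappa}}\le\|X^{(1)}\|_{\drivers{1}{\kappa}}$ and $\|X^{(1),\AB}_t-X^{(2),\AB}_t\|_{\HolBesSp{-1-\kappa}}\le\|X^{(1)}-X^{(2)}\|_{\drivers{1}{\kappa}}$ from the definition of the $\drivers{1}{\kappa}$-norm, together with $\|\Phi^{(2)}_t\|_{L^\infty}\lesssim\|X^{(2)}\|_{\drivers{1}{\kappa}}+t^{-\frac{2-3\kappa}{6}}\|(v^{(2)},w^{(2)})\|_{\sols{T}{\kappa}{\kappa'}}$, exactly as in \lref{lem_20161002130857}. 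Multiplying the two factors of each term, observing that each product contains exactly one factor that blows up at $t=0$ (so the exponent $\frac{2-3\kappa}{6}$ is not worsened), using $0<t\le T\le1$, and absorbing the fixed quantities $\|X^{(i)}\|_{\drivers{1}{\kappa}}$ and $\|(v^{(i)},w^{(i)})\|_{\sols{T}{\kappa}{\kappa'}}$ into $\const$, one arrives at the claimed bound; the constant so obtained depends at most linearly on those quantities, giving the asserted first-order polynomial dependence. The $X^\AA$-term is treated identically.

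I do not expect a genuine obstacle here: this is a routine bilinear difference estimate of the type already carried out for \lref{lem_20161002130857}. The only points requiring care are (i) tracking the time singularity at $t=0$ so that the exponent stays equal to $\frac{2-3\kappa}{6}$ --- which is precisely why the differences $v^{(1)}-v^{(2)}$, $w^{(1)}-w^{(2)}$ must be measured in $\sols{T}{\kappa}{\kappa'}$ and the interpolation bounds of \rref{rem_20160921064624} are invoked --- and (ii) verifying that, after extracting the two difference norms, the remaining constant is indeed a first-order polynomial in $\|X^{(i)}\|_{\drivers{1}{\kappa}}$ and $\|(v^{(i)},w^{(i)})\|_{\sols{T}{\kappa}{\kappa'}}$, as stated.
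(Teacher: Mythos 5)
Your proposal is correct and follows essentially the same route as the paper: split $F$ into its two paraproduct pieces, telescope each difference, apply the paraproduct bound $\|f\rpara g\|_{\HolBesSp{-1-\kappa}}\lesssim\|f\|_{L^\infty}\|g\|_{\HolBesSp{-1-\kappa}}$, and reuse the $L^\infty$-bounds on $\Phi^{(i)}_t$ and $\Phi^{(1)}_t-\Phi^{(2)}_t$ from \lref{lem_20161002130857} and \rref{rem_20160921064624}. The only (immaterial) difference is that the paper uses the symmetrized splitting $\tfrac12\{(\Phi^{(1)}+\Phi^{(2)})\rpara(X^{(1),\AB}-X^{(2),\AB})+(\Phi^{(1)}-\Phi^{(2)})\rpara(X^{(1),\AB}+X^{(2),\AB})\}$ where you use the standard asymmetric telescoping; both yield the stated bound with the same singularity exponent and polynomial dependence of the constant.
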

\begin{proof}
	Set	$\Phi^{(i)}=-\nu W^{(i)}+v^{(i)}+w^{(i)}$ for $i=1,2$.
	Then
	\begin{multline*}
		\|
			\Phi^{(1)}_t\rpara X^{(1),\AB}_t
			-
			\Phi^{(2)}_t\rpara X^{(2),\AB}_t
		\|_{\HolBesSp{-1-\kappa}}\\
		\begin{aligned}
			&=
				\frac{1}{2}
				\|
					(\Phi^{(1)}_t+\Phi^{(2)}_t)
					\rpara
					(X^{(1),\AB}_t-X^{(2),\AB}_t)\\
			&\phantom{=}\quad\qquad\qquad\qquad
					+
					(\Phi^{(1)}_t-\Phi^{(2)}_t)
					\rpara
					(X^{(1),\AB}_t+X^{(2),\AB}_t)
				\|_{\HolBesSp{-1-\kappa}}\\
			&\lesssim
				\|\Phi^{(1)}_t+\Phi^{(2)}_t\|_{L^\infty}
				\|X^{(1),\AB}_t-X^{(2),\AB}_t\|_{\HolBesSp{-1-\kappa}}\\
			&\phantom{=}\quad\qquad\qquad\qquad
				+
				\|\Phi^{(1)}_t-\Phi^{(2)}_t\|_{L^\infty}
				\|X^{(1),\AB}_t+X^{(2),\AB}_t\|_{\HolBesSp{-1-\kappa}}.
		\end{aligned}
	\end{multline*}
	The term $\|\Phi^{(1)}_t+\Phi^{(2)}_t\|_{L^\infty}$ is dominated as follows:
	\begin{align*}
		\|\Phi^{(1)}_t+\Phi^{(2)}_t\|_{L^\infty}
		&\lesssim
			\|\Phi^{(1)}_t\|_{L^\infty}+\|\Phi^{(2)}_t\|_{L^\infty}\\
		&\lesssim
			\sum_{i=1,2}
				(
					\|X^{(i)}\|_{\drivers{T}{\kappa}}
					+
					t^{-\frac{2-3\kappa}{6}}
					\|(v,w)^{(i)}\|_{\sols{T}{\kappa}{\kappa'}}
				)
				\|X^{(i)}\|_{\drivers{T}{\kappa}}\\
		&=
			\const[1]+t^{-\frac{2-3\kappa}{6}}\const[2],
	\end{align*}
	where $\const[1]$ and $\const[2]$ are positive constants given by
	\begin{align*}
		\const[1]
		&=
			\|X^{(1)}\|_{\drivers{1}{\kappa}}^2
			+\|X^{(2)}\|_{\drivers{1}{\kappa}}^2,\\
		\const[2]
		&=
			\|(v^{(1)},w^{(1)})\|_{\sols{T}{\kappa}{\kappa'}}
			\|X^{(1)}\|_{\drivers{1}{\kappa}}
			+
			\|(v^{(2)},w^{(2)})\|_{\sols{T}{\kappa}{\kappa'}}
			\|X^{(2)}\|_{\drivers{1}{\kappa}}.
	\end{align*}
	The term $\|\Phi^{(1)}_t-\Phi^{(2)}_t\|_{L^\infty}$ is dominated as follows:
	\begin{align*}
		\|\Phi^{(1)}_t-\Phi^{(2)}_t\|_{L^\infty}
		\lesssim
			\|X^{(1)}-X^{(2)}\|_{\drivers{T}{\kappa}}
			+
			t^{-\frac{2-3\kappa}{6}}
			\|(v^{(1)},w^{(1)})-(v^{(2)},w^{(2)})\|_{\sols{T}{\kappa}{\kappa'}}.
	\end{align*}
	Setting
	$
		\const[3]
		=
			\|X^{(1)}\|_{\drivers{1}{\kappa}}
			+\|X^{(2)}\|_{\drivers{1}{\kappa}}
	$,
	we see
	\begin{multline*}
		\|
			\Phi^{(1)}_t\rpara X^{(1),\AB}_t
			-
			\Phi^{(2)}_t\rpara X^{(2),\AB}_t
		\|_{\HolBesSp{-1-\kappa}}\\
		\begin{aligned}
			&\lesssim
				(\const[1]+t^{-\frac{2-3\kappa}{6}}\const[2])
				\|X^{(1)}-X^{(2)}\|_{\drivers{T}{\kappa}}\\
			&\phantom{=}\quad
				+
				\bigg\{
					\|X^{(1)}-X^{(2)}\|_{\drivers{T}{\kappa}}
					+
					t^{-\frac{2-3\kappa}{6}}
					\|(v^{(1)},w^{(1)})-(v^{(2)},w^{(2)})\|_{\sols{T}{\kappa}{\kappa'}}
				\bigg\}
				\const[3]\\
			&\lesssim
				(\const[1]+\const[2]+\const[3])
				(1+t^{-\frac{2-3\kappa}{6}})\\
			&\phantom{=}\quad
				\times
				\left\{
					\|X^{(1)}-X^{(2)}\|_{\drivers{T}{\kappa}}
					+\|(v^{(1)},w^{(1)})-(v^{(2)},w^{(2)})\|_{\sols{T}{\kappa}{\kappa'}}
				\right\},
		\end{aligned}
	\end{multline*}
	which implies the conclusion.
\end{proof}

\begin{proposition}\label{prop_20161005032251}
	For any
	$
		(v^{(1)},w^{(1)}),(v^{(2)},w^{(2)})\in\sols{T}{\kappa}{\kappa'}
	$,
	we have
	\begin{multline*}
		\|
			\mathcal{M}_{(v^{(1)}_0,w^{(1)}_0),X^{(1)}}^1(v^{(1)},w^{(1)})
			-
			\mathcal{M}_{(v^{(2)}_0,w^{(2)}_0),X^{(2)}}^1(v^{(2)},w^{(2)})
		\|_{\cL_T^{\frac{5}{6}-\kappa',1-\kappa',1-\frac{1}{2}\kappa'}}\\
    \begin{aligned}
  		&\leq
  			\const[3]
  			\|v^{(1)}_0-v^{(2)}_0\|_{\HolBesSp{-\frac{2}{3}+\kappa'}}\\
      &%\phantom{\leq}\quad
  			+
  			\const[4]
  			T^{\frac{1-\kappa'}{2}}
  			\Big(
  				\|X^{(1)}-X^{(2)}\|_{\drivers{1}{\kappa}}
  				+\|(v^{(1)},w^{(1)})-(v^{(2)},w^{(2)})\|_{\sols{T}{\kappa}{\kappa'}}
  			\Big)
    \end{aligned}
	\end{multline*}
	Here, $\const[3]$ and $\const[4]$ are positive constants depending only on $\kappa$, $\kappa'$, $\mu$, $\nu$,
	$\|X^{(i)}\|_{\drivers{1}{\kappa}}$ and	$\|(v^{(i)},w^{(i)})\|_{\sols{T}{\kappa}{\kappa'}}$.
	In particular, they are given by at most first-order polynomials in
	$\|X^{(i)}\|_{\drivers{1}{\kappa}}$ and	$\|(v^{(i)},w^{(i)})\|_{\sols{T}{\kappa}{\kappa'}}$.
\end{proposition}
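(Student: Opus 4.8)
The plan is to repeat, at the level of differences, the argument used to prove \pref{prop_20161003143833}, exploiting that the construction \eqref{eq_20161003034438} is affine in the pair $(v_0,F(v,w))$. First I would write
\[
\mathcal{M}_{(v^{(1)}_0,w^{(1)}_0),X^{(1)}}^1(v^{(1)},w^{(1)})(t)
-\mathcal{M}_{(v^{(2)}_0,w^{(2)}_0),X^{(2)}}^1(v^{(2)},w^{(2)})(t)
=P^1_t(v^{(1)}_0-v^{(2)}_0)
+\int_0^t P^1_{t-s}\bigl\{F_{X^{(1)}}(v^{(1)},w^{(1)})(s)-F_{X^{(2)}}(v^{(2)},w^{(2)})(s)\bigr\}\,ds
\]
and estimate the two summands in $\cL_T^{\frac{5}{6}-\kappa',1-\kappa',1-\frac{1}{2}\kappa'}$ separately.

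For the first summand I would apply the first assertion of \pref{prop_20160930075129} with $\alpha=-\frac{2}{3}+\kappa'$, $\beta=1-\kappa'$, $\delta=1-\frac{1}{2}\kappa'$ to $v^{(1)}_0-v^{(2)}_0\in\HolBesSp{-\frac{2}{3}+\kappa'}$; this yields the term $\const[3]\|v^{(1)}_0-v^{(2)}_0\|_{\HolBesSp{-\frac{2}{3}+\kappa'}}$ with $\const[3]$ depending only on $\kappa,\kappa',\mu,\nu$ (hence, in particular, a constant, which is trivially a first-order polynomial in the relevant norms). For the second summand, \lref{lem_20161003043650} shows that $F_{X^{(1)}}(v^{(1)},w^{(1)})-F_{X^{(2)}}(v^{(2)},w^{(2)})$ belongs to $\cE_T^{\frac{2-3\kappa}{6}}\HolBesSp{-1-\kappa}$ — on $0<t\le T\le1$ the factor $1+t^{-\frac{2-3\kappa}{6}}$ appearing there is dominated by $2t^{-\frac{2-3\kappa}{6}}$ — with norm bounded by a first-order polynomial in $\|X^{(i)}\|_{\drivers{1}{\kappa}}$, $\|(v^{(i)},w^{(i)})\|_{\sols{T}{\kappa}{\kappa'}}$ times $\bigl(\|X^{(1)}-X^{(2)}\|_{\drivers{1}{\kappa}}+\|(v^{(1)},w^{(1)})-(v^{(2)},w^{(2)})\|_{\sols{T}{\kappa}{\kappa'}}\bigr)$. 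Then the second assertion of \pref{prop_20160930075129}, applied with the same exponents as in the proof of \pref{prop_20161003143833}, namely $\alpha=-1-\kappa$, $\beta=1-\kappa'$, $\gamma=-\frac{2}{3}+\kappa'$, $\delta=1-\frac{1}{2}\kappa'$, $\eta=\frac{2-3\kappa}{6}$ (all admissible since $0<\kappa<\kappa'<1/18$), supplies the prefactor $T^{\frac{1-\kappa'}{2}}$ and produces the constant $\const[4]$, again a first-order polynomial in $\|X^{(i)}\|_{\drivers{1}{\kappa}}$ and $\|(v^{(i)},w^{(i)})\|_{\sols{T}{\kappa}{\kappa'}}$. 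Summing the two bounds gives the claim.

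There is essentially no obstacle here beyond bookkeeping: the substantive content is \lref{lem_20161003043650}, already established, and the rest is an application of \pref{prop_20160930075129} verbatim from the proof of \pref{prop_20161003143833}. The only points that require a line of care are (i) checking that the $t$-dependence in \lref{lem_20161003043650} is genuinely of order $t^{-\frac{2-3\kappa}{6}}$ near $t=0$, so that $F_{X^{(1)}}(v^{(1)},w^{(1)})-F_{X^{(2)}}(v^{(2)},w^{(2)})\in\cE_T^{\frac{2-3\kappa}{6}}\HolBesSp{-1-\kappa}$ with the correct exponent $\eta$, and (ii) verifying that the resulting constants remain first-order polynomials in the relevant norms, which follows because \lref{lem_20161003043650} already has this property and the Schauder estimates of \pref{prop_20160930075129} contribute only multiplicative universal constants and the stated power of $T$.
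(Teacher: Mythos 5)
Your proposal is correct and follows exactly the paper's own route: the same affine decomposition of $\mathcal{M}^1$ into the semigroup term and the Duhamel term, the first part of \pref{prop_20160930075129} for the initial data, and \lref{lem_20161003043650} combined with the second part of \pref{prop_20160930075129} (with the same exponents as in the proof of \pref{prop_20161003143833}) for the integral term. The exponent bookkeeping, including $\frac{\alpha-2\eta+2-\gamma}{2}=\frac{1-\kappa'}{2}$, checks out.
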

\begin{proof}
	The assertion follows from \lref{lem_20161003043650} and the fact that
	\begin{multline*}
		\mathcal{M}_{X^{(1)}}^1(v^{(1)},w^{(1)})
		-
		\mathcal{M}_{X^{(2)}}^1(v^{(2)},w^{(2)})\\
		=
			P^1_t
			(v^{(1)}_0-v^{(2)}_0)
			+
			\int_0^t
				P^1_{t-s}
				\{F_{X^{(1)}}(v^{(1)},w^{(1)})(s)-F_{X^{(2)}}(v^{(2)},w^{(2)})(s)\}\,
				ds.
	\end{multline*}
	By a similar argument to the proof of \pref{prop_20161003143833},
	we see the conclusion.
\end{proof}

\subsubsection{Properties of $\mathcal{M}^2$}\label{sec_20160920093505}
Here, we consider properties of $\mathcal{M}^2$.
Let $0<T\leq 1$.
We fix $X\in\drivers{1}{\kappa}$ and write $Z=X^\A$ and $W=X^\IAAB$.
We denote by $\delta_{st}$ the difference operator, that is, $\delta_{st}f=f(t)-f(s)$.

First of all, we study $\com(v,w)$ defined by \eqref{eq_20161129003748}.
Let $v_0\in\HolBesSp{-\frac{2}{3}+\kappa'}$ and $(v,w)\in\sols{T}{\kappa}{\kappa'}$.
For notational simplicity, we set $\Phi^1=-2\nu(-\nu W+v+w)$,
$\Phi^2=-\nu(-\CmplConj{\nu}\CmplConj{W}+\CmplConj{v}+\CmplConj{w})$,
$\Psi^1=X^\AB$, $\Psi^2=X^\AA$ and
\begin{align*}
	U_t
	=
		\int_0^t
	 		P^1_{t-s}[F(v,w)(s)]\,
	 		ds
	 	-
 		\Phi^1_t
 		\rpara
 		X^\IAB_t
		-
 		\Phi^2_t
 		\rpara
 		X^\IAA_t.
\end{align*}

\begin{remark}
	The implicit constants which will appear in \lrefs{lem_20161005024632}{lem_20160822041507}{lem_20161003140013}
	depend only on $\kappa$, $\kappa'$, $\mu$, $\nu$ and $\|X\|_{\drivers{1}{\kappa}}$.
	In particular, the constants are given by an at most first-order polynomials in $\|X\|_{\drivers{1}{\kappa}}$.
\end{remark}

\begin{lemma}\label{lem_20161005024632}
	For every $0<t\leq T$, we have the following:
	\begin{enumerate}[(1)]
		\item	We have
				\begin{multline*}
					U_t
				 	=
				 		-\Phi^1_t\rpara P^1_tX^\IAB_0
				 		-\Phi^2_t\rpara P^1_tX^\IAA_0\\
				 		+
				 		\sum_{i=1,2}
				 			\left\{
				 				\int_0^t
									\delta_{st}\Phi^i
									\rpara
									P^1_{t-s}\Psi^i_s\,
									ds
								-
								\int_0^t
									[P^1_{t-s},\Phi^i_s\rpara]\Psi^i_s\,
									ds
				 			\right\}.
				\end{multline*}
		\item	We have $U_t\in\HolBesSp{1+\kappa'}$ and
				\begin{multline*}
					\|U_t\|_{\HolBesSp{1+\kappa'}}
					\lesssim
						1
						+
						t^{-\kappa'}
				 		\{1+\|v_t\|_{L^\infty}+\|w_t\|_{L^\infty}\}\\
						+
						\int_0^t
							(t-s)^{-\frac{3+2\kappa}{4}}
							\|v_s\|_{\HolBesSp{\frac{1}{2}+\kappa'}}\,
							ds
						+
						\int_0^t
							(t-s)^{-\frac{1+2\kappa'}{2}}
							\|w_s\|_{\HolBesSp{1+2\kappa'}}\,
							ds\\
						+
				 		\int_0^t
							(t-s)^{-\frac{2+\kappa+\kappa'}{2}}
							\{
								\|\delta_{st}v\|_{L^\infty}
								+\|\delta_{st}w\|_{L^\infty}
							\}\,
							ds.
				\end{multline*}
	\end{enumerate}
\end{lemma}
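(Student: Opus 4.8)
The plan is to prove the two assertions of \lref{lem_20161005024632} by a direct computation that splits the Duh- hamel integral into a ``paracontrolled part'' and a remainder, and then to estimate the remainder using the commutator bound (\pref{prop_20160927051159}) together with the Schauder-type and paraproduct bounds already collected. Throughout I will freely use the shorthand $\Phi^1,\Phi^2,\Psi^1,\Psi^2$ introduced just before the lemma, and recall that by definition of the driving vector we have $X^\IAB=I(X^\IAB_0,X^\AB)$ and $X^\IAA=I(X^\IAA_0,X^\AA)$, so that
\[
	\Phi^i_t\rpara X^{\IAB\text{ or }\IAA}_t
	=
		\Phi^i_t\rpara P^1_tX^{\IAB}_0
		+
		\Phi^i_t\rpara\int_0^tP^1_{t-s}\Psi^i_s\,ds,
\]
with $i=1$ for the $\IAB$-term and $i=2$ for the $\IAA$-term.

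For part (1): starting from the definition $U_t=\int_0^tP^1_{t-s}F(v,w)(s)\,ds-\Phi^1_t\rpara X^\IAB_t-\Phi^2_t\rpara X^\IAA_t$ and recalling $F(v,w)=\sum_{i=1,2}\Phi^i\rpara\Psi^i$ (this is exactly \eqref{eq_20161002045615} rewritten with the $\Phi^i$ notation), I would subtract and add $\Phi^i_t\rpara P^1_{t-s}\Psi^i_s$ inside the $s$-integral. Concretely,
\[
	\int_0^tP^1_{t-s}(\Phi^i_s\rpara\Psi^i_s)\,ds
	-
	\Phi^i_t\rpara\int_0^tP^1_{t-s}\Psi^i_s\,ds
	=
	\int_0^t[P^1_{t-s},\Phi^i_s\rpara]\Psi^i_s\,ds
	+
	\int_0^t(\Phi^i_s-\Phi^i_t)\rpara P^1_{t-s}\Psi^i_s\,ds,
\]
and the remaining piece $-\Phi^i_t\rpara P^1_tX^\IAB_0$ (resp.\ $X^\IAA_0$) comes from the $P^1_t X_0$ term in $I(X_0,\Psi^i)$. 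Writing $\delta_{st}\Phi^i=\Phi^i(t)-\Phi^i(s)$, after collecting the two signs this is precisely the claimed identity; there is a sign to keep track of ($\Phi^i_s-\Phi^i_t=-\delta_{st}\Phi^i$, while the stated formula has $+\delta_{st}\Phi^i$ multiplying $P^1_{t-s}\Psi^i_s$), so I must be careful that the commutator is defined as $[P^1_{t-s},\Phi^i_s\rpara]\Psi^i_s=P^1_{t-s}(\Phi^i_s\rpara\Psi^i_s)-\Phi^i_s\rpara P^1_{t-s}\Psi^i_s$ and that $\Phi^i_s\rpara P^1_{t-s}\Psi^i_s-\Phi^i_t\rpara P^1_{t-s}\Psi^i_s=-\delta_{st}\Phi^i\rpara P^1_{t-s}\Psi^i_s$; the two minus signs compensate and the display in the lemma is recovered. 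This part is purely algebraic and should be routine once the bookkeeping is done.

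For part (2): I estimate each term of the identity from part (1) in $\HolBesSp{1+\kappa'}$. The terms $\Phi^i_t\rpara P^1_tX^{\IAB/\IAA}_0$ are bounded by \pref{prop_20160926062106}(1) together with \pref{prop_20160927051055}(1): $\|P^1_tX^\IAB_0\|_{\HolBesSp{1+\kappa'}}\lesssim t^{-\kappa'}\|X^\IAB_0\|_{\HolBesSp{1-\kappa}}\lesssim t^{-\kappa'}\|X\|_{\drivers{1}{\kappa}}$ (using $1+\kappa'=(1-\kappa)+2\delta$ with $\delta=(\kappa+\kappa')/2$), multiplied by $\|\Phi^i_t\|_{L^\infty}\lesssim 1+\|v_t\|_{L^\infty}+\|w_t\|_{L^\infty}$; this yields the $t^{-\kappa'}\{1+\|v_t\|_{L^\infty}+\|w_t\|_{L^\infty}\}$ term and the $1$ (absorbing the $X$-norm factor into the implicit constant). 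For the commutator integrals I invoke \pref{prop_20160927051159} with $\alpha$ the regularity of $\Phi^i_s$ (which is $\tfrac12-\kappa<1$, as required) and $\beta$ the regularity of $\Psi^i_s$ (which is $-1-\kappa$), choosing $2\delta$ so that $\alpha+\beta+2\delta=1+\kappa'$, i.e.\ $2\delta=\tfrac32+2\kappa+\kappa'$, hence $(t-s)^{-\delta}=(t-s)^{-(3+4\kappa+2\kappa')/4}$; to get the cleaner exponent stated I would instead split $\Phi^i_s=-\nu W_s+v_s+w_s$ and estimate the $W$-part separately at its full regularity $\tfrac12-\kappa$ (giving a uniformly-in-$s$ bound absorbed into the ``$1$'' after time integration since the exponent stays below $1$) and the $v_s,w_s$-parts against their own regularities $\HolBesSp{\tfrac12+\kappa'}$ and $\HolBesSp{1+2\kappa'}$ respectively, producing the two integral terms $\int_0^t(t-s)^{-(3+2\kappa)/4}\|v_s\|_{\HolBesSp{\tfrac12+\kappa'}}ds$ and $\int_0^t(t-s)^{-(1+2\kappa')/2}\|w_s\|_{\HolBesSp{1+2\kappa'}}ds$. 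The main obstacle here is exactly this careful choice of H\"older exponents: one must verify that in each application of Propositions~\ref{prop_20160926062106}, \ref{prop_20160927051055} and \ref{prop_20160927051159} the hypotheses ($\alpha<1$ for the commutator, $\delta\in[0,1]$, the sum-of-regularities equal to $1+\kappa'$) are met and that every resulting time-singularity exponent is strictly less than $1$ so the integrals converge — this requires using $0<\kappa<\kappa'<1/18$ in several places. Finally, for the $\delta_{st}\Phi^i\rpara P^1_{t-s}\Psi^i_s$ integrals I use \pref{prop_20160926062106}(2) with $\|\delta_{st}\Phi^i\|_{L^\infty}\lesssim\|\delta_{st}v\|_{L^\infty}+\|\delta_{st}w\|_{L^\infty}+\|\delta_{st}W\|_{L^\infty}$, bound $\|P^1_{t-s}\Psi^i_s\|_{\HolBesSp{1+\kappa'}}\lesssim(t-s)^{-(2+\kappa+\kappa')/2}\|\Psi^i_s\|_{\HolBesSp{-1-\kappa}}$ by \pref{prop_20160927051055}(1) (since $1+\kappa'=(-1-\kappa)+2\cdot\frac{2+\kappa+\kappa'}{2}$), and for the $W$-contribution use the H\"older regularity $\|\delta_{st}W\|_{L^\infty}\lesssim(t-s)^{\text{something}}$ from $X^\IAAB\in\cL_T^{\tfrac12-\kappa,\tfrac14-\tfrac12\kappa}$ as noted in \rref{rem_20160921064624}, which again contributes to the ``$1$''. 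Summing these bounds gives exactly the displayed estimate.
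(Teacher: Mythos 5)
Your proposal is correct and follows essentially the same route as the paper: the same commutator decomposition of the Duhamel integral for part (1) (using $\int_0^tP^1_{t-s}\Psi^i_s\,ds=X^{\IAB}_t-P^1_tX^{\IAB}_0$, resp.\ the $\IAA$-analogue), and the same term-by-term estimates for part (2), in particular splitting $\Phi^i_s$ into its $W$-, $v$- and $w$-contributions inside the commutator integral and treating the $\delta_{st}W$ piece via the H\"older continuity of $X^{\IAAB}$. Two minor caveats, neither of which affects part (2): your algebra naturally yields $-\int_0^t\delta_{st}\Phi^i\rpara P^1_{t-s}\Psi^i_s\,ds+\int_0^t[P^1_{t-s},\Phi^i_s\rpara]\Psi^i_s\,ds$, i.e.\ the opposite signs to the lemma's display (the paper's own proof exhibits the same discrepancy, and both terms are estimated in norm anyway); and for the $w$-contribution the commutator estimate must be applied with $\alpha=1-\kappa'+\kappa<1$ followed by the embedding $\HolBesSp{1+2\kappa'}\hookrightarrow\HolBesSp{1-\kappa'+\kappa}$, since applying it literally at regularity $1+2\kappa'>1$ would violate the hypothesis $\alpha<1$.
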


\begin{proof}
	We show the first assertion.
	For $(\Phi,\Psi)=(\Phi^1,\Psi^1)$, \pref{prop_20160927051159} implies
	\begin{align*}
		P^1_{t-s}[\Phi\rpara\Psi](s)
		&=
			P^1_{t-s}[\Phi_s\rpara\Psi_s]\\
		&=
			\Phi_s\rpara P^1_{t-s}\Psi_s
			+[P^1_{t-s},\Phi_s\rpara]\Psi_s\\
		&=
			\Phi_t\rpara P^1_{t-s}\Psi_s
			-\delta_{st}\Phi\rpara P^1_{t-s}\Psi_s
			+[P^1_{t-s},\Phi_s\rpara]\Psi_s.
	\end{align*}
	Hence
	\begin{multline*}
	 	\int_0^t
	 		P^1_{t-s}[\Phi\rpara\Psi](s)\,
	 		ds\\
		 =
		 	\Phi_t
		 	\rpara
		 	\int_0^t
		 		P^1_{t-s}\Psi_s\,
		 		ds
			-
			\int_0^t
				\delta_{st}\Phi\rpara P^1_{t-s}\Psi_s\,
				ds
			+
			\int_0^t
				[P^1_{t-s},\Phi_s\rpara]\Psi_s\,
				ds.
	\end{multline*}
	Substituting
	$
		\int_0^t
	 		P^1_{t-s}\Psi_s\,
	 		ds
 		=
			X^\IAB_t
	 		-
	 		P^1_t
			X^\IAB_0
	$
	to the first term in the above,
	we see
	\begin{multline}\label{eq_20161005022939}
		\int_0^t
	 		P^1_{t-s}[\Phi\rpara\Psi](s)\,
	 		ds
	 	-
	 	\Phi_t
	 	\rpara
	 	X^\IAB_t\\
	 	=
	 		-
		 	\Phi_t
		 	\rpara
		 	P^1_tX^\IAB_0
		 	-
			\int_0^t
				\delta_{st}\Phi\rpara P^1_{t-s}\Psi_s\,
				ds
			+
			\int_0^t
				[P^1_{t-s},\Phi_s\rpara]\Psi_s\,
				ds.
	\end{multline}
	Since a similar equality holds for
	$(\Phi,\Psi)=(\Phi^2,\Psi^2)$,
	we have verified the first assertion.

	For the second assertion, we estimate the terms in \eqref{eq_20161005022939}.
	For the first term in \eqref{eq_20161005022939},
	we use $1+\kappa'=1-\kappa+2\cdot\frac{\kappa+\kappa'}{2}$ and obtain
	\begin{align*}
		\|
			\Phi^1_t
	 		\rpara
	 		P^1_tX^\IAB_0
	 	\|_{\HolBesSp{1+\kappa'}}
		 &\lesssim
	 		\|\Phi^1_t\|_{L^\infty}
	 		\|P^1_tX^\IAB_0\|_{\HolBesSp{1+\kappa'}}\\
	 	&\lesssim
	 		\|-\nu W_t+v_t+w_t\|_{L^\infty}
	 		\cdot
	 		t^{-\frac{\kappa+\kappa'}{2}}
	 		\|X^\IAB_0\|_{\HolBesSp{1-\kappa}}\\
	 	&\lesssim
	 		t^{-\kappa'}
	 		\{1+\|v_t\|_{L^\infty}+\|w_t\|_{L^\infty}\}.
	\end{align*}

	We estimate the second term in \eqref{eq_20161005022939}.
	From
	$
		1+\kappa'
		=
			-1-\kappa
			+
			2\cdot\frac{2+\kappa+\kappa'}{2}
	$,
	we have
	\begin{align*}
		\|
			\delta_{st}\Phi
			\rpara
			P^1_{t-s}\Psi_s
		\|_{\HolBesSp{1+\kappa'}}
		&\lesssim
			\|\delta_{st}\Phi\|_{L^\infty}
			\|P^1_{t-s}\Psi_s\|_{\HolBesSp{1+\kappa'}}\\
		&\lesssim
			\|\delta_{st}\Phi\|_{L^\infty}
			\cdot
			(t-s)^{-\frac{2+\kappa+\kappa'}{2}}
			\|\Psi_s\|_{\HolBesSp{-1-\kappa}}\\
		&\lesssim
			(t-s)^{-\frac{2+\kappa+\kappa'}{2}}
			\|\delta_{st}\Phi\|_{L^\infty}.
	\end{align*}
	Note
	\begin{gather*}
		\|\delta_{st}\Phi\|_{L^\infty}
		\lesssim
			\|\delta_{st}v\|_{L^\infty}
			+\|\delta_{st}w\|_{L^\infty}
			+\|\delta_{st}W\|_{L^\infty},\\
		\|\delta_{st}W\|_{L^\infty}
		\lesssim
			\|\delta_{st}W\|_{\HolBesSp{\kappa'-\kappa}}
		\lesssim
			(t-s)^{\frac{1}{4}-\frac{1}{2}\kappa'}
			\|W\|_{C_T^{\frac{1}{4}-\frac{1}{2}\kappa'}\HolBesSp{\kappa'-\kappa}}.
	\end{gather*}
	For the latter estimate, see \rref{rem_20160921064624}.
	From them, we have
	\begin{multline*}
		\left\|
			\int_0^t
				\delta_{st}\Phi
				\rpara
				P^1_{t-s}\Psi_s\,
				ds
		\right\|_{\HolBesSp{1+\kappa'}}\\
		\begin{aligned}
			&\lesssim
				\int_0^t
					(t-s)^{-\frac{2+\kappa+\kappa'}{2}}
					\{
						\|\delta_{st}v\|_{L^\infty}
						+\|\delta_{st}w\|_{L^\infty}
						+(t-s)^{\frac{1}{4}-\frac{1}{2}\kappa'}
					\}\,
					ds\\
			&\lesssim
				\int_0^t
					(t-s)^{-\frac{2+\kappa+\kappa'}{2}}
					\{
						\|\delta_{st}v\|_{L^\infty}
						+\|\delta_{st}w\|_{L^\infty}
					\}\,
					ds
				+
				1.
		\end{aligned}
	\end{multline*}
	The last inequality follows from
	$
		\int_0^t
			(t-s)^{-1-\frac{\kappa+\kappa'}{2}+\frac{1}{4}-\frac{1}{2}\kappa'}\,
			ds
		<\infty
	$.
	The estimate of the second term has finished.

	Lastly, we estimate the third term.
	We consider the contribution of $W$, $v$ and $w$ separably.
	In the proof, we use \pref{prop_20160927051159}. Note
	\begin{align*}
		1+\kappa'
		&=
			\left(
				\frac{1}{2}-\kappa
			\right)
			+
			(-1-\kappa)
			+
			2
			\cdot
			\frac{3+4\kappa+2\kappa'}{4}\\
		&=
			\left(
				\frac{1}{2}+\kappa'
			\right)
			+
			(-1-\kappa)
			+
			2
			\cdot
			\frac{3+2\kappa}{4}\\
		&=
			(1-\kappa'+\kappa)
			+(-1-\kappa)
			+
			2
			\cdot
			\frac{1+2\kappa'}{2}.
	\end{align*}
	We also use $\|\Psi_s\|_{\HolBesSp{-1-\kappa}}\lesssim 1$.
	For $W$, we see
	\begin{align*}
		\left\|
			\int_0^t
				[P^1_{t-s},W_s\rpara]\Psi_s\,
				ds
		\right\|_{\HolBesSp{1+\kappa'}}
		&\lesssim
			\int_0^t
				(t-s)^{-\frac{3+4\kappa+2\kappa'}{4}}
				\|W_s\|_{\HolBesSp{\frac{1}{2}-\kappa}}
				\|\Psi_s\|_{\HolBesSp{-1-\kappa}}\,
				ds\\
		&\lesssim
			1.
	\end{align*}
	For $v$ and $w$, we have
	\begin{gather*}
		\left\|
			\int_0^t
				[P^1_{t-s},v_s\rpara]\Psi_s\,
				ds
		\right\|_{\HolBesSp{1+\kappa'}}
		\lesssim
			\int_0^t
				(t-s)^{-\frac{3+2\kappa}{4}}
				\|v_s\|_{\HolBesSp{\frac{1}{2}+\kappa'}}\,
				ds,\\
		\begin{aligned}
			\left\|
				\int_0^t
					[P^1_{t-s},w_s\rpara]\Psi_s\,
					ds
			\right\|_{\HolBesSp{1+\kappa'}}
			&\lesssim
				\int_0^t
					(t-s)^{-\frac{1+2\kappa'}{2}}
					\|w_s\|_{\HolBesSp{1-\kappa'+\kappa}}\,
					ds\\
			&\lesssim
				\int_0^t
					(t-s)^{-\frac{1+2\kappa'}{2}}
					\|w_s\|_{\HolBesSp{1+2\kappa'}}\,
					ds.
		\end{aligned}
	\end{gather*}
	The proof is completed.
\end{proof}

\begin{lemma}\label{lem_20160822041507}
	For any $(v,w)\in\sols{T}{\kappa}{\kappa'}$ and $0<t\leq T$,
	we have $\com(v,w)(t)\in\HolBesSp{1+\kappa'}$ and
	\begin{multline*}
		\|\com(v,w)(t)\|_{\HolBesSp{1+\kappa'}}\\
		\begin{aligned}
			&\lesssim
				1
				+t^{-\frac{5}{6}}\|v_0\|_{\HolBesSp{-\frac{2}{3}+\kappa'}}
				+t^{-\kappa'}(1+\|v_t\|_{L^\infty}+\|w_t\|_{L^\infty})\\
			&\phantom{=}\quad
				+
					\int_0^t
						(t-s)^{-\frac{3+2\kappa'}{4}}
						\|v_s\|_{\HolBesSp{\frac{1}{2}+\kappa'}}\,
						ds
				+
					\int_0^t
						(t-s)^{-\frac{1+2\kappa'}{2}}
						\|w_s\|_{\HolBesSp{1+2\kappa'}}\,
						ds\\
			&\phantom{=}\quad
				+
					\int_0^t
						(t-s)^{-1-\frac{\kappa+\kappa'}{2}}
						(\|\delta_{st} v\|_{L^\infty}+\|\delta_{st} w\|_{L^\infty})\,
						ds.
		\end{aligned}
	\end{multline*}
\end{lemma}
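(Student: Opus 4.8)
The plan is to reduce the estimate to the algebraic identity $\com(v,w)_t = P^1_t v_0 + U_t$ together with the two bounds already available. First I would verify this identity by unwinding definitions. Recall from \eqref{eq_20161129003748} and the paragraph preceding \lref{lem_20161005024632} that $\hat{v}_t = P^1_t v_0 + \int_0^t P^1_{t-s}[F(v,w)(s)]\,ds$, that $\Phi^1 = -2\nu(-\nu W + v + w)$ and $\Phi^2 = -\nu(-\CmplConj{\nu}\CmplConj{W} + \CmplConj{v} + \CmplConj{w})$, and that
\[
  U_t = \int_0^t P^1_{t-s}[F(v,w)(s)]\,ds - \Phi^1_t \rpara X^\IAB_t - \Phi^2_t \rpara X^\IAA_t .
\]
Comparing the scalar factors with the definition of $\com(v,w)$ in \eqref{eq_20161129003748} gives $\nu\cdot 2(-\nu W + v + w) = -\Phi^1$ and $\nu(-\CmplConj{\nu}\CmplConj{W} + \CmplConj{v} + \CmplConj{w}) = -\Phi^2$, so the two paraproducts appearing in $\com(v,w)$ are exactly the negatives of the two paraproducts subtracted off in $U$. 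Substituting for $\int_0^t P^1_{t-s}[F(v,w)(s)]\,ds$ from the displayed line then yields $\com(v,w)_t = P^1_t v_0 + U_t$, and it remains to bound each summand in $\HolBesSp{1+\kappa'}$.

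For $P^1_t v_0$ I would apply \pref{prop_20160927051055}(1) with $\alpha = -\tfrac{2}{3} + \kappa'$ and $\delta = \tfrac{5}{6}$ (so that $\alpha + 2\delta = 1 + \kappa'$), obtaining $\|P^1_t v_0\|_{\HolBesSp{1+\kappa'}} \lesssim t^{-5/6}\|v_0\|_{\HolBesSp{-\frac{2}{3}+\kappa'}}$, which is precisely the $v_0$-contribution in the claimed bound. For $U_t$ I would invoke the estimate of $\|U_t\|_{\HolBesSp{1+\kappa'}}$ proved in \lref{lem_20161005024632}(2). Each term there is either verbatim one of the terms in the present statement or is dominated by one: since $0 < t - s \le T \le 1$ and $\kappa < \kappa'$ we have $(t-s)^{-\frac{3+2\kappa}{4}} \le (t-s)^{-\frac{3+2\kappa'}{4}}$, so the $v$-integral of \lref{lem_20161005024632}(2) is bounded by $\int_0^t (t-s)^{-\frac{3+2\kappa'}{4}} \|v_s\|_{\HolBesSp{\frac{1}{2}+\kappa'}}\,ds$; the $w$-integral $\int_0^t (t-s)^{-\frac{1+2\kappa'}{2}} \|w_s\|_{\HolBesSp{1+2\kappa'}}\,ds$, the $\delta_{st}$-integral $\int_0^t (t-s)^{-1-\frac{\kappa+\kappa'}{2}}(\|\delta_{st}v\|_{L^\infty} + \|\delta_{st}w\|_{L^\infty})\,ds$, and the term $1 + t^{-\kappa'}(1 + \|v_t\|_{L^\infty} + \|w_t\|_{L^\infty})$ reproduce terms of the statement directly. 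Adding the two bounds gives the conclusion.

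There is no genuine analytic difficulty at this step: the content is all carried by \lref{lem_20161005024632} (itself built on \lref{lem_20161002130857}, \pref{prop_20160927051159} and the Schauder estimates), which I am assuming available. The only point that requires care is the bookkeeping of the factors $\nu$, $\CmplConj{\nu}$ and the $2$ when identifying $\com(v,w)_t$ with $P^1_t v_0 + U_t$, since a sign error there would destroy the cancellation that makes the identity work. I would therefore carry out that reduction explicitly before quoting \lref{lem_20161005024632}(2) and \pref{prop_20160927051055}.
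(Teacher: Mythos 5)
Your proposal is correct and follows essentially the same route as the paper: both establish the identity $\com(v,w)(t)=P^1_tv_0+U_t$ from the definitions, bound $P^1_tv_0$ via the heat-semigroup smoothing with $1+\kappa'=(-\tfrac23+\kappa')+2\cdot\tfrac56$, and quote \lref{lem_20161005024632}(2) for $U_t$, noting that $(t-s)^{-\frac{3+2\kappa}{4}}\le(t-s)^{-\frac{3+2\kappa'}{4}}$ since $\kappa<\kappa'$ and $t-s\le 1$. The only cosmetic difference is that the paper additionally (and redundantly) records a bound on $\int_0^tP^1_{t-s}v_s\,ds$, which your argument does not need.
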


\begin{proof}
	From definition \eqref{eq_20161129003748}, we have
	$
		\com(v,w)(t)
		=
			P^1_tv_0
			+U_t
	$.

	Noting
	$
		1+\kappa'
		=
			\left(
				-\frac{2}{3}+\kappa'
			\right)
			+
			2
			\cdot
			\frac{5}{6}
		=
			\frac{1}{2}+\kappa'
			+2\cdot\frac{1}{4}
	$
	and using \pref{prop_20160927051055}, we see
	\begin{gather*}
		\|P^1_tv_0\|_{\HolBesSp{1+\kappa'}}
		\lesssim
			t^{-\frac{5}{6}}
			\|v_0\|_{\HolBesSp{-\frac{2}{3}+\kappa'}},\\
		\begin{aligned}
			\left\|
				\int_0^t
					P^1_{t-s}
					v_s\,
					ds
			\right\|_{\HolBesSp{1+\kappa'}}
			\leq
				\int_0^t
					\|
						P^1_{t-s}
						v_s
					\|_{\HolBesSp{1+\kappa'}}\,
					ds
			\lesssim
				\int_0^t
					(t-s)^{-\frac{1}{4}}
					\|v_s\|_{\HolBesSp{\frac{1}{2}+\kappa'}}\,
					ds.
		\end{aligned}
	\end{gather*}
	Note that the last term smaller than or equal to
	$
		\int_0^t
			(t-s)^{-\frac{3+2\kappa}{4}}
			\|v_s\|_{\HolBesSp{\frac{1}{2}+\kappa'}}\,
			ds
	$.
	Combining these and \lref{lem_20161005024632}, we see the conclusion.
\end{proof}

\begin{lemma}\label{lem_20161003140013}
	For any
	$
		(v,w)\in\sols{T}{\kappa}{\kappa'}
	$
	and
	$0<t\leq T$,
	we have
	\begin{align*}
		\|\com(v,w)(t)\|_{\HolBesSp{1+\kappa'}}
		\lesssim
			\const
			(
				1
				+
				t^{-\frac{5}{6}}\|v_0\|_{\HolBesSp{-\frac{2}{3}+\kappa'}}
				+
				t^{-\frac{1+2\kappa+2\kappa'}{2}}
				\|(v,w)\|_{\sols{T}{\kappa}{\kappa'}}
			).
	\end{align*}
\end{lemma}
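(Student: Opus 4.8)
The plan is to feed the explicit pointwise‑in‑time bound of \lref{lem_20160822041507} into the time‑weighted a priori estimates of \rref{rem_20160921064624}, thereby reducing everything to elementary Beta‑type integrals in the time variable. Recall that \lref{lem_20160822041507} already bounds $\|\com(v,w)(t)\|_{\HolBesSp{1+\kappa'}}$ by the sum of: the constant $1$; the term $t^{-\frac{5}{6}}\|v_0\|_{\HolBesSp{-\frac{2}{3}+\kappa'}}$; the term $t^{-\kappa'}(1+\|v_t\|_{L^\infty}+\|w_t\|_{L^\infty})$; and the three integrals $\int_0^t(t-s)^{-\frac{3+2\kappa'}{4}}\|v_s\|_{\HolBesSp{\frac12+\kappa'}}\,ds$, $\int_0^t(t-s)^{-\frac{1+2\kappa'}{2}}\|w_s\|_{\HolBesSp{1+2\kappa'}}\,ds$ and $\int_0^t(t-s)^{-1-\frac{\kappa+\kappa'}{2}}(\|\delta_{st}v\|_{L^\infty}+\|\delta_{st}w\|_{L^\infty})\,ds$. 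The constant $1$ and the $v_0$‑term already appear on the right‑hand side of the claim, so only the other pieces need work.

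Next I would substitute the bounds recorded in \rref{rem_20160921064624} (each up to a factor $\|(v,w)\|_{\sols{T}{\kappa}{\kappa'}}$): namely
\[
\begin{gathered}
\|v_t\|_{L^\infty}\lesssim t^{-\frac{2-3\kappa}{6}},\qquad
\|w_t\|_{L^\infty}\lesssim t^{-\frac{1}{4}-\frac{\kappa}{2}-\frac{\kappa'}{2}},\\
\|v_s\|_{\HolBesSp{\frac12+\kappa'}}\lesssim s^{-\frac{7}{12}},\qquad
\|w_s\|_{\HolBesSp{1+2\kappa'}}\lesssim s^{-\frac{5}{6}}\cdot s^{\frac{1}{12}-\kappa-\kappa'}=s^{-\frac{3}{4}-\kappa-\kappa'},
\end{gathered}
\]
together with $\|\delta_{st}v\|_{L^\infty}\lesssim s^{-\frac{5}{6}+\kappa'}(t-s)^{\frac12-\kappa'}$ and $\|\delta_{st}w\|_{L^\infty}\lesssim s^{-1+\kappa'-\kappa}(t-s)^{\frac12-\kappa'}$; each of these follows by interpolating the three ingredients defining the relevant $\cL_T$‑space exactly as in \rref{rem_20160921064624}, using $\frac12+\kappa'\le 1-\kappa'$ and $1+2\kappa'\le\frac32-2\kappa'$ (both valid for $\kappa'<\frac1{18}$). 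After the substitution every time integral takes the form $\int_0^t(t-s)^{-a}s^{-b}\,ds=B(1-a,1-b)\,t^{1-a-b}$, and I would check case by case that $a,b\in(0,1)$ so the Beta identity applies; the least obvious checks are $a=\tfrac12+\tfrac{\kappa}{2}+\tfrac{3\kappa'}{2}<1$ (which is $\kappa+3\kappa'<1$) and $b=1-\kappa'+\kappa<1$ (which is $\kappa<\kappa'$) in the last integral.

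Carrying out the computation produces a finite list of terms $\const\,t^{-p}\|(v,w)\|_{\sols{T}{\kappa}{\kappa'}}$, with $p\in\{\kappa',\ \tfrac13-\tfrac{\kappa}{2}+\kappa',\ \tfrac14+\tfrac{\kappa}{2}+\tfrac{3\kappa'}{2}\}$ coming from the $t^{-\kappa'}(\cdots)$ term, $p=\tfrac{2+3\kappa'}{6}$ from the $v$‑integral, $p=\tfrac14+\kappa+2\kappa'$ from the $w$‑integral, and $p\in\{\tfrac13+\tfrac{\kappa}{2}+\tfrac{\kappa'}{2},\ \tfrac12+\tfrac{3\kappa}{2}+\tfrac{\kappa'}{2}\}$ from the two halves of the last integral. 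Since $0<t\le T\le 1$, the map $p\mapsto t^{-p}$ is non‑decreasing, so it suffices to note that each such $p$ is $\le\frac{1+2\kappa+2\kappa'}{2}=\frac{1}{2}+\kappa+\kappa'$ — each comparison is a one‑line inequality valid for $0<\kappa<\kappa'<1/18$, the sharpest being $\tfrac12+\tfrac{3\kappa}{2}+\tfrac{\kappa'}{2}\le\tfrac12+\kappa+\kappa'$, i.e.\ again $\kappa\le\kappa'$ — and then $t^{-p}\le t^{-\frac{1+2\kappa+2\kappa'}{2}}$. Collecting everything yields the stated estimate, with $\const$ depending only on $\kappa,\kappa',\mu,\nu$ and $\|X\|_{\drivers{1}{\kappa}}$, inherited from the implicit constants of \lref{lem_20160822041507}.

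The proof has no conceptual content; it is entirely bookkeeping of exponents. The only point requiring genuine care is to ensure \emph{simultaneously} that all three time integrals converge and that every resulting power of $t$ is dominated by $t^{-\frac{1+2\kappa+2\kappa'}{2}}$; these two requirements together are what pin down the admissible range of the parameters, the binding constraint being $\kappa<\kappa'$.
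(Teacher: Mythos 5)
Your proposal is correct and follows essentially the same route as the paper's own proof: feed the bound of \lref{lem_20160822041507} into the estimates of \rref{rem_20160921064624}, evaluate the resulting Beta-type integrals via $\int_0^t(t-s)^{-\theta_1}s^{-\theta_2}\,ds\lesssim t^{1-\theta_1-\theta_2}$ for $\theta_1,\theta_2\in(0,1)$, and dominate every resulting power of $t$ by $t^{-\frac{1+2\kappa+2\kappa'}{2}}$ using $t\le 1$. The exponents you compute ($\tfrac13+\tfrac{\kappa'}{2}$, $\tfrac14+\kappa+2\kappa'$, $\tfrac13+\tfrac{\kappa}{2}+\tfrac{\kappa'}{2}$, $\tfrac12+\tfrac{3\kappa}{2}+\tfrac{\kappa'}{2}$) agree with those in the paper, and your identification of $\kappa<\kappa'$ as the binding constraint is accurate.
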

\begin{proof}
	We estimate each term in the upper bound of $\|\com(v,w)(t)\|_{\HolBesSp{1+\kappa'}}$
	in \lref{lem_20160822041507} by using \rref{rem_20160921064624}.
	The first three terms are estimated as follows:
	\begin{multline*}
		1
		+t^{-\frac{5}{6}}\|v_0\|_{\HolBesSp{-\frac{2}{3}+\kappa'}}
		+t^{-\kappa'}(1+\|v(t)\|_{L^\infty}+\|w(t)\|_{L^\infty})\\
		\lesssim
			1
			+t^{-\frac{5}{6}}\|v_0\|_{\HolBesSp{-\frac{2}{3}+\kappa'}}
			+t^{
				-
				\left(
					\kappa'+\frac{2-3\kappa}{6}
				\right)
				}
			(1+\|(v,w)\|_{\sols{T}{\kappa}{\kappa'}}).
	\end{multline*}
	To estimate other terms, we use the fact that the inequality
	\begin{align*}
		\int_0^t
			(t-s)^{-\theta_1}
			s^{-\theta_2}\,
			ds
		\lesssim
			t^{1-\theta_1-\theta_2}
	\end{align*}
	holds for $0<\theta_1,\theta_2<1$ and $t>0$.
	From \rref{rem_20160921064624}, we see
	\begin{multline*}
		\int_0^t
			(t-s)^{-\frac{3+2\kappa'}{4}}
			\|v_s\|_{\HolBesSp{\frac{1}{2}+\kappa'}}\,
			ds
		+
		\int_0^t
			(t-s)^{-\frac{1+2\kappa'}{2}}
			\|w_s\|_{\HolBesSp{1+2\kappa'}}\,
			ds\\
		\lesssim
			t^{
				-
				\left(
					\frac{1}{3}+\frac{1}{2}\kappa'
				\right)
				}
			\|v\|_{\cL_T^{\frac{5}{6}-\kappa',1-\kappa',1-\frac{1}{2}\kappa'}}
			+
			t^{
				-
				\left(
					\frac{1}{4}+2\kappa'+\kappa
				\right)
			}
			\|w\|_{\cL_T^{1-\kappa'+\kappa,\frac{3}{2}-2\kappa',1-\kappa'}}
	\end{multline*}
	and
	\begin{multline*}
		\int_0^t
			(t-s)^{-\left(1+\frac{\kappa+\kappa'}{2}\right)}
			(\|\delta_{st} v\|_{L^\infty}+\|\delta_{st} w\|_{L^\infty})\,
			ds\\
		\lesssim
			t^{-\frac{2+3\kappa+3\kappa'}{6}}
			\|v\|_{\cL_T^{\frac{5}{6}-\kappa',1-\kappa',1-\frac{1}{2}\kappa'}}
			+
			t^{-\frac{1+3\kappa+\kappa'}{2}}
			\|w\|_{\cL_T^{1-\kappa'+\kappa,\frac{3}{2}-2\kappa',1-\kappa'}}.
	\end{multline*}
	Combining them, we see the estimate of $\|\com(v,w)(t)\|_{\HolBesSp{1+\kappa'}}$.
\end{proof}

\begin{lemma}\label{lem_20160930130701}
	For any $(v,w)\in\sols{T}{\kappa}{\kappa'}$ and $0<t\leq T$, we have
	\begin{multline*}
		\|G(v,w)(t)\|_{\HolBesSp{-\frac{1}{2}-2\kappa}}\\
		\leq
			\const
			\Big(
				1
				+t^{-\frac{5}{6}}\|v_0\|_{\HolBesSp{-\frac{2}{3}+\kappa'}}
				+
				t^{-\frac{2-3\kappa}{2}}
				\Big(
					\|(v,w)\|_{\sols{T}{\kappa}{\kappa'}}^3
					+\|(v,w)\|_{\sols{T}{\kappa}{\kappa'}}
				\Big)
			\Big).
	\end{multline*}
	Here, $\const$ is a positive constant depending only on $\kappa$, $\kappa'$, $\mu$, $\nu$ and $\|X\|_{\drivers{1}{\kappa}}$
	and it is given by a third-order polynomial in $\|X\|_{\drivers{1}{\kappa}}$.
\end{lemma}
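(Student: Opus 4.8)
The plan is to estimate $G(v,w)(t)=\sum_{j=1}^{8}G_j(v,w)(t)$ term by term in $\HolBesSp{-\frac{1}{2}-2\kappa}$, using the paraproduct and resonant estimates of \pref{prop_20160926062106}, the commutator estimate \pref{prop_comm_20160919055939}, the a priori bounds collected in \rref{rem_20160921064624} for $v$, $w$ and $u_2=v+w$, and the defining bounds for the components of $X\in\drivers{1}{\kappa}$. Each subterm is a product, a $\rpara/\lpara/\reso$-expression, or a commutator $\comR$; for each one picks a (small) intermediate regularity so that the expression is both well-defined and lands in a space $\HolBesSp{\beta}$ with $\beta\ge-\frac{1}{2}-2\kappa$, and one checks that the resulting time singularity $t^{-a}$ satisfies $a\le\frac{2-3\kappa}{2}$ for the $(v,w)$-dependent terms (and $a\le\frac{5}{6}$ for the $\|v_0\|$-term coming from $G_6$). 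This last bookkeeping repeatedly uses $0<\kappa<\kappa'<1/18$ (e.g.\ $\frac{5}{6}\le\frac{2-3\kappa}{2}$ needs $\kappa\le\frac{1}{9}$, and $\frac{1+2\kappa+2\kappa'}{2}\le\frac{2-3\kappa}{2}$ needs $5\kappa+2\kappa'\le1$). Finally one absorbs the quadratic contributions via $\|(v,w)\|_{\sols{T}{\kappa}{\kappa'}}^2\le\tfrac{1}{2}\bigl(\|(v,w)\|_{\sols{T}{\kappa}{\kappa'}}+\|(v,w)\|_{\sols{T}{\kappa}{\kappa'}}^3\bigr)$ and $t^{-a}\le t^{-\frac{2-3\kappa}{2}}$ for $t\le T\le1$ and $a\le\frac{2-3\kappa}{2}$.

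\textbf{The cubic and mixed terms.} The cubic term $G_1=-\nu u_2^2\CmplConj{u_2}$ is handled directly: $u_2=v+w\in\HolBesSp{1-\kappa'}$ is a genuine (continuous) function, so $\|G_1(v,w)(t)\|_{\HolBesSp{-\frac{1}{2}-2\kappa}}\lesssim\|u_2(t)\|_{L^\infty}^3\lesssim t^{-\frac{2-3\kappa}{2}}\|(v,w)\|_{\sols{T}{\kappa}{\kappa'}}^3$ by \rref{rem_20160921064624}; this is the only genuinely cubic contribution. The terms $G_2,G_3,G_5,G_7,G_8$, together with the linear pieces $(\nu+1)u_2$ in $G_3$ and $(\nu+1)(Z-\nu W)$ in $G_4$, are at most quadratic in $(v,w)$. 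For each product of a $(v,w)$-factor with a noise component one splits into $\rpara$-, $\lpara$- and $\reso$-parts; the $\rpara$-part is bounded by $L^\infty$ times the noise norm, the $\lpara$- and $\reso$-parts by \pref{prop_20160926062106}(2)--(3) after putting the $(v,w)$-factor into a suitable positive-regularity space, and the $\comR$-terms of $G_5$ are handled by \pref{prop_comm_20160919055939}. Using \rref{rem_20160921064624} to control the $(v,w)$-factors, each such term lies in $\HolBesSp{-\frac{1}{2}-\kappa}$ or in a positive-regularity space (hence in $\HolBesSp{-\frac{1}{2}-2\kappa}$), with a time singularity strictly milder than $t^{-\frac{2-3\kappa}{2}}$ and coefficient a polynomial of degree $\le3$ in $\|X\|_{\drivers{1}{\kappa}}$; this produces the $t^{-\frac{2-3\kappa}{2}}\|(v,w)\|$-contribution (after absorbing the quadratic $G_2$-terms).

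\textbf{The driving-vector term $G_4$ and the commutator term $G_6$.} Apart from its linear piece, $G_4$ involves only components of $X$: the products $W^2\CmplConj{W}$, $W^2\CmplConj{Z}$, $W\CmplConj{W}Z$, the resonant symbols $X^\IAABoAB$, $X^\IAABoBB$, $X^\IABoAB$, $X^\IAAoAB$, $X^\IAAoBB$, $X^\IABoBB$, the $\lpara$-products $W\lpara X^\AB$, $\CmplConj{W}\lpara X^\AA$, and the commutators $\comR(W,X^\IAB,X^\AB)$, $\comR(\CmplConj{W},\CmplConj{X^\IAB},X^\AA)$, $\comR(\CmplConj{W},X^\IAA,X^\AB)$, $\comR(W,\CmplConj{X^\IAA},X^\AA)$. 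Each is bounded uniformly in $t\le T$ by a polynomial of degree $\le3$ in $\|X\|_{\drivers{1}{\kappa}}$: the products of positive-regularity components by the Besov product estimates of \pref{prop_20160926062106}, the products $W^2\CmplConj{Z}$, $W\CmplConj{W}Z$ by the definitions given before the statement, and the commutators by \pref{prop_comm_20160919055939} with indices $(\tfrac{1}{2}-\kappa,1-\kappa,-1-\kappa)$ (so $\tfrac{1}{2}-\kappa<1$, $(1-\kappa)+(-1-\kappa)<0$, $(\tfrac{1}{2}-\kappa)+(1-\kappa)+(-1-\kappa)>0$), landing in $\HolBesSp{\frac{1}{2}-3\kappa}\subset\HolBesSp{-\frac{1}{2}-2\kappa}$; this gives the constant term. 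For $G_6=-\nu\{2\com(v,w)\reso X^\AB+\CmplConj{\com(v,w)}\reso X^\AA\}$ I would invoke \lref{lem_20161003140013}: since $\com(v,w)(t)\in\HolBesSp{1+\kappa'}$ and $1+\kappa'+(-1-\kappa)=\kappa'-\kappa>0$, \pref{prop_20160926062106}(3) gives $\com(v,w)\reso X^\AB\in\HolBesSp{\kappa'-\kappa}\subset\HolBesSp{-\frac{1}{2}-2\kappa}$ with norm $\lesssim\|\com(v,w)(t)\|_{\HolBesSp{1+\kappa'}}\|X\|_{\drivers{1}{\kappa}}$, and \lref{lem_20161003140013} turns this into exactly the bound $\const\,(1+t^{-5/6}\|v_0\|_{\HolBesSp{-\frac{2}{3}+\kappa'}}+t^{-\frac{2-3\kappa}{2}}\|(v,w)\|_{\sols{T}{\kappa}{\kappa'}})$ (using $\tfrac{1+2\kappa+2\kappa'}{2}\le\tfrac{2-3\kappa}{2}$).

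\textbf{Conclusion and main obstacle.} Summing the eight contributions and absorbing the quadratic term yields the claim, with $\const$ a degree-$3$ polynomial in $\|X\|_{\drivers{1}{\kappa}}$ (the degree $3$ coming from $W^2\CmplConj{W}$, $W^2\CmplConj{Z}$, $W\CmplConj{W}Z$ and the commutators in $G_4$). The main obstacle is not any single estimate but the systematic bookkeeping: for each of the many subterms one must choose an intermediate regularity so that the product/resonance/commutator is simultaneously well-defined, lands in $\HolBesSp{-\frac{1}{2}-2\kappa}$, and accumulates a time singularity no worse than $t^{-\frac{2-3\kappa}{2}}$ — the last requirement being precisely where $\kappa<\kappa'<1/18$ is needed. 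Among the individual pieces, $G_4$ is the most laborious (sheer number of terms), while $G_6$ is the most delicate conceptually, but the latter has already been reduced to \lref{lem_20161003140013}.
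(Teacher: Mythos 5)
Your proposal is correct and follows essentially the same route as the paper: a term-by-term estimate of $G_1,\dots,G_8$ using the paraproduct/resonant estimates, the commutator estimate, the bounds of \rref{rem_20160921064624}, and \lref{lem_20161003140013} for $G_6$, with the quadratic contributions absorbed via $x^2\le\frac{1}{2}(x+x^3)$. The only detail worth making explicit (which the paper does) is that for $G_2$ one needs the Bony decomposition of $u_2^2$ itself to place it in $\HolBesSp{\frac{1}{2}+\kappa'}$ before pairing it with the $\HolBesSp{-\frac{1}{2}-\kappa}$ coefficients, but this is exactly the "suitable positive-regularity space" step you describe.
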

\begin{proof}
	We write $u_2=v+w$.
	It follows from \rref{rem_20160921064624} that
	\begin{align*}
		\|G_1(v,w)(t)\|_{L^\infty}
		\lesssim
			\|v_t+w_t\|_{L^\infty}^3
		\lesssim
			t^{-\frac{2-3\kappa}{2}}
			\|(v,w)\|_{\sols{T}{\kappa}{\kappa'}}^3.
	\end{align*}

	To estimate $G_2(v,w)(t)$, we use the Bony decomposition.
	Applying the decomposition to $u_2(t)\in\HolBesSp{\frac{1}{2}+\kappa'}$,
	we see
	\begin{align*}
		\|u_2(t)^2\|_{\HolBesSp{\frac{1}{2}+\kappa'}}
		&=
			\|
				u_2(t)\reso u_2(t)
				+2u_2(t)\rpara u_2(t)
			\|_{\HolBesSp{\frac{1}{2}+\kappa'}}\\
		&\lesssim
			\|u_2(t)\|_{\HolBesSp{\frac{1}{2}\left(\frac{1}{2}+\kappa'\right)}}^2
			+2\|u_2(t)\|_{L^\infty}\|u_2(t)\|_{\HolBesSp{\frac{1}{2}+\kappa'}}\\
		&\lesssim
			t^{-\frac{11-6\kappa'}{12}}
			\|(v,w)\|_{\sols{T}{\kappa}{\kappa'}}^2
			+
			2
			t^{-\frac{2-3\kappa}{6}}
			\|(v,w)\|_{\sols{T}{\kappa}{\kappa'}}
			\cdot
			t^{-\frac{7}{12}}
			\|(v,w)\|_{\sols{T}{\kappa}{\kappa'}}
			\\
		&=
			t^{-\frac{11-6\kappa'}{12}}
			\|(v,w)\|_{\sols{T}{\kappa}{\kappa'}}^2
			+
			2
			t^{-\frac{11-6\kappa}{12}}
			\|(v,w)\|_{\sols{T}{\kappa}{\kappa'}}^2\\
		&\lesssim
			t^{-\frac{11-6\kappa}{12}}
			\|(v,w)\|_{\sols{T}{\kappa}{\kappa'}}^2.
	\end{align*}
	In these estimate, we used $0<\kappa<\kappa'<1/18$ (see \rref{rem_20160921064624}).
	The term $\|u_2(t)\CmplConj{u_2(t)}\|_{\HolBesSp{\frac{1}{2}+\kappa'}}$ has the same bound.
	Since
	$
		G_2(v,w)(t)
		=
			a_1
			u_2(t)^2
			+
			a_2
			u_2(t)
			\CmplConj{u_2(t)}
	$
	for some $a_1,a_2\in\HolBesSp{-\frac{1}{2}-\kappa}$
	and $u_2(t)\in\HolBesSp{\frac{1}{2}+\kappa'}$,
	we have
	\begin{align*}
		\|G_2(v,w)(t)\|_{\HolBesSp{-\frac{1}{2}-\kappa}}
		&\lesssim
			\|u_2(t)^2\|_{\HolBesSp{\frac{1}{2}+\kappa'}}
			+\|u_2(t)\CmplConj{u_2(t)}\|_{\HolBesSp{\frac{1}{2}+\kappa'}}\\
		&\lesssim
			t^{-\frac{11-6\kappa}{12}}
			\|(v,w)\|_{\sols{T}{\kappa}{\kappa'}}^2.
	\end{align*}
	Noting
	$
		\|(v,w)\|_{\sols{T}{\kappa}{\kappa'}}^2
		\leq
			\|(v,w)\|_{\sols{T}{\kappa}{\kappa'}}^3
			+
			\|(v,w)\|_{\sols{T}{\kappa}{\kappa'}}
	$,
	we see the estimate.

	Since
	$
		G_3(v,w)(t)
		=
			b_1u_2(t)
			+b_2\CmplConj{u_2(t)}
			+(\nu+1)u_2(t)
	$
	for some $b_1,b_2\in\HolBesSp{-\frac{1}{2}-\kappa}$, we have
	\begin{align*}
		\|G_3(v,w)(t)\|_{\HolBesSp{-\frac{1}{2}-\kappa}}
		\lesssim
			\|u_2(t)\|_{\HolBesSp{\frac{1}{2}+\kappa'}}
		\lesssim
			t^{-\frac{7}{12}}
			\|(v,w)\|_{\sols{T}{\kappa}{\kappa'}}.
	\end{align*}

	The estimates of $G_4(v,w)(t)$ and $G_5(v,w)(t)$ are obtained easily.
	The terms which admits the lowest regularity in the defintions of $G_4(v,w)(t)$
	are $W_t\lpara X^\AB$ and $\CmplConj{W_t}\lpara X^\AA$
	and their regularity is $-\frac{1}{2}-2\kappa$.
	Therefore we obtain $\|G_4(v,w)(t)\|_{\HolBesSp{-\frac{1}{2}-2\kappa}}\lesssim 1$.
	From \pref{prop_comm_20160919055939}, we see
	\begin{align*}
		\|G_5(v,w)(t)\|_{\HolBesSp{\frac{1}{2}+\kappa'-2\kappa}}
		\lesssim
			\|u_2\|_{\HolBesSp{\frac{1}{2}+\kappa'}}
		\lesssim
			t^{-\frac{7}{12}}
			\|(v,w)\|_{\sols{T}{\kappa}{\kappa'}}.
	\end{align*}

	From the definition of $G_6(v,w)(t)$, we have
	\begin{align*}
		\|G_6(v,w)(t)\|_{\HolBesSp{\kappa'-\kappa}}
		&\lesssim
			\|\com(v,w)(t)\reso X^\AB_t\|_{\HolBesSp{1+\kappa'+(-1-\kappa)}}\\
		&\phantom{\lesssim}\quad\qquad
			+\|\CmplConj{\com(v,w)(t)}\reso X^\AA_t\|_{\HolBesSp{1+\kappa'+(-1-\kappa)}}\\
		&\lesssim
			\|\com(v,w)(t)\|_{\HolBesSp{1+\kappa'}}\\
		&\lesssim
			\const
				(
					1
					+
					t^{-\frac{5}{6}}
					\|v_0\|_{\HolBesSp{-\frac{2}{3}+\kappa'}}
					+
					t^{-\frac{1+2\kappa+2\kappa'}{2}}
					\|(v,w)\|_{\sols{T}{\kappa}{\kappa'}}
				).
	\end{align*}
	In the last line, we used \lref{lem_20161003140013}.

	For $\tau=\AB,\AA$, we see
	\begin{gather*}
		\|w_t\reso X^\tau_t\|_{\HolBesSp{(1+\kappa')+(-1-\kappa)}}
		\lesssim
			\|w_t\|_{\HolBesSp{1+\kappa'}}
			\|X^\tau_t\|_{\HolBesSp{-1-\kappa}}
		\lesssim
			t^{-\frac{3+2\kappa+\kappa'}{4}}
			\|(v,w)\|_{\sols{T}{\kappa}{\kappa'}},\\
		\|u_2(t)\lpara X^\tau_t\|_{\HolBesSp{\left(\frac{1}{2}+\kappa'\right)+(-1-\kappa)}}
		\lesssim
			\|u_2(t)\|_{\HolBesSp{\frac{1}{2}+\kappa'}}
			\|X^\tau_t\|_{\HolBesSp{-1-\kappa}}
		\lesssim
			t^{-\frac{7}{12}}
			\|(v,w)\|_{\sols{T}{\kappa}{\kappa'}}.
	\end{gather*}
	In these estimates, we used \rref{rem_20160921064624} .
	We obtain
	\begin{gather*}
		\|G_7(v,w)(t)\|_{\HolBesSp{\kappa'-\kappa}}
		\lesssim
			t^{-\frac{3+4\kappa+2\kappa'}{4}}
			\|(v,w)\|_{\sols{T}{\kappa}{\kappa'}},\\
		\|G_8(v,w)(t)\|_{\HolBesSp{-\frac{1}{2}+\kappa'-\kappa}}
		\lesssim
			t^{-\frac{7}{12}}
			\|(v,w)\|_{\sols{T}{\kappa}{\kappa'}}.
	\end{gather*}
	The proof is completed.
\end{proof}

\begin{proposition}\label{prop_20161003143917}
	The map
	$
		\mathcal{M}^2:
			\sols{T}{\kappa}{\kappa'}
			\to
			\cL_T^{1-\kappa'+\kappa,\frac{3}{2}-2\kappa',1-\kappa'}
	$
	is well-defined and,
	for any
	$
		(v,w)\in\sols{T}{\kappa}{\kappa'}
	$,
	we have
	\begin{multline*}
		\|\mathcal{M}^2(v,w)\|_{\cL_T^{1-\kappa'+\kappa,\frac{3}{2}-2\kappa',1-\kappa'}}
		\leq
			\const[1]
			(
				1
				+
				\|v_0\|_{\HolBesSp{-\frac{2}{3}+\kappa'}}
				+
				\|w_0\|_{\HolBesSp{-\frac{1}{2}-2\kappa}}
			)\\
			+
			\const[2]
			T^{\frac{3}{2}\kappa}
			\left(
				\|(v,w)\|_{\sols{T}{\kappa}{\kappa'}}^3
				+\|(v,w)\|_{\sols{T}{\kappa}{\kappa'}}
			\right).
	\end{multline*}
	Here, $\const[1]$ and $\const[2]$ are positive constants depending only on $\kappa$, $\kappa'$, $\mu$, $\nu$ and $\|X\|_{\drivers{1}{\kappa}}$.
	They are given by at most third-order polynomials in $\|X\|_{\drivers{1}{\kappa}}$.
\end{proposition}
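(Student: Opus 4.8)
The proof will proceed exactly in parallel with that of \pref{prop_20161003143833}: we split
$\mathcal{M}^2(v,w)(t)=P^1_t w_0+\int_0^t P^1_{t-s}G(v,w)(s)\,ds=:\mathrm{I}_t+\mathrm{II}_t$
and estimate the two contributions separately by means of the Schauder estimates of \pref{prop_20160930075129}, the core analytic input being \lref{lem_20160930130701}.

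For the free term $\mathrm{I}$, I would apply the first assertion of \pref{prop_20160930075129} with $\alpha=-\tfrac12-2\kappa$, $\beta=\tfrac32-2\kappa'$ and $\delta=1-\kappa'$. Since $0<\kappa<\kappa'<1/18$ we have $\beta>\alpha$ and $\delta\in[0,1]$, and $\tfrac{\beta-\alpha}{2}=1-\kappa'+\kappa$, so by the identities recalled in \rref{rem_20160921064624} the space $\cL_T^{\frac{\beta-\alpha}{2},\beta,\delta}$ coincides with $\cL_T^{1-\kappa'+\kappa,\frac32-2\kappa',1-\kappa'}$; hence $\|(t\mapsto P^1_t w_0)_{t\geq0}\|_{\cL_T^{1-\kappa'+\kappa,\frac32-2\kappa',1-\kappa'}}\lesssim\|w_0\|_{\HolBesSp{-\frac12-2\kappa}}$. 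For the Duhamel term $\mathrm{II}$, note first that because $T\leq1$ and $\kappa<1/9$ one has $\tfrac56\leq\tfrac{2-3\kappa}{2}$, so the bound in \lref{lem_20160930130701} yields $G(v,w)\in\cE_T^{\frac{2-3\kappa}{2}}\HolBesSp{-\frac12-2\kappa}$ with $\|G(v,w)\|_{\cE_T^{\frac{2-3\kappa}{2}}\HolBesSp{-\frac12-2\kappa}}\lesssim\const(1+\|v_0\|_{\HolBesSp{-\frac23+\kappa'}}+\|(v,w)\|_{\sols{T}{\kappa}{\kappa'}}^3+\|(v,w)\|_{\sols{T}{\kappa}{\kappa'}})$, where $\const$ is a third-order polynomial in $\|X\|_{\drivers{1}{\kappa}}$. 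I would then apply the second assertion of \pref{prop_20160930075129} with $\alpha=-\tfrac12-2\kappa$, $\eta=\tfrac{2-3\kappa}{2}$, $\gamma=-\tfrac12-2\kappa$, $\beta=\tfrac32-2\kappa'$, $\delta=1-\kappa'$. The required inequalities $\eta\in[0,1)$, $\gamma\in[\alpha,\alpha-2\eta+2)=[-\tfrac12-2\kappa,-\tfrac12+\kappa)$, $\beta\in[\gamma,\alpha+2)=[-\tfrac12-2\kappa,\tfrac32-2\kappa)$, $\delta\in(0,\tfrac{\beta-\gamma}{2}]=(0,1-\kappa'+\kappa]$ and $\alpha\neq\beta$ all hold precisely because $0<\kappa<\kappa'$; moreover $\tfrac{\alpha-2\eta+2-\gamma}{2}=\tfrac32\kappa$ and $\tfrac{\beta-\gamma}{2}=1-\kappa'+\kappa$, so the estimate produces $\|\mathrm{II}\|_{\cL_T^{1-\kappa'+\kappa,\frac32-2\kappa',1-\kappa'}}\lesssim T^{\frac32\kappa}\|G(v,w)\|_{\cE_T^{\frac{2-3\kappa}{2}}\HolBesSp{-\frac12-2\kappa}}$. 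Adding the two bounds, and using $T\leq1$ to discard the $T^{\frac32\kappa}$ factor in front of the $1+\|v_0\|$-part, gives the claimed inequality; in particular the fact that $\mathcal{M}^2$ maps $\sols{T}{\kappa}{\kappa'}$ into $\cL_T^{1-\kappa'+\kappa,\frac32-2\kappa',1-\kappa'}$ follows at once, and the asserted polynomial dependence of $\const[1],\const[2]$ on $\|X\|_{\drivers{1}{\kappa}}$ is inherited from \lref{lem_20160930130701}.

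The only genuinely delicate point is the bookkeeping of the exponents: one must check that the $t=0$ blow-up rate of $G(v,w)$, namely $\eta=\tfrac{2-3\kappa}{2}$, stays \emph{strictly} below $1$ (this is exactly where the positivity of $\kappa$ is used, reflecting that CGL in $d=3$ is borderline subcritical) and that the regularity/weight/H\"older triple output by \pref{prop_20160930075129} matches $\cL_T^{1-\kappa'+\kappa,\frac32-2\kappa',1-\kappa'}$ on the nose; once these choices are fixed, the argument is a routine composition of \lref{lem_20160930130701} with the Schauder estimates, with no further calculation required.
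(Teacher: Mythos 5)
Your proposal is correct and follows essentially the same route as the paper: Schauder (first assertion of \pref{prop_20160930075129}) for $P^1_t w_0$, \lref{lem_20160930130701} to place $G(v,w)$ in $\cE_T^{\frac{2-3\kappa}{2}}\HolBesSp{-\frac12-2\kappa}$, and Schauder (second assertion) for the Duhamel term. Your explicit parameter choices ($\alpha=\gamma=-\tfrac12-2\kappa$, $\beta=\tfrac32-2\kappa'$, $\eta=\tfrac{2-3\kappa}{2}$, $\delta=1-\kappa'$) and the check that $\tfrac56\le\tfrac{2-3\kappa}{2}<1$ are exactly the bookkeeping the paper leaves implicit.
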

\begin{proof}
	Recall \eqref{eq_20161003034500}.
	It follows from \pref{prop_20160930075129} that
	\begin{align*}
		\|
			(t\mapsto P^1_t w_0)_{t\geq 0}
		\|_{\cL_T^{1-\kappa'+\kappa,\frac{3}{2}-2\kappa',1-\kappa'}}
		\lesssim
			\|w_0\|_{\HolBesSp{-\frac{1}{2}-2\kappa}}.
	\end{align*}
	\lref{lem_20160930130701} implies
	$G(v,w)\in\cE_T^{\frac{2-3\kappa}{2}}\HolBesSp{-\frac{1}{2}-2\kappa}$
	and
	\begin{align*}
		\|G(v,w)\|_{\cE_T^{\frac{2-3\kappa}{2}}\HolBesSp{-\frac{1}{2}-2\kappa}}
		\leq
			\const
			\Big(
				1
				+\|v_0\|_{\HolBesSp{-\frac{2}{3}+\kappa'}}
				+
					\|(v,w)\|_{\sols{T}{\kappa}{\kappa'}}^3
					+\|(v,w)\|_{\sols{T}{\kappa}{\kappa'}}
			\Big).
	\end{align*}
	\pref{prop_20160930075129} implies
	\begin{multline*}
		\left\|
			(
				t
				\mapsto
					\int_0^t
						P^1_{t-s}
						G(v,w)(s)\,
						ds
			)_{t\geq 0}
		\right\|_{\cL_T^{1-\kappa'+\kappa,\frac{3}{2}-2\kappa',1-\kappa'}}\\
		\lesssim
			T^{\frac{3}{2}\kappa}
			\|G(v,w)\|_{\cE_T^{\frac{2-3\kappa}{2}}\HolBesSp{-\frac{1}{2}-2\kappa}}.
	\end{multline*}
	Combining these, we have shown the assertion.
\end{proof}

We also have local Lipschitz continuity of $\mathcal{M}_2$.
\begin{proposition}\label{prop_20161005032416}
	For any
	$
		(v^{(1)},w^{(1)}),(v^{(2)},w^{(2)})\in\sols{T}{\kappa}{\kappa'}
	$,
	we have
	\begin{multline*}
		\|
			\mathcal{M}_{(v^{(1)}_0,w^{(1)}_0),X^{(1)}}^2(v^{(1)},w^{(1)})
			-
			\mathcal{M}_{(v^{(2)}_0,w^{(2)}_0),X^{(2)}}^2(v^{(2)},w^{(2)})
		\|_{\cL_T^{\frac{5}{6}-\kappa',1-\kappa',1-\frac{1}{2}\kappa'}}\\
		\leq
			\const[3]
			\left(
				\|v^{(1)}_0-v^{(2)}_0\|_{\HolBesSp{-\frac{2}{3}+\kappa'}}
				+
				\|w^{(1)}_0-w^{(2)}_0\|_{\HolBesSp{-\frac{1}{2}-2\kappa}}
			\right)\\
			+
			\const[4]
			T^{\frac{3}{2}\kappa}
			\Big(
				\|X^{(1)}-X^{(2)}\|_{\drivers{T}{\kappa}}
				+\|(v^{(1)},w^{(1)})-(v^{(2)},w^{(2)})\|_{\sols{T}{\kappa}{\kappa'}}
			\Big)
	\end{multline*}
	Here, $\const[3]$ and $\const[4]$ are positive constants depending only on $\kappa$, $\kappa'$, $\mu$, $\nu$,
	$\|X^{(i)}\|_{\drivers{1}{\kappa}}$ and	$\|(v^{(i)},w^{(i)})\|_{\sols{T}{\kappa}{\kappa'}}$.
	In particular, they are given by at most second-order polynomials in
	$\|X^{(i)}\|_{\drivers{1}{\kappa}}$ and	$\|(v^{(i)},w^{(i)})\|_{\sols{T}{\kappa}{\kappa'}}$.
\end{proposition}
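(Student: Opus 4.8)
The plan is to mimic the proof of \pref{prop_20161003143917}, carrying differences through every step, in exactly the way \pref{prop_20161005032251} was deduced from \pref{prop_20161003143833}. First I would write, using \eqref{eq_20161003034500},
\[
	\mathcal{M}_{(v^{(1)}_0,w^{(1)}_0),X^{(1)}}^2(v^{(1)},w^{(1)})(t)
	-
	\mathcal{M}_{(v^{(2)}_0,w^{(2)}_0),X^{(2)}}^2(v^{(2)},w^{(2)})(t)
	=
		P^1_t(w^{(1)}_0-w^{(2)}_0)
		+
		\int_0^t P^1_{t-s}\bigl\{G_{X^{(1)}}(v^{(1)},w^{(1)})(s)-G_{X^{(2)}}(v^{(2)},w^{(2)})(s)\bigr\}\,ds .
\]
The first term is controlled by \pref{prop_20160930075129}(1) by $\|w^{(1)}_0-w^{(2)}_0\|_{\HolBesSp{-\frac12-2\kappa}}$, which produces the $\const[3]$-contribution. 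For the second term, via \pref{prop_20160930075129}(2), everything reduces to a difference version of \lref{lem_20160930130701}.

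Concretely, I would establish that
\[
	\|G_{X^{(1)}}(v^{(1)},w^{(1)})(t)-G_{X^{(2)}}(v^{(2)},w^{(2)})(t)\|_{\HolBesSp{-\frac12-2\kappa}}
	\lesssim
		\const\,\bigl(1+t^{-\frac{2-3\kappa}{2}}\bigr)\bigl(\|X^{(1)}-X^{(2)}\|_{\drivers{1}{\kappa}}+\|v^{(1)}_0-v^{(2)}_0\|_{\HolBesSp{-\frac23+\kappa'}}+\|(v^{(1)},w^{(1)})-(v^{(2)},w^{(2)})\|_{\sols{T}{\kappa}{\kappa'}}\bigr),
\]
with $\const$ an at most second-order polynomial in the norms of the two data, by treating $G_1,\dots,G_8$ one at a time. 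Each $G_j$ is (multi)linear in the components of the driving vector and polynomial of degree $\le 3$ in $(v,w)$, so every product is expanded into a telescoping sum each of whose summands carries exactly one difference factor (using $ab-a'b'=\tfrac12\{(a-a')(b+b')+(a+a')(b-b')\}$ and its trilinear analogue), after which one applies, exactly as in \lref{lem_20160930130701}: \pref{prop_20160926062106} for the para- and resonant products, \pref{prop_comm_20160919055939} for the commutators $\comR(\cdot,\cdot,\cdot)$ in $G_4,G_5$, and the norm bounds collected in \rref{rem_20160921064624} for $v$, $w$, $u_2=v+w$ and their differences. The renormalised products $WZ$, $W\CmplConj{Z}$, $W^2\CmplConj{Z}$, $W\CmplConj{W}Z$ need the corresponding difference estimates, obtained by the same polarisation from their paracontrolled definitions. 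For the resonant term $G_6(v,w)=-\nu\{2\,\com(v,w)\reso X^\AB+\CmplConj{\com(v,w)}\reso X^\AA\}$ I would first prove a difference version of \lref{lem_20161003140013}: the representation of $U_t$ in \lref{lem_20161005024632}(1) is linear in $(\Phi^1,\Phi^2,\Psi^1,\Psi^2)$ and in $(X^\IAB_0,X^\IAA_0)$, so polarising and reusing \pref{prop_20160927051159} (commutation of $P^1_t$ with the paraproduct) together with the H\"older-in-time control of $W,v,w$ from \rref{rem_20160921064624} gives the needed bound for $\com(v,w)(t)=P^1_tv_0+U_t$, and then $G_6$ is handled by \pref{prop_20160926062106}(3).

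The main obstacle is purely bookkeeping: verifying, for each of the many (and several trilinear) summands, that the accumulated negative power of $t$ stays strictly above $-1$, hence is integrable against the singular Schauder kernels $(t-s)^{-\theta}$, and that the net exponent does not exceed $-\frac{2-3\kappa}{2}$, so that \pref{prop_20160930075129}(2) applies and leaves a gain $T^{\frac32\kappa}$. But all of these numerical inequalities have already been checked, with the very same exponents, in \lref{lem_20160930130701}, \lref{lem_20161003140013} and the proof of \pref{prop_20161003143917}; only their linearised counterparts remain, with no new kind of estimate required. Finally, inserting the difference estimate for $G$ into \pref{prop_20160930075129}(1)--(2) exactly as in the proof of \pref{prop_20161003143917} yields the claimed inequality, with $\const[3]$ absorbing the initial-data differences (here $T\le 1$ lets one put the $v^{(1)}_0-v^{(2)}_0$ contribution coming from $G_6$ into the $\const[3]$-term) and $\const[4]$ coming from the time integral; the polynomial degrees in $\|X^{(i)}\|_{\drivers{1}{\kappa}}$ and $\|(v^{(i)},w^{(i)})\|_{\sols{T}{\kappa}{\kappa'}}$ are one less than in \pref{prop_20161003143917} because a difference of cubic expressions carries quadratic coefficients.
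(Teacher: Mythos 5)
Your proposal is correct and follows exactly the route the paper intends: the paper's own proof of this proposition is a one-line remark that it is shown "by a similar way as" Proposition~\ref{prop_20161003143917}, and your argument is precisely that — polarising each $G_j$, proving difference versions of Lemmas~\ref{lem_20161003140013} and~\ref{lem_20160930130701}, and feeding the result into the Schauder estimates, with the initial-data differences (including the $v_0^{(1)}-v_0^{(2)}$ contribution entering through $\com(v,w)$) absorbed into the $\const[3]$-term. No gaps.
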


\begin{proof}
	We can show the assertion by a similar way as \pref{prop_20161003143917}.
\end{proof}

\subsection{Local existence and uniqueness}\label{sec_20160920093609}
We show local well-posedness of CGL \eqref{eq:cgl}.
This is the most important theorem in this section.
\begin{theorem}\label{thm_20160920085605}
  Let $0<\kappa<\kappa'<1/18$.
  There exists a continuous function
  $
    \tilde{T}_\ast:
    \HolBesSp{-\frac{2}{3}+\kappa'}
    \times
    \HolBesSp{-\frac{1}{2}-2\kappa}
    \times
    \drivers{1}{\kappa}
    \to
      (0,1]
  $
  such that the following (1) and (2) hold:
	\begin{enumerate}[(1)]
		\item	For every
				$
					(v_0,w_0)
					\in
						\HolBesSp{-\frac{2}{3}+\kappa'}
						\times
						\HolBesSp{-\frac{1}{2}-2\kappa}
				$
				and
				$
					X\in\drivers{1}{\kappa}
				$,
        set $T_\ast=\tilde{T}_\ast(v_0,w_0,X)$.
        Then, the system \eqref{eq_20160920090410} admits a unique solution $(v,w)\in\sols{T_\ast}{\kappa}{\kappa'}$
				and there is a positive constant $\const$ depending only on $\mu$, $\nu$, $\kappa$, $\kappa'$, $T_\ast$ and
				$\|X\|_{\drivers{1}{\kappa}}$
				such that
				\begin{align*}
					\|(v,w)\|_{\sols{T_\ast}{\kappa}{\kappa'}}
					\leq
						\const
						\left(
							1
							+\|v_0\|_{\HolBesSp{-\frac{2}{3}+\kappa'}}
							+\|w_0\|_{\HolBesSp{-\frac{1}{2}-2\kappa}}
						\right).
				\end{align*}
		\item	Let $\{(v_0^{(n)},w_0^{(n)})\}_{n=1}^\infty$ and $\{X^{(n)}\}_{n=1}^\infty$
        converge to $(v_0,w_0)$ in
				$
						\HolBesSp{-\frac{2}{3}+\kappa'}
						\times
						\HolBesSp{-\frac{1}{2}-2\kappa}
				$
				and
        $X$ in $\drivers{1}{\kappa}$,
        respectively.
        Set $T_\ast^{(n)}=\tilde{T}_\ast(v_0^{(n)},w_0^{(n)},X^{(n)})$
        and let $(v^{(n)},w^{(n)})$ be a unique solution on $[0,T^{(n)}_\ast]$
        to the system \eqref{eq_20160920090410} with the initial condition $(v_0^{(n)},w_0^{(n)})$ driven by $X^{(n)}$.
        Then, for every $0<t<T_\ast$, we have
        \begin{align*}
          \lim_{n\to\infty}
            \|(v^{(n)},w^{(n)})-(v,w)\|_{\sols{t}{\kappa}{\kappa'}}
          =
            0.
        \end{align*}
	\end{enumerate}
\end{theorem}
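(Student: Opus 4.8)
The plan is to solve the fixed point problem \eqref{eq_20160920090410} for $\mathcal{M}=(\mathcal{M}^1,\mathcal{M}^2)$ by the Banach fixed point theorem, using as black boxes the self-mapping estimates \pref[prop_20161003143833]{prop_20161003143917} and the local Lipschitz estimates \pref[prop_20161005032251]{prop_20161005032416}. First I would fix the data $(v_0,w_0)\in\HolBesSp{-\frac{2}{3}+\kappa'}\times\HolBesSp{-\frac{1}{2}-2\kappa}$ and $X\in\drivers{1}{\kappa}$, abbreviate $M=1+\|v_0\|_{\HolBesSp{-\frac{2}{3}+\kappa'}}+\|w_0\|_{\HolBesSp{-\frac{1}{2}-2\kappa}}$ and $N=\|X\|_{\drivers{1}{\kappa}}$, and merge \pref[prop_20161003143833]{prop_20161003143917} into a single bound of the form
\[
	\|\mathcal{M}(v,w)\|_{\sols{T}{\kappa}{\kappa'}}
	\le
		C(N)M
		+
		C(N)T^{\theta}
		\bigl(
			\|(v,w)\|_{\sols{T}{\kappa}{\kappa'}}^3
			+\|(v,w)\|_{\sols{T}{\kappa}{\kappa'}}
		\bigr),
\]
valid for all $0<T\le 1$, where $\theta=\min\{\frac{1-\kappa'}{2},\frac{3}{2}\kappa\}>0$ and $C(N)$ is a polynomial in $N$ (the powers of $T$ and the polynomial degrees are exactly those recorded in the two propositions).

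Because of the cubic term $\|(v,w)\|^3$, the radius of the invariant ball must be chosen \emph{before} the time. I would set $R=2C(N)M$ and let $B_R$ be the closed ball of radius $R$ in $\sols{T}{\kappa}{\kappa'}$, which is a complete metric space since $\sols{T}{\kappa}{\kappa'}$ is an intersection and product of Banach spaces. Choosing $T$ small enough that $C(N)T^{\theta}(R^3+R)\le R/2$ makes $\mathcal{M}(B_R)\subset B_R$; on $B_R$ the constants $\const[3],\const[4]$ of \pref[prop_20161005032251]{prop_20161005032416} (applied with identical data $X^{(1)}=X^{(2)}=X$ and $(v_0^{(1)},w_0^{(1)})=(v_0^{(2)},w_0^{(2)})=(v_0,w_0)$) are bounded by a polynomial in $N$ and $R$, so after shrinking $T$ once more $\mathcal{M}$ becomes a $\tfrac12$-contraction on $B_R$. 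All these smallness conditions on $T$ can be met by one explicit choice $T_\ast=c(1+M+N)^{-L}$ with $c\in(0,1]$ and $L\in\NaturalNum$ depending only on $\mu,\nu,\kappa,\kappa'$; this is a continuous $(0,1]$-valued function of $(v_0,w_0,X)$ and will serve as $\tilde{T}_\ast$. The fixed point of $\mathcal{M}$ in $B_R$ is then the desired solution on $[0,T_\ast]$, and $\|(v,w)\|_{\sols{T_\ast}{\kappa}{\kappa'}}\le R=2C(N)M$ is the asserted bound.

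To upgrade uniqueness in $B_R$ to uniqueness in all of $\sols{T_\ast}{\kappa}{\kappa'}$, I would take two solutions, let $R^\ast$ bound both their $\sols{T_\ast}{\kappa}{\kappa'}$-norms, and apply \pref[prop_20161005032251]{prop_20161005032416} with identical data on a short interval $[0,T']$ to get
\[
	\|(v^{(1)},w^{(1)})-(v^{(2)},w^{(2)})\|_{\sols{T'}{\kappa}{\kappa'}}
	\le
		C(R^\ast,N)\,(T')^{\theta}\,
		\|(v^{(1)},w^{(1)})-(v^{(2)},w^{(2)})\|_{\sols{T'}{\kappa}{\kappa'}},
\]
which forces coincidence on $[0,T']$ for $T'$ small. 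Since for $t>0$ the solution takes values in $\HolBesSp{1-\kappa'}\times\HolBesSp{\frac{3}{2}-2\kappa'}$ and solves the time-shifted fixed point problem, a standard continuation argument (the agreement set is nonempty, closed and relatively open in $[0,T_\ast]$) extends this to $[0,T_\ast]$. This completes (1). For (2), convergence of the data gives uniform bounds on $M^{(n)}$ and $N^{(n)}$, hence, by continuity of $\tilde{T}_\ast$, $T_\ast^{(n)}\to T_\ast$ and a uniform lower bound on the $T_\ast^{(n)}$, and by (1) a uniform bound $R_0$ on $\|(v^{(n)},w^{(n)})\|$ on a common time interval (as well as on the limit solution); then \pref[prop_20161005032251]{prop_20161005032416} with differing data on a short interval $[0,t']$ — with $t'$ chosen so that the contraction factor $\const[4](R_0,\sup_n N^{(n)})\,(t')^{\theta}\le\tfrac12$ — yields
\[
	\|(v^{(n)},w^{(n)})-(v,w)\|_{\sols{t'}{\kappa}{\kappa'}}
	\lesssim
		\|v_0^{(n)}-v_0\|_{\HolBesSp{-\frac{2}{3}+\kappa'}}
		+\|w_0^{(n)}-w_0\|_{\HolBesSp{-\frac{1}{2}-2\kappa}}
		+\|X^{(n)}-X\|_{\drivers{1}{\kappa}}
	\to
		0,
\]
and iterating over a partition of $[0,t]$ of mesh $t'$, using the already-proved convergence at each node as new initial data together with the uniform bound $R_0$, gives $\|(v^{(n)},w^{(n)})-(v,w)\|_{\sols{t}{\kappa}{\kappa'}}\to0$ for every $0<t<T_\ast$.

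The routine parts are the merging of the estimates, the continuation argument for global uniqueness, and the iteration for global continuous dependence. The main thing to get right is the order of quantifiers forced by the cubic nonlinearity — the ball radius $R$ depends on the initial data, and the survival time must then be chosen small relative to $R$ — while still producing a single survival time $\tilde{T}_\ast$ that depends \emph{continuously} (not merely lower-semicontinuously) on $(v_0,w_0,X)$; this is exactly what the explicit choice $T_\ast=c(1+M+N)^{-L}$ is designed to achieve.
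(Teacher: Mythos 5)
Your proposal is correct and follows essentially the same route as the paper's proof: the same merged self-mapping and Lipschitz estimates from \pref[prop_20161003143833]{prop_20161003143917} and \pref[prop_20161005032251]{prop_20161005032416}, the same order of quantifiers (ball radius $M_\ast$ chosen from the data first, then $T_\ast$ chosen small relative to it, cf.\ \eqref{eq_20161005054253}--\eqref{eq_20161005054259}), Banach fixed point on that ball, and the same short-interval-contraction-plus-iteration arguments for global uniqueness and for continuous dependence. The only cosmetic difference is your closed form $T_\ast=c(1+M+N)^{-L}$ versus the paper's explicit formula, and both correctly note the subtlety that the time-shifted solution must be re-checked against the weighted norms (the paper's \rref{rem_20161013061330}).
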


In the proof the function $\tilde{T}_\ast$ is concretely given by
$
  \tilde{T}_\ast(v_0,w_0,X)
  =
    T_\ast
$,
where $T_\ast$ is defined by \eqref{eq_20161005054253} and \eqref{eq_20161005054259}.
We prove the theorem by using the properties of $\mathcal{M}$ we have just shown.

For every $0<T\leq 1$ and $M>0$, we define
\begin{align*}
	\mathbf{B}_{T,M}
	=
		\{
			(v,w)\in\sols{T}{\kappa}{\kappa'};
			\|(v,w)\|_{\sols{T}{\kappa}{\kappa'}}\leq M
		\}.
\end{align*}
\pref[prop_20161003143833]{prop_20161003143917} imply
\begin{multline*}
	\|\mathcal{M}(v,w)\|_{\sols{T}{\kappa}{\kappa'}}
	\leq
		\const[1]
		\big(
			1
			+
			\|v_0\|_{\HolBesSp{-\frac{2}{3}+\kappa'}}
			+
			\|w_0\|_{\HolBesSp{-\frac{1}{2}-2\kappa}}
		\big)\\
		+
		\const[2]
		T^{\frac{3}{2}\kappa}
		(
			\|(v,w)\|_{\sols{T}{\kappa}{\kappa'}}^3
			+\|(v,w)\|_{\sols{T}{\kappa}{\kappa'}}
		).
\end{multline*}
Here, $\const[1]$ and $\const[2]$ are positive constants depending only on $\kappa$, $\kappa'$, $\mu$, $\nu$ and $\|X\|_{\drivers{1}{\kappa}}$.
In particular, they are given by at most third-order polynomials in $\|X\|_{\drivers{1}{\kappa}}$.
\pref[prop_20161005032251]{prop_20161005032416} imply
\begin{multline}\label{eq_20161005061121}
	\|
		\mathcal{M}_{(v^{(1)}_0,w^{(1)}_0),X^{(1)}}(v^{(1)},w^{(1)})
		-
		\mathcal{M}_{(v^{(2)}_0,w^{(2)}_0),X^{(2)}}(v^{(2)},w^{(2)})
	\|_{\sols{T}{\kappa}{\kappa'}}\\
	\leq
		\const[3]
		(
			\|v^{(1)}_0-v^{(2)}_0\|_{\HolBesSp{-\frac{2}{3}+\kappa'}}
			+
			\|w^{(1)}_0-w^{(2)}_0\|_{\HolBesSp{-\frac{1}{2}-2\kappa}}
		)\\
		+
		\const[4]
		T^{\frac{3}{2}\kappa}
		(
			\|X^{(1)}-X^{(2)}\|_{\drivers{T}{\kappa}}
			+\|(v^{(1)},w^{(1)})-(v^{(2)},w^{(2)})\|_{\sols{T}{\kappa}{\kappa'}}
		).
\end{multline}
Here, $\const[3]$ and $\const[4]$ are positive constants depending only on $\kappa$, $\kappa'$, $\mu$, $\nu$,
$\|X^{(i)}\|_{\drivers{1}{\kappa}}$ and	$\|(v^{(i)},w^{(i)})\|_{\sols{T}{\kappa}{\kappa'}}$.
In particular, they are given by at most second-order polynomials in
$\|X^{(i)}\|_{\drivers{1}{\kappa}}$ and	$\|(v^{(i)},w^{(i)})\|_{\sols{T}{\kappa}{\kappa'}}$.

\begin{proof}[Proof of \tref{thm_20160920085605}]
	For the proof of existence, we use \pref[prop_20161003143833]{prop_20161003143917}.
	We will show the map $\mathcal{M}$ is contraction from $\mathbf{B}_{T,M}$ to itself for small $T>0$ and suitable $M>0$
	and obtain the existence of solution by the fixed point theorem.
	Let $M\geq 1$.
	For any $(v,w)\in\sols{T}{\kappa}{\kappa'}$, we have
	\begin{align*}
		\|\mathcal{M}(v,w)\|_{\sols{T}{\kappa}{\kappa'}}
		&\leq
			(\const[1]+\const[2])
			(
				1
				+\|v_0\|_{\HolBesSp{-\frac{2}{3}+\kappa'}}
				+\|w_0\|_{\HolBesSp{-\frac{1}{2}-2\kappa}}
				+
				T^{\frac{3}{2}\kappa}
				M^3
			)\\
		&\leq
			(\const[1]+\const[2])
			(
				1
				+\|v_0\|_{\HolBesSp{-\frac{2}{3}+\kappa'}}
				+\|w_0\|_{\HolBesSp{-\frac{1}{2}-2\kappa}}
			)
			(
				1
				+
				T^{\frac{3}{2}\kappa}
				M^3
			).
	\end{align*}
	In a similar way, we see
	\begin{multline*}
		\|\mathcal{M}(v^{(1)},w^{(1)})-\mathcal{M}(v^{(2)},w^{(2)})\|_{\sols{T}{\kappa}{\kappa'}}\\
		\begin{aligned}
			&\leq
				\const[\|X\|_{\drivers{1}{\kappa}}]
				T^{\frac{3}{2}\kappa}
				(1+M^2)
				\|(v^{(1)},w^{(1)})-(v^{(2)},w^{(2)})\|_{\sols{T}{\kappa}{\kappa'}}.
		\end{aligned}
	\end{multline*}
	Here $\const[\|X\|_{\drivers{1}{\kappa}}]>0$
	is given by a second-order polynomial with respect to $\|X\|_{\drivers{T}{\kappa}}$.
	Set
	\begin{gather}
		\label{eq_20161005054253}
		M_\ast
		=
			2
			(\const[1]+\const[2])
			(
				1
				+\|v_0\|_{\HolBesSp{-\frac{2}{3}+\kappa'}}
				+\|w_0\|_{\HolBesSp{-\frac{1}{2}-2\kappa}}
			)
			\vee
			1,\\
		\label{eq_20161005054259}
		T_\ast^{\frac{3}{2}\kappa}
		=
			M_\ast^{-3}
			\wedge
			\{2\const[\|X\|_{\drivers{1}{\kappa}}](1+M_\ast^2)\}^{-1}
			\wedge
			1.
	\end{gather}
	Then
	\begin{gather}
		\label{eq_20161005060055}
		\|\mathcal{M}(v,w)\|_{\sols{T_\ast}{\kappa}{\kappa'}}
		\leq
			M_\ast,\\
		\|\mathcal{M}(v^{(1)},w^{(1)})-\mathcal{M}(v^{(2)},w^{(2)})\|_{\sols{T_\ast}{\kappa}{\kappa'}}
		\leq
			\frac{1}{2}
			\|(v^{(1)},w^{(1)})-(v^{(2)},w^{(2)})\|_{\sols{T_\ast}{\kappa}{\kappa'}}.
	\end{gather}
	We see that the map $\mathcal{M}$ is contraction on $\mathbf{B}_{T_\ast,M_\ast}$
	Therefore there exists a unique fixed point $(v,w)\in\mathbf{B}_{T_\ast,M_\ast}$ of $\mathcal{M}$,
	which is a solution on $[0,T_\ast]$.

	Next we show that the solution on $[0,T_\ast]$ is unique.
	Let $(v^{(1)},w^{(1)}),(v^{(2)},w^{(2)})\in\sols{T_\ast}{\kappa}{\kappa'}$ are solutions
	with a common initial condition $(v_0,w_0)$.
	We show that $(v^{(1)},w^{(1)})=(v^{(2)},w^{(2)})$.
	Taking $M>0$ such that
	\begin{align*}
		\|(v^{(1)},w^{(1)})\|_{\sols{T_\ast}{\kappa}{\kappa'}}
		\vee
		\|(v^{(2)},w^{(2)})\|_{\sols{T_\ast}{\kappa}{\kappa'}}
		\leq
			M,
	\end{align*}
	the similar arguments as above ensure that $\mathcal{M}$ is
	a contraction on $\mathbf{B}_{T_{**},M}$, where $T_{**}(\le T_*)$ depends on $M$.
	Hence $(v^{(1)},w^{(1)})$ and $(v^{(2)},w^{(2)})$ coincide on $[0,T_{\ast\ast}]$.
	We can continue this procedure on $[T_{**},2T_{**}]$, $[2T_{**},3T_{**}],\dots$.
	However, in these steps, we need to check
	that $(\tilde{v}^{(i)},\tilde{w}^{(i)})(t)=(v^{(i)},w^{(i)})(t+T_{**})$ satisfies
	\begin{align*}
		\|(\tilde{v}^{(i)},\tilde{w}^{(i)})\|_{\sols{T_\ast-T_{\ast\ast}}{\kappa}{\kappa'}}
		\leq
			M,
	\end{align*}
	since for example
	\begin{align*}
		\|\tilde{v}^{(i)}\|_{\cE_{T_\ast-T_{\ast\ast}}^{\frac{5}{6}-\kappa'}\HolBesSp{1-\kappa'}}
		&=
			\sup_{T_{\ast\ast}<t\leq T_{\ast}}(t-T_{\ast\ast})^{\frac{5}{6}-\kappa'}
				\|v^{(i)}(t)\|_{\HolBesSp{1-\kappa'}}\\
		&\leq
			\sup_{0<t\leq T_\ast}t^{\frac{5}{6}-\kappa'}
				\|v^{(i)}(t)\|_{\HolBesSp{1-\kappa'}}\\
		&=
			\|v^{(i)}\|_{\mathcal{E}_{T_\ast}^{\frac{5}{6}-\kappa'}\HolBesSp{1-\kappa'}}.
	\end{align*}
	Obviously $(\tilde{v}^{(i)},\tilde{w}^{(i)})$ is a solution with the initial condition
	$
		(v^{(1)},w^{(1)})(T_{\ast\ast})
		=
			(v^{(2)},w^{(2)})(T_{\ast\ast})
	$.
	Therefore we can iterate the above arguments on
	$[kT_{\ast},(k+1)T_{\ast\ast}\wedge T]$
	for $k=1,2,\dots$ and thus $(v^{(1)},w^{(1)})$ and $(v^{(2)},w^{(2)})$ coincide on $[0,T_\ast]$.

	We show the last assertion.
	From \eqref{eq_20161005054253} and \eqref{eq_20161005054259},
	we see that $T_\ast$ continuously depends on the initial condition $(v_0,w_0)$ and the driving vector $X$.
	Since $\const[2]$ depends on the driving vector $X$ continuously, $M_\ast$ is a continuous map from
	$(v_0,w_0)$ and $X$.
	From this fact and the continuity of $\const[\|X\|_{\drivers{1}{\kappa}}]$,
	we see the continuity of $T_\ast$.
	Hence we have $T_\ast^{(n)}\to T_\ast$.
	Without loss of generality, for fixed $t<T_\ast$, we assume that $T_\ast^{(n)}>t$ for every $n$.
	From \eqref{eq_20161005060055} and the continuity of $M_\ast$ with respect to $(v_0,w_0)$ and $X$,
	we see
	$
		\sup_n\|(v^{(n)},w^{(n)})\|_{\sols{T}{\kappa}{\kappa'}}
		<
			\infty
	$.
	From this fact and \eqref{eq_20161005061121},
	we can choose $\const[3]'$ and $\const[4]'$ such that
	\begin{multline*}
		\|
			(v,w)
			-
			(v^{(n)},w^{(n)})
		\|_{\sols{t}{\kappa}{\kappa'}}
		\leq
			\const[3]'
			\Big(
				\|v_0-v^{(n)}_0\|_{\HolBesSp{-\frac{2}{3}+\kappa'}}
				+
				\|w_0-w^{(n)}_0\|_{\HolBesSp{-\frac{1}{2}-2\kappa}}
			\Big)\\
			+
			\const[4]'
			t^{\frac{3}{2}\kappa}
			\Big(
				\|X-X^{(n)}\|_{\drivers{1}{\kappa}}
				+\|(v,w)-(v^{(n)},w^{(n)})\|_{\sols{t}{\kappa}{\kappa'}}
			\Big).
	\end{multline*}
	Hence we have $(v^{(n)},w^{(n)})\to(v,w)$ in $[0,t_*]$ for some $t_*\le t$
	depending on $\const[3]'$ and $\const[4]'$.
	Iterating this argument, we have the convergence in $[0,t]$.
	The proof is completed.
\end{proof}

\begin{remark}\label{rem_20161013061330}
	If $(v_0,w_0)\in\HolBesSp{1-\kappa'}\times\HolBesSp{\frac{3}{2}-2\kappa'}$,
	we obtain the local well-posedness on the space $\cL_T^{1-\kappa',1-\frac{\kappa'}{2}}\times\cL_T^{\frac{3}{2}-\kappa',1-\kappa'}$
	without explosion at $t=0$ by similar arguments.
\end{remark}

\begin{proposition}
	For every $(v_0,w_0)\in\HolBesSp{-\frac{2}{3}+\kappa'}\times\HolBesSp{-\frac{1}{2}-2\kappa}$
	and $X\in\drivers{T}{\kappa}$,
	there exists $T_{\mathrm{sur}}\in(0,\infty]$ such that the system \eqref{eq_20160920090410}
	has a unique solution $(v,w)\in\sols{t}{\kappa}{\kappa'}$ for every $t<T_{\mathrm{sur}}$,
	and
	\begin{align*}
		\lim_{t\uparrow T_{\mathrm{sur}}}
			(\|v\|_{C_t\HolBesSp{-\frac{2}{3}+\kappa'}}+\|w\|_{C_t\HolBesSp{-\frac{1}{2}-2\kappa}})
		=
			\infty
	\end{align*}
	unless $T_{\mathrm{sur}}=\infty$.
	Furthermore, the mapping from $(v_0,w_0,X)$ to the maximal solution $(v,w)$ is continuous in the sense that,
	for a sequence $\{(v_0^{(n)},w_0^{(n)},X^{(n)})\}$ which converges to $(v_0,w_0,X)$,
	we have
	$
		T_{\mathrm{sur}}
		\leq
			\liminf_{n\to\infty}T_{\mathrm{sur}}^{(n)}
	$ and
	\begin{align*}
		\|(v^{(n)},w^{(n)})-(v,w)\|_{\mathcal{D}_t^{\kappa,\kappa'}}\to0
	\end{align*}
	for every $t<T_{\mathrm{sur}}$.
\end{proposition}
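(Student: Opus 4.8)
The plan is to construct the maximal solution by repeatedly invoking the local result \tref{thm_20160920085605} and gluing the pieces, exactly as in the classical ODE/PDE scheme. The first thing I would record is a \emph{restart} observation. If $(v,w)\in\sols{T}{\kappa}{\kappa'}$ solves \eqref{eq_20160920090410} on $[0,T]$ and $0<s<T$, then by the semigroup property of $P^1$ applied to the defining identities for $\mathcal M^1,\mathcal M^2$ one has $v_t=P^1_{t-s}v_s+\int_s^t P^1_{t-r}F(v,w)(r)\,dr$ for $t\in[s,T]$, and likewise for $w$; and the time translation of $X$ to the new origin $s$ is again a driving vector on $[0,T-s]$, since $X^\IAA_t=P^1_{t-s}X^\IAA_s+\int_s^t P^1_{t-r}X^\AA_r\,dr$ (and similarly for $X^\IAB$) is precisely the relation $I(X^\IAA_s,X^\AA|_{[s,\cdot]})=X^\IAA|_{[s,\cdot]}$, all other components being merely restricted. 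Because $v_s\in\HolBesSp{1-\kappa'}$ and $w_s\in\HolBesSp{\frac32-2\kappa'}$ for $s>0$ by \rref{rem_20160921064624}, I may then apply \tref{thm_20160920085605} (in its smoother variant \rref{rem_20161013061330}) with datum $(v_s,w_s)$ to extend the solution a bit beyond $s$, with no explosion at the left endpoint; gluing produces a function on the enlarged interval that lies in $\sols{T'}{\kappa}{\kappa'}$ and still solves the \emph{same} system \eqref{eq_20160920090410}.

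With this I would set
\[
  T_{\mathrm{sur}}
  =
    \sup\{\,t>0 : \eqref{eq_20160920090410}\ \text{has a solution in}\ \sols{t}{\kappa}{\kappa'}\,\},
\]
which is positive by \tref{thm_20160920085605}, and note that uniqueness on each $[0,t]$ with $t<T_{\mathrm{sur}}$ follows by iterating the uniqueness argument already carried out in the proof of \tref{thm_20160920085605}, so that $(v,w)$ is well defined on $[0,T_{\mathrm{sur}})$. For the blow-up criterion, suppose $T_{\mathrm{sur}}<\infty$ while $\|v\|_{C_t\HolBesSp{-\frac23+\kappa'}}+\|w\|_{C_t\HolBesSp{-\frac12-2\kappa}}$ stays bounded by some $R$ as $t\uparrow T_{\mathrm{sur}}$ (the limit exists in $(0,\infty]$ since these norms are nondecreasing in $t$). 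By the explicit formulas \eqref{eq_20161005054253}--\eqref{eq_20161005054259}, the local survival time $\tilde T_\ast(v_s,w_s,X)$ depends on $(v_s,w_s)$ only through $\|v_s\|_{\HolBesSp{-\frac23+\kappa'}}+\|w_s\|_{\HolBesSp{-\frac12-2\kappa}}\le R$ and is decreasing in it, hence is bounded below by a constant $\tau_0=\tau_0(R,\|X\|_{\drivers{1}{\kappa}})>0$. Restarting at any $s\in(T_{\mathrm{sur}}-\tau_0,T_{\mathrm{sur}})$ yields a solution on $[0,s+\tau_0]$ with $s+\tau_0>T_{\mathrm{sur}}$, contradicting maximality.

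For the continuity statement, fix $t<T_{\mathrm{sur}}$ and put $M_0=\|(v,w)\|_{\sols{t}{\kappa}{\kappa'}}<\infty$; using $\sols{t}{\kappa}{\kappa'}\subset C_t\HolBesSp{-\frac23+\kappa'}\times C_t\HolBesSp{-\frac12-2\kappa}$ the quantities $\|v_r\|_{\HolBesSp{-\frac23+\kappa'}}+\|w_r\|_{\HolBesSp{-\frac12-2\kappa}}$ are bounded by $M_0$ on $[0,t]$, so the local survival time along the trajectory has a uniform lower bound $\tau_1=\tau_1(M_0,\|X\|_{\drivers{1}{\kappa}})>0$. Cover $[0,t]$ by finitely many blocks $0=t_0<t_1<\cdots<t_N=t$ of length at most $\tau_1$. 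On $[0,t_1]$, \tref{thm_20160920085605}(2) gives $(v^{(n)},w^{(n)})\to(v,w)$ in $\sols{t_1}{\kappa}{\kappa'}$ and, with the continuity of $\tilde T_\ast$, $T_{\mathrm{sur}}^{(n)}\ge t_1$ for $n$ large; evaluating at $t_1$ gives $(v^{(n)}_{t_1},w^{(n)}_{t_1})\to(v_{t_1},w_{t_1})$ in $\HolBesSp{1-\kappa'}\times\HolBesSp{\frac32-2\kappa'}$ and hence in the rougher initial-data space by \pref{prop_20160928054620}. Iterating \tref{thm_20160920085605}(2) (with \rref{rem_20161013061330}) over $[t_1,t_2],\dots,[t_{N-1},t_N]$ propagates the convergence block by block, and concatenating gives $(v^{(n)},w^{(n)})\to(v,w)$ in $\sols{t}{\kappa}{\kappa'}$; since $t<T_{\mathrm{sur}}$ was arbitrary, $\liminf_n T_{\mathrm{sur}}^{(n)}\ge T_{\mathrm{sur}}$. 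I expect the main obstacle to be the restart-consistency step — checking that the $\com$-term and the constrained components $X^\IAA,X^\IAB$ transform correctly under time translation so the glued object solves the \emph{same} system, and the bookkeeping needed to pass between the weighted spaces $\cE_T^\eta$ (rough data, endpoint explosion) and the unweighted spaces of \rref{rem_20161013061330} (smooth data) at each restart while keeping all $\sols{}{\kappa}{\kappa'}$-norms under control.
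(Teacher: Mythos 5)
Your proof follows essentially the same route as the paper's: build the maximal solution by gluing local solutions via \tref{thm_20160920085605} and \rref{rem_20161013061330}, derive the blow-up alternative by restarting near $T_{\mathrm{sur}}$ with a local existence time that is uniform in the restart point when the rough norms stay bounded, and obtain lower semicontinuity of $T_{\mathrm{sur}}$ and convergence of solutions by propagating \tref{thm_20160920085605}(2) over finitely many blocks. Your explicit verification of the restart consistency (time translation of $X^\IAA$, $X^\IAB$ and of the $\com$-term) is a point the paper leaves implicit, but the overall argument is the same.
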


\begin{proof}
	Let $(v,w)\in\sols{T_\ast}{\kappa}{\kappa'}$ be a unique solution on $[0,T_\ast]$ shown in \tref{thm_20160920085605}
	Because of \rref{rem_20161013061330}, we can start from $(v,w)(T_*)\in\HolBesSp{1-\kappa'}\times\HolBesSp{\frac{3}{2}-2\kappa'}$
	and construct a solution $(\bar{v},\bar{w})\in\sols{T_{\ast\ast}}{\kappa}{\kappa'}$ with $(\bar{v},\bar{w})(0)=(v,w)(T_\ast)$.
	Obviously the extended function
	\begin{align*}
		(\FourierCoeff{v},\FourierCoeff{w})(t)
		=
			\begin{cases}
				(v,w)(t)&t\in[0,T_*]\\
				(\bar{v},\bar{w})(t-T_*)&t\in[T_*,T_*+T_{**}]
			\end{cases}
	\end{align*}
	belongs to $\sols{T_\ast+T_{\ast\ast}}{\kappa}{\kappa'}$ and solves the system \eqref{eq_20160921004656}.
	Uniqueness on $[0,T_\ast+T_{\ast\ast}]$ also holds. We can iterate this argument until the time $T_{\mathrm{sur}}$,
	which is a supremum up to when the existence and uniqueness hold.

	The lower semi-continuity of $T_{\mathrm{sur}}$ follows from the continuity of $T_\ast$.
	Let $(v_0^{(n)},w_0^{(n)},X^{(n)})\to(v_0,w_0,X)$.
	For any fixed $t<T_{\mathrm{sur}}$, we can construct a unique solution in $[0,t]$
	by gluing finite number of local solutions as above.
	In this procedure, each of length of time interval converges,
	so that the solution $(v^{(n)},w^{(n)})$ exists in $[0,t]$ for sufficiently large $n$.
	This implies $t<\liminf_{n\to\infty}T_{\mathrm{sur}}^{(n)}$.

	Now assume that $T_{\mathrm{sur}}<\infty$. If
	\begin{align*}
		\lim_{t\uparrow T_{\mathrm{sur}}}
			(\|v\|_{C_t\HolBesSp{-\frac{2}{3}+\kappa'}}+\|w\|_{C_t\HolBesSp{-\frac{1}{2}-2\kappa}})
		<
			\infty,
	\end{align*}
	we can start from $(v,w)(T_{\mathrm{sur}}-\delta)\in\HolBesSp{-\frac{2}{3}+\kappa'}\times\HolBesSp{-\frac{1}{2}-2\kappa}$
	for small $\delta>0$ and construct a solution on $[0,T_\ast]$,
	where $T_\ast$ is uniform over $\delta$. This implies that for sufficiently small $\delta>0$,
	we can construct a solution on $[T_{\mathrm{sur}}-\frac{\delta}{2},T_{\mathrm{sur}}+\frac{\delta}{2}]$
	without explosion at the starting time. This is a contradiction,
	so we obtain the existence and uniqueness up to survival time with respect to the weaker norms.
\end{proof}

\subsection{Renormalized equation}\label{sec_20160920094017}

In this subsection, we show that a solution  in the sense of \tref{thm_20160920085605}
to the equation with a driving vector constructed
from a driving force $\xi\in C_T\HolBesSp{\beta}$ for $\beta>-2$
and renormalization constants solves the renormalized equation.

We fix complex constants $\constRe[1]{}$, $\constRe[2,1]{}$ and $\constRe[2,2]{}$
and define functions $X^\tau$ as in \tblref{tbl_20160920085835}
for every graphical symbols $\tau$ and construct the driving vector $X=(X^\A,\dots,X^\IAABoBB)$.
The Y/N in the Driver column in \tblref{tbl_20160920085835} indicates
whether the term $X^\tau$ is included in the definition of a driving vector or not.
The term $X^\tau$ with Driver column N is going to be used to define other terms.
For the definition of $I(\ast,\bullet)$, see \eqref{eq_20161010092955}.
Note that we can interpret the product in \tblref{tbl_20160920085835} in the usual sense
because $X^\A$ is a $\CmplNum$-valued continuous function by the assumption $\xi\in C_T\HolBesSp{\beta}$ for $\beta>-2$.
The number in Regularity column denotes the exponent $\alpha_\tau$ of the H\"older-Besov space $\HolBesSp{\alpha_\tau}$
where the term $X^\tau$ lives in. Precisely, $\alpha_\tau$ means $\alpha_\tau-\kappa$ for any $\kappa>0$ small enough.
\begin{table}[h]
  \begin{center}
	\caption{Definition of a driving vectors}\label{tbl_20160920085835}
	\begin{tabular}{|c|c|c|c|} \hline
Driver & Symbol & Definition & Regularity $\alpha_\tau$ \\ \hline
Y & $X^\A(=Z)$ & $I(X^\A_0,\whiteNoise)$ & $-1/2$  \\
N & $X^\B$ & $\CmplConj{X^\A}$ & $-1/2$  \\ \hline
Y & $X^\AA$ & $(X^\A)^2$ & $-1$  \\
Y & $X^\AB$ & $X^\A X^\B-\constRe[1]{}$ & $-1$  \\
N & $X^\BB$ & $(X^\B)^2$ & $-1$  \\
N & $X^\AAB$ & $X^\AA X^\B-2\constRe[1]{}X^\A$ & $-3/2$  \\
Y & $X^\IAA$ & $I(X^\IAA_0,X^\AA)$ & $+1$  \\
Y & $X^\IAB$ & $I(X^\IAB_0,X^\AB)$ & $+1$  \\
Y & $X^\IAAB(=W)$ & $I(X^\IAAB_0,X^\AAB)$ & $+1/2$  \\ \hline
Y & $X^\IAABoA$ & $X^\IAAB\reso X^\A$ & $0$  \\
Y & $X^\IAABoB$ & $X^\IAAB\reso X^\B$ & $0$  \\ \hline
Y & $X^\IAAoAB$ & $X^\IAA\reso X^\AB$ & $0$  \\
Y & $X^\IAAoBB$ & $X^\IAA\reso X^\BB-2\constRe[2,1]{}$ & $0$  \\
Y & $X^\IABoAB$ & $X^\IAB\reso X^\AB-\constRe[2,2]{}$ & $0$  \\
Y & $X^\IABoBB$ & $X^\IAB\reso X^\BB$ & $0$  \\ \hline
Y & $X^\IAABoAB$ & $X^\IAAB\reso X^\AB-2\constRe[2,2]{}X^\A$ & $-1/2$  \\
Y & $X^\IAABoBB$ & $X^\IAAB\reso X^\BB-2\constRe[2,1]{}X^\B$ & $-1/2$  \\ \hline
	\end{tabular}
  \end{center}
\end{table}

The next theorem is about the renormalized equation.
\begin{theorem}\label{thm_20160920094302}
  Let $0<\kappa<\kappa'<1/18$.
	Let $\xi\in C_T\HolBesSp{\beta}$ and $X^\A_0, X^\IAA_0, X^\IAB_0, X^\IAAB_0\in\HolBesSp{\beta+2}$ for $\beta>-2$.
	Construct $X\in\drivers{1}{\kappa}$ as in \tblref{tbl_20160920085835}.
	Let $(v,w)\in\sols{T}{\kappa}{\kappa'}$ be the solution to \eqref{eq_20160920090410}
	with the initial condition $(v_0,w_0)\in\HolBesSp{-\frac{2}{3}+\kappa'}\times\HolBesSp{-\frac{1}{2}-2\kappa}$
  for the driving vector $X$.
	Set $u=Z-\nu W+v+w$ and
	$
		\constRe{}
		=
		2
		(
			\constRe[1]{}
			-\CmplConj{\nu}\CmplConj{\constRe[2,1]{}}
			-2\nu\constRe[2,2]{}
		)
	$.
	Then $u$ solves
	\begin{align*}
		\partial_t u
		=
			(\ImUnit+\mu)\LaplaceOp u
			+
			\nu(1-|u|^2) u
			+
			\nu \constRe{} u
			+
			\whiteNoise,
		\qquad
		t>0,\quad
		x\in\Torus^3.
	\end{align*}
	with the initial condition $u_0=Z_0-\nu W_0+v_0+w_0$
	in the usual mild sense.
\end{theorem}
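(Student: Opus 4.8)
The plan is to unwind all the definitions, exploiting that under the hypothesis $\whiteNoise\in C_T\HolBesSp{\beta}$ with $\beta>-2$ the lowest-level object $Z=X^\A=I(X^\A_0,\whiteNoise)$ is a genuine continuous function; consequently every entry $X^\tau$ of \tblref{tbl_20160920085835}, and every paraproduct, resonant product and commutator occurring in $F$, in $G_1,\dots,G_8$ and in $\com(v,w)$, is an honest product of functions, and the whole statement becomes an algebraic identity plus the trivial identification of initial data.

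First I would record the mild equations satisfied by the four summands of $u$. From \tblref{tbl_20160920085835}, $Z=I(X^\A_0,\whiteNoise)$ and $W=X^\IAAB=I(X^\IAAB_0,X^\AAB)$ with $X^\AAB=X^\AA X^\B-2\constRe[1]{}X^\A=Z^2\CmplConj{Z}-2\constRe[1]{}Z$; by the fixed point relation \eqref{eq_20160920090410} together with \eqref{eq_20161003034438} and \eqref{eq_20161003034500}, $v=I(v_0,F(v,w))$ and $w=I(w_0,G(v,w))$. Since $I(\,\cdot\,,\,\cdot\,)$ is linear in both arguments, $u=Z-\nu W+v+w$ satisfies
\[
  u
  =
  I\bigl(u_0,\,\whiteNoise-\nu(Z^2\CmplConj{Z}-2\constRe[1]{}Z)+F(v,w)+G(v,w)\bigr),
  \qquad
  u_0=Z_0-\nu W_0+v_0+w_0.
\]
Because $I(u_0,g)$ is by definition the mild solution of $\partial_t U=(\ImUnit+\mu)\LaplaceOp U-U+g$, and the equation in the statement reads $\partial_t u=(\ImUnit+\mu)\LaplaceOp u-u+\bigl[(1+\nu)u-\nu u^2\CmplConj{u}+\nu\constRe{}u+\whiteNoise\bigr]$, the theorem is equivalent to the single identity
\[
  F(v,w)+G(v,w)
  =
  (1+\nu)u-\nu u^2\CmplConj{u}+\nu\constRe{}u+\nu Z^2\CmplConj{Z}-2\nu\constRe[1]{}Z
\]
in $C_T\HolBesSp{\beta}$.

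The bulk of the proof is the verification of this last identity, which requires no analytic input. For functions one has $fg=f\rpara g+f\lpara g+f\reso g$, $\reso$ is symmetric, and $\comR(f,g,h)=(f\rpara g)\reso h-f(g\reso h)$ by definition; from these it follows that the defining formulas for $WZ$, $W\CmplConj{Z}$, $W^2\CmplConj{Z}$, $W\CmplConj{W}Z$ and, using $\hat v=\mathcal{M}^1(v,w)=v$ at the fixed point, for $\com(v,w)=v+\nu\{2(-\nu W+v+w)\rpara X^\IAB+(-\CmplConj{\nu}\CmplConj{W}+\CmplConj{v}+\CmplConj{w})\rpara X^\IAA\}$, all collapse to ordinary products of $Z,W,v,w$. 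Hence $F(v,w)+G(v,w)$ is an explicit classical expression in $Z,W,v,w$ and the constants $\constRe[1]{},\constRe[2,1]{},\constRe[2,2]{}$. One then expands $u^2\CmplConj{u}=(Z-\nu W+v+w)^2(\CmplConj{Z}-\CmplConj{\nu}\CmplConj{W}+\CmplConj{v}+\CmplConj{w})$, substitutes the \tblref{tbl_20160920085835} expression for each singular sub-product and resonance, and collects terms according to the same grading of the monomials in $Z,W,v,w$ as in \secref{sec_20161107073826}. In this bookkeeping the terms $\nu Z^2\CmplConj{Z}$ and $-2\nu\constRe[1]{}Z$ on the right-hand side are exactly the contribution of $-\nu\mathcal{L}^1 W$, while the constant-carrying pieces that survive all cancellations — $\constRe[1]{}$ coming from $X^\AB$, $X^\AAB$, $X^\IAABoAB$, $X^\IAABoBB$; $2\constRe[2,1]{}$ from $X^\IAAoBB$, $X^\IAABoBB$; $2\constRe[2,2]{}$ from $X^\IABoAB$, $X^\IAABoAB$ — recombine into $2\nu\bigl(\constRe[1]{}-\CmplConj{\nu}\CmplConj{\constRe[2,1]{}}-2\nu\constRe[2,2]{}\bigr)u=\nu\constRe{}u$. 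Since the $G_i$ and $F$ were constructed by carrying out precisely this expansion, the matching is forced.

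The only genuinely laborious step is this last term-by-term matching, and in particular the careful tracking of which $X^\tau$ produces which renormalization constant and how the surviving constants recombine; this is the paracontrolled counterpart of \pref{0360:renormalized equation}. I would organise it as a computation graded by the monomials in $Z,W,v,w$, checking the commutator terms $\comR(\,\cdot\,,\,\cdot\,,\,\cdot\,)$ in $G_4$, $G_5$ and the resonant terms in $G_6$, $G_7$, $G_8$ against the corresponding low-regularity pieces of $u^2\CmplConj{u}$ separately. No estimate is needed, so once the algebra is checked the proof is complete.
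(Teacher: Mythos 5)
Your proposal is correct and follows essentially the same route as the paper: the paper likewise reduces the theorem to the single algebraic identity for $F(v,w)+G(v,w)$ (proved as a preparatory lemma by summing $G_1+G_2$, $G_3+G_4+G_5$ and $G_6+G_7+G_8$ separately, using the \tblref{tbl_20160920085835} definitions and the collapse of $\comR$ and the paraproducts to ordinary products), identifies $\nu Z^2\CmplConj{Z}-2\nu\constRe[1]{}Z$ as the contribution of $-\nu\mathcal{L}^1W$, and recombines the surviving constants into $\nu\constRe{}u$ exactly as you describe. Your observation that $\com(v,w)$ reduces to $v+\nu\{\cdots\}$ at the fixed point is also the step the paper uses to handle $G_6$.
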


The next lemma plays a key role to prove \tref{thm_20160920094302}.
\begin{lemma}
	Let $(v,w)$ be the solution to \eqref{eq_20160920090410}.
	Set $u_2=v+w$. Then, we have
	\begin{multline}\label{eq_20160921005804}
		F(v,w)+G(v,w)\\
		\begin{aligned}
			&=
				-
				\nu
				\big\{
					(-\nu W+u_2)^2
					(\CmplConj{Z}-\CmplConj{\nu}\CmplConj{W}+\CmplConj{u_2})
					+
					2(-\nu W+u_2)(-\CmplConj{\nu}\CmplConj{W}+\CmplConj{u_2})Z\\
			&\phantom{=}\quad\qquad
					+2(-\nu W+u_2)X^\AB
					+(-\CmplConj{\nu}\CmplConj{W}+\CmplConj{u_2})X^\AA\\
			&\phantom{=}\quad\qquad
					+2(\CmplConj{\nu}\CmplConj{\constRe[2,1]{}}+2\nu\constRe[2,2]{})(Z-\nu W+u_2)
				\big\}\\
			&\phantom{=}\quad
				+
				(\nu+1)(Z-\nu W+u_2).
		\end{aligned}
	\end{multline}
\end{lemma}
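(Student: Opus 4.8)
The plan is to prove \eqref{eq_20160921005804} by a completely elementary — if lengthy — pointwise computation, exploiting the hypothesis of \tref{thm_20160920094302} that $\whiteNoise\in C_T\HolBesSp{\beta}$ with $\beta>-2$: then $Z=X^\A$, $W=X^\IAAB$, $v$, $w$ and every component $X^\tau$ of \tblref{tbl_20160920085835} are genuine continuous functions, so all the paradifferential machinery collapses to ordinary calculus. First, for functions one has $f\lpara g+f\reso g+f\rpara g=fg$ (the Bony decomposition of an honest product, by the definitions in \secref{sec_1501991661}), the resonant product $\reso$ is commutative, and complex conjugation commutes with $\lpara,\rpara,\reso$. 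Second, $\comR(f,g,h)=(f\rpara g)\reso h-f(g\reso h)$ by its very definition in \pref{prop_comm_20160919055939}. Third, the renormalised products $WZ$, $W\CmplConj Z$, $W^2\CmplConj Z$, $W\CmplConj W Z$, once one inserts the classical identities $X^\IAABoA=W\reso Z$, $X^\IAABoB=W\reso\CmplConj Z$, reduce to the honest products: the $(\rpara+\lpara+\reso)$-pieces reassemble, the $\comR$-terms unfold by their formula, and the two resonant contributions in each definition collapse to one by commutativity. Fourth, every entry of \tblref{tbl_20160920085835} becomes an explicit expression in $Z,\CmplConj Z,W,\CmplConj W$ and $\constRe[1]{},\constRe[2,1]{},\constRe[2,2]{}$; in particular $X^\AA=Z^2$, $X^\AB=Z\CmplConj Z-\constRe[1]{}$, $W=X^\IAAB=I(X^\IAAB_0,X^\AA X^\B-2\constRe[1]{}X^\A)$, $X^\IAB\reso X^\AB=X^\IABoAB+\constRe[2,2]{}$, $X^\IAA\reso X^\BB=X^\IAAoBB+2\constRe[2,1]{}$, $W\reso X^\AB=X^\IAABoAB+2\constRe[2,2]{}Z$, $W\reso X^\BB=X^\IAABoBB+2\constRe[2,1]{}\CmplConj Z$, and so on. Finally, since $(v,w)$ is the solution of \eqref{eq_20160920090410}, the auxiliary function $\hat v$ entering $\com(v,w)$ equals $v$, so $\com(v,w)=v+\nu\{2\Phi\rpara X^\IAB+\CmplConj\Phi\rpara X^\IAA\}$ with $\Phi=-\nu W+v+w$.

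With these reductions in hand I would expand $F(v,w)+\sum_{i=1}^{8}G_i(v,w)$ and reorganise it into four blocks. \emph{(i) Commutators:} using $\comR(f,g,h)=(f\rpara g)\reso h-f(g\reso h)$, the resonant products $(\Phi\rpara X^\IAB)\reso X^\AB$, $(\CmplConj\Phi\rpara X^\IAA)\reso X^\AB$ and their conjugates hidden in $G_6$ via $\com(v,w)$ split into a $\comR$-part plus ``$\Phi$ times a resonant product''; by trilinearity of $\comR$ in its first slot and $\Phi=-\nu W+v+w$, the $\comR$-parts are exactly — coefficients included — the $\comR$-terms carried by $G_4$ and $G_5$, and they cancel. \emph{(ii) Linear:} the $(\nu+1)(v+w)$ of $G_3$ and the $(\nu+1)(Z-\nu W)$ of $G_4$ add up to $(\nu+1)u$, $u=Z-\nu W+v+w$. \emph{(iii) Constants:} $\constRe[2,1]{}$ and $\constRe[2,2]{}$ enter $F+G$ only through $X^\IABoAB$, $X^\IAAoBB$, $X^\IAABoAB$, $X^\IAABoBB$ and through the resonant products $X^\IAB\reso X^\AB$, $\overline{X^\IAA}\reso X^\AA$ produced inside the unfolded $\com(v,w)$; collecting these one must check they assemble into $-2\nu(\CmplConj\nu\,\CmplConj{\constRe[2,1]{}}+2\nu\constRe[2,2]{})u$. \emph{(iv) Honest products:} everything else — the cubic $G_1=-\nu(v+w)^2\CmplConj{(v+w)}$, the remaining parts of $G_2$, $G_3$, the terms of $F$, $G_7$, $G_8$, the leftover $v\reso X^\AB$, $\CmplConj v\reso X^\AA$ from $\com(v,w)$, and the ``$\Phi$-times-resonant'' remnants from block (i) — recombines, applying $\lpara+\rpara+\reso=(\text{product})$ repeatedly, into
\begin{multline*}
-\nu\bigl\{(-\nu W+u_2)^2(\CmplConj Z-\CmplConj\nu\CmplConj W+\CmplConj{u_2})
+2(-\nu W+u_2)(-\CmplConj\nu\CmplConj W+\CmplConj{u_2})Z\\
+2(-\nu W+u_2)X^\AB
+(-\CmplConj\nu\CmplConj W+\CmplConj{u_2})X^\AA\bigr\},\qquad u_2=v+w.
\end{multline*}
Adding blocks (i)--(iv) gives precisely the right-hand side of \eqref{eq_20160921005804}.

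The main obstacle is not conceptual but combinatorial: roughly thirty summands spread over $F$ and $G_1,\dots,G_8$, each dragging its own powers of $\nu,\CmplConj\nu$, must be carried through the reassembly of block (iv) without a slip. Two points genuinely require care rather than mechanical expansion. The first is the treatment of $\com(v,w)$: one must use $\hat v=v$ and split each $(\Phi\rpara X^\IAB)\reso X^\AB$ so that simultaneously the $\comR$-cancellation against $G_4,G_5$ closes with the right signs and the $\Phi\cdot X^\IABoAB$- and $\Phi\cdot\constRe[2,2]{}$-remnants emerge with the coefficients needed in blocks (iv) and (iii) — in particular the $X^\IABoAB$- and $X^\IAABoAB$-type terms already present in $G_3,G_4$ must cancel those produced here, leaving only bare constants. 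The second is block (iii) itself, where the several channels feeding $\constRe[2,1]{}$ and $\constRe[2,2]{}$ must be tallied so as to reproduce exactly the combination dictated by $\constRe{}=2(\constRe[1]{}-\CmplConj\nu\,\CmplConj{\constRe[2,1]{}}-2\nu\constRe[2,2]{})$. As a consistency check one may observe that, after substituting $X^\AA=Z^2$ and $X^\AB=Z\CmplConj Z-\constRe[1]{}$, the identity reads $F(v,w)+G(v,w)=-\nu(u^2\CmplConj u-Z^2\CmplConj Z)+\nu\constRe{}u-2\nu\constRe[1]{}Z+(\nu+1)u$, which is precisely the driver of the renormalised CGL equation (in the $\mathcal{L}^1$-mild form) for $u=Z-\nu W+v+w$ — but since \eqref{eq_20160921005804} is a pointwise algebraic identity, this dynamical interpretation is used only as a cross-check, not in the proof.
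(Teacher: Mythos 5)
Your proposal is correct and follows essentially the same route as the paper's proof: both are direct algebraic verifications that use the fixed-point property $\FourierCoeff{v}=v$ to unfold $\com(v,w)$, split the resulting $((-\nu W+v+w)\rpara X^\IAB)\reso X^\AB$-type terms via the definition of $\comR$ so that the commutator terms of $G_4,G_5$ and the $X^\IABoAB$-type terms of $G_3,G_4$ cancel (leaving only the $\constRe[2,1]{}$, $\constRe[2,2]{}$ remnants), and then reassemble the surviving $\lpara,\reso,\rpara$ pieces into honest products. The only difference is bookkeeping — the paper organizes the computation as $G_1+G_2$, $G_3+G_4+G_5$, $G_6+G_7+G_8$ rather than by term type — and your concluding cross-check is exactly the identity derived in the proof of \tref{thm_20160920094302}.
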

\begin{proof}
	It follows from the definition that
	\begin{align}
		\label{eq_20160921001259}
		G_1(v,w)+G_2(v,w)
		=
			-\nu
			\big\{
				u_2^2(\CmplConj{Z}-\CmplConj{\nu}\CmplConj{W})
				+\CmplConj{u_2}(u_2^2+2u_2(Z-\nu W))
			\big\}.
	\end{align}
	We will show
	\begin{align}
		\label{eq_20160921001318}
		&
			G_3(v,w)+G_4(v,w)+G_5(v,w)\\\notag
		&\quad
		=
			-
			\nu
			\big\{
				u_2
				(
					2\nu\CmplConj{\nu}W\CmplConj{W}
					-2\nu W\CmplConj{Z}
					-2\CmplConj{\nu}\CmplConj{W}Z
				)
				+
				\CmplConj{u_2}
				(
					\nu^2W^2
					-2\nu WZ
				)\\\notag
		&\phantom{=}\quad\quad\qquad
				-\nu^2\CmplConj{\nu}W^2\CmplConj{W}
				+\nu^2W^2\CmplConj{Z}
				+2\nu\CmplConj{\nu}W\CmplConj{W}Z\\\notag
		&\phantom{=}\quad\quad\qquad
				-4\nu ((-\nu W+u_2)\rpara X^\IAB)\reso X^\AB\\\notag
		&\phantom{=}\quad\quad\qquad
				-2\CmplConj{\nu} ((-\CmplConj{\nu}\CmplConj{W}+\CmplConj{u_2})\rpara \CmplConj{X^\IAB})\reso X^\AA\\\notag
		&\phantom{=}\quad\quad\qquad
				-2\nu ((-\CmplConj{\nu}\CmplConj{W}+\CmplConj{u_2})\rpara X^\IAA)\reso X^\AB\\\notag
		&\phantom{=}\quad\quad\qquad
				-\CmplConj{\nu} ((-\nu W+u_2)\rpara \CmplConj{X^\IAA})\reso X^\AA\\\notag
		&\phantom{=}\quad\quad\qquad
				-2\nu W(\reso+\lpara)X^\AB
				-\CmplConj{\nu}\CmplConj{W}(\reso+\lpara)X^\AA\\\notag
		&\phantom{=}\quad\quad\qquad
				+2(\CmplConj{\nu}\CmplConj{\constRe[2,1]{}}+2\nu\constRe[2,2]{})(Z-\nu W+u_2)
			\big\}\\\notag
		&\phantom{=}\quad\quad
			+
			(\nu+1)(Z-\nu W+u_2),\\
		&
		\label{eq_20160921001334}
			G_6(v,w)+G_7(v,w)+G_8(v,w)\\\notag
		&\quad=
			-\nu
			\big\{
				2u_2(\reso+\lpara)X^\AB+\CmplConj{u_2}(\reso+\lpara)X^\AA\\\notag
		&\phantom{=}\quad\quad\qquad
				+4\nu((-\nu W+u_2)\rpara X^\IAB)\reso X^\AB\\\notag
		&\phantom{=}\quad\quad\qquad
				+2\CmplConj{\nu}((-\CmplConj{\nu}\CmplConj{W}+\CmplConj{u_2})\rpara\CmplConj{X^\IAB})\reso X^\AA\\\notag
		&\phantom{=}\quad\quad\qquad
				+2\nu((-\CmplConj{\nu}\CmplConj{W}+\CmplConj{u_2})\lpara X^\IAA)\reso X^\AB\\\notag
		&\phantom{=}\quad\quad\qquad
				+\CmplConj{\nu}((-\nu W+u_2)\rpara\CmplConj{X^\IAA})\reso X^\AA\notag
			\big\}.
	\end{align}
	Summing them up, we obtain
	\begin{multline*}
		G_1(v,w)+\cdots+G_8(v,w)\\
		\begin{aligned}
			&=
				-
				\nu
				\big\{
					\big\{
						u_2^2+2u_2(Z-\nu W)+\nu^2 W^2-2\nu W Z
					\big\}
					\CmplConj{u_2}\\
			&\phantom{=}\quad\qquad
					+
					\big\{
						u_2^2-2\nu Wu_2+2Zu_2+\nu^2 W^2-2\nu WZ
					\big\}
					(-\CmplConj{\nu}\CmplConj{W})\\
			&\phantom{=}\quad\qquad
					+
					\big\{
						u_2^2-2\nu W u_2+\nu^2 W^2
					\big\}
					\CmplConj{Z}\\
			&\phantom{=}\quad\qquad
					+2(-\nu W+u_2)(\reso+\lpara)X^\AB
					+(-\CmplConj{\nu}\CmplConj{W}+\CmplConj{u_2})(\reso+\lpara)X^\AA\\
			&\phantom{=}\quad\qquad
					+2(\CmplConj{\nu}\CmplConj{\constRe[2,1]{}}+2\nu\constRe[2,2]{})(Z-\nu W+u_2)
				\big\}\\
			&\phantom{=}\quad
				+
				(\nu+1)(Z-\nu W+u_2)
				+
				cv-w\\
			&=
				-
				\nu
				\big\{
					(-\nu W+u_2)^2
					(\CmplConj{Z}-\CmplConj{\nu}\CmplConj{W}+\CmplConj{u_2})
					+
					2(-\nu W+u_2)(-\CmplConj{\nu}\CmplConj{W}+\CmplConj{u_2})Z\\
			&\phantom{=}\quad\qquad
					+2(-\nu W+u_2)(\reso+\lpara)X^\AB
					+(-\CmplConj{\nu}\CmplConj{W}+\CmplConj{u_2})(\reso+\lpara)X^\AA\\
			&\phantom{=}\quad\qquad
					+2(\CmplConj{\nu}\CmplConj{\constRe[2,1]{}}+2\nu\constRe[2,2]{})(Z-\nu W+u_2)
				\big\}\\
			&\phantom{=}\quad
				+
				(\nu+1)(Z-\nu W+u_2),
		\end{aligned}
	\end{multline*}
	which implies the conclusion.

	For the rest of this proof, we prove  \eqref{eq_20160921001318} and \eqref{eq_20160921001334}.
	To show \eqref{eq_20160921001318}, we use the definition of $X^\IABoAB$ and \pref{prop_comm_20160919055939}.
	From them, we see
	\begin{multline*}
		W X^\IABoAB+4\nu \comR(W,X^\IAB,X^\AB)\\
		\begin{aligned}
			&=
				W\{(X^\IAB\reso X^\AB)-\constRe[2,2]{}\}
				+(W\rpara X^\IAB)\reso X^\AB
				-W(X^\IAB\reso X^\AB)\\
			&=
				(W \rpara X^\IAB)\reso X^\AB
				-\constRe[2,2]{} W
		\end{aligned}
	\end{multline*}
	A similar argument implies
	\begin{align*}
		\CmplConj{W}\CmplConj{X^\IABoBB}+\comR(\CmplConj{W},\CmplConj{X^\IAB},X^\AA)
		&=
			(\CmplConj{W}\rpara \CmplConj{X^\IAB})\reso X^\AA\\
		\CmplConj{W}X^\IAAoAB+\comR(\CmplConj{W},X^\IAA,X^\AB)
		&=
			(\CmplConj{W}\rpara X^\IAA)\reso X^\AB\\
		W\CmplConj{X^\IAAoBB}+\comR(W,\CmplConj{X^\IAA},X^\AA)
		&=
			(W\rpara \CmplConj{X^\IAA})\reso X^\AA-2\CmplConj{\constRe[2,1]{}}W.
	\end{align*}
	Applying these identities and the definitions of $X^\IAABoAB$ and $X^\IAABoBB$, we obtain
	\begin{align*}
		G_4(v,w)
		&=
			-
			\nu
			\big\{
				-\nu^2\CmplConj{\nu}W^2\CmplConj{W}
				+\nu^2W^2\CmplConj{Z}
				+2\nu\CmplConj{\nu}W\CmplConj{W}Z\\
		&\phantom{=}\quad\qquad
				+4\nu^2 \{(W\rpara X^\IAB)\reso X^\AB-\constRe[2,2]{}W\}
				+2\CmplConj{\nu}^2 (\CmplConj{W}\rpara \CmplConj{X^\IAB})\reso X^\AA\\
		&\phantom{=}\quad\qquad
				+2\nu\CmplConj{\nu} (\CmplConj{W}\rpara X^\IAA)\reso X^\AB
				+\nu\CmplConj{\nu} \{(W\rpara \CmplConj{X^\IAA})\reso X^\AA-2\CmplConj{\constRe[2,1]{}}W\}\\
		&\phantom{=}\quad\qquad
				-2\nu (W\reso X^\AB-2\constRe[2,2]{}X^\A)
				-2\nu W\lpara X^{\AB}\\
		&\phantom{=}\quad\qquad
				-\CmplConj{\nu}(\CmplConj{W\reso X^\BB}-2\CmplConj{\constRe[2,1]{}X^\B})
				-\CmplConj{\nu}\CmplConj{W}\lpara X^{\AA}
			\big\}\\
		&\phantom{=}\quad
			+(\nu+1)(Z-\nu W)\\
		&=
			-
			\nu
			\big\{
				-\nu^2\CmplConj{\nu}W^2\CmplConj{W}
				+\nu^2W^2\CmplConj{Z}
				+2\nu\CmplConj{\nu}W\CmplConj{W}Z\\
		&\phantom{=}\quad\qquad
				+4\nu(\nu W\rpara X^\IAB)\reso X^\AB
				+2\CmplConj{\nu} (\CmplConj{\nu}\CmplConj{W}\rpara \CmplConj{X^\IAB})\reso X^\AA\\
		&\phantom{=}\quad\qquad
				+2\nu (\CmplConj{\nu}\CmplConj{W}\rpara X^\IAA)\reso X^\AB
				+\CmplConj{\nu} (\nu W\rpara \CmplConj{X^\IAA})\reso X^\AA\\
		&\phantom{=}\quad\qquad
				-2\nu W(\reso+\lpara)X^\AB
				-\CmplConj{\nu}\CmplConj{W}(\reso+\lpara)X^\AA\\
		&\phantom{=}\quad\qquad
				+2(\CmplConj{\nu}\CmplConj{\constRe[2,1]{}}+2\nu \constRe[2,2]{})(Z-\nu W)
			\big\}\\
		&\phantom{=}\quad
			+(\nu+1)(Z-\nu W).
	\end{align*}
	We use the similar argument to obtain
	\begin{multline*}
		G_3(v,w)+G_5(v,w)\\
		\begin{aligned}
			&=
				-
				\nu
				\big\{
					u_2
					(
						2\nu\CmplConj{\nu}W\CmplConj{W}
						-2\nu W\CmplConj{Z}
						-2\CmplConj{\nu}\CmplConj{W}Z
					)
					+
					\CmplConj{u_2}
					(
						\nu^2W^2
						-2\nu WZ
					)\\
			&\phantom{=}\quad\qquad
					-4\nu (u_2\rpara X^\IAB)\reso X^\AB
					-2\CmplConj{\nu} (\CmplConj{u_2}\rpara \CmplConj{X^\IAB})\reso X^\AA\\
			&\phantom{=}\quad\qquad
					-2\nu (\CmplConj{u_2}\rpara X^\IAA)\reso X^\AB
					-\CmplConj{\nu} (u_2\rpara \CmplConj{X^\IAA})\reso X^\AA\\
			&\phantom{=}\quad\qquad
					+2(\CmplConj{\nu}\CmplConj{\constRe[2,1]{}}+2\nu\constRe[2,2]{})u_2
				\big\}\\
			&\phantom{=}\quad
				+
				(\nu+1)u_2.
		\end{aligned}
	\end{multline*}
	Combining them, we see \eqref{eq_20160921001318}.
	From the definition of $\com(v,w)$, we obtain \eqref{eq_20160921001334}.
	The proof is completed.
\end{proof}

\begin{proof}[Proof of \tref{thm_20160920094302}]
	Set $u_2=v+w$, $u_1=-\nu W+u_2$ and $u=Z+u_1$.
	Note that $u_2$ solves $\cL^1 u_2=F(v,w)+G(v,w)$.
	Substituting $X^\AB=Z\CmplConj{Z}-\constRe{}$ and $X^\AA=Z^2$ to \eqref{eq_20160921005804},
	we have
	\begin{align*}
		F(v,w)+G(v,w)
		&=
			-\nu (Z+u_1)^2(\CmplConj{Z}+\CmplConj{u_1})
			+\nu (Z+u_1)
			+2\nu\constRe{}(Z+u_1)\\
		&\phantom{=}\quad\qquad
			+\nu(Z^2\CmplConj{Z}-2\constRe[1]{}Z)
			+Z+u_1\\
		&=
			-\nu u^2\CmplConj{u}
			+\nu u
			+2\nu\constRe{}u
			+\nu(Z^2\CmplConj{Z}-2\constRe[1]{}Z)
			+u
	\end{align*}
	where $\constRe{}=\constRe[1]{}-\CmplConj{\nu}\CmplConj{\constRe[2,1]{}}+2\nu\constRe[2,2]{}$.
	Hence
	\begin{align*}
		\{\partial_t-(\ImUnit+\mu)\LaplaceOp\}u
		&=
			\cL^1 u-u\\
		&=
			\cL^1(Z-\nu W+u_2)-u\\
		&=
			\whiteNoise
			-\nu(Z^2\CmplConj{Z}-2\constRe[1]{}Z)
			+\{F(v,w)+G(v,w)\}
			-u\\
		&=
			-\nu u^2\CmplConj{u}
			+\nu u
			+2\nu\constRe{}u
			+\whiteNoise.
	\end{align*}
	The proof is completed.
\end{proof}

%!TEX root = ComplexGinzburgLandauEq.tex

\section{Proof of convergence of driving vectors}\label{sec_20161107073600}

This section is a probabilistic part of proof of \tref{thm_2017013154042}.
In this section, we construct a driving vector $X\in\drivers{T}{\kappa}$
associated to the white noise $\whiteNoise$ (\tref{thm_20161128051824}).
After that we derive the expression of renormalization constants
$\constRe[1]{\epsilon}$, $\constRe[2,1]{\epsilon}$ and $\constRe[2,2]{\epsilon}$
used in the construction of $X$ (\pref{prop_20161107071120})
and obtain the divergence rate of them (\pref{prop_20161107072011}).

First of all, we define Ornstein-Uhlenbeck like process $Z=Z(t,x)$, which is a seed of the driving vector.
The process $Z$ is defined as a stationary solution to the following equation:
\begin{align*}
	\partial_t Z
	=
		\{(\ImUnit+\mu)\LaplaceOp-1\} Z
		+
		\whiteNoise.
\end{align*}
The solution has a formal expression
\begin{align*}
	Z_t
	=
		I(\whiteNoise)_t
	=
		\int_{-\infty}^t
			P^1_{t-s}
			\whiteNoise_s\,
			ds
	=
		\sum_{k\in\Integers^3}
			\left(
				\int_{-\infty}^t
					P^1_{t-s}
					\FourierBase[k]
					\hat{\whiteNoise_s}(k)\,
					ds
			\right).
\end{align*}
Here, $I$ is defined by \eqref{eq_20161006075406}.
% More strictly, for every $t\in\RealNum$, we set
% \begin{align*}
% 	\langle
% 		Z_t,\phi
% 	\rangle
% 	=
% 		\langle
% 			\whiteNoise,
% 			\indicator{[0,\infty)}(t-\dummyTime)
% 			P^1_{t-\dummyTime}
% 			\phi
% 		\rangle.
% \end{align*}
% for every smooth function $\phi$ on $\RealNum\times\Torus^3$.
Since $Z$ is a distribution-valued process,
we cannot define processes such as $Z^2$ and $Z^2\CmplConj{Z}$ a priori.
To define such processes, we consider an approximation $\{Z^\epsilon\}_{0<\epsilon<1}$ of $Z$
and define $Z^2$ and $Z^2\CmplConj{Z}$ as renormalized limits of $(Z^\epsilon)^2$ and $Z^2\CmplConj{Z}$
in an appreciate topology, respectively.
To this end, we recall the smeared noise $\{\whiteNoise^\epsilon\}_{0<\epsilon<1}$ defined by \eqref{eq_20161201075726}
approximates the white noise $\whiteNoise$.
Using the approximation, we define
\begin{align}\label{eq_20161205034834}
	Z^\epsilon_t
	=
		\int_{-\infty}^t
			P^1_{t-s}
			\whiteNoise^\epsilon_s\,
			ds
	=
		\sum_{k\in\Integers^3}
			\chi^\epsilon(k)
			\left(
				\int_{-\infty}^t
					P^1_{t-s}
					\FourierBase[k]
					\hat{\whiteNoise_s}(k)\,
					ds
			\right).
\end{align}

We recall that the Fourier transform $\{\hat{\whiteNoise}(k)\}_{k\in\Integers^3}$ of $\whiteNoise$
has the same law of the white noise associated to $(E,\mathcal{B},d\mathfrak{m})$.
Here, $E=\RealNum\times\Integers^3$, $\mathcal{B}$ is the product $\sigma$-field of $\mathcal{B}(\RealNum)$ and $2^{\Integers^3}$
and $d\mathfrak{m}=dsdk$, where $ds$ and $dk$ are
the Lebesgue measure on $\RealNum$ and the counting measure $\Integers^3$, respectively.
Note that $d\mathfrak{m}$ is given by
\begin{align*}
	\mathfrak{m}(A)
	=
		\int_E
			\indicator{A}(s,k)\,
			dsdk
	=
		\sum_{k\in\Integers^3}
			\int_\RealNum
				\indicator{A}(s,k)\,
				ds.
\end{align*}
We denote by $\mathcal{B}^\ast$ the set of all elements $A\in \mathcal{B}$ such that $\mathfrak{m}(A)<\infty$.
Let
$
	M(A)
	=
		\sum_{k\in\Integers^3}
			\int_\RealNum
				\indicator{A}(s,k)
				\hat{\whiteNoise_s}(k)\,
				ds
$
for $A\in\mathcal{B}^\ast$.
Since
$\{M(A);A\in\mathcal{B}^\ast\}$ is a jointly isotropic complex normal such that
$\expect[M(A)\CmplConj{M(B)}]=\mathfrak{m}(A\cap B)$, we can define complex multiple It\^o-Wiener integrals $\WienerIntCmpl{p}{q}$
to calculate $(Z^\epsilon)^2$ and $(Z^\epsilon)^2\CmplConj{Z^\epsilon}$; see \secref{sec:itowienerintegral}.
By using them, we show their convergence after renormalization and construct the driving vector $X$.

Throughout this section, we use the notations in \secref{sec:itowienerintegral} and the following:
\begin{itemize}
	\item We use $m=(s,k)$, $n=(t,l)$, $\mu=(\sigma,k)$ and $\nu=(\tau,l)$ to denote a generic element in $E$.
	\item For $\nu_i=(\tau_i,l_i)$, we write $\nu_{-i}=(\tau_i,-l_i)$.
	\item For $p_1,\dots,p_n\in\Integers\setminus\{0\}$, we write $k_{p_1,\dots,p_n}=(k_{p_1},\dots,k_{p_n})$
		and $k_{[p_1,\dots,p_n]}=k_{p_1}+\dots+k_{p_n}$ for shorthand.
		We use the same abbreviation for $s$, $t$, $l$, $m$, $n$, $\sigma$, $\tau$, $\mu$ and $\nu$.
	\item We define
			$
				|k|_\ast
				=
					1+|k|
				=
					1+\sqrt{k_1^2+k_2^2+k_3^2}
			$
			for $k=(k_1,k_2,k_3)\in\Integers^3$
			and
			$
				|m|_\ast
				=
					|(s,k)|_\ast
				=
					1+|s|^{1/2}+|k|
			$.
			The same notations are used for $l$, $n$, $\mu$ and $\nu$.
\end{itemize}
Let $f:E^{p+q}\to\CmplNum$ satisfy
\begin{align*}
	\int_{\RealNum^{p+q}}
		|f((s,k)_{1,\dots,p},(t,l)_{1,\dots,q})|^2\,
		ds_1\cdots ds_p
		dt_1\cdots dt_q
	<
		\infty
\end{align*}
for every $k_1,\dots,k_p,l_1,\dots,l_q\in\Integers^3$.
For such $f$, we can define the Fourier transform $\FTTime f$ with respect to time parameters.
In particular, if $f$ is integrable and square-integrable with respect to the time parameters, then
$\FTTime f$ is given by
\begin{multline*}
	[\FTTime f]((\sigma,k)_{1,\dots,p},(\tau,l)_{1,\dots,q})
	=
		\int_{\RealNum^{p+q}}
			ds_1\cdots ds_p
			dt_1\cdots dt_q\\
			\times
			e^{-2\pi\ImUnit(\sigma_1s_1+\dots+\sigma_ps_p+\tau_1t_1+\dots+\tau_qt_q)}
			f((s,k)_{1,\dots,p},(t,l)_{1,\dots,q}).
\end{multline*}

\subsection{Convergence criteria}
In this subsection, we establish convergence criteria of It\^o-Wiener integrals.

\subsubsection{$\HolBesSp{\alpha}$-valued random variables}
We want to define a random field of the form
\begin{align*}
	X(x)
	=
		\WienerIntCmpl{p}{q}(f(x))
\end{align*}
for a kernel $f\in C(\Torus^3,L^\infty_{p,q})$ even if $f(x)\notin L^2_{p,q}$.
Here $L^\infty_{p,q}$ is the space of the essentially bounded measurable functions defined on $E^{p+q}$.
Assume now that
$
	\langle f,\phi\rangle
	=
		\int_{\Torus^3}
			f(x)
				\phi(x)\,
			dx
	\in
		L^2_{p,q}
$
for every $\phi\in\cD$
and define the family of random variables
\begin{align*}
	X(\phi)
	=
		\WienerIntCmpl{p}{q}(\langle f,\phi\rangle).
\end{align*}
If there exists a $\mathcal{D}'$-valued random variable $\tilde{X}$ such that
\begin{align*}
	\langle\tilde{X},\phi\rangle=X(\phi),
\end{align*}
then we write $\tilde{X}(x)=\WienerIntCmpl{p}{q}(f(x))$.

Now we define $X_j(x)=X((\FourierTrans^{-1}\DyaPartOfUnit[j])(x-\cdot))$.
If
$
	X
	=
		\sum_{j\ge-1}
			X_j
$
converges in $\mathcal{D}'$,
it satisfies $\langle X,\phi\rangle=X(\phi)$ for every $\phi\in\mathcal{D}$,
so we can write $X(x)=\WienerIntCmpl{p}{q}(f(x))$.

\begin{proposition}\label{prop_1481257897}
	Let $\alpha\in\RealNum$ and $p\in(1,\infty)$. If
	\begin{align*}
		C_{\alpha,p}
		=
			\sum_{j\ge-1}
				2^{(2\alpha p+1)j}
				\left(
					\sup_{x\in\Torus^3}
					\expect[|X_j(x)|^2]
				\right)^p
			<
				\infty,
	\end{align*}
	then $X=\sum_{j\ge-1}X_j$ converges in $L^{2p}(\Omega,\HolBesSp{\alpha})$ and we have
	\begin{align*}
		\expect[\|X\|_{\HolBesSp{\alpha}}^{2p}]
		\lesssim
			C_{\alpha,p}.
	\end{align*}
\end{proposition}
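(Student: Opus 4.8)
The statement to prove is \pref{prop_1481257897}, a Kolmogorov-type criterion guaranteeing that a Besov-valued random variable built from It\^o-Wiener integrals of a given chaos order lives in $\HolBesSp{\alpha}$ with all moments controlled. The plan is to combine the Littlewood-Paley characterization of the Besov norm with Gaussian hypercontractivity (equivalence of $L^2$ and $L^{2p}$ moments on a fixed Wiener chaos) and Minkowski's integral inequality. First I would recall that $X_j(x)=X((\FourierTrans^{-1}\DyaPartOfUnit[j])(x-\cdot))=\WienerIntCmpl{p}{q}(\langle f,(\FourierTrans^{-1}\DyaPartOfUnit[j])(x-\cdot)\rangle)$, so that each $X_j(x)$ lies in the fixed homogeneous chaos of bidegree $(p,q)$; in particular one may also identify $X_j=\LPBlock[j]X$ in the sense of tempered-distribution-valued random variables, which is what justifies reading $\|X\|_{\HolBesSp{\alpha}}=\sup_{j\ge-1}2^{j\alpha}\|\LPBlock[j]X\|_{L^\infty}$ in terms of the $X_j(x)$.

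The core estimate proceeds as follows. Fix $p\in(1,\infty)$ and observe that, by hypercontractivity on the $(p,q)$-th complex Wiener chaos, there is a constant depending only on $p,q$ with
\begin{align*}
	\expect[|X_j(x)|^{2p}]
	\lesssim
		\bigl(\expect[|X_j(x)|^2]\bigr)^p
\end{align*}
uniformly in $j$ and $x$. Next I would bound the Besov norm by an $\ell^p$ sum over scales: using $\|\LPBlock[j]X\|_{L^\infty}^{2p}\le$ (a Sobolev-type embedding of $L^\infty(\Torus^3)$ into a high-exponent $L^m$ block, or more directly) controlling the sup over $x$ of $|X_j(x)|$ by an integral, one writes
\begin{align*}
	\expect\Bigl[\sup_{j\ge-1}2^{2\alpha pj}\|\LPBlock[j]X\|_{L^\infty}^{2p}\Bigr]
	\le
		\sum_{j\ge-1}
			2^{2\alpha pj}
			\expect[\|\LPBlock[j]X\|_{L^\infty}^{2p}].
\end{align*}
To pass from the $L^\infty$ norm in $x$ to the pointwise second moment, I would exploit that $\LPBlock[j]X$ has Fourier support in a ball of radius $\sim 2^{j}$, so by Bernstein's inequality $\|\LPBlock[j]X\|_{L^\infty}\lesssim 2^{3j/(2p)}\|\LPBlock[j]X\|_{L^{2p}}$; then Fubini and the pointwise hypercontractive bound give $\expect[\|\LPBlock[j]X\|_{L^{2p}}^{2p}]=\int_{\Torus^3}\expect[|X_j(x)|^{2p}]\,dx\lesssim\sup_x(\expect[|X_j(x)|^2])^p$. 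Collecting the powers of $2^j$ yields exactly the weight $2^{(2\alpha p+1)j}$ appearing in $C_{\alpha,p}$, whence $\expect[\|X\|_{\HolBesSp{\alpha}}^{2p}]\lesssim C_{\alpha,p}$, and finiteness of $C_{\alpha,p}$ also furnishes convergence of the partial sums $\sum_{j\le N}X_j$ in $L^{2p}(\Omega,\HolBesSp{\alpha})$ by the same estimate applied to tails.

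The main obstacle, and the point requiring care rather than routine manipulation, is the interchange of the supremum over $j$ with the expectation: a priori $\expect[\sup_j\,\cdot\,]$ is not bounded by $\sup_j\expect[\,\cdot\,]$, and one genuinely needs the summability encoded in $C_{\alpha,p}$ together with the extra factor $2^{j}$ (the ``$+1$'' in the exponent) that comes from the Bernstein step, so that $\sup_j$ can be dominated by $\sum_j$. One must also be slightly careful that the objects $X_j$ are well-defined as honest random functions (not merely distributions) — this is where the Fourier-support/Bernstein argument is used a second time, since a band-limited $L^{2p}$ function is continuous — and that the series $\sum_j X_j$ converges in $\cD'$ almost surely, which is precisely the hypothesis under which $X(x)=\WienerIntCmpl{p}{q}(f(x))$ was defined just above the statement. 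Everything else is a bookkeeping exercise with the dyadic weights.
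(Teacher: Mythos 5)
Your overall strategy is the same one the paper's (very terse) proof points to: identify $X_j$ with $\LPBlock[j]X$, note that $\FourierTrans X_j$ is supported in a dyadic annulus so that the Besov norm is read off the blocks (this is \cite[Lemma~2.69]{BahouriCheminDanchin2011}), bound $\expect[\sup_j(\cdot)]$ by $\sum_j\expect[(\cdot)]$, and use hypercontractivity on the fixed chaos of bidegree $(p,q)$ to pass from $2p$-th moments to $p$-th powers of second moments. Those steps are all fine, and the tail estimate you mention does give convergence in $L^{2p}(\Omega,\HolBesSp{\alpha})$.

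There is, however, a concrete arithmetic gap at the point where you claim the powers of $2^j$ ``collect to exactly $2^{(2\alpha p+1)j}$.'' On $\Torus^3$ the Bernstein step you invoke reads $\|\LPBlock[j]X\|_{L^\infty}\lesssim 2^{3j/(2p)}\|\LPBlock[j]X\|_{L^{2p}}$, so after raising to the power $2p$, applying Fubini and hypercontractivity, your chain of inequalities yields
\begin{align*}
	\expect[\|X\|_{\HolBesSp{\alpha}}^{2p}]
	\lesssim
		\sum_{j\ge-1}
			2^{(2\alpha p+3)j}
			\Bigl(\sup_{x\in\Torus^3}\expect[|X_j(x)|^2]\Bigr)^p,
\end{align*}
with exponent $+3$ (the space dimension), not $+1$. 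Finiteness of $C_{\alpha,p}$ as defined in the statement does not imply finiteness of this sum, so your argument as written does not prove the proposition with the stated weight. To genuinely reach $+1$ you would need a sharper control of $\expect[\|X_j\|_{L^\infty}^{2p}]$ than Bernstein--plus--$L^{2p}$, e.g.\ a chaining/union bound over a $2^{-j}$-net combined with concentration on a fixed Wiener chaos, which costs only a power of $j$ instead of $2^{3j}$. (For what it is worth, the paper's own proof merely cites \cite[Lemma~5.3]{Hoshino2016arXiv}, a one-dimensional KPZ statement where the exponent $+1$ equals the dimension; and in the only application, the next proposition, $p$ is taken arbitrarily large, so the conclusion there is unaffected by whether the weight is $+1$ or $+3$. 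But as a proof of the proposition as stated, the step needs to be either corrected or replaced.)
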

\begin{proof}
		Since
		$
			\langle f,(\FourierTrans^{-1}\DyaPartOfUnit[j])(x-\cdot)\rangle
			=
				\LPBlock[j]f(x)
		$,
		we have
		$
			\FourierTrans X_j(k)
			=
				\WienerIntCmpl{p}{q}(\DyaPartOfUnit[j] \FourierTrans f(k))
		$, which implies that the support of $\FourierTrans X_j(k)$ contained in an annulus.
		Hence we can apply {\cite[Lemma~2.69]{BahouriCheminDanchin2011}} to $X$.
		By a similar argument as {\cite[Lemma~5.3]{Hoshino2016arXiv}}, we see the assertion.
		(There, the following well-known property of Gaussian measures are used:
		on each fixed inhomogeneous Wiener chaos, all the $L^p$-norms, $1<p<\infty$, are equivalent.)
\end{proof}

\subsubsection{Good kernels}

We consider a random field of the form
\begin{align*}
	X(t,x)=\WienerIntCmpl{p}{q}(f_{(t,x)})
\end{align*}
for a kernel $f(t,\cdot)\in C(\Torus^3,L^\infty_{p,q})$
which satisfies the conditions as above for each fixed $t$.
We are interested in the case that $f$ satisfies the following good conditions.

\begin{definition}
	We say that a family $\{f_{(t,x)}\}_{t\ge0,x\in\Torus^3}$ is good if it has the form
	\begin{align*}
		f_{(t,x)}(m_{1,\dots,p},n_{1,\dots,q})
		=
			\FourierBase[k_{[1\dots p]}-l_{[1\dots q]}](x)
			H_t(m_{1,\dots,p},n_{1,\dots,q})
	\end{align*}
	for some $H_t\in L^\infty_{p,q}$ which is in $L^2$
	with respect to $(s_{1,\dots,p},t_{1,\dots,q})$ for each fixed $(k_{1,\dots,p},l_{1,\dots,q})$
	and $Q_t=\FTTime H_t$ satisfies
	\begin{align*}
		Q_t(\mu_{1,\dots,p},\nu_{1,\dots,q})
		=
			e^{-2\pi\ImUnit(\sigma_{[1\dots p]}+\tau_{[1\dots q]})t}
			Q_0(\mu_{1,\dots,p},\nu_{1,\dots,q}).
	\end{align*}
\end{definition}

For a function $f:E^{p+q}\to\CmplNum$, we set
\begin{align*}
	\tilde{\DyaPartOfUnit[j]}f(m_{1,\dots,p},n_{1,\dots,q})
	=
		\DyaPartOfUnit[j](k_{[1\dots p]}-l_{[1\dots q]})
		f(m_{1,\dots,p},n_{1,\dots,q}).
\end{align*}
We define
\begin{align*}
	R(\sigma_{1,\dots,p},\tau_{1,\dots,q})
	=
		\sigma_{[1\dots p]}+\tau_{[1\dots q]}.
\end{align*}

In order to estimate the Besov norm of $X$, it is enough to estimate $Q_0$.
\begin{proposition}
	Let $\{f_{(t,x)}\}_{t\ge0,x\in\Torus^3}$ be a good kernel.
	Assume that there exist $\beta\in\RealNum$, $\theta_0\in(0,2]$ and $C>0$ such that
	\begin{align*}
		\||R|^{\frac{\theta}2}\tilde{\DyaPartOfUnit[j]}Q_0\|_{L^2_{p,q}}\le C2^{(\beta+\theta)j}
	\end{align*}
	for every $j\ge-1$ and $\theta\in[0,\theta_0)$. Then we have
	\begin{align}\label{eq_1503553014}
		\expect
			[
				\|X\|_{C_T^\kappa\HolBesSp{\alpha-2\kappa}}^{2p}
			]
		\lesssim
			C^{2p},
	\end{align}
	for every $p\in(1,\infty)$, $\alpha<-\beta$ and $\kappa\in[0,\frac{\theta_0}2)$.
	Here $C_T^0\HolBesSp{\alpha}=C_T\HolBesSp{\alpha}$.
\end{proposition}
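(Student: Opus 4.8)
The whole argument reduces, via \pref{prop_1481257897}, to second-moment estimates on the Littlewood--Paley blocks of $X_t$ and of its time increments $X_t-X_s$; the good-kernel structure makes these estimates immediate, and the only genuine work is to convert the resulting pointwise-in-time bounds into a H\"older-in-time, negative-regularity-in-space statement and to obtain it for \emph{every} exponent $p$.

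First I would record the two basic second-moment bounds. Since $f_{(t,x)}$ depends on $x$ only through the character $\FourierBase[k_{[1\dots p]}-l_{[1\dots q]}]$, applying $\LPBlock[j]$ in the $x$-variable simply multiplies the kernel by $\DyaPartOfUnit[j](k_{[1\dots p]}-l_{[1\dots q]})$, so $\LPBlock[j]X_t(x)=\WienerIntCmpl{p}{q}(\FourierBase[k_{[1\dots p]}-l_{[1\dots q]}](x)\,\tilde{\DyaPartOfUnit[j]}H_t)$. By the isometry for complex multiple It\^o--Wiener integrals, $|\FourierBase[k]|\equiv1$, Plancherel in the time variables, and $|Q_t|=|Q_0|$ pointwise (the factor $e^{-2\pi\ImUnit Rt}$ has modulus one),
\[
\sup_{x\in\Torus^3}\expect[|\LPBlock[j]X_t(x)|^2]\lesssim\|\tilde{\DyaPartOfUnit[j]}H_t\|_{L^2_{p,q}}^2=\|\tilde{\DyaPartOfUnit[j]}Q_0\|_{L^2_{p,q}}^2\le C^22^{2\beta j}.
\]
For increments, $Q_t-Q_s=(e^{-2\pi\ImUnit R(t-s)}-1)Q_0$ and $|e^{-2\pi\ImUnit R(t-s)}-1|\lesssim(|R|\,|t-s|)^{\theta/2}$ for every $\theta\in[0,2]$; hence, for $\theta\in[0,\theta_0)$,
\[
\sup_{x\in\Torus^3}\expect[|\LPBlock[j](X_t-X_s)(x)|^2]\lesssim|t-s|^{\theta}\,\||R|^{\theta/2}\tilde{\DyaPartOfUnit[j]}Q_0\|_{L^2_{p,q}}^2\le C^2|t-s|^{\theta}2^{2(\beta+\theta)j}.
\]
Both $\LPBlock[j]X_t$ and $\LPBlock[j](X_t-X_s)$ are spectrally supported in an annulus of size $2^j$, so \pref{prop_1481257897} (applied to $X_t$ and, for fixed $s,t$, to the field $X_t-X_s$) is available.

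Next I would turn these into norm estimates. Fix $\alpha<-\beta$, $\kappa\in[0,\tfrac{\theta_0}{2})$; choose $\theta$ with $2\kappa<\theta<\theta_0$ and $\theta<-\beta-\alpha+2\kappa$ (possible since $\alpha<-\beta$), and then an auxiliary moment $p_1\ge p$ large enough that $\alpha-2\kappa+\beta+\theta<-\tfrac1{2p_1}$ and $\kappa<\tfrac{\theta}{2}-\tfrac1{2p_1}$. Feeding the two bounds above into \pref{prop_1481257897} with spatial exponent $\alpha$, resp.\ $\alpha-2\kappa$, and moment $p_1$ (the first condition on $p_1$ makes the relevant dyadic series converge, and it also implies $\alpha+\beta<-\tfrac1{2p_1}$ since $\theta>2\kappa$) gives, uniformly in $s,t\in[0,T]$,
\[
\expect[\|X_t\|_{\HolBesSp{\alpha}}^{2p_1}]\lesssim C^{2p_1},\qquad\expect[\|X_t-X_s\|_{\HolBesSp{\alpha-2\kappa}}^{2p_1}]\lesssim C^{2p_1}|t-s|^{\theta p_1}.
\]
Since $\kappa<\tfrac{\theta}{2}-\tfrac1{2p_1}$ forces $\theta p_1>1$, the quantitative Kolmogorov--Chentsov continuity theorem applied to the $\HolBesSp{\alpha-2\kappa}$-valued process $t\mapsto X_t$ yields a continuous modification with $\expect[[X]_{C_T^\kappa\HolBesSp{\alpha-2\kappa}}^{2p_1}]\lesssim C^{2p_1}$; combining with the bound at $t=0$ and the embedding $\HolBesSp{\alpha}\hookrightarrow\HolBesSp{\alpha-2\kappa}$ gives $\expect[\|X\|_{C_T^\kappa\HolBesSp{\alpha-2\kappa}}^{2p_1}]\lesssim C^{2p_1}$. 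Finally, since $(\probSp,\prob)$ is a probability space and $p\le p_1$, Jensen's inequality gives $\expect[\|X\|_{C_T^\kappa\HolBesSp{\alpha-2\kappa}}^{2p}]\le\bigl(\expect[\|X\|_{C_T^\kappa\HolBesSp{\alpha-2\kappa}}^{2p_1}]\bigr)^{p/p_1}\lesssim C^{2p}$, which is \eqref{eq_1503553014}.

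The routine ingredients are the Plancherel identity, the elementary estimate for $|e^{\ImUnit x}-1|$, and the dyadic summations, all of which are packaged inside \pref{prop_1481257897}. The one point requiring care is the bookkeeping of the three exponents: one must pick $\theta\in(2\kappa,\theta_0)$ to realise the parabolic trade-off ``one unit of time $\leftrightarrow$ two spatial derivatives'', and then pass to a large auxiliary moment $p_1$ both to let the Besov series converge for $\alpha$ arbitrarily close to $-\beta$ and to let Kolmogorov's theorem output H\"older exponent exactly $\kappa$; the estimate for a general $p$ then comes for free by Jensen. I do not expect any serious obstacle beyond this.
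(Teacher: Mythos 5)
Your proposal is correct and follows essentially the same route as the paper: second-moment bounds on the Littlewood--Paley blocks of $X_t$ and of $X_t-X_s$ via the It\^o--Wiener isometry and $|e^{-2\pi\ImUnit R(t-s)}-1|\lesssim(|R||t-s|)^{\theta/2}$, then \pref{prop_1481257897} and the Kolmogorov continuity theorem with the same bookkeeping of $\theta\in(2\kappa,\theta_0)$ and a sufficiently large auxiliary moment. Your explicit Jensen step for general $p$ and your direct treatment of $\kappa=0$ (the paper instead reduces $\kappa=0$ to a positive $\kappa'$ with a slightly larger spatial exponent) are only cosmetic variations.
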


\begin{proof}
	Let $1<p<\infty$ satisfy $2(\alpha+\beta)p+1<0$.
	For every $t\in[0,\infty)$,	we have $X_t\in\HolBesSp{\alpha}$ from
	\begin{align*}
		\expect[\|X_t\|_{\HolBesSp{\alpha}}^{2p}]
		<
			\infty.
	\end{align*}
	We will show this inequality.
	Set
	$
		X_j(t,x)
		=
			\WienerIntCmpl{p}{q}
				(
					\langle
						f_{(t,\cdot)},
						(\FourierTrans^{-1}\DyaPartOfUnit[j])(x-\cdot)
					\rangle
				)
	$.
	Since
	$
		\langle
			f(t,\cdot),
			(\FourierTrans^{-1}\DyaPartOfUnit[j])(x-\cdot)
		\rangle
		=
			[\LPBlock[j]f_{(t,\cdot)}](x)
		=
			\tilde{\DyaPartOfUnit[j]}f_{(t,x)}
	$,
	we have
	\begin{multline*}
		\expect[|X_j(t,x)|^2]
		=
			\|\tilde{\DyaPartOfUnit[j]}f_{(t,x)}\|_{L^2_{p,q}}^2\\
		=
			\|\FTTime\tilde{\DyaPartOfUnit[j]} f_{(t,x)}\|_{L^2_{p,q}}^2
		=
			\|\tilde{\DyaPartOfUnit[j]} e^{-2\pi\ImUnit R t} Q_0\|_{L^2_{p,q}}^2
		=
			\|\tilde{\DyaPartOfUnit[j]} Q_0\|_{L^2_{p,q}}^2.
	\end{multline*}
	Using the assumption with $\theta=0$, we obtain
	$
		\expect[|X_j(t,x)|^2]
		\leq
			(
				\const
				2^{\beta j}
			)^2
	$.
	Hence
	\begin{align*}
		\const[\alpha,p]
		=
			\sum_{j=-1}^\infty
				2^{(2\alpha p+1)j}
				(
					\const
					2^{\beta j}
				)^{2p}
		\leq
			\const^{2p}
			\sum_{j=-1}^\infty
				2^{(2(\alpha+\beta)p+1)j}
		<
			\infty.
	\end{align*}
	From \pref{prop_1481257897}, we see the inequality.

	We show $X\in C^\kappa_T\HolBesSp{\alpha-2\kappa}$ and \eqref{eq_1503553014} for $\kappa\in(0,\theta_0/2)$.
	Set $\alpha'=\alpha-2\kappa$ and take $2\kappa<\theta<\theta_0$ such that $\alpha'+\beta+\theta<0$.
	For any $1<p<\infty$ such that $2(\alpha'+\beta+\theta)p+1<0$ and $(\theta-2\kappa)p>1$,
	we can show that
	\begin{gather*}
		\expect
			[
				\|X_t-X_s\|_{\HolBesSp{\alpha'}}^{2p}
			]
		\leq
			\const
			|t-s|^{p\theta},
	\end{gather*}
	where $\const$ is a positive constant independent of $s,t$.
	Note $(p\theta-1)/2p>\kappa$.
	These inequalities and the Kolmogorov continuity theorem \cite[Theorem~1.4.1]{Kunita1990}
	implies $X\in C^\kappa_T\HolBesSp{\alpha-2\kappa}$ and \eqref{eq_1503553014}.
	Next we show the assertions for $\kappa=0$.
	Let $\alpha<\alpha''<-\beta$.
	Then we see $X\in C^{\kappa'}_T\HolBesSp{\alpha''-2\kappa'}$
	for $\kappa'\in(0,\theta_0/2)$ by the above discussion.
	Choosing $\kappa'=(\alpha''-\alpha)/2$, we obtain $X\in C^{\kappa'}_T\HolBesSp{\alpha}$, which implies the conclusion.
\end{proof}

For a function $f:E^{p+q}\to\CmplNum$ and $\mu=(\sigma,k)$, we write
\begin{align*}
	\int_{\mu_{[1\dots p]}+\nu_{[(-1)\dots(-q)]}=\mu}
		f(\mu_{1,\dots,p},\nu_{1,\dots,q})
\end{align*}
for the integration over the ``hyperplane'' $\{\mu_{[1\dots p]}+\nu_{[(-1)\dots(-q)]}=\mu\}$.

\begin{proposition}\label{section6:from Q to X}
	Let $\{f_{(t,x)}\}_{t\ge0,x\in\Torus^3}$ be a good kernel.
	Assume that there exist $\gamma>1$, $\delta\ge0$ and $C>0$ such that
	\begin{align}\label{section6:we want gamma and delta}
		\int_{\mu_{[1\dots p]}+\nu_{[(-1)\dots(-q)]}=\mu}
			|Q_0(\mu_{1,\dots,p},\nu_{1,\dots,q})|^2
			\leq
			C
			|\mu|_*^{-2\gamma}
			|k|_*^{-2\delta}.
	\end{align}
	Then we have
	\begin{align*}
		\||R|^{\theta/2}\tilde{\DyaPartOfUnit[j]}Q_0\|_{L^2_{p,q}}
		\lesssim
			C2^{(\frac{5}{2}-\gamma-\delta+\theta)j}
	\end{align*}
	for every $\theta\in[0,\gamma-1)$. As a consequence, we have
	\begin{align*}
		\expect
			[
				\|X\|_{C_T^\kappa\HolBesSp{\alpha-2\kappa}}^{2p}
			]
		\lesssim C^{2p},
	\end{align*}
	for every $p\in(1,\infty)$, $\alpha<-\frac{5}{2}+\gamma+\delta$ and $\kappa\in[0,\frac{\gamma-1}{2}\wedge 1)$.
\end{proposition}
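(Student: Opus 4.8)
The plan is to reduce \pref{section6:from Q to X} to the preceding proposition (which converts an $L^2_{p,q}$-bound on $|R|^{\theta/2}\tilde{\DyaPartOfUnit[j]}Q_0$ into the desired moment bound on $\|X\|_{C_T^\kappa\HolBesSp{\alpha-2\kappa}}$). Thus the only thing to prove is the estimate
\[
	\||R|^{\theta/2}\tilde{\DyaPartOfUnit[j]}Q_0\|_{L^2_{p,q}}
	\lesssim
		C 2^{(\frac52-\gamma-\delta+\theta)j},
	\qquad \theta\in[0,\gamma-1),
\]
and then apply the earlier proposition with $\beta=\frac52-\gamma-\delta$ and $\theta_0=\gamma-1$, noting $-\beta=-\frac52+\gamma+\delta$ and $\frac{\theta_0}{2}=\frac{\gamma-1}{2}$, and absorbing the extra $\kappa<1$ constraint (coming from the Besov parabolic scaling $\alpha-2\kappa$) into $\kappa\in[0,\frac{\gamma-1}{2}\wedge 1)$.

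First I would unpack the $L^2_{p,q}$-norm. On the support of $\tilde{\DyaPartOfUnit[j]}$ one has $k_{[1\dots p]}-l_{[1\dots q]}=:k$ with $|k|\approx 2^j$, so $|k|_*\approx 2^j$. Writing the squared norm as a sum over $k\in\Integers^3$ (for the spatial frequencies of the output) of an integral over the hyperplane $\{\mu_{[1\dots p]}+\nu_{[(-1)\dots(-q)]}=\mu\}$, and using $|R(\sigma_{1,\dots,p},\tau_{1,\dots,q})| = |\sigma_{[1\dots p]}+\tau_{[1\dots q]}| = |\sigma|$ where $\mu=(\sigma,k)$, I would get
\[
	\||R|^{\theta/2}\tilde{\DyaPartOfUnit[j]}Q_0\|_{L^2_{p,q}}^2
	\approx
		\sum_{|k|\approx 2^j}
			\int_\RealNum d\sigma\,
				|\sigma|^{\theta}
				\int_{\mu_{[1\dots p]}+\nu_{[(-1)\dots(-q)]}=\mu}
					|Q_0|^2 .
\]
Now apply the hypothesis \eqref{section6:we want gamma and delta}: the inner integral is bounded by $C|\mu|_*^{-2\gamma}|k|_*^{-2\delta}\lesssim C 2^{-2\delta j}(1+|\sigma|^{1/2}+2^j)^{-2\gamma}$. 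Since $|\sigma|\le (1+|\sigma|^{1/2}+|k|)^2=|\mu|_*^2$, we bound $|\sigma|^\theta\lesssim |\mu|_*^{2\theta}$, giving an integrand $\lesssim C 2^{-2\delta j}(1+|\sigma|^{1/2}+2^j)^{-2\gamma+2\theta}$.

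The key one-dimensional computation is then
\[
	\int_\RealNum
		(1+|\sigma|^{1/2}+2^j)^{-2\gamma+2\theta}\,
		d\sigma
	\lesssim
		2^{j(2-4\gamma+4\theta+2\cdot 2\gamma - \cdots)}
\]
— more carefully, substituting $|\sigma|^{1/2}=2^j r$, $d\sigma = 2\cdot 2^{2j} r\,dr$, the integral equals $2^{2j}\int_0^\infty (1+2^j+2^j r)^{-2\gamma+2\theta}\, 2r\,dr \approx 2^{2j}\cdot 2^{j(1-2\gamma+2\theta)}\int_0^\infty (1+r)^{1-2\gamma+2\theta}\,dr$, where the last integral converges precisely because $1-2\gamma+2\theta<-1$, i.e. $\theta<\gamma-1$; this yields $\lesssim 2^{j(3-2\gamma+2\theta)}$. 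Combining, the $\sigma$-integral contributes $\lesssim C 2^{-2\delta j}2^{j(3-2\gamma+2\theta)}$, and summing over the $\approx 2^{3j}$ lattice points $k$ with $|k|\approx 2^j$ gives $\lesssim C 2^{3j}\cdot 2^{-2\delta j}\cdot 2^{j(3-2\gamma+2\theta)} = C 2^{j(6-2\gamma-2\delta+2\theta -\cdots)}$; I must double-check the bookkeeping so that this equals $C 2^{j(5-2\gamma-2\delta+2\theta)}$, matching the claimed $2^{(\frac52-\gamma-\delta+\theta)j}$ after taking square roots. (The arithmetic: $3 + 3 - 2\gamma - 2\delta + 2\theta$; a factor must be reconciled against the annulus volume $2^{3j}$ versus $2^{(5/2)\cdot 2}=2^{5j}$ expected — I would track the time/space parabolic scaling carefully here, as the parabolic dimension is $5$, not $6$, once the $\sigma$-integral is scaled correctly.)

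The main obstacle is exactly this scaling bookkeeping: making sure the hyperplane integral, the $d\sigma$ integral, and the $\Integers^3$-sum combine with the right exponents of $2^j$, and verifying the convergence condition $\theta<\gamma-1$ is the sharp one governing both the $d\sigma$ integral and the applicability of the previous proposition's hypothesis $\theta\in[0,\theta_0)$ with $\theta_0=\gamma-1$. Once the bound $\||R|^{\theta/2}\tilde{\DyaPartOfUnit[j]}Q_0\|_{L^2_{p,q}}\lesssim C 2^{(\frac52-\gamma-\delta+\theta)j}$ is established for all $\theta\in[0,\gamma-1)$, the conclusion follows immediately by invoking the preceding proposition with $\beta=\frac52-\gamma-\delta$, and the stated ranges $\alpha<-\frac52+\gamma+\delta$ and $\kappa\in[0,\frac{\gamma-1}{2}\wedge 1)$ are precisely what that proposition delivers.
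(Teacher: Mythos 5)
Your approach is the same as the paper's: expand the $L^2_{p,q}$-norm as a sum over $k$ with $|k|\approx 2^j$ of a $d\sigma$-integral of the hyperplane integral, insert hypothesis \eqref{section6:we want gamma and delta} together with $|\sigma|^\theta\le|\mu|_*^{2\theta}$, evaluate the one-dimensional integral under the condition $\gamma-\theta>1$, and feed the resulting bound into the preceding proposition with $\beta=\tfrac52-\gamma-\delta$ and $\theta_0=\gamma-1$. The only defect is the arithmetic you flagged, and it resolves in your favor: after the substitution $\sigma=2^{2j}r^2$ one has
\[
2^{2j}\int_0^\infty\bigl(1+2^j(1+r)\bigr)^{-2(\gamma-\theta)}\,2r\,dr
\lesssim 2^{2j}\cdot 2^{-2(\gamma-\theta)j}\int_0^\infty(1+r)^{1-2(\gamma-\theta)}\,dr,
\]
i.e.\ the prefactor is $2^{j(2-2\gamma+2\theta)}$, not $2^{j(3-2\gamma+2\theta)}$ --- your extra $2^{j}$ came from writing $2^{j(1-2\gamma+2\theta)}$ where only the decay factor $2^{-2(\gamma-\theta)j}$ belongs (the power of $(1+r)$ picks up the $+1$, not the power of $2^j$). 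With that corrected, $2^{3j}\cdot 2^{-2\delta j}\cdot 2^{j(2-2\gamma+2\theta)}=2^{(5-2\gamma-2\delta+2\theta)j}$ as claimed, and the convergence condition $1-2(\gamma-\theta)<-1$ is exactly $\theta<\gamma-1$, matching the paper, which phrases the same computation as $\int_\RealNum|\mu|_*^{-2(\gamma-\theta)}\,d\sigma\lesssim|k|_*^{2(1-\gamma+\theta)}$.
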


\begin{proof}
	Since
	\begin{multline*}
		\||R|^{\theta/2}\tilde{\DyaPartOfUnit[j]}Q_0\|_{L^2_{p,q}}^2\\
		\begin{aligned}
			&=
				\sum_{k\in\Integers^3}\DyaPartOfUnit[j](k)^2
					\int_\RealNum
						|\sigma|^\theta
						\left(
							\int_{\mu_{[1\dots p]}+\nu_{[(-1)\dots(-q)]}=\mu}
								|Q_0(\mu_{1,\dots,p},\nu_{1,\dots,q})|^2
						\right)\,
					d\sigma\\
			&\leq
				C
				\sum_{k\in\Integers^3}
					\DyaPartOfUnit[j](k)^2
					|k|_*^{-2\delta}
						\int_\RealNum
							|\mu|_*^{-2(\gamma-\theta)}\,
							d\sigma
		\end{aligned}
	\end{multline*}
	if $\gamma-\theta>1$, we have
	\begin{align*}
		\||R|^{\theta/2}\tilde{\DyaPartOfUnit[j]}Q_0\|_{L^2_{p,q}}^2
		&\lesssim
			C
			\sum_{k\in\Integers^3}
				\DyaPartOfUnit[j](k)^2
				|k|_*^{-2\delta}
				|k|_*^{2(1-\gamma+\theta)}\\
		&\lesssim
			C(2^j)^3
			(2^j)^{2(1-\gamma+\theta-\delta)}\\
		&=
			C2^{(5-2\gamma-2\delta+2\theta)j}.
	\end{align*}
	The proof is completed.
\end{proof}

\subsection{Definitions of driving vectors}

Since $Z$ is a distribution-valued process,
we cannot define a process such as $Z^2$, $Z\CmplConj{Z}$ and $Z^2\CmplConj{Z}$ a priori.
To define such processes, we consider an approximation $\{Z^\epsilon\}_{0<\epsilon<1}$ of $Z$
and define $Z^2$, $Z\CmplConj{Z}$ and $Z^2\CmplConj{Z}$
as renormalized limits of $(Z^\epsilon)^2$, $Z^\epsilon\CmplConj{Z^\epsilon}$ and $(Z^\epsilon)^2\CmplConj{Z^\epsilon}$.

\subsubsection{Ornstein-Uhlenbeck like process and its approximations}
We give an expression of $Z^\epsilon$ defined by \eqref{eq_20161205034834} in terms of It\^o-Wiener integral.
Since we have
$
	P^1_s\FourierBase[k]
	=
		h(s,k)
		\FourierBase[k]
$, where
$
	h(s,k)
	=
		e^{-\{4\pi^2(\ImUnit+\mu)|k|^2+1\}s}
$,
we see
\begin{align}\label{0550 Z is OU with xi^epsilon}
	Z^\epsilon_t
	=
		\sum_{k\in\Integers^3}
			\chi^\epsilon(k)
			\int_{-\infty}^t
				h(t-s,k)
				\FourierBase[k]
				\FourierCoeff{\whiteNoise_s}(k)\,
				ds.
\end{align}
Hence, we can write $Z^\epsilon_{(t,x)}=\WienerIntCmpl{1}{0}(f^\epsilon_{(t,x)})$ with
\begin{gather*}
	\begin{aligned}
		f^\epsilon_{(t,x)}(s,k)
		&=
			\FourierBase[k](x)
			H^\epsilon_t(s,k),
		&
		H^\epsilon_t(s,k)
		&=
			\chi^\epsilon(k)
			H_t(s,k),
	\end{aligned}\\
	H_t(s,k)
	=
		\indicator{[0,\infty)}(t-s)
		h(t-s,k).
\end{gather*}
Note that $Q_t=\FTTime H_t$ is given by
\begin{align*}
	Q_t(\sigma,k)
	=
		\frac{e^{-2\pi\ImUnit\sigma t}}{-2\pi\ImUnit\sigma+4\pi^2(\ImUnit+\mu)|k|^2+1}.
\end{align*}
In particular, we see $Q_t(\mu)=e^{-2\pi\ImUnit \sigma t}Q_0(\mu)$.
We simply write $Q_0=Q$.

\subsubsection{Definition of driving vectors}

For every $0<\epsilon<1$ and graphical symbols $\tau$,
we define distributions $X^{\epsilon,\tau}$ as in \tblref{tbl_20160723074305}.
\begin{table}[h]
  \begin{center}
	\caption{List of distributions}\label{tbl_20160723074305}
	\begin{tabular}{|c|c|c|c|} \hline
Driver & Distribution $X^{\epsilon,\tau}$ & Definition & Regularity $\alpha_\tau$ \\ \hline
Y & $X^{\epsilon,\A}$ & $Z^{\epsilon}$ & $-1/2$  \\
N & $X^{\epsilon,\B}$ & $\CmplConj{X^{\epsilon,\A}}$ & $-1/2$  \\ \hline
Y & $X^{\epsilon,\AA}$ & $(X^{\epsilon,\A})^2$ & $-1$  \\
Y & $X^{\epsilon,\AB}$ & $X^{\epsilon,\A}X^{\epsilon,\B}-\constRe[1]{\epsilon}$ & $-1$  \\
N & $X^{\epsilon,\BB}$ & $(X^{\epsilon,\B})^2$ & $-1$  \\
N & $X^{\epsilon,\AAB}$ & $X^{\epsilon,\AA}X^{\epsilon,\B}-2\constRe[1]{\epsilon}X^{\epsilon,\A}$ & $-3/2$  \\
Y & $X^{\epsilon,\IAA}$ & $I(X^{\epsilon,\AA})$ & $+1$  \\
Y & $X^{\epsilon,\IAB}$ & $I(X^{\epsilon,\AB})$ & $+1$  \\
Y & $X^{\epsilon,\IAAB}$ & $I(X^{\epsilon,\AAB})$ & $+1/2$  \\ \hline
Y & $X^{\epsilon,\IAABoA}$ & $X^{\epsilon,\IAAB}\reso X^{\epsilon,\A}$ & $0$  \\
Y & $X^{\epsilon,\IAABoB}$ & $X^{\epsilon,\IAAB}\reso X^{\epsilon,\B}$ & $0$  \\ \hline
Y & $X^{\epsilon,\IAAoAB}$ & $X^{\epsilon,\IAA}\reso X^{\epsilon,\AB}$ & $0$  \\
Y & $X^{\epsilon,\IAAoBB}$ & $X^{\epsilon,\IAA}\reso X^{\epsilon,\BB}-2\constRe[2,1]{\epsilon}$ & $0$  \\
Y & $X^{\epsilon,\IABoAB}$ & $X^{\epsilon,\IAB}\reso X^{\epsilon,\AB}-\constRe[2,2]{\epsilon}$ & $0$  \\
Y & $X^{\epsilon,\IABoBB}$ & $X^{\epsilon,\IAB}\reso X^{\epsilon,\BB}$ & $0$  \\ \hline
Y & $X^{\epsilon,\IAABoAB}$ & $X^{\epsilon,\IAAB}\reso X^{\epsilon,\AB}-2\constRe[2,2]{\epsilon}X^{\epsilon,\A}$ & $-1/2$  \\
Y & $X^{\epsilon,\IAABoBB}$ & $X^{\epsilon,\IAAB}\reso X^{\epsilon,\BB}-2\constRe[2,1]{\epsilon}X^{\epsilon,\B}$ & $-1/2$  \\ \hline
	\end{tabular}
  \end{center}
\end{table}
The operator $I$ is defined by \eqref{eq_20161006075406}
and the constants $\constRe[1]{\epsilon}$, $\constRe[2,1]{\epsilon}$ and $\constRe[2,2]{\epsilon}$
in \tblref{tbl_20160723074305} are defined by
\begin{align}\label{eq_20161015073329}
	\constRe[1]{\epsilon}
	&=
		\expect
			[
				X^{\epsilon,\A}_{(t,x)}
				X^{\epsilon,\B}_{(t,x)}
			],
	&
	\constRe[2,1]{\epsilon}
	&=
		\frac{1}{2}
		\expect
			[
				X^{\epsilon,\IAA}_{(t,x)}
				\reso
				X^{\epsilon,\BB}_{(t,x)}
			],
	&
	\constRe[2,2]{\epsilon}
	&=
		\expect
			[
				X^{\epsilon,\IAB}_{(t,x)}
				\reso
				X^{\epsilon,\AB}_{(t,x)}
			].
\end{align}
The other symbols and regularities have the same meanings as in \tblref{tbl_20160920085835}.
We set
\begin{multline*}
	X^\epsilon
	=
		(
			X^{\epsilon,\A},
			X^{\epsilon,\AA},
			X^{\epsilon,\AB},
			X^{\epsilon,\IAA},
			X^{\epsilon,\IAB},
			X^{\epsilon,\IAAB},\\
			X^{\epsilon,\IAABoA},
			X^{\epsilon,\IAABoB},
			X^{\epsilon,\IAAoAB},
			X^{\epsilon,\IAAoBB},
			X^{\epsilon,\IABoAB},
			X^{\epsilon,\IABoBB},
			X^{\epsilon,\IAABoAB},
			X^{\epsilon,\IAABoBB}
	).
\end{multline*}
The constants $\constRe[1]{\epsilon}$, $\constRe[2,1]{\epsilon}$ and $\constRe[2,2]{\epsilon}$
look dependent on $(t,x)$ and the dyadic partition $\{\DyaPartOfUnit[m]\}_{m=-1}^\infty$ of unity.
However, we will show that they are not in \pref{prop_20161107071120}.

\subsubsection{It\^o-Wiener integral expressions of driving vectors}
We give expressions of $X^{\epsilon,\tau}$ by It\^o-Wiener integrals.

We start to discuss with $\tau=\A$, $\B$, $\AA$, $\AB$, $\BB$, $\AAB$, $\IAA$, $\IAB$, $\IAAB$.
We denote by $p(\tau)$ and $q(\tau)$ the number of circles and squares in $\tau$, respectively.
We write
\begin{align*}
	\chi^\epsilon(k_{1,\dots,p},l_{1,\dots,q})
	=
		\prod_{i=1}^p\chi^\epsilon(k_i)
		\prod_{j=1}^q\chi^\epsilon(l_j).
\end{align*}

\begin{proposition}\label{prop_20161205074106}
	Let $\tau=\A$, $\B$, $\AA$, $\AB$, $\BB$, $\AAB$, $\IAA$, $\IAB$, $\IAAB$, $p=p(\tau)$ and $q=q(\tau)$.
	Then
	$
		X_{(t,x)}^{\epsilon,\tau}
		=
			\WienerIntCmpl{p}{q}(f_{(t,x)}^{\epsilon,\tau})
		$,
	where
	$
		f_{(t,x)}^{\epsilon,\tau}
		=
			f_{(t,x)}^{\epsilon,\tau}(m_{1,\dots,p},n_{1,\dots,q})
	$
	is a good kernel with functions $H_t^{\epsilon,\tau}$ and $Q_0^{\epsilon,\tau}$ defined as follows.
\begin{enumerate}[(1)]
	\item	We have
			$
				H_t^{\epsilon,\tau}(m_{1,\dots,p},n_{1,\dots,q})
				=
					\chi^\epsilon(k_{1,\dots,p},l_{1,\dots,q})
					H_t^\tau(m_{1,\dots,p},n_{1,\dots,q})
			$,
			where $\{H_t^\tau\}_{t\ge0}\in L_{p,q}^2$ is given as follows.
	\begin{itemize}
		\item	$H_t^\A(m_1)=H_t(m_1)$ and $H_t^\B(n_1)=\overline{H_t(n_1)}$.
		\item	For $\tau=\AA,\AB,\BB,\AAB$,
				\begin{align*}
					H_t^\tau(m_{1,\dots,p},n_{1,\dots,q})
					=
						\prod_{i=1}^p
							H_t^\A(m_i)
						\prod_{j=1}^q
							H_t^\B(n_j).
				\end{align*}
		\item	Let $\tau_0=\AA,\AB,\IAAB$ for $\tau=\IAA,\IAB,\IAAB$, respectively.
				\begin{align*}
					H_t^\tau(m_{1,\dots,p},n_{1,\dots,q})
					=
						\int_\RealNum
							H_t(u,k_{[1\dots p]}-l_{[1\dots q]})
							H_u^{\tau_0}(m_{1,\dots,p},n_{1,\dots,q})\,
							du.
				\end{align*}
	\end{itemize}
		\item	We have
				$
					Q_0^{\epsilon,\tau}(\mu_{1,\dots,p},\nu_{1,\dots,q})
					=
						\chi^\epsilon(k_{1,\dots,p},l_{1,\dots,q})
						Q_0^\tau(\mu_{1,\dots,p},\nu_{1,\dots,q})
				$,
				where $Q_0^\tau\in L_{p,q}^2$ is given as follows.
		\begin{itemize}
		\item $Q_0^\A(\mu_1)=Q(\mu_1)$ and $Q_0^\B(\nu_1)=\overline{Q_0(-\nu_{-1})}$.
		\item For $\tau=\AA,\AB,\BB,\AAB$,
				\begin{align*}
					Q_0^\tau(\mu_{1,\dots,p},\nu_{1,\dots,q})
					=
						\prod_{i=1}^p
							Q_0^\A(\mu_i)
						\prod_{j=1}^q
							Q_0^\B(\nu_j).
				\end{align*}
		\item Let $\tau_0=\AA,\AB,\IAAB$ for $\tau=\IAA,\IAB,\IAAB$, respectively.
				\begin{align*}
					Q_0^\tau(\mu_{1,\dots,p},\nu_{1,\dots,q})
					=
						Q(\mu_{[1\dots p]}+\nu_{[(-1)\dots(-q)]})
						Q_0^{\tau_0}(\mu_{1,\dots,p},\nu_{1,\dots,q}).
				\end{align*}
		\end{itemize}
	\end{enumerate}
	In the above, we regard $H_t^\tau$ as a function with respect to
	$n_{1,\dots,q}$ and $m_{1,\dots,p}$ for $p=0$ and $q=0$, respectively.
	In particular, $H_t^\tau$ is a constant for $p=q=0$.
	We use the same convention for $Q_0^\tau$.
\end{proposition}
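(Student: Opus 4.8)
The plan is to argue by induction on the construction of the symbol $\tau$, processing $\A,\B,\AA,\AB,\BB,\AAB,\IAA,\IAB,\IAAB$ in the order they are listed so that whenever a symbol is built from previously treated ones the required kernel identities are already in hand. The base case $\tau=\A$ is exactly the computation performed just before the statement: by \eqref{0550 Z is OU with xi^epsilon} we have $X^{\epsilon,\A}_{(t,x)}=Z^\epsilon_{(t,x)}=\WienerIntCmpl{1}{0}(f^{\epsilon,\A}_{(t,x)})$ with $f^{\epsilon,\A}_{(t,x)}(m_1)=\FourierBase[k_1](x)\,\chi^\epsilon(k_1)H_t(m_1)$, which is a good kernel with $H^{\epsilon,\A}_t=\chi^\epsilon H_t$ and $Q^{\epsilon,\A}_0=\chi^\epsilon Q$, i.e.\ precisely the asserted formulas. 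For $\tau=\B$ I would invoke the behaviour of Wiener integrals under complex conjugation recalled in \secref{sec:itowienerintegral}: conjugating $\WienerIntCmpl{1}{0}(f)$ produces $\WienerIntCmpl{0}{1}$ of the conjugate kernel with its single slot reinterpreted as antiholomorphic. Since $\chi^\epsilon$ is real-valued and $\CmplConj{\FourierBase[l_1](x)}=\FourierBase[-l_1](x)$, this gives $H^\B_t(n_1)=\CmplConj{H_t(n_1)}$ and, after taking $\FTTime$ in the time variable, $Q^\B_0(\nu_1)=\CmplConj{Q_0(-\nu_{-1})}$; the good-kernel spatial phase is $\FourierBase[-l_1]=\FourierBase[k_{[1\dots p]}-l_{[1\dots q]}]$ with $p=0,q=1$, so the form is preserved.

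For $\tau\in\{\AA,\AB,\BB,\AAB\}$ the tool is the product formula for complex multiple It\^o-Wiener integrals. Writing each factor $X^{\epsilon,\A}$ as a $(1,0)$-integral and each $X^{\epsilon,\B}$ as a $(0,1)$-integral of the corresponding good kernels, the product formula yields the top-order integral $\WienerIntCmpl{p}{q}$ of the tensor product of the kernels, plus strictly lower-order terms, each arising by contracting one holomorphic slot against one antiholomorphic slot; a single such contraction over the variable $m$ produces the deterministic factor $\int\CmplConj{H^\A_t(m)}\,H^\A_t(m)$, which by the isometry property and the definition \eqref{eq_20161015073329} equals $\constRe[1]{\epsilon}$. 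Hence for $\tau=\AA$ no contraction is possible (no antiholomorphic slot) and $X^{\epsilon,\AA}=\WienerIntCmpl{2}{0}(f^{\epsilon,\A}\otimes f^{\epsilon,\A})$ directly; for $\tau=\AB$ the single available contraction contributes exactly $\constRe[1]{\epsilon}$, so $X^{\epsilon,\AB}=X^{\epsilon,\A}X^{\epsilon,\B}-\constRe[1]{\epsilon}=\WienerIntCmpl{1}{1}(f^{\epsilon,\A}\otimes f^{\epsilon,\B})$; for $\tau=\AAB$ the two ways of pairing one of the two holomorphic slots with the single antiholomorphic slot contribute $2\constRe[1]{\epsilon}X^{\epsilon,\A}$, matching precisely the counterterm in the definition of $X^{\epsilon,\AAB}$. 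No iterated contraction occurs because at each product at most one of the two sides offers a slot of the opposite type. In all cases the spatial frequencies add (with a minus sign on the antiholomorphic slots), so the tensor product carries the phase $\FourierBase[k_{[1\dots p]}-l_{[1\dots q]}]$, the $\chi^\epsilon$-prefactor multiplies, $H^\tau_t=\prod H^\A_t\prod H^\B_t$ and $Q^\tau_0=\prod Q^\A_0\prod Q^\B_0$, and the good-kernel time-homogeneity $Q^\tau_t=e^{-2\pi\ImUnit Rt}Q^\tau_0$ is inherited since each factor carries its own $e^{-2\pi\ImUnit\sigma_i t}$ or $e^{-2\pi\ImUnit\tau_j t}$.

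For $\tau\in\{\IAA,\IAB,\IAAB\}$ one has $X^{\epsilon,\tau}=I(X^{\epsilon,\tau_0})$ with $\tau_0=\AA,\AB,\AAB$, where $I(v)_t=\int_{-\infty}^t P^1_{t-s}v_s\,ds$ is a deterministic linear operation acting diagonally on each spatial Fourier mode by multiplication by $h(t-s,k)$. Since $X^{\epsilon,\tau_0}$ is a single chaos whose only spatial mode is $\FourierBase[k_{[1\dots p]}-l_{[1\dots q]}]$, applying $P^1_{t-s}$ multiplies the kernel by $h(t-s,k_{[1\dots p]}-l_{[1\dots q]})$, and Fubini for Wiener integrals lets one interchange $I$ with $\WienerIntCmpl{p}{q}$; this produces $H^{\epsilon,\tau}_t(m,n)=\int_\RealNum H_t(u,k_{[1\dots p]}-l_{[1\dots q]})\,H^{\epsilon,\tau_0}_u(m,n)\,du$, which is the claimed recursion (the indicator in $H_t$ reinstates the upper limit $u\le t$). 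Finally, taking $\FTTime$ in $(s_{1,\dots,p},t_{1,\dots,q})$ and using the good-kernel property of $\tau_0$ to write $\FTTime H^{\tau_0}_u=e^{-2\pi\ImUnit Ru}Q^{\tau_0}_0$ with $R=\sigma_{[1\dots p]}+\tau_{[1\dots q]}$, the leftover integral $\int_\RealNum H_t(u,k_{[1\dots p]}-l_{[1\dots q]})e^{-2\pi\ImUnit Ru}\,du$ evaluates, after substituting $u\mapsto t-u$ and inserting the explicit $h$, to $e^{-2\pi\ImUnit Rt}\,Q(\mu_{[1\dots p]}+\nu_{[(-1)\dots(-q)]})$; at $t=0$ this gives $Q^{\epsilon,\tau}_0=\chi^\epsilon\,Q(\mu_{[1\dots p]}+\nu_{[(-1)\dots(-q)]})\,Q^{\tau_0}_0$ and for general $t$ reproduces the good-kernel time dependence, the $\chi^\epsilon$-prefactor being untouched since $I$ does not see it. The main obstacle I anticipate is not any single identity but the bookkeeping of making the argument rigorous: justifying the $I$–Wiener-integral interchange and, more basically, checking that the kernels $H^\tau_t$ and $Q^\tau_0$ produced by the recursion genuinely lie in $L^2_{p,q}$ for each fixed tuple of spatial frequencies, so that the integrals and time-Fourier transforms used above make sense; this rests on the decay of $h$ and of $Q$ and is exactly the integrability that is afterwards quantified in the convergence estimates via \pref{section6:from Q to X}.
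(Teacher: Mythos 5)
Your proposal is correct and follows essentially the same route as the paper, whose entire proof is the single sentence that the assertion follows from the product formula (\tref{2:product}); your induction on the symbols, the identification of the contraction terms with $\constRe[1]{\epsilon}$, and the computation of $\FTTime$ after commuting $I$ with the Wiener integral are exactly the details that one-line proof leaves implicit. The kernel identities you derive (including $Q_0^\B(\nu_1)=\overline{Q_0(-\nu_{-1})}$ and $Q_0^\tau=Q(\mu_{[1\dots p]}+\nu_{[(-1)\dots(-q)]})Q_0^{\tau_0}$) all check out against the explicit formula for $Q_t$.
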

\begin{proof}
	The assertion follows from \pref{2:product}.
\end{proof}

From this proposition, we can guess the limit $X^\tau$ of $\{X^{\epsilon,\tau}\}_{0<\epsilon<1}$ as follows:
\begin{definition}
	Let $\tau\in\{\A,\B,\AA,\AB,\BB,\AAB,\IAA,\IAB,\IAAB\}$, $p=p(\tau)$ and $q=q(\tau)$. We define
	\begin{align*}
		f_{(t,x)}^\tau(m_{1,\dots,p},n_{1,\dots,q})
		=
			\FourierBase[k_{[1\dots p]}-l_{[1\dots q]}]
			H_t^\tau(m_{1,\dots,p},n_{1,\dots,q})
	\end{align*}
	and $X^\tau_{(t,x)}=\WienerIntCmpl{p}{q}(f_{(t,x)}^\tau)$.
\end{definition}

Next, we consider $X^{\epsilon,\tau}$ for
$\tau=\IAABoA$, $\IAABoB$, $\IAAoAB$, $\IAAoBB$, $\IABoAB$, $\IABoBB$, $\IAABoAB$, $\IAABoBB$.
For these $\tau$, we define $(\tau_1,\tau_2)=(\IAAB,\A)$, $(\IAAB,\B)$, $(\IAA,\AB)$,
$(\IAA,\BB)$, $(\IAB,\AB)$, $(\IAB,\BB)$, $(\IAAB,\AB)$, $(\IAAB,\BB)$, respectively.
We simply write $p_i=p(\tau_i)$ and $q_i=q(\tau_i)$ for $i=1,2$. We set $p=p_1+p_2$ and $q=q_1+q_2$.

We define the function $\psi_\circ:\Integers^3\times\Integers^3\to\RealNum$ by
\begin{align}\label{eq_20161206032529}
	\psi_\circ(k,l)
	=
		\sum_{|i-j|\le1}
			\DyaPartOfUnit[i](k)\DyaPartOfUnit[j](l).
\end{align}

\begin{proposition}\label{section6:expansion of tau1 rs tau2}
	For above $(\tau_1,\tau_2)$, it holds that
	\begin{align*}
		X_t^{\epsilon,\tau_1}\reso X_t^{\epsilon,\tau_2}(x)
		=
			\sum_g
				\WienerIntCmpl{p-\#g}{q-\#g}(f_{(t,x)}^{\epsilon,(\tau_1,\tau_2,g)}),
	\end{align*}
	where $g$ runs over all of the graphs consisting of disjoint edges
	\begin{align*}
		(i,j)\in\{1,\dots,p_1\}\times\{q_1+1,\dots,q\}\cup\{p_1+1,\dots,p\}\times\{1,\dots,q_1\},
	\end{align*}
	and $f_{(t,x)}^{\epsilon,(\tau_1,\tau_2,g)}$ is a good kernel with functions $H_t^{\epsilon,(\tau_1,\tau_2,g)}$ and $Q_0^{\epsilon,(\tau_1,\tau_2,g)}$ defined as follows.
	\begin{enumerate}[(1)]
		\item	$H_t^{\epsilon,(\tau_1,\tau_2,\emptyset)}$ is given by
				\begin{multline*}
					H_t^{\epsilon,(\tau_1,\tau_2,\emptyset)}(m_{1,\dots,p},n_{1,\dots,q})
					=
						\psi_\circ(k_{[1\dots p_1]}-l_{[1\dots q_1]},k_{[(p_1+1)\dots p]}-l_{[(q_1+1)\dots q]})\\
						\times
						H_t^{\epsilon,\tau_1}(m_{1,\dots,p_1},n_{1,\dots,q_1})
						H_t^{\epsilon,\tau_2}(m_{(p_1+1),\dots,p},n_{(q_1+1),\dots,q}).
				\end{multline*}
				For general $g$, $H_t^{\epsilon,(\tau_1,\tau_2,g)}$ is given by
				\begin{align*}
					H_t^{\epsilon,(\tau_1,\tau_2,g)}(m_{1,\dots,p},n_{1,\dots,q}\setminus g)
					=
						\int_{E^{2\#g}}
							H_t^{\epsilon,(\tau_1,\tau_2,\emptyset)}(m_{1,\dots,p},n_{1,\dots,q})\,
							d(m,n)_g,
				\end{align*}
				where $(m_{1,\dots,p},n_{1,\dots,q}\setminus g)$ means that variables $(m_i,n_j)$
				are removed for all $(i,j)\in g$ and $d(m,n)_g=\prod_{(i,j)\in g}\delta(m_i-n_j)dm_idn_j$.
		\item	$Q_0^{\epsilon,(\tau_1,\tau_2,\emptyset)}$ is given by
				\begin{multline*}
					Q_0^{\epsilon,(\tau_1,\tau_2,\emptyset)}(\mu_{1,\dots,p},\nu_{1,\dots,q})
					=
						\psi_\circ(k_{[1\dots p_1]}-l_{[1\dots q_1]},k_{[(p_1+1)\dots p]}-l_{[(q_1+1)\dots q]})\\
						\times
						Q_0^{\epsilon,\tau_1}(\mu_{1,\dots,p_1},\nu_{1,\dots,q_1})
						Q_0^{\epsilon,\tau_1}(\mu_{(p_1+1),\dots,p},\nu_{(q_1+1),\dots,q}).
				\end{multline*}
				For general $g$, $Q_0^{\epsilon,(\tau_1,\tau_2,g)}$ is given by
				\begin{align*}
					Q_0^{\epsilon,(\tau_1,\tau_2,g)}(\mu_{1,\dots,p},\nu_{1,\dots,q}\setminus g)
					=
						\int_{E^{2\#g}}
							Q_0^{\epsilon,(\tau_1,\tau_2,\emptyset)}(\mu_{1,\dots,p},\nu_{1,\dots,q})\,
							d(\mu,\nu)_g,
				\end{align*}
				where $(\mu_{1,\dots,p},\nu_{1,\dots,q}\setminus g)$ means that variables $(\mu_i,\nu_j)$
				are removed for all $(i,j)\in g$ and $d(\mu,\nu)_g=\prod_{(i,j)\in g}\delta(\mu_i+\nu_{-j})d\mu_id\nu_j$.
	\end{enumerate}
\end{proposition}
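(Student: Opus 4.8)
The plan is to follow the route that was used for the ``standard'' trees in \pref{prop_20161205074106}: the resonant product of two It\^o--Wiener integrals is expanded by the product formula (\tref{2:product}), each resulting term is re-indexed so that its kernel becomes \emph{good}, and the resulting $H$- and $Q$-kernels are read off from the product structure. First I would apply \tref{2:product} to $X_t^{\epsilon,\tau_1}(x)=\WienerIntCmpl{p_1}{q_1}(f_{(t,x)}^{\epsilon,\tau_1})$ and $X_t^{\epsilon,\tau_2}(x)=\WienerIntCmpl{p_2}{q_2}(f_{(t,x)}^{\epsilon,\tau_2})$ (both good kernels by \pref{prop_20161205074106}), noting that the resonant projector $\reso$ only restricts the outer frequencies: writing the Fourier supports of the two factors as $k_{[1\dots p_1]}-l_{[1\dots q_1]}$ and $k_{[(p_1+1)\dots p]}-l_{[(q_1+1)\dots q]}$, the operator $\reso$ inserts precisely the multiplier $\psi_\circ$ of \eqref{eq_20161206032529}. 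The product formula then contracts circle-variables of one factor against square-variables of the other; since $X^{\epsilon,\A}$ contributes circles and $X^{\epsilon,\B}$ squares, and $\tau_1$, $\tau_2$ have the circle/square counts recorded by $p_i,q_i$, the admissible contractions are exactly the graphs $g$ of disjoint edges in $\{1,\dots,p_1\}\times\{q_1+1,\dots,q\}\cup\{p_1+1,\dots,p\}\times\{1,\dots,q_1\}$, which gives the stated sum over $g$.

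Next I would identify the kernel of the $(p-\#g,q-\#g)$-th chaos component. Before any contraction, the kernel is the product of the two good kernels times $\psi_\circ$, i.e. $f_{(t,x)}^{\epsilon,(\tau_1,\tau_2,\emptyset)}$, whose $H$-part is displayed in item~(1); the corresponding frequency-side object is $Q_0^{\epsilon,(\tau_1,\tau_2,\emptyset)}$ in item~(2), obtained simply because $\FTTime$ turns the time-convolution/product structure of $H_t^{\epsilon,\tau_i}$ into the product of the $Q_0^{\epsilon,\tau_i}$ (each $Q_0^{\epsilon,\tau_i}$ being the one from \pref{prop_20161205074106}), and because $\psi_\circ$ depends only on spatial frequencies it passes through $\FTTime$ untouched. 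Contraction along an edge $(i,j)\in g$ identifies $m_i=n_j$ and integrates out that pair, which on the time-Fourier side is exactly the constraint $\mu_i+\nu_{-j}=0$ together with integration $d\mu_i\,d\nu_j$; this yields the integral formulas for $H_t^{\epsilon,(\tau_1,\tau_2,g)}$ and $Q_0^{\epsilon,(\tau_1,\tau_2,g)}$ from their $\emptyset$-versions. One then checks that $f_{(t,x)}^{\epsilon,(\tau_1,\tau_2,g)}$ is again a \emph{good} kernel: its spatial phase is $\FourierBase[k_{[1\dots p]}-l_{[1\dots q]}]$ restricted to the surviving variables, and the defining identity $Q_t=e^{-2\pi\ImUnit R t}Q_0$ is inherited because each factor $Q_t^{\epsilon,\tau_i}$ satisfies it (with exponents $R$ that add up) and because the contraction constraints $\mu_i=-\nu_{-j}$ are $t$-independent, so the overall time-exponential of the contracted kernel is still $e^{-2\pi\ImUnit R t}$ in the remaining variables, where $R$ is the sum of the surviving $\sigma$'s and $\tau$'s.

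The main obstacle I expect is bookkeeping rather than analysis: one must be careful that the index ranges for the contraction edges, the choice of which variables ($m$ vs.\ $n$) are removed, and the delta-measures $d(m,n)_g$ and $d(\mu,\nu)_g$ are all consistent with the conventions of \secref{sec:itowienerintegral}, in particular with the relation $Q_0^\B(\nu_1)=\overline{Q_0(-\nu_{-1})}$ and the sign convention $\nu_{-j}=(\tau_j,-l_j)$. In particular the multiplicities produced by the product formula (the combinatorial factors counting how many ways a given graph $g$ arises) should be absorbed into the definition of the sum ``$\sum_g$'' — I would state explicitly that $g$ ranges over graphs (not over pairings with labels) and that symmetrisation of the kernels accounts for the factorials, matching the treatment in \pref{prop_20161205074106}. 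Once the good-kernel structure is in place, no further estimate is needed here; the regularity of $X^{\epsilon,\tau_1}\reso X^{\epsilon,\tau_2}$ and its convergence will follow later from \pref{section6:from Q to X} applied to each $Q_0^{\epsilon,(\tau_1,\tau_2,g)}$. Hence the proof reduces to: (i) invoke \tref{2:product}; (ii) insert $\psi_\circ$ from the definition of $\reso$; (iii) read off $H$ and $Q_0$ from the product and the contractions; (iv) verify the good-kernel identity for the contracted kernels.
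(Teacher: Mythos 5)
Your proposal is correct and follows essentially the same route as the paper: expand the resonant product over Littlewood--Paley blocks (which produces the $\psi_\circ$ multiplier), apply the product formula \tref{2:product} to each pair of blocks, and pass to the frequency side via Plancherel to turn the contraction $\delta(m_i-n_j)$ into $\delta(\mu_i+\nu_{-j})$. The only minor remark is that no extra multiplicity needs to be ``absorbed'' at this stage --- the sum over $g$ here is literally the sum over contraction graphs $\gamma$ in \tref{2:product}, and the combinatorial factors such as the $2$'s and $4$'s only appear later in \eqref{eq_1481533581} when graphs yielding identical kernels are collected.
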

For example,
\begin{align*}
	Q^{\epsilon,(\IAAB,\A,\emptyset)}_0(\mu_{1,2,3},\nu_1)
	&=
		\psi_\circ(k_{[12]}-l_1,k_3)
		\chi^\epsilon(k_1,k_2,k_3,l_1)\\
	&\phantom{=}\quad\qquad
		\times
		Q(\mu_{[12]}+\nu_{-1})
		Q(\mu_1)
		Q(\mu_2)
		\CmplConj{Q(-\nu_{-1})}
		Q(\mu_3),\\
	Q^{\epsilon,(\IAAB,\A,(3,1))}_0(\mu_1,\mu_2)
	&=
		\int_{E^2}
			\psi_\circ(k_{[12]}-l_1,k_3)
			\chi^\epsilon(k_1,k_2,k_3,l_1)\\
	&\phantom{=}\quad\qquad
			\times
			Q(\mu_{[12]}+\nu_{-1})
			Q(\mu_1)
			Q(\mu_2)
			\CmplConj{Q(-\nu_{-1})}\\
	&\phantom{=}\quad\qquad
			\times
			Q(\mu_3)\,
			\delta(\mu_3+\nu_{-1})\,
			d\mu_3d\nu_1.
\end{align*}

\begin{proof}
	Contraction formula of $H_t^{\epsilon,(\tau_1,\tau_2,g)}$ is trivial from the product formula of Wiener chaoses.
	For example, we see
	\begin{align*}
		X^{\epsilon,\IAABoA}_{(t,x)}
		&=
			X^{\epsilon,\IAAB}_{(t,x)}
			\reso
			X^{\epsilon,\A}_{(t,x)}\\
		&=
			\sum
				[\LPBlock[m_1]X^{\epsilon,\IAAB}_t](x)
				[\LPBlock[m_2]X^{\epsilon,\A}_t](x)\\
		&=
			\sum
				\WienerIntCmpl{2}{1}(\DyaPartOfUnit[m_1]f^{\epsilon,\IAAB}_{(t,x)})
				\WienerIntCmpl{1}{0}(\DyaPartOfUnit[m_2]f^{\epsilon,\A}_{(t,x)})\\
		&=
			\WienerIntCmpl{3}{1}
				(
					f^{\epsilon,(\IAAB,\A,\emptyset)}_{(t,x)}
				)
			+
			\WienerIntCmpl{2}{0}
				(
					f^{\epsilon,(\IAAB,\A,(3,1))}_{(t,x)}
				),
	\end{align*}
	where the summation runs over integers $m_1,m_2\geq -1$ with $|m_1-m_2|\leq 1$.
	In order to obtain contraction formula of $Q_0^{\epsilon,(\tau_1,\tau_2,g)}$, we use Plancherel's formula
	\begin{align*}
		\int_{\RealNum^2}
			f(s)g(t)\delta(s-t)\,
			dsdt
		=
			\int_{\RealNum^2}
				\hat{f}(\sigma)\hat{g}(\tau)\delta(\sigma+\tau)\,
				d\sigma d\tau.
	\end{align*}
	Note that $\mu_i+\nu_{-j}=0$ if and only if $\sigma_i+\tau_j=0,k_i=l_j$. This formula is obtained as follows.
	\begin{align*}
		\int_{\RealNum^2}
			f(s)g(t)\delta(s-t)\,
			dsdt
		&=
			\int_\RealNum
				f(s)\overline{\bar{g}(s)}\,
				ds
		=
			\int_\RealNum
				\hat{f}(\sigma)\overline{\hat{\bar{g}}(\sigma)}\,
				d\sigma\\
		&=
			\int_\RealNum
				\hat{f}(\sigma)\hat{g}(-\sigma)\,
				d\sigma
		=
			\int_{\RealNum^2}
				\hat{f}(\sigma)\hat{g}(\tau)\delta(\sigma+\tau)\,
				d\sigma d\tau.
	\end{align*}
	The proof is completed.
\end{proof}

In \tblref{tbl_contraction_symbols}, we give a list of all contractions $g$ for each $(\tau_1,\tau_2)$
and define the corresponding symbols $(\tau_1,\tau_2,g)$.
Note that the graphs in the same line gives the same kernel $f^{\epsilon,(\tau_1,\tau_2,g)}$,
so we write $(\tau_1,\tau_2,g)$ by the same symbol.
By taking the renormalization into account, we have the following decompositions:
\begin{gather}\label{eq_1481533581}
	\begin{aligned}
		X^{\epsilon,\IAABoA}
		&=
			\WienerIntCmpl{3}{1}(f^{\epsilon,\IAABoA})
			+\WienerIntCmpl{2}{0}(f^{\epsilon,\IAADoC}),\\
		X^{\epsilon,\IAABoB}
		&=
			\WienerIntCmpl{2}{2}(f^{\epsilon,\IAABoB})
			+2\WienerIntCmpl{1}{1}(f^{\epsilon,\IACBoD}),\\
		X^{\epsilon,\IAAoAB}
		&=
			\WienerIntCmpl{3}{1}(f^{\epsilon,\IAAoAB})
			+2\WienerIntCmpl{2}{0}(f^{\epsilon,\IACoAD}),\\
		X^{\epsilon,\IAAoBB}
		&=
			\WienerIntCmpl{2}{2}(f^{\epsilon,\IAAoBB})
			+4\WienerIntCmpl{1}{1}(f^{\epsilon,\IACoBD}),\\
		X^{\epsilon,\IABoAB}
		&=
			\WienerIntCmpl{2}{2}(f^{\epsilon,\IABoAB})
			+\WienerIntCmpl{1}{1}(f^{\epsilon,\ICBoAD})
			+\WienerIntCmpl{1}{1}(f^{\epsilon,\IADoCB}),\\
		X^{\epsilon,\IABoBB}
		&=
			\WienerIntCmpl{1}{3}(f^{\epsilon,\IABoBB})
			+2\WienerIntCmpl{0}{2}(f^{\epsilon,\IADoBD}),\\
		X^{\epsilon,\IAABoAB}
		&=
			\WienerIntCmpl{3}{2}(f^{\epsilon,\IAABoAB})
			+2\WienerIntCmpl{2}{1}(f^{\epsilon,\IACBoAD})
			+\WienerIntCmpl{2}{1}(f^{\epsilon,\IAADoBC})
			+2\WienerIntCmpl{1}{0}(\mathfrak{R}f^{\epsilon,\IACDoCD}),\\
		X^{\epsilon,\IAABoBB}
		&=
			\WienerIntCmpl{2}{3}(f^{\epsilon,\IAABoBB})
			+4\WienerIntCmpl{1}{2}(f^{\epsilon,\IACBoBD})
			+2\WienerIntCmpl{0}{1}(\mathfrak{R}f^{\epsilon,\IBCCoDD})
	\end{aligned}
\end{gather}
where
\begin{align*}
	\mathfrak{R}f^{\epsilon,\IACDoCD}
	&=
		f^{\epsilon,\IACDoCD}
		-\constRe[2,2]{\epsilon} f^{\epsilon,\A},
	&
	\mathfrak{R}f^{\epsilon,\IBCCoDD}
	&=
		f^{\epsilon,\IBCCoDD}
		-\constRe[2,1]{\epsilon} f^{\epsilon,\B}.
\end{align*}
\begin{table}[h]
  \begin{center}
	\caption{List of graphical symbols for contractions}\label{tbl_contraction_symbols}
\begin{tabular}{cccc}\hline

$\tau_1$&$\tau_2$&$g$&$(\tau_1,\tau_2,g)$\\\hline

\multirow{2}{*}{$\bigIAAB$}&$\multirow{2}{*}{$\bigA{3}$}$&$\emptyset$&$\IAABoA$\rule[-1.5mm]{0mm}{5mm}\\\cline{3-4}
&&$\{(3,1)\}$&$\IAADoC$\rule[-1.5mm]{0mm}{5mm}\\\hline

\multirow{2}{*}{$\bigIAAB$}&$\multirow{2}{*}{$\bigB{2}$}$&$\emptyset$&$\IAABoB$\rule[-1.5mm]{0mm}{5mm}\\\cline{3-4}
&&$\{(1,2)\},\{(2,2)\}$&$\IACBoD$\rule[-1.5mm]{0mm}{5mm}\\\hline

\multirow{2}{*}{$\bigIAA$}&$\multirow{2}{*}{$\bigAB{3}{1}$}$&$\emptyset$&$\IAAoAB$\rule[-1.5mm]{0mm}{5mm}\\\cline{3-4}
&&$\{(1,1)\},\{(2,1)\}$&$\IACoAD$\rule[-1.5mm]{0mm}{5mm}\\\hline

\multirow{3}{*}{$\bigIAA$}&$\multirow{3}{*}{$\bigBB{1}{2}$}$&$\emptyset$&$\IAAoBB$\rule[-1.5mm]{0mm}{5mm}\\\cline{3-4}
&&$\{(1,1)\},\{(1,2)\},\{(2,1)\},\{(2,2)\}$&$\IACoBD$\rule[-1.5mm]{0mm}{5mm}\\\cline{3-4}
&&$\{(1,1),(2,2)\},\{(1,2),(2,1)\}$&$\ICCoDD$\rule[-1.5mm]{0mm}{5mm}\\\hline

\multirow{4}{*}{$\bigIAB$}&$\multirow{4}{*}{$\bigAB{2}{2}$}$&$\emptyset$&$\IABoAB$\rule[-1.5mm]{0mm}{5mm}\\\cline{3-4}
&&$\{(1,2)\}$&$\ICBoAD$\rule[-1.5mm]{0mm}{5mm}\\\cline{3-4}
&&$\{(2,1)\}$&$\IADoCB$\rule[-1.5mm]{0mm}{5mm}\\\cline{3-4}
&&$\{(1,2),(2,1)\}$&$\ICDoCD$\rule[-1.5mm]{0mm}{5mm}\\\hline

\multirow{2}{*}{$\bigIAB$}&$\multirow{2}{*}{$\bigBB{2}{3}$}$&$\emptyset$&$\IABoBB$\rule[-1.5mm]{0mm}{5mm}\\\cline{3-4}
&&$\{(1,2)\},\{(1,3)\}$&$\IADoBD$\rule[-1.5mm]{0mm}{5mm}\\\hline

\multirow{4}{*}{$\bigIAAB$}&$\multirow{4}{*}{$\bigAB{3}{2}$}$&$\emptyset$&$\IAABoAB$\rule[-1.5mm]{0mm}{5mm}\\\cline{3-4}
&&$\{(1,2)\},\{(2,2)\}$&$\IACBoAD$\rule[-1.5mm]{0mm}{5mm}\\\cline{3-4}
&&$\{(3,1)\}$&$\IAADoBC$\rule[-1.5mm]{0mm}{5mm}\\\cline{3-4}
&&$\{(1,2),(3,1)\},\{(2,2),(3,1)\}$&$\IACDoCD$\rule[-1.5mm]{0mm}{5mm}\\\hline

\multirow{3}{*}{$\bigIAAB$}&$\multirow{3}{*}{$\bigBB{2}{3}$}$&$\emptyset$&$\IAABoBB$\rule[-1.5mm]{0mm}{5mm}\\\cline{3-4}
&&$\{(1,2)\},\{(1,3)\},\{(2,2)\},\{(2,3)\}$&$\IACBoBD$\rule[-1.5mm]{0mm}{5mm}\\\cline{3-4}
&&$\{(1,2),(2,3)\},\{(1,3),(2,2)\}$&$\IBCCoDD$\rule[-1.5mm]{0mm}{5mm}\\\hline

\end{tabular}
\end{center}
\end{table}

Finally, we define a process $X^\tau$, which is a candidate of the limit of $\{X^{\epsilon,\tau}\}_{0<\epsilon<1}$.
It may be natural to define $H_t^{(\tau_1,\tau_2,g)}$ by the same way as in \pref{section6:expansion of tau1 rs tau2}
by replacing $H_t^{\epsilon,\tau_i}$ by $H_t^{\tau_i}$ for $i=1,2$. This definition makes sense if $\#g=0,1$,
however, does not if $\#g=2$. In Section \ref{section6:estimate of higher order terms},
we will show that there exist kernels $\mathfrak{R}f^\tau$ for $\tau=\IACDoCD,\IBCCoDD$ such that
\begin{align*}
	\mathfrak{R}f^{\epsilon,\IACDoCD}
	&\to
		\mathfrak{R}f^\IACDoCD,
	&
	\mathfrak{R}f^{\epsilon,\IBCCoDD}
	&
		\to\mathfrak{R}f^\IBCCoDD
\end{align*}
as $\epsilon\to 0$.

\begin{definition}
For $\tau\in\{\IAABoA,\IAABoB,\IAAoAB,\IAAoBB,\IABoAB,\IABoBB\}$, we define
\begin{align*}
	X^\tau_{(t,x)}
	=
		\sum_{\#g=0,1}
			\WienerIntCmpl{p-\#g}{q-\#g}(f_{(t,x)}^\tau).
\end{align*}
For $\tau=\IAABoAB,\IAABoBB$, we define
\begin{align*}
	X^\IAABoAB
	&=
		\WienerIntCmpl{3}{2}(f^\IAABoAB)
		+2\WienerIntCmpl{2}{1}(f^\IACBoAD)
		+\WienerIntCmpl{2}{1}(f^\IAADoBC)
		+2\WienerIntCmpl{1}{0}(\mathfrak{R}f^\IACDoCD),\\
	X^\IAABoBB
	&=
		\WienerIntCmpl{2}{3}(f^\IAABoBB)
		+4\WienerIntCmpl{1}{2}(f^\IACBoBD)
		+2\WienerIntCmpl{0}{1}(\mathfrak{R}f^\IBCCoDD).
\end{align*}
\end{definition}

The following is the main  theorem in this section:
\begin{theorem}\label{thm_20161128051824}
	Let $\kappa>0$ and $T>0$.
	Then, we have
	\begin{align*}
		\lim_{\epsilon\downarrow 0}
			\expect[\|X-X^\epsilon\|_\drivers{T}{\kappa}^p]
		=
			0
	\end{align*}
	for every $1<p<\infty$.
\end{theorem}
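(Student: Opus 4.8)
The plan is to reduce the whole statement, componentwise, to kernel estimates fed into the criterion of \pref{section6:from Q to X} (with \pref{prop_1481257897} covering the purely spatial case). Recall from \pref{prop_20161205074106}, \pref{section6:expansion of tau1 rs tau2} and \eqref{eq_1481533581} that for each graphical symbol $\tau$ occurring in $X=(X^\A,X^\AA,\dots,X^\IAABoBB)$ both $X^{\epsilon,\tau}_{(t,x)}$ and $X^\tau_{(t,x)}$ are finite sums of terms $\WienerIntCmpl{p'}{q'}(f^{\epsilon,\sigma}_{(t,x)})$ and $\WienerIntCmpl{p'}{q'}(f^\sigma_{(t,x)})$ with explicit good kernels indexed by the contraction symbols $\sigma$ of Table~\ref{tbl_contraction_symbols}, and that the parabolic Besov/H\"older norms of such Wiener integrals are controlled through the hyperplane integral of $|Q_0^\sigma|^2$. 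Since $\|X^\tau-X^{\epsilon,\tau}\|$ is controlled by the same quantity applied to the kernel difference $Q_0^\sigma-Q_0^{\epsilon,\sigma}$, the theorem follows from two facts: (i) for each $\sigma$ there are exponents $\gamma_\sigma>1$, $\delta_\sigma\ge 0$ with $-\tfrac52+\gamma_\sigma+\delta_\sigma>\alpha_\tau$ and
\[
	\int_{\mu_{[1\dots p']}+\nu_{[(-1)\dots(-q')]}=\mu}
		|Q_0^\sigma(\mu_{1,\dots,p'},\nu_{1,\dots,q'})|^2
	\lesssim
		|\mu|_\ast^{-2\gamma_\sigma}|k|_\ast^{-2\delta_\sigma};
\]
and (ii) the same bound for $Q_0^\sigma-Q_0^{\epsilon,\sigma}$ with an extra factor $\epsilon^{2\theta}$ and with $\gamma_\sigma$ degraded to $\gamma_\sigma-2\theta$, for $\theta>0$ small. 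Then \pref{section6:from Q to X} (applied with $\kappa=0$ for the components asked only to be continuous in time, and with $\kappa>0$ for $X^\IAAB$) gives $\expect[\|X^{\epsilon,\tau}\|^{2p}]\lesssim 1$ and $\expect[\|X^\tau-X^{\epsilon,\tau}\|^{2p}]\lesssim\epsilon^{2\theta p}$ for every component, and summing over the finitely many $\tau$ in the definition of $\|\cdot\|_{\drivers{T}{\kappa}}$ yields the claim.

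As preliminary input I would first record $|Q(\mu)|\approx|\mu|_\ast^{-2}$, immediate from $Q(\sigma,k)=(-2\pi\ImUnit\sigma+4\pi^2(\ImUnit+\mu)|k|^2+1)^{-1}$ and $|\mu|_\ast^2\approx 1+|\sigma|+|k|^2$, together with $|1-\chi^\epsilon(k)|\lesssim(\epsilon|k|)^\theta$ for $\theta\in[0,1]$ (from $\chi\in C^\infty$, $\chi(0)=1$), whence multiplicativity of $\chi^\epsilon$ gives $|1-\chi^\epsilon(k_1,\dots,l_{q'})|\lesssim\epsilon^\theta(|k_1|^\theta+\dots+|l_{q'}|^\theta)$: this is the source of the gain $\epsilon^\theta$ in (ii), and the accompanying loss of homogeneity is exactly the degradation of $\gamma_\sigma$. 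Next I would establish a short list of discrete--continuous convolution lemmas---the analogues on $E=\RealNum\times\Integers^3$ of \lref{section4 cut lemma} and of \cite[Lemma~10.14]{Hairer2014a}, allowing one to integrate internal time variables over $\RealNum$ and sum internal spatial variables over $\Integers^3$ against products of powers of $|\cdot|_\ast$ while tracking homogeneities. With these, the verification of (i) for each $\sigma$ is a bookkeeping power count along the tree/contraction structure, in parallel with \secref{4:subsection3} on the regularity-structures side; since all target regularities are stated with an arbitrarily small loss $\kappa>0$, there is strictly positive slack, which is precisely what (ii) spends.

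The main obstacle is the two double-contraction terms, i.e.\ the kernels $\mathfrak{R}f^{\epsilon,\IACDoCD}$ and $\mathfrak{R}f^{\epsilon,\IBCCoDD}$ inside $X^{\epsilon,\IAABoAB}$ and $X^{\epsilon,\IAABoBB}$: the naive kernels $f^{\epsilon,\IACDoCD}$, $f^{\epsilon,\IBCCoDD}$ are not integrable/summable and raw power counting fails, which is why the constants $\constRe[2,1]{\epsilon}$, $\constRe[2,2]{\epsilon}$ of \eqref{eq_20161015073329} enter. Here I would argue as in \secref{4:subsection3} (the $\mathfrak{R}L^\epsilon$ device): the renormalizing subtraction amounts to pairing against $\varphi(\cdot)-\varphi(0)$ instead of $\varphi$, which supplies an extra vanishing of the test function at the resonant frequency and hence a Taylor-type gain of one degree of homogeneity, enough to push the renormalized kernel into the regime of \pref{section6:from Q to X}; carrying the same computation out for $\mathfrak{R}f^{\epsilon,\sigma}-\mathfrak{R}f^{\sigma}$ simultaneously gives the convergence with rate $\epsilon^\theta$ and shows, as a by-product, that the limit kernels $\mathfrak{R}f^\IACDoCD$, $\mathfrak{R}f^\IBCCoDD$ promised in \secref{section6:estimate of higher order terms} are well defined. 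Finally, using the equivalence of all $L^p$-norms on a fixed inhomogeneous Wiener chaos (as in \pref{prop_1481257897}) to pass from the $L^{2p}$-moments to $L^p$ and summing the componentwise bounds $\expect[\|X^\tau-X^{\epsilon,\tau}\|^p]\lesssim\epsilon^{\theta p}$ gives $\expect[\|X-X^\epsilon\|_{\drivers{T}{\kappa}}^p]\lesssim\epsilon^{\theta p}\to 0$, which is the assertion.
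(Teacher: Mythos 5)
Your proposal is correct and follows essentially the same route as the paper: reduction of each component to hyperplane estimates on $Q_0^\sigma$ and $Q_0^\sigma-Q_0^{\epsilon,\sigma}$ fed into \pref{section6:from Q to X}, power counting via the convolution lemma on $E$ and the $\psi_\circ$ bound of \lref{section6:reduce singularity}, the factor $|1-\chi^\epsilon|\lesssim\epsilon^\lambda(\sum|k_i|^\lambda+\cdots)$ trading a small loss of homogeneity for the rate $\epsilon^\lambda$, and the difference-operator/Taylor gain for the two renormalized double-contraction kernels. The paper implements that last step exactly as you describe, writing $\mathfrak{R}Q_0^{\epsilon,\tau}$ with the increment $\delta_{0,\mu_1}\{\psi_\circ(\cdot)Q(\cdot)\}$ and estimating it by the fundamental theorem of calculus together with the derivative bounds of \lref{lem_1481250926}.
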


\begin{remark}
	The limit process $X$ in \tref{thm_20161128051824} is given explicitly by generalized It\^o-Wiener integrals.
	Since the expression of kernels are independent of $\chi$, so is $X$.
\end{remark}

The proof of this theorem will be given in the next section.

\subsection{Proof of convergence of driving vectors}%%%%%%%%%%%%%%%%%%

In this section, we show the convergence $X^{\epsilon,\tau}\to X^\tau$ for all $\tau$.
As stated above, they have the good kernels.
Hence, it is sufficient to estimate $Q_0^\tau$ and $Q_0^\tau-Q_0^{\epsilon,\tau}$,
due to \pref{section6:from Q to X}.

\subsubsection{Useful estimates}%%%%%%%%%%%%%%%%%%%%%%%%%

In order to estimate $Q_0^\tau$ and $Q_0^\tau-Q_0^{\epsilon,\tau}$,
we use the following lemmas many times.
\begin{lemma}\label{section6:-a-b to -a-b+5}
If $\alpha,\beta\in(0,5)$ and $\alpha+\beta>5$, we have
$$
\int_E|\mu|_*^{-\alpha}|\nu-\mu|_*^{-\beta}d\mu\lesssim|\nu|_*^{-\alpha-\beta+5}.
$$
\end{lemma}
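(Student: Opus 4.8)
The statement is a convolution estimate on $E=\RealNum\times\Integers^3$ with the anisotropic weight $|\mu|_*=1+|\sigma|^{1/2}+|k|$ for $\mu=(\sigma,k)$. The plan is to reduce it to the familiar Euclidean analogue on $\RealNum^4$ (with the parabolic scaling $\|z\|_s$) and then transfer the result to the mixed continuous-discrete setting. First I would observe that, up to a multiplicative constant, $|\mu|_*$ is comparable to $\|(\sigma,k)\|_s + 1$ where we embed $k\in\Integers^3\subset\RealNum^3$ and use the parabolic norm $\|(\sigma,k)\|_s = |\sigma|^{1/2}+|k|$; this is exactly the homogeneity under which the kernels $|z|^{-\alpha}$ live in the classes $\mathfrak{S}_\alpha$ considered earlier in the paper.

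The core step is the continuous estimate: for $\alpha,\beta\in(0,5)$ with $\alpha+\beta>5$,
\[
\int_{\RealNum^4} (1+\|z\|_s)^{-\alpha}(1+\|w-z\|_s)^{-\beta}\,dz \lesssim (1+\|w\|_s)^{-\alpha-\beta+5},
\]
where $dz$ is Lebesgue measure on $\RealNum\times\RealNum^3$ and $5$ is the scaling dimension of this measure under $z=(t,x)\mapsto(\lambda^2 t,\lambda x)$. This is a standard ``singular kernels multiply like $|z|^{-\alpha}\cdot|z|^{-\beta}$ under convolution" lemma; it is precisely the continuous counterpart of \cite[Lemma~10.14]{Hairer2014a} / the bound in \cite[Section~4.7]{Hoshino2016} that the paper already invokes, and in fact it follows from part (1) of the proposition on $\mathfrak{S}_\alpha$ stated just above (convolution of $\mathfrak{S}_\alpha$ and $\mathfrak{S}_\beta$ lands in $\mathfrak{S}_{\alpha+\beta-5}$, together with the bound $|\widetilde A(z)|\lesssim\|z\|_s^{-\alpha}$ and boundedness of the $A_-$ parts), once one notes $(1+\|z\|_s)^{-\alpha}\in\mathfrak{S}_\alpha$. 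So I would simply cite that proposition, or give the two-line splitting-into-dyadic-regions argument (the region $\|z\|_s\lesssim\|w\|_s/2$ contributes $\lesssim (1+\|w\|_s)^{-\beta}\int_{\|z\|_s\lesssim\|w\|_s}(1+\|z\|_s)^{-\alpha}\,dz$, which by $\alpha<5$ is $\lesssim(1+\|w\|_s)^{-\beta+5-\alpha}$; the symmetric region is the same with $\alpha,\beta$ swapped; the region $\|w-z\|_s\approx\|z\|_s\gtrsim\|w\|_s$ contributes $\lesssim\int_{\|z\|_s\gtrsim\|w\|_s}(1+\|z\|_s)^{-\alpha-\beta}\,dz\lesssim(1+\|w\|_s)^{-\alpha-\beta+5}$ using $\alpha+\beta>5$).

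The remaining step is the passage from $\RealNum^4$ to $E=\RealNum\times\Integers^3$. For fixed $k\in\Integers^3$, the function $\sigma\mapsto|(\sigma,k)|_*^{-\alpha}$ is smooth, and comparing the sum over $k$ with an integral over $\RealNum^3$ costs only a constant because the weight is slowly varying on unit cubes: for $k\in\Integers^3$ and $x$ in the unit cube centered at $k$ one has $|(\sigma,k)|_*\approx|(\sigma,x)|_*$ uniformly. Hence
\[
\sum_{k\in\Integers^3}\int_\RealNum |(\sigma,k)|_*^{-\alpha}|(\tau-\sigma,l-k)|_*^{-\beta}\,d\sigma
\;\lesssim\;
\int_{\RealNum^4}(1+\|z\|_s)^{-\alpha}(1+\|(\tau,l)-z\|_s)^{-\beta}\,dz,
\]
and the continuous bound above gives $\lesssim(1+\|(\tau,l)\|_s)^{-\alpha-\beta+5}\approx|\nu|_*^{-\alpha-\beta+5}$ for $\nu=(\tau,l)$, which is the claim. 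The main obstacle is the bookkeeping in this discretization: one must check that replacing $|(\tau-\sigma,l-k)|_*$ by its value at a nearby point of $\RealNum^3$ is harmless uniformly in $\sigma$, which is where the hypothesis $\beta<5$ (local integrability, so no issue near the singularity $\nu=\mu$) and $\alpha<5$ both get used; everything else is the soft scaling argument. I expect no genuine difficulty beyond this, since the estimate is morally identical to the ones already established in \cite{Hairer2014a,Hoshino2016} and only the measure on the base space has changed.
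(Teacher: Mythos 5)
Your proof is correct, but note that the paper itself gives essentially no argument here: it just says ``modify \cite[Lemma~9.8]{GubinelliPerkowski2017} to the three dimensional case,'' so your write-up is necessarily a different, self-contained route rather than a reproduction of the paper's. Two remarks. First, your primary suggestion of deducing the continuous estimate from the proposition on the classes $\mathfrak{S}_\alpha$ does not work as stated: membership in $\mathfrak{S}_\alpha$ requires compact support, and $(1+\|z\|_s)^{-\alpha}$ is not compactly supported; moreover the difficulty in the present lemma is decay at infinity (which is exactly where $\alpha+\beta>5$ enters), not the singularity at the origin that the $\mathfrak{S}_\alpha$ machinery and \cite[Lemma~10.14]{Hairer2014a} are designed to handle. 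Fortunately the fallback you give --- splitting $\RealNum^4$ into the regions $\|z\|_s\le\|w\|_s/2$, $\|w-z\|_s\le\|w\|_s/2$, and the complement where $\|z\|_s\approx\|w-z\|_s\gtrsim\|w\|_s$ --- is the right elementary argument and uses all three hypotheses $\alpha<5$, $\beta<5$, $\alpha+\beta>5$ in the right places, so the proof stands on that leg. Second, your discretization step is sound and is the only genuinely new ingredient relative to the purely continuous setting: since $|\mu|_*\ge1$ the kernel has no singularity at $\nu=\mu$ (so your worry about local integrability there is moot), and $|(\sigma,k)|_*\approx|(\sigma,x)|_*$ uniformly for $x$ in the unit cube centered at $k$, applied simultaneously to both factors, makes the sum over $\Integers^3$ comparable to the integral over $\RealNum^3$. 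This is the same strategy that underlies the cited one-dimensional lemma of Gubinelli--Perkowski; your version simply makes the details explicit in dimension three.
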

\begin{proof}
		We modify {\cite[Lemma~9.8]{GubinelliPerkowski2017}} to the three dimensional case.
\end{proof}

\begin{lemma}\label{section6:reduce singularity}
	The function $\psi_\circ$ defined by \eqref{eq_20161206032529} is bounded
	and supported in the set $\{(k,l);C^{-1}|l|_*\le|k|_*\le C|l|_*\}$ for some $C>0$.
	Moreover, we have
	\begin{align*}
		|\psi_\circ(k,l)|\lesssim|k+l|_*^{-\theta}|l|_*^\theta
	\end{align*}
	for every $\theta>0$.
\end{lemma}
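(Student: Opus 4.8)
The plan is to deduce everything from the support properties of the dyadic partition of unity $\{\DyaPartOfUnit[m]\}_{m=-1}^\infty$ recalled in \secref{sec_1501991661}. The starting point is a finite–overlap observation: for each fixed $k\in\Integers^3$ the set $\{i\geq -1;\DyaPartOfUnit[i](k)\neq 0\}$ has cardinality bounded by a universal constant $N_0$. Indeed $\DyaPartOfUnit[-1]$ is supported in $B(0,\frac{4}{3})$, while for $m\geq 0$ the support of $\DyaPartOfUnit[m]=\DyaPartOfUnit[0](2^{-m}\cdot)$ is contained in the annulus $B(0,\frac{8}{3}2^m)\setminus B(0,\frac{3}{4}2^m)$, and a point of modulus $r$ lies in such an annulus only for $m$ in an interval of length $\log_2(32/9)<2$. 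Since each $\DyaPartOfUnit[i]$ takes values in $[0,1]$, the sum defining $\psi_\circ(k,l)$ has at most $N_0^2$ nonzero terms, each $\leq 1$, so $|\psi_\circ|\leq N_0^2$; this gives boundedness.

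Next I would record the elementary comparison: there is $C_0>1$ such that $\DyaPartOfUnit[i](k)\neq 0$ implies $C_0^{-1}2^{i\vee 0}\leq|k|_\ast\leq C_0\,2^{i\vee 0}$, where $i\vee 0=\max(i,0)$ and $|k|_\ast=1+|k|$. For $i\geq 0$ this follows from the annulus bound and $2^i\geq 1$; for $i=-1$ it is just $1\leq|k|_\ast<\frac{7}{3}$. Now suppose $\psi_\circ(k,l)\neq 0$ and pick $i,j$ with $|i-j|\leq 1$, $\DyaPartOfUnit[i](k)\neq 0$ and $\DyaPartOfUnit[j](l)\neq 0$. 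Since $x\mapsto x\vee 0$ is $1$-Lipschitz, $|(i\vee 0)-(j\vee 0)|\leq 1$, hence $2^{i\vee 0}\approx 2^{j\vee 0}$ and therefore $|k|_\ast\approx|l|_\ast$ with implicit constants depending only on $C_0$. This is exactly the support statement, with $C=2C_0^{2}$.

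For the decay estimate I would work on $\supp\psi_\circ$, where $|k|_\ast\leq C|l|_\ast$ has just been established. Using $|l|_\ast\geq 1$ and the triangle inequality,
\[
|k+l|_\ast
=1+|k+l|
\leq 1+|k|+|l|
\leq |l|_\ast+|k|_\ast+|l|_\ast
\leq (C+2)|l|_\ast,
\]
so $|l|_\ast/|k+l|_\ast\geq (C+2)^{-1}$ and hence $|k+l|_\ast^{-\theta}|l|_\ast^{\theta}\geq (C+2)^{-\theta}$ for every $\theta>0$. Combining this with $|\psi_\circ|\leq N_0^2$ on its support (and $\psi_\circ=0$ off it) yields
\[
|\psi_\circ(k,l)|\leq N_0^2(C+2)^{\theta}\,|k+l|_\ast^{-\theta}|l|_\ast^{\theta},
\]
which is the claimed bound, the implicit constant being allowed to depend on $\theta$. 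There is no genuine difficulty here; the only point needing a little attention is the low–frequency block $\DyaPartOfUnit[-1]$, which is the reason the frequency comparison must be phrased through $2^{i\vee 0}$ rather than $2^{i}$.
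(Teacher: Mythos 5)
Your proof is correct and follows essentially the same route as the paper's: boundedness of $\psi_\circ$, comparability $|k|_*\approx|l|_*$ on its support, and then the observation that $|k+l|_*\lesssim|l|_*$ there makes $|k+l|_*^{-\theta}|l|_*^\theta$ bounded below, so the estimate follows from boundedness alone. The only cosmetic differences are that the paper gets $|\psi_\circ|\le 1$ directly from $\sum_{i,j}\rho_i(k)\rho_j(l)=1$ rather than from finite overlap, and passes through $|k|_*^{-\theta}|l|_*^\theta$ as an intermediate bound.
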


\begin{proof}
	The properties $|\psi_\circ(k,l)|\le1$ and $\psi_\circ(k,l)>0$ imply $|k|_*\approx|l|_*$ are trivial.
	We show the last property.
	Since if $(k,l)\in\supp(\psi_\circ)$ then $|l|_*/|k|_*\gtrsim1$ and $|k+l|_*\le|k|_*+|l|_*\lesssim|k|_*$,
	we have
	\begin{align*}
		|\psi_\circ(k,l)|\lesssim|k|_*^{-\theta}|l|_*^\theta\lesssim|k+l|_*^{-\theta}|l|_*^\theta
	\end{align*}
	for every $\theta>0$.
\end{proof}

\subsubsection{Lower order terms}%%%%%%%%%%%%%%%%%%%%%%%%

Now we consider $X^\tau$ for $\tau=\A$, $\B$, $\AA$, $\AB$, $\BB$, $\AAB$, $\IAA$, $\IAB$, $\IAAB$.

\begin{proposition}\label{prop_1481179726000}
For $\tau=\A$, $\B$, $\AA$, $\AB$, $\BB$, $\AAB$, $\IAA$, $\IAB$, $\IAAB$, we have
\begin{gather*}
	\int_{\mu_{[1\dots p]}+\nu_{[(-1)\dots(-q)]}=\mu}
		|
			Q_0^\tau(\mu_{1,\dots,p},\nu_{1,\dots,q})
		|^2
	\lesssim
		|\mu|_*^{-2\gamma_\tau},\\
	\int_{\mu_{[1\dots p]}+\nu_{[(-1)\dots(-q)]}=\mu}
		|(Q_0^\tau-Q_0^{\epsilon,\tau})(\mu_{1,\dots,p},\nu_{1,\dots,q})|^2
	\lesssim
		\epsilon^\lambda
		|\mu|_*^{-2\gamma_\tau+\lambda},
\end{gather*}
for every $\lambda\in(0,1]$, where $\gamma_\tau=2\,(\tau=\A,\B), \frac32\,(\tau=\AA,\AB,\BB),1\,(\tau=\AAB),\frac72\,(\tau=\IAA,\IAB),3\,(\tau=\IAAB)$.
\end{proposition}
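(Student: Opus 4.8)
The plan is to argue by induction on the tree structure of $\tau$, using the recursive formulas for $Q_0^\tau$ from \pref{prop_20161205074106}. The basic input I would establish first is the elementary pointwise bound
\begin{align*}
  |Q(\mu)|
  &=
    \bigl|-2\pi\ImUnit\sigma + 4\pi^2(\ImUnit+\mu)|k|^2 + 1\bigr|^{-1}\\
  &\lesssim
    (1+|\sigma|+|k|^2)^{-1}
  \approx
    |\mu|_*^{-2}
  \qquad (\mu=(\sigma,k)),
\end{align*}
which holds because the constant $\mu>0$ forces the real part $4\pi^2\mu|k|^2+1$ of the denominator to dominate $1+|k|^2$, while for $|\sigma|$ large the imaginary part $2\pi\sigma-4\pi^2|k|^2$ dominates $|\sigma|$. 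Since $|Q_0^\A|=|Q|$ and $|Q_0^\B(\nu_1)|=|Q(-\nu_{-1})|\lesssim|\nu_1|_*^{-2}$ (note $|{-\nu_{-1}}|_*=|\nu_1|_*$), the product and convolution formulas of \pref{prop_20161205074106} then show that for each of the nine symbols $\tau$ the kernel $|Q_0^\tau|$ is bounded, up to a constant, by a product of factors $|\,\cdot\,|_*^{-2}$ — one for every circle and every square of $\tau$ — times one additional factor $|\mu_{[1\dots p]}+\nu_{[(-1)\dots(-q)]}|_*^{-2}$ for each application of the abstract integration $\mathcal{I}$ in the construction of $\tau$ (so $Q_0^\IAAB$, for instance, carries the three factors of $Q_0^\AAB$ plus one extra factor $Q(\mu_{[12]}+\nu_{-1})$).

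To prove the first bound I would fix $\tau$ and integrate over the hyperplane $\{\mu_{[1\dots p]}+\nu_{[(-1)\dots(-q)]}=\mu\}$. On that set the outermost $\mathcal{I}$-factor (when present) equals $|Q(\mu)|^2\lesssim|\mu|_*^{-4}$ and comes out of the integral; then I would parametrise the hyperplane by $p+q-1$ of the frequency variables and integrate them out one at a time, using at each step \lref{section6:-a-b to -a-b+5}, namely $\int_E|\mu'|_*^{-\alpha}|\eta-\mu'|_*^{-\beta}\,d\mu'\lesssim|\eta|_*^{-\alpha-\beta+5}$ for $\alpha+\beta>5$. Tracking the exponents symbol by symbol yields exactly the values $\gamma_\tau$: for $\tau=\AA$ one gets $\int_E|\mu_1|_*^{-4}|\mu-\mu_1|_*^{-4}\,d\mu_1\lesssim|\mu|_*^{-3}$, hence $\gamma_\AA=\tfrac32$; for $\tau=\AAB$ one integrates in $\mu_1$ and then in $\nu_1$ to obtain $|\mu|_*^{-2}$, hence $\gamma_\AAB=1$; and for $\tau=\IAAB$ one first extracts $|\mu|_*^{-4}$ and then runs the $\AAB$ computation to get $|\mu|_*^{-6}$, hence $\gamma_\IAAB=3$. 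The remaining symbols ($\B,\AB,\BB,\IAA,\IAB$) are entirely analogous, and in every application of \lref{section6:-a-b to -a-b+5} the condition $\alpha+\beta>5$ is met with a margin of at least $2$ (the tightest case being $\AAB$, where a factor $|\,\cdot\,|_*^{-3}$ meets a factor $|\,\cdot\,|_*^{-4}$).

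For the convergence estimate I would use $Q_0^{\epsilon,\tau}=\chi^\epsilon(k_{1,\dots,p},l_{1,\dots,q})Q_0^\tau$ with $\chi^\epsilon(k)=\chi(\epsilon k)$, so that
\begin{align*}
  \bigl|Q_0^\tau-Q_0^{\epsilon,\tau}\bigr|
  =
    \Bigl|1-\prod_{i=1}^p\chi(\epsilon k_i)\prod_{j=1}^q\chi(\epsilon l_j)\Bigr|\,
    |Q_0^\tau|.
\end{align*}
Since $\chi$ is smooth and bounded with $\chi(0)=1$, one has $|1-\chi(\epsilon k)|\lesssim\min(1,\epsilon|k|)\lesssim(\epsilon|k|)^\lambda$ for every $\lambda\in[0,1]$, and telescoping the product gives $|1-\chi^\epsilon(k_{1,\dots,p},l_{1,\dots,q})|\lesssim\epsilon^\lambda(\sum_i|k_i|^\lambda+\sum_j|l_j|^\lambda)\lesssim\epsilon^\lambda(\sum_i|\mu_i|_*^\lambda+\sum_j|\nu_j|_*^\lambda)$. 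Expanding over the finitely many resulting terms, each one replaces a single factor $|\,\cdot\,|_*^{-2}$ of $Q_0^\tau$ by $|\,\cdot\,|_*^{-2+\lambda}$ and carries an overall prefactor $\epsilon^\lambda$; re-running the hyperplane integration above — still licit by \lref{section6:-a-b to -a-b+5}, since we lose at most $\lambda\le1$ from a single exponent while all margins exceed $1$ — then produces the bound $\epsilon^\lambda|\mu|_*^{-2\gamma_\tau+\lambda}$. The only genuine labour is this exponent bookkeeping across the nine symbols and checking the hypothesis of \lref{section6:-a-b to -a-b+5} at each integration; I do not expect any conceptual obstacle once the pointwise bound $|Q(\mu)|\lesssim|\mu|_*^{-2}$ is in place.
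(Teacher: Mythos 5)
Your proposal is correct and follows essentially the same route as the paper: the pointwise bound $|Q(\mu)|\lesssim|\mu|_*^{-2}$, iterated application of \lref{section6:-a-b to -a-b+5} over the hyperplane to track the exponents $\gamma_\tau$, extraction of the outer factor $|\mu|_*^{-4}$ for the $\mathcal{I}$-symbols, and the difference estimate via $Q_0^\tau-Q_0^{\epsilon,\tau}=(1-\chi^\epsilon)Q_0^\tau$ together with \eqref{eq_20161206035717}. No substantive differences.
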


\begin{proof}
	For $\tau=\A,\B$, the required estimates is trivial from $|Q(\mu)|\lesssim|\mu|_*^{-2}$. Indeed,
	\begin{align*}
		|Q_0^\A(\mu_1)|^2
		&=
			|Q(\mu_1)|^2\lesssim|\mu_1|_*^{-4},
		&
		|Q_0^\B(\nu_1)|^2
		&=
			|Q(\nu_{-1})|^2\lesssim|\nu_1|_*^{-4}.
	\end{align*}
	For $\tau=\AA$, from Lemma \ref{section6:-a-b to -a-b+5} we have
	\begin{align*}
	\int_{\mu_{[12]}=\mu}|Q_0^\AA(\mu_{1,2})|^2\lesssim\int_{\mu_{[12]}=\mu}|\mu_1|_*^{-4}|\mu_2|_*^{-4}\lesssim|\mu|_*^{-3}.
	\end{align*}
	The case $\tau=\AB,\BB$ are parallel. For $\tau=\AAB$, we have
	\begin{align*}
		\int_{\mu_{[12]}+\nu_{-1}=\mu}
			|Q_0^\AAB(\mu_{1,2},\nu_1)|^2
		\lesssim
			\int_{\mu_{[12]}+\nu_{-1}=\mu}
				|\mu_1|_*^{-4}|
				\mu_2|_*^{-4}
				|\nu_{-1}|_*^{-4}
		\lesssim
			|\mu|_*^{-2}.
	\end{align*}
	For $\tau=\IAA,\IAB,\IAAB$, we have
	\begin{multline*}
		\int_{\mu_{[1\dots p]}+\nu_{[(-1)\dots(-q)]}=\mu}
			|Q_0^\tau(\mu_{1,\dots,p},\nu_{1,\dots,q})|^2\\
		\begin{aligned}
			&\lesssim
				|\mu|_*^{-4}
				\int_{\mu_{[1\dots p]}+\nu_{[(-1)\dots(-q)]}=\mu}
					|Q_0^{\tau_0}(\mu_{1,\dots,p},\nu_{1,\dots,q})|^2\\
			&\lesssim
				\begin{cases}
					|\mu|_*^{-7},	&\tau=\IAA,\IAB,\\
					|\mu|_*^{-6},	&\tau=\IAAB.
				\end{cases}
		\end{aligned}
	\end{multline*}
	Here, we used \pref{prop_20161205074106} (2).
	The required estimates of $Q_0^\tau-Q_0^{\epsilon,\tau}$ is obtained by similar computations
	by using $Q_0^\tau-Q_0^{\epsilon,\tau}=(1-\chi^\epsilon)Q_0^\tau$ and the inequality
	\begin{multline}\label{eq_20161206035717}
		|1-\chi^\epsilon(k_{1,\dots,p},l_{1,\dots,q})|\\
		\lesssim
			\epsilon^\lambda
			\left(
				\sum_{i=1}^p
					|k_i|^\lambda
				+
				\sum_{j=1}^q
					|l_j|^\lambda
			\right)
		\lesssim
			\epsilon^\lambda
			\left(
				\sum_{i=1}^p
					|\mu_i|_*^\lambda
				+
				\sum_{j=1}^q
					|\nu_j|_*^\lambda
			\right)
	\end{multline}
	for every $\lambda\in(0,1]$.
\end{proof}

\begin{proof}[Proof of \tref{thm_20161128051824} for $\A$, $\AA$, $\AB$, $\IAA$, $\IAB$, $\IAAB$]
	\pref[section6:from Q to X]{prop_1481179726000} imply the conclusion.
	Note that we need to prove $X^\IAAB$ is H\"older continuous in time.
\end{proof}

\subsubsection{Higher order terms}\label{section6:estimate of higher order terms}%%%%%%%%%%%%%%%%%%%%%%%%%%%%%%%%%%%%%%%%%%%%

Now we consider $X^\tau$ for $\tau=\IAABoA$, $\IAABoB$, $\IAAoAB$, $\IAAoBB$, $\IABoAB$, $\IABoBB$, $\IAABoAB$, $\IAABoBB$.
We define $(\tau_1,\tau_2)$ for each $\tau$ as in \pref{section6:expansion of tau1 rs tau2}.
We note that $X^\tau$ is written as a sum of It\^o-Wiener integrals
which have good kernels $f^{(\tau_1,\tau_2,g)}$ for $\#g=0,1$ and $\mathfrak{R}f^{(\tau_1,\tau_2,g)}$ for $\#g=2$
such as \eqref{eq_1481533581}.
We will estimate these functions for the case $\#g=0,1,2$ separately.

First we consider the functions $Q_0^\tau=Q_0^{(\tau_1,\tau_2,\emptyset)}$.

\begin{proposition}\label{prop_1481179894}
For $\tau=\IAABoA$, $\IAABoB$, $\IAAoAB$, $\IAAoBB$, $\IABoAB$, $\IABoBB$, $\IAABoAB$, $\IAABoBB$, we have
\begin{gather*}
	\int_{\mu_{[1\dots p]}+\nu_{[(-1)\dots(-q)]}=\mu}
		|
			Q_0^\tau(\mu_{1,\dots,p},\nu_{1,\dots,q})
		|^2
	\lesssim
		|\mu|_*^{-2\gamma_\tau+\kappa}
		|k|_*^{-2\delta_\tau-\kappa},\\
	\int_{\mu_{[1\dots p]}+\nu_{[(-1)\dots(-q)]}=\mu}
		|
			(Q_0^\tau-Q_0^{\epsilon,\tau})(\mu_{1,\dots,p},\nu_{1,\dots,q})
		|^2
	\lesssim
		\epsilon^\lambda
		|\mu|_*^{-2\gamma_\tau+\kappa+\lambda}
		|k|_*^{-2\delta_\tau-\kappa+\lambda},
\end{gather*}
for every small $\kappa>0$ and $\lambda>0$, where
\begin{align*}
	(\gamma_\tau,\delta_\tau)
	=
		\begin{cases}
			(2,\frac{1}{2}),	&\tau=\IAABoA,\IAABoB,\\
			(\frac{3}{2},1),	&\tau=\IAAoAB,\IAAoBB,\IABoAB,\IABoBB,\\
			(\frac{3}{2},\frac{1}{2}),	&\tau=\IAABoAB,\IAABoBB.
		\end{cases}
	\end{align*}
\end{proposition}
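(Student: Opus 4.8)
The plan is to reduce both bounds to the kernel estimates of \pref{prop_1481179726000} for the subtrees $\tau_1,\tau_2$, combined with the convolution bound of \lref{section6:-a-b to -a-b+5} and the resonance gain of \lref{section6:reduce singularity}. By \pref{section6:expansion of tau1 rs tau2}(2) the no-contraction kernel factorizes as $Q_0^\tau=\psi_\circ(k^{(1)},k^{(2)})\,Q_0^{\tau_1}\,Q_0^{\tau_2}$, where, writing $\mu^{(1)}=\mu_{[1\dots p_1]}+\nu_{[(-1)\dots(-q_1)]}$ and $\mu^{(2)}=\mu-\mu^{(1)}$ for the output frequencies of the two subtrees, $k^{(1)},k^{(2)}$ are their spatial parts and $Q_0^{\tau_1}$, $Q_0^{\tau_2}$ depend on disjoint blocks of integration variables. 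Fibering the hyperplane $\{\mu_{[1\dots p]}+\nu_{[(-1)\dots(-q)]}=\mu\}$ over $\{\mu^{(1)}+\mu^{(2)}=\mu\}$ and integrating out the internal variables of each subtree first, \pref{prop_1481179726000} gives
\[
\int_{\mu_{[1\dots p]}+\nu_{[(-1)\dots(-q)]}=\mu}|Q_0^\tau|^2\lesssim\int_{\mu^{(1)}+\mu^{(2)}=\mu}|\psi_\circ(k^{(1)},k^{(2)})|^2\,|\mu^{(1)}|_*^{-2\gamma_{\tau_1}}\,|\mu^{(2)}|_*^{-2\gamma_{\tau_2}}\,d\mu^{(1)}.
\]

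The difficulty is that $2\gamma_{\tau_1}\in\{6,7\}$ exceeds $5$, so \lref{section6:-a-b to -a-b+5} cannot be applied directly; this is exactly where $\psi_\circ$ is used. By \lref{section6:reduce singularity}, $|\psi_\circ(k^{(1)},k^{(2)})|^2\lesssim|k|_*^{-2\theta}|k^{(2)}|_*^{2\theta}$ for every $\theta>0$, and on $\supp\psi_\circ$ one has $|k^{(2)}|_*\approx|k^{(1)}|_*\le|\mu^{(1)}|_*$. Taking $\theta=\delta_\tau+\tfrac{\kappa}{2}$ and using $|k^{(2)}|_*^{2\theta}\lesssim|\mu^{(1)}|_*^{2\delta_\tau+\kappa}$ replaces the exponent of $|\mu^{(1)}|_*$ by $-2\gamma_{\tau_1}+2\delta_\tau+\kappa=-5+\kappa$; here one checks case by case that $2\delta_\tau=2\gamma_{\tau_1}-5$ (equivalently $\delta_\tau=\gamma_{\tau_1}-\tfrac{5}{2}$), which is the arithmetic content of the stated values of $\delta_\tau$. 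Now \lref{section6:-a-b to -a-b+5} applies with $\alpha=5-\kappa$ and $\beta=2\gamma_{\tau_2}\in\{3,4\}$, both in $(0,5)$ and with $\alpha+\beta>5$, giving $\int_{\mu^{(1)}+\mu^{(2)}=\mu}|\mu^{(1)}|_*^{-5+\kappa}|\mu^{(2)}|_*^{-2\gamma_{\tau_2}}\,d\mu^{(1)}\lesssim|\mu|_*^{-2\gamma_{\tau_2}+\kappa}$. Combining and using $\gamma_\tau=\gamma_{\tau_2}$ (which also matches the table) yields the first bound $\lesssim|\mu|_*^{-2\gamma_\tau+\kappa}|k|_*^{-2\delta_\tau-\kappa}$.

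For the difference I would insert \eqref{eq_20161206035717}, so that $|Q_0^\tau-Q_0^{\epsilon,\tau}|=|1-\chi^\epsilon(k_{1,\dots,p},l_{1,\dots,q})|\,|Q_0^\tau|\lesssim\epsilon^\lambda\bigl(\sum_i|\mu_i|_*^\lambda+\sum_j|\nu_j|_*^\lambda\bigr)|Q_0^\tau|$, and rerun the computation term by term, each extra factor $|\mu_i|_*^\lambda$ or $|\nu_j|_*^\lambda$ being absorbed into the polynomial decay of the $Q$-factor that carries that variable. Because \pref{prop_1481179726000} and every step above leave $\kappa$-room to spare, the only cost is a loss of at most $\lambda$ in the output exponents, and since $|k|_*\le|\mu|_*$ the result can be written in the stated form $\lesssim\epsilon^\lambda|\mu|_*^{-2\gamma_\tau+\kappa+\lambda}|k|_*^{-2\delta_\tau-\kappa+\lambda}$.

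The same recipe applies uniformly to all eight symbols, the only case-dependent input being the pair $(\gamma_{\tau_1},\gamma_{\tau_2})$ read off from \tblref{tbl_contraction_symbols} (with the circles and squares of the subtrees bookkept as in \pref{section6:expansion of tau1 rs tau2}). The main obstacle, and the point requiring the care indicated above, is the transfer step via $\psi_\circ$: one must verify in each instance that after spending the resonance gain the remaining singularity of $|\mu^{(1)}|_*$ is integrable (exponent exactly $5-\kappa<5$) while still leaving enough decay to produce $|\mu|_*^{-2\gamma_\tau}$. It is this constraint that pins down the values of both $\gamma_\tau$ and $\delta_\tau$.
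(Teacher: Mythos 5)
Your argument coincides with the paper's proof: the same factorization of $Q_0^\tau$ via \pref{section6:expansion of tau1 rs tau2}, the same use of \lref{section6:reduce singularity} to convert the resonance factor into $|k|_*^{-2\delta_\tau-\kappa}$ times enough extra decay to bring the $|\mu^{(1)}|_*$-exponent up to the integrable value $-5+\kappa$, then \lref{section6:-a-b to -a-b+5}, and finally \eqref{eq_20161206035717} for the difference. The only difference is presentational — you parametrize the three cases uniformly by $\theta=\delta_\tau+\tfrac{\kappa}{2}$ where the paper writes them out separately — so the proposal is correct.
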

\begin{proof}
	We have
	\begin{multline*}
		\int_{\mu_{[1\dots p]}+\nu_{[(-1)\dots(-q)]}=\mu}
			|Q_0^\tau(\mu_{1,\dots,p},\nu_{1,\dots,q})|^2\\
		\begin{aligned}
			&
				\lesssim
					\int_{\mu_1'+\mu_2'=\mu}
						\psi_\circ(k_1',k_2')^2
						\int_{\mu_{[1\dots p_1]}+\nu_{[(-1)\dots(-q_1)]}=\mu_1'}
							|Q_0^{\tau_1}(\mu_{1,\dots,p_1},\nu_{1,\dots,q_1})|^2\\
			&\qquad
						\times
						\int_{\mu_{[(p_1+1)\dots p]}+\nu_{[(-q_1-1)\dots(-q)]}=\mu_1'}
							|Q_0^{\tau_2}(\mu_{(p_1+1),\dots,p},\nu_{(q_1+1),\dots,q})|^2\\
			&\lesssim
				\int_{\mu_1'+\mu_2'=\mu}
					\psi_\circ(k_1',k_2')^2
					|\mu_1'|_*^{-2\gamma_{\tau_1}}
					|\mu_2'|_*^{-2\gamma_{\tau_2}}.
		\end{aligned}
	\end{multline*}
	We estimate them by using \pref{prop_1481179726000} and \lref{section6:reduce singularity}.
	In the case $\tau=\IAABoA,\IAABoB$, \pref{prop_1481179726000} implies $(\gamma_{\tau_1},\gamma_{\tau_2})=(3,2)$.
	Applying \lref{section6:reduce singularity}, we have
	\begin{align*}
		\psi_\circ(k_1',k_2')^2
		|\mu_1'|_\ast^{-6}
		|\mu_2'|_\ast^{-4}
		&\lesssim
			|k'_1+k'_2|_\ast^{-1-\kappa}
			|k'_1|_\ast^{1+\kappa}
			|\mu_1'|_\ast^{-6}
			|\mu_2'|_\ast^{-4}\\
		&\leq
			|k|_\ast^{-1-\kappa}
			|\mu_1'|_\ast^{-5+\kappa}
			|\mu_2'|_\ast^{-4}.
	\end{align*}
	for any $\mu_1'+\mu_2'=\mu$.
	Hence
	\begin{align*}
		\int_{\mu_1'+\mu_2'=\mu}
			\psi_\circ(k_1',k_2')^2
			|\mu_1'|_*^{-6}
			|\mu_2'|_*^{-4}
		&\lesssim
			|k|_*^{-1-\kappa}
			\int_{\mu_1'+\mu_2'=\mu}
			|\mu_1|_*^{-5+\kappa}
			|\mu_2|_*^{-4}\\
		&\lesssim
			|k|_*^{-1-\kappa}
			|\mu|_*^{-4+\kappa}.
	\end{align*}
	For $\tau=\IAAoAB$, $\IAAoBB$, $\IABoAB$, $\IABoBB$,
	$(\gamma_{\tau_1},\gamma_{\tau_2})=(\frac72,\frac32)$ implies
	\begin{align*}
		\int_{\mu_1'+\mu_2'=\mu}
			\psi_\circ(k_1',k_2')^2
			|\mu_1'|_*^{-7}
			|\mu_2'|_*^{-3}
		&\lesssim
			|k|_*^{-2-\kappa}
			\int_{\mu_1'+\mu_2'=\mu}
				|\mu_1|_*^{-5+\kappa}
				|\mu_2|_*^{-3}\\
		&\lesssim
			|k|_*^{-2-\kappa}
			|\mu|_*^{-3+\kappa}.
	\end{align*}
	For $\tau=\IAABoAB$, $\IAABoBB$, $(\gamma_{\tau_1},\gamma_{\tau_2})=(3,\frac32)$ implies
	\begin{align*}
		\int_{\mu_1'+\mu_2'=\mu}
			\psi_\circ(k_1',k_2')^2
			|\mu_1'|_*^{-6}
			|\mu_2'|_*^{-3}
		&\lesssim
			|k|_*^{-1-\kappa}
			\int_{\mu_1'+\mu_2'=\mu}
				|\mu_1|_*^{-5+\kappa}
				|\mu_2|_*^{-3}\\
		&\lesssim
			|k|_*^{-1-\kappa}
			|\mu|_*^{-3+\kappa}.
	\end{align*}
	By noting \eqref{eq_20161206035717}, we can estimate $Q_0^\tau-Q_0^{\epsilon,\tau}$ in a similar way.
\end{proof}

Next we consider the functions $Q_0^\tau$
for $\tau=\IAADoC$, $\IACBoD$, $\IACoAD$, $\IACoBD$, $\ICBoAD$, $\IADoCB$, $\IADoBD$, $\IACBoAD$, $\IAADoBC$, $\IACBoBD$.
We show three propositions; \prefs{prop_1481179916}{prop_1481179943}{prop_1481179962}.
\begin{proposition}\label{prop_1481179916}
For $\tau=\IAADoC$, $\IACBoD$, we have
\begin{gather*}
	\int_{\mu_{[1\dots p]}+\nu_{[(-1)\dots(-q)]}=\mu}
		|Q_0^\tau(\mu_{1,\dots,p},\nu_{1,\dots,q})|^2
	\lesssim
		|\mu|_*^{-5},\\
	\int_{\mu_{[1\dots p]}+\nu_{[(-1)\dots(-q)]}=\mu}
		|(Q_0^\tau-Q_0^{\epsilon,\tau})(\mu_{1,\dots,p},\nu_{1,\dots,q})|^2
	\lesssim
		\epsilon^\lambda
		|\mu|_*^{-5+\lambda},
\end{gather*}
for every small $\lambda>0$.
\end{proposition}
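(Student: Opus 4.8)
The plan is to use that both $\IAADoC$ and $\IACBoD$ are $\#g=1$ contractions, so by \pref{section6:expansion of tau1 rs tau2} their kernels are obtained from the $\#g=0$ kernels by integrating out a single paired pair of variables; after this contraction one is left with an integral involving only the function $Q$, which can be estimated by iterating \lref{section6:-a-b to -a-b+5}. By \pref{section6:from Q to X} it suffices to prove the two displayed bounds on $\int_{\mu_{[1\dots p]}+\nu_{[(-1)\dots(-q)]}=\mu}|Q_0^\tau|^2$ and the analogue for $Q_0^\tau-Q_0^{\epsilon,\tau}$.

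Concretely, for $\tau=\IAADoC$ I take $(\tau_1,\tau_2)=(\IAAB,\A)$ and $g=\{(3,1)\}$. Carrying out the $\delta$-integration in the formula displayed right after \pref{section6:expansion of tau1 rs tau2} (which forces $\mu_3=-\nu_{-1}$, hence $l_1=k_3$ and $\CmplConj{Q(-\nu_{-1})}Q(\mu_3)=|Q(\mu_3)|^2$) one obtains
\[
Q_0^{\IAADoC}(\mu_1,\mu_2)
= Q(\mu_1)Q(\mu_2)\int_E \psi_\circ(k_{[12]}-k_3,k_3)\,Q(\mu_{[12]}-\mu_3)\,|Q(\mu_3)|^2\,d\mu_3 .
\]
Bounding $|\psi_\circ|\le 1$ and $|Q(\cdot)|\lesssim|\cdot|_\ast^{-2}$, \lref{section6:-a-b to -a-b+5} with exponents $4$ and $2$ (sum $6>5$) shows the inner integral is $\lesssim|\mu_{[12]}|_\ast^{-1}$, so on $\mu_{[12]}=\mu$ one has $|Q_0^{\IAADoC}(\mu_1,\mu_2)|^2\lesssim|\mu_1|_\ast^{-4}|\mu_2|_\ast^{-4}|\mu|_\ast^{-2}$; a second application of \lref{section6:-a-b to -a-b+5} (exponents $4,4$, sum $8>5$) gives $\int_{\mu_{[12]}=\mu}|Q_0^{\IAADoC}|^2\lesssim|\mu|_\ast^{-2}\cdot|\mu|_\ast^{-3}=|\mu|_\ast^{-5}$. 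The case $\tau=\IACBoD$, with $(\tau_1,\tau_2)=(\IAAB,\B)$ and $g=\{(1,2)\}$, is entirely parallel: the contracted circle–square pair again produces a factor $|Q|^2$ inside the internal integral, the dependence on the two surviving variables factors through two $Q$-type factors, and the same two applications of \lref{section6:-a-b to -a-b+5} yield $|\mu|_\ast^{-5}$.

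For the convergence estimates I would write $Q_0^\tau-Q_0^{\epsilon,\tau}=(1-\chi^\epsilon)\,Q_0^\tau$, where $\chi^\epsilon$ stands for the product of the mollifier factors over all the original integration variables (including the one eliminated by the $\delta$), and invoke \eqref{eq_20161206035717}: this inserts a factor $\epsilon^\lambda\sum_i|\mu_i|_\ast^\lambda$, and for each summand one simply spends the $|\mu_i|_\ast^\lambda$ on the corresponding $Q$- or $|Q|^2$-factor. For $\lambda\in(0,1]$ small every exponent entering \lref{section6:-a-b to -a-b+5} is perturbed by at most $\lambda$ and still lies in $(0,5)$ with sum exceeding $5$, so rerunning the two-step estimate above produces $\epsilon^\lambda|\mu|_\ast^{-5+c\lambda}$ for a fixed constant $c$, and renaming $\lambda$ gives the claimed bound. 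The only delicate point — the main, though routine, obstacle — is the bookkeeping: one must extract the contracted kernels precisely from \pref{section6:expansion of tau1 rs tau2}, correctly identify which variable the $\delta$ removes and the resulting $|Q|^2$, and check admissibility of the exponents at each use of \lref{section6:-a-b to -a-b+5}, in particular after the $\lambda$-perturbation. Since here $\delta_\tau=0$ there is no gain in $|k|_\ast$ to extract, so $\psi_\circ$ may be bounded trivially by $1$ and the finer estimate \lref{section6:reduce singularity} is not needed, in contrast with \pref{prop_1481179894}.
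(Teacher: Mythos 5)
Your proposal is correct and follows essentially the same route as the paper: the same explicit contracted kernel for $\IAADoC$ (the $\delta$-integration producing the internal $|Q(\mu_3)|^2$ factor), the trivial bound $|\psi_\circ|\le 1$, two successive applications of \lref{section6:-a-b to -a-b+5} with exponents $(4,2)$ and then $(4,4)$ to arrive at $|\mu|_*^{-5}$, and the difference estimate via \eqref{eq_20161206035717} with the $\lambda$-perturbation absorbed into the same convolution bounds. The only cosmetic difference is that the paper tracks the perturbed exponents precisely enough to land on $|\mu|_*^{-5+\lambda}$ directly rather than $|\mu|_*^{-5+c\lambda}$, but careful bookkeeping of your own argument yields $c=1$, so nothing is lost.
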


\begin{proof}
	We consider the case $\tau=\IAADoC$. Note
	\begin{align*}
		Q_0^\IAADoC(\mu_{1,2})
		&=
			\int_{E^2}
				\psi_\circ(k_{[12]}-l_1,k_3)
				Q(\mu_{[12]}+\nu_{-1})\\
		&\phantom{=}\quad\qquad
				\times
				Q(\mu_1)
				Q(\mu_2)
				\CmplConj{Q(-\nu_{-1})}
				Q(\mu_3)\,
				\delta(\mu_3+\nu_{-1})\,
				d\mu_3d\nu_1\\
		&=
			Q(\mu_1)
			Q(\mu_2)
			\int_E
				\psi_\circ(k_{[12]}-k_3,k_3)
				Q(\mu_{[12]}-\mu_3)
				|Q(\mu_3)|^2\,
				d\mu_3.
	\end{align*}
	By using the estimate $|\psi_\circ(k_{[12]}-k_3,k_3)|\leq 1$,
	we have
	\begin{align*}
		|Q_0^\IAADoC(\mu_{1,2})|
		\lesssim
			|\mu_1|_*^{-2}
			|\mu_2|_*^{-2}
			\int_E
				|\mu_{[12]}-\mu_3|_*^{-2}
				|\mu_3|_*^{-4}\,
				d\mu_3
		\lesssim
			|\mu_1|_*^{-2}
			|\mu_2|_*^{-2}
			|\mu_{[12]}|_*^{-1}.
	\end{align*}
	Hence
	\begin{align*}
		\int_{\mu_{[12]}=\mu}
			|Q_0^\IAADoC(\mu_{1,2})|^2
		\lesssim
			|\mu|_*^{-2}
			\int_{\mu_{[12]}=\mu}
				|\mu_1|_*^{-4}
				|\mu_2|_*^{-4}
		\lesssim
			|\mu|_*^{-5}.
	\end{align*}

	We will show the second inequality for $\tau=\IAADoC$.
	The inequality \eqref{eq_20161206035717} implies
	\begin{multline*}
		|Q_0^\IAADoC(\mu_{1,2})-Q_0^{\epsilon,\IAADoC}(\mu_{1,2})|
		\lesssim
			|\mu_1|_\ast^{-2}
			|\mu_2|_\ast^{-2}
			\cdot
			\epsilon^{\lambda/2}
			\{
				|\mu_1|_\ast^\lambda
				+|\mu_2|_\ast^\lambda
				+|\mu_{[12]}|_\ast^\lambda
			\}^{1/2}
			|\mu_{[12]}|_\ast^{-1}.
	\end{multline*}
	Hence we obtain the second inequality for $\tau=\IAADoC$.

	The assertion for $\IACBoD$ is verified in the same way.
\end{proof}

\begin{proposition}\label{prop_1481179943}
	For $\tau=\IACoAD,\IACoBD,\ICBoAD,\IADoCB,\IADoBD$, we have
	\begin{gather*}
		\int_{\mu_{[1\dots p]}+\nu_{[(-1)\dots(-q)]}=\mu}
			|Q_0^\tau(\mu_{1,\dots,p},\nu_{1,\dots,q})|^2
		\lesssim
			|\mu|_*^{-4+\kappa}|k|_*^{-1-\kappa},\\
		\int_{\mu_{[1\dots p]}+\nu_{[(-1)\dots(-q)]}=\mu}
			|(Q_0^\tau-Q_0^{\epsilon,\tau})(\mu_{1,\dots,p},\nu_{1,\dots,q})|^2
		\lesssim
			\epsilon^\lambda
			|\mu|_*^{-4+\kappa+\lambda}
			|k|_*^{-1-\kappa},
	\end{gather*}
	for every small $\kappa>0$ and $\lambda>0$.
\end{proposition}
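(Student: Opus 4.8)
The plan is to follow the scheme already used for \pref{prop_1481179726000}, \pref{prop_1481179894} and \pref{prop_1481179916}: first produce a pointwise bound on $Q_0^\tau$ by carrying out the single internal integration created by the contraction $g$ with $\#g=1$, and then obtain the asserted estimates by a further application of \lref{section6:-a-b to -a-b+5} to the hyperplane integral of $|Q_0^\tau|^2$. By \tblref{tbl_contraction_symbols} and \pref{prop_1481179726000}, each of the five symbols $\IACoAD,\IACoBD,\ICBoAD,\IADoCB,\IADoBD$ comes from a pair $(\tau_1,\tau_2)$ with $(\gamma_{\tau_1},\gamma_{\tau_2})=(\tfrac72,\tfrac32)$, i.e.\ an integrated tree ($\IAA$ or $\IAB$) resonating against a second-order tree ($\AB$ or $\BB$); so one generic computation covers all cases once the contraction combinatorics are written out.

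First I would use \pref{section6:expansion of tau1 rs tau2}(2) to write $Q_0^\tau$ explicitly. Since $\#g=1$, the contraction identifies a circle variable of $\tau_1$, carrying a leaf factor $Q(\mu_1)$, with a square variable of $\tau_2$; the delta function forces the two momenta to agree (up to the conjugation sign), and after integrating in $\mu_1$ one arrives at an expression of the shape
\[
  Q_0^\tau(\text{retained variables})
  =
    Q(u_1)\,Q(u_2)
    \int_E
      \psi_\circ(a,b)\,
      Q(\mu_1 + v)\,
      |Q(\mu_1)|^2\,
      d\mu_1 ,
\]
where $u_1,u_2$ are the two retained leaf momenta, $\mu_1$ is the integrated momentum, $v$ is the internal momentum of the integration inside $\tau_1$ (the sign in $Q(\mu_1+v)$ is immaterial), and — this has to be verified in each case by solving the contraction and the total-momentum delta functions — $v$ coincides, up to a sign, with one of $u_1,u_2$, while the two arguments $a,b$ of $\psi_\circ$ sum to the external momentum $k$ and the one comparable to $|\mu_1+v|_*$ on $\supp\psi_\circ$, say $a$, satisfies $|a|_*\lesssim|\mu_1+v|_*$.

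Then, using $|Q(\mu)|\lesssim|\mu|_*^{-2}$, the symmetry of $\psi_\circ$ together with \lref{section6:reduce singularity} in the form $|\psi_\circ(a,b)|\lesssim|a+b|_*^{-\theta}|a|_*^{\theta}$ with $\theta=\tfrac{1+\kappa}{2}$, and the bound $|a|_*\lesssim|\mu_1+v|_*$, I absorb the factor $|a|_*^\theta$ into $Q(\mu_1+v)$ and apply \lref{section6:-a-b to -a-b+5} to the $\mu_1$-integral (legitimate since $(2-\theta)+4>5$), obtaining $\int_E|\mu_1+v|_*^{-2+\theta}|\mu_1|_*^{-4}\,d\mu_1\lesssim|v|_*^{-1+\theta}$. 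Combining this with the retained leaf factor $|Q(v)|\lesssim|v|_*^{-2}$ gives $|Q_0^\tau|\lesssim|k|_*^{-\theta}|v|_*^{-3+\theta}|w|_*^{-2}$, $w$ being the other retained leaf; a final application of \lref{section6:-a-b to -a-b+5} to $\int_{v\pm w=\mu}|Q_0^\tau|^2$ yields $|k|_*^{-2\theta}|\mu|_*^{-(6-2\theta)-4+5}=|k|_*^{-1-\kappa}|\mu|_*^{-4+\kappa}$, the first claimed bound. For the second bound, since $Q_0^{\epsilon,\tau}$ carries a factor $\chi^\epsilon$ of \emph{all} momenta (including the integrated $\mu_1$), I would write $Q_0^\tau-Q_0^{\epsilon,\tau}=(1-\chi^\epsilon)Q_0^\tau$ inside the $\mu_1$-integral and use \eqref{eq_20161206035717}; this multiplies the previous estimate by $\epsilon^\lambda$ and inserts one extra factor $|\cdot|_*^\lambda$ on one of the momenta, which the same two applications of \lref{section6:-a-b to -a-b+5} absorb into the $|\mu|_*$-exponent, giving $\epsilon^\lambda|\mu|_*^{-4+\kappa+\lambda}|k|_*^{-1-\kappa}$ for $\lambda$ small enough.

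The main obstacle is the case-by-case bookkeeping rather than any single hard estimate. For each of the five symbols one must renumber the circle/square slots according to \tblref{tbl_contraction_symbols}, solve the contraction and total-momentum delta functions to confirm both that $a+b=k$ and that the internal momentum $v$ is (up to a sign) a retained leaf, and track whether the resulting It\^o--Wiener integral is $\WienerIntCmpl{2}{0}$, $\WienerIntCmpl{1}{1}$ or $\WienerIntCmpl{0}{2}$ — in the $\WienerIntCmpl{1}{1}$ and $\WienerIntCmpl{0}{2}$ cases the matching leaf $v$ sits on the square side, but the exponent count is unchanged. One also has to keep $\theta=\tfrac{1+\kappa}{2}<1$ and check the two hypotheses $\alpha+\beta>5$ of \lref{section6:-a-b to -a-b+5} at both steps (and, in the $\epsilon$-estimate, for the worst placement of the extra $|\cdot|_*^\lambda$), all of which hold with room to spare.
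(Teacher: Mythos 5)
Your proposal is correct and follows essentially the same route as the paper's proof: the same explicit contraction formula, the same use of \lref{section6:reduce singularity} with $\theta=\tfrac{1+\kappa}{2}$ to shift the singularity onto $|k|_*$ while absorbing $|a|_*^\theta$ into the internal propagator, the same two applications of \lref{section6:-a-b to -a-b+5} (first to the contracted $\mu_2$-integral, then to the hyperplane integral), and the same treatment of the $\epsilon$-difference via $(1-\chi^\epsilon)$ and \eqref{eq_20161206035717}. The paper likewise writes out only the representative case $\tau=\IACoAD$ and dispatches the remaining four by the identical bookkeeping you describe.
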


\begin{proof}
	We give a proof for the case $\tau=\IACoAD$ only, because we can show the other cases in the same way.
	Note
	\begin{align*}
		Q_0^\IACoAD(\mu_{1,3})
		&=
			\int_{E^2}\psi_\circ(k_{[12]},k_3-l_1)
				Q(\mu_{[12]})\\
		&\phantom{=}\quad\qquad
				\times
				Q(\mu_1)
				Q(\mu_2)
				Q(\mu_3)
				\CmplConj{Q(-\nu_{-1})}\,
				\delta(\mu_2+\nu_{-1})\,
				d\mu_2d\nu_1\\
		&=
			Q(\mu_1)
			Q(\mu_3)
			\int_E
				\psi_\circ(k_1+k_2,k_3-k_2)
				Q(\mu_{[12]})
				|Q(\mu_2)|^2\,
				d\mu_2.
	\end{align*}
	\lref{section6:reduce singularity} implies
	\begin{align*}
		\psi_\circ(k_1+k_2,k_3-k_2)
		\lesssim
			|k_1+k_3|_\ast^{-\frac{1+\kappa}{2}}
			|k_1+k_2|_\ast^{\frac{1+\kappa}{2}}
		\leq
			|k_1+k_3|_\ast^{-\frac{1+\kappa}{2}}
			|\mu_{[12]}|_\ast^{\frac{1+\kappa}{2}}.
	\end{align*}
	Combining them, we have
	\begin{align*}
		|Q_0^\IACoAD(\mu_{1,3})|
		&\lesssim
			|\mu_1|_*^{-2}
			|\mu_3|_*^{-2}
			|k_1+k_3|_\ast^{-\frac{1+\kappa}2}
			\int_E
				|\mu_{[12]}|_*^{-\frac{3-\kappa}{2}}
				|\mu_2|_*^{-4}\,
				d\mu_2\\
		&\lesssim
			|\mu_1|_*^{-2}
			|\mu_3|_*^{-2}
			|k_1+k_3|_\ast^{-\frac{1+\kappa}{2}}
			|\mu_1|_*^{-\frac{1-\kappa}{2}}
	\end{align*}
	Hence
	\begin{align*}
		\int_{\mu_{[13]}=\mu}|Q_0^\IACoAD(\mu_{1,3})|^2
		\lesssim
			|k|_*^{-1-\kappa}
			\int_{\mu_{[13]}=\mu}
				|\mu_1|_*^{-5+\kappa}
				|\mu_3|_*^{-4}
		\lesssim
			|k|_*^{-1-\kappa}
			|\mu|_*^{-4+\kappa}.
	\end{align*}
	In a similar way, we see
	\begin{multline*}
		|Q_0^\IACoAD(\mu_{1,3})-Q_0^{\epsilon,\IACoAD}(\mu_{1,3})|\\
		\lesssim
			|\mu_1|_*^{-2}
			|\mu_3|_*^{-2}
			|k_1+k_3|_\ast^{-\frac{1+\kappa}{2}}
			\cdot
			\epsilon^{\lambda/2}
			\{
				|\mu_1|_\ast^\lambda
				+|\mu_2|_\ast^\lambda
				+|\mu_{[12]}|_\ast^\lambda
			\}^{1/2}
			|\mu_1|_\ast^{-\frac{1-\kappa}{2}},
	\end{multline*}
	which implies the second assertion.
	The proof has been completed.
\end{proof}

\begin{proposition}\label{prop_1481179962}
	For $\tau=\IACBoAD$, $\IAADoBC$, $\IACBoBD$, we have
	\begin{gather*}
		\int_{\mu_{[1\dots p]}+\nu_{[(-1)\dots(-q)]}=\mu}
			|Q_0^\tau(\mu_{1,\dots,p},\nu_{1,\dots,q})|^2
		\lesssim
			|\mu|_*^{-4+\kappa}
			|k|_*^{-\kappa},\\
		\int_{\mu_{[1\dots p]}+\nu_{[(-1)\dots(-q)]}=\mu}
			|(Q_0^\tau-Q_0^{\epsilon,\tau})(\mu_{1,\dots,p},\nu_{1,\dots,q})|^2
		\lesssim
			\epsilon^\lambda
			|\mu|_*^{-4+\kappa+\lambda}
			|k|_*^{-\kappa}
	\end{gather*}
	for every small $\kappa>0$ and $\lambda>0$.
\end{proposition}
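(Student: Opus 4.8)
The plan is to follow closely the proofs of \prefs{prop_1481179916}{prop_1481179943}{prop_1481179894}. For each of the three symbols, \pref{section6:expansion of tau1 rs tau2} together with \pref{prop_20161205074106}(2) exhibits $Q_0^\tau$ as an integral over the single contracted variable of a product of factors $Q(\,\cdot\,)$ and one factor $\psi_\circ(\,\cdot\,,\,\cdot\,)$. I would first carry out that inner integration with \lref{section6:-a-b to -a-b+5}, then square, and finally integrate over the hyperplane $\{\mu_{[1\dots p]}+\nu_{[(-1)\dots(-q)]}=\mu\}$, invoking \lref{section6:-a-b to -a-b+5} again at each step and using \lref{section6:reduce singularity} to squeeze a small power of $|k|_\ast$ out of $\psi_\circ$.

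Concretely, take $\tau=\IACBoAD$, for which $(\tau_1,\tau_2)=(\IAAB,\AB)$ and $g=\{(1,2)\}$ (the case $\{(2,2)\}$ being identical by symmetry). Substituting the formulas of \pref{prop_20161205074106} and using that the contraction forces $k_1=l_2$, so that the two copies $Q(\mu_1)\,\CmplConj{Q(-\nu_{-2})}$ collapse to $|Q(\mu_1)|^2$, one gets
\begin{align*}
  Q_0^\IACBoAD(\mu_2,\mu_3,\nu_1)
  =Q(\mu_2)\,\CmplConj{Q(-\nu_{-1})}\,Q(\mu_3)
   \int_E
     \psi_\circ(k_1+k_2-l_1,\,k_3-k_1)\,
     Q(\mu_1+\mu_2+\nu_{-1})\,
     |Q(\mu_1)|^2\,
   d\mu_1.
\end{align*}
The key structural fact is that the two arguments of $\psi_\circ$ add up to the output frequency $k=k_2+k_3-l_1$, which is independent of the integration variable (the contracted frequencies always cancel in this sum); hence \lref{section6:reduce singularity} gives $|\psi_\circ(\,\cdot\,,\,\cdot\,)|\lesssim|k|_\ast^{-\theta}\,|k_1+k_2-l_1|_\ast^{\theta}$ for a small $\theta$, and on the support of $\psi_\circ$ the second factor is controlled by $|\mu_1+\mu_2+\nu_{-1}|_\ast^{\theta}$. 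Absorbing it into $|Q(\mu_1+\mu_2+\nu_{-1})|^2\lesssim|\,\cdot\,|_\ast^{-2}$ and applying \lref{section6:-a-b to -a-b+5} to the $\mu_1$-integral against $|Q(\mu_1)|^2\lesssim|\mu_1|_\ast^{-4}$ yields the bound $|\mu_2+\nu_{-1}|_\ast^{-1+\theta}$; then using $|Q|\lesssim|\,\cdot\,|_\ast^{-2}$ on the remaining factors and integrating over $\{\mu_2+\mu_3+\nu_{-1}=\mu\}$ with \lref{section6:-a-b to -a-b+5} twice more produces $\lesssim|\mu|_\ast^{-4+2\theta}|k|_\ast^{-2\theta}$, which is the asserted bound after renaming $2\theta$ as $\kappa$ (note $\theta<1$ keeps the effective $\gamma_\tau=2-\theta$ above $1$, as required by \pref{section6:from Q to X}). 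The symbols $\IAADoBC$ (where $(\tau_1,\tau_2)=(\IAAB,\AB)$, $g=\{(3,1)\}$, forcing $k_3=l_1$) and $\IACBoBD$ (where $(\tau_1,\tau_2)=(\IAAB,\BB)$) lead to structurally identical computations.

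Finally, the convergence estimate follows from $Q_0^\tau-Q_0^{\epsilon,\tau}=(1-\chi^\epsilon)Q_0^\tau$ and the inequality \eqref{eq_20161206035717}: the extra factor $\epsilon^\lambda(\sum_i|\mu_i|_\ast^\lambda+\sum_j|\nu_j|_\ast^\lambda)$ is distributed over the integrand, and re-running the same chain of integrations with all power-counting exponents shifted by $\lambda$ (which stays inside the admissible window $(0,5)$ of \lref{section6:-a-b to -a-b+5} for $\lambda$ small) costs at most an extra $|\mu|_\ast^\lambda$, giving $\lesssim\epsilon^\lambda|\mu|_\ast^{-4+\kappa+\lambda}|k|_\ast^{-\kappa}$. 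I expect no new analytic ingredient beyond \prefs{prop_1481179916}{prop_1481179943}{prop_1481179894}; the only real work — and the main place where care is needed — is the bookkeeping: verifying in each of the three cases that the two arguments of the surviving $\psi_\circ$ sum to a quantity comparable to $k$ (this is what supplies the gain $|k|_\ast^{-\kappa}$ and is why $\delta_\tau$ vanishes here), and checking at every application of \lref{section6:-a-b to -a-b+5} that the relevant exponents lie in $(0,5)$ with total exceeding $5$.
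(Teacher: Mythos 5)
Your proposal is correct and follows essentially the same route as the paper's proof: extract $|k|_\ast^{-\kappa/2}$ from $\psi_\circ$ via \lref{section6:reduce singularity} (transferring the compensating power $|k_{[12]}-l_1|_\ast^{\kappa/2}\lesssim|\mu_{[12]}+\nu_{-1}|_\ast^{\kappa/2}$ onto the convolution factor), perform the inner integral and then the hyperplane integral with repeated applications of \lref{section6:-a-b to -a-b+5}, and treat the difference via \eqref{eq_20161206035717}. The only blemish is the slip ``$|Q(\mu_1+\mu_2+\nu_{-1})|^2\lesssim|\cdot|_\ast^{-2}$'', which should read $|Q(\cdot)|\lesssim|\cdot|_\ast^{-2}$; your subsequent exponent bookkeeping uses the correct value, so the argument is unaffected.
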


\begin{proof}
	Here, we will show the assertion for $\tau=\IACBoAD$ only.
	Note
	\begin{align*}
		Q_0^\IACBoAD(\mu_{1,3},\nu_1)
		&=
			\int_{E^2}
				\psi_\circ(k_{[12]}-l_1,k_3-l_2)
				Q(\mu_{[12]}+\nu_{-1})
				Q(\mu_1)
				Q(\mu_2)
				\CmplConj{Q(-\nu_{-1})}\\
		&\qquad\times
				Q(\mu_3)
				\CmplConj{Q(-\nu_{-2})}\,
				\delta(\mu_2+\nu_{-2})\,
				d\mu_2d\nu_2\\
		&=
			Q(\mu_1)
			Q(\mu_3)
			\CmplConj{Q(-\nu_{-1})}\\
		&\qquad\times
			\int_E
				\psi_\circ(k_{[12]}-l_1,k_3-k_2)
				Q(\mu_{[12]}+\nu_{-1})
				|Q(\mu_2)|^2\,
				d\mu_2.
	\end{align*}
	We use \lref{section6:reduce singularity} to obtain
	\begin{align*}
		\psi_\circ(k_{[12]}-l_1,k_3-k_2)
		\lesssim
			|k_{[13]}-l_1|_\ast^{-\frac{\kappa}{2}}
			|k_{[12]}-l_1|_\ast^{\frac{\kappa}{2}}
		\lesssim
			|k_{[13]}-l_1|_\ast^{-\frac{\kappa}{2}}
			|\mu_{[12]}+\nu_{-1}|_\ast^{\frac{\kappa}{2}}.
	\end{align*}
	Hence
	\begin{multline*}
		|Q_0^\IACBoAD(\mu_{1,3},\nu_1)|\\
		\begin{aligned}
			&\lesssim
				|\mu_1|_*^{-2}
				|\mu_3|_*^{-2}
				|\nu_{-1}|_*^{-2}
				|k_{[13]}-l_1|_*^{-\frac{\kappa}2}
				\int_E
					|\mu_{[12]}+\nu_{-1}|_*^{-2+\frac{\kappa}2}
					|\mu_2|_*^{-4}\,
					d\mu_2\\
			&\lesssim
				|\mu_1|_*^{-2}
				|\mu_3|_*^{-2}
				|\nu_{-1}|_*^{-2}
				|k_{[13]}-l_1|_*^{-\frac{\kappa}2}
				|\mu_1+\nu_{-1}|_*^{-1+\frac{\kappa}2},
		\end{aligned}
	\end{multline*}
	which implies
	\begin{multline*}
		\int_{\mu_{[13]}+\nu_{-1}=\mu}
			|Q_0^\IACBoAD(\mu_{1,3},\nu_1)|^2\\
		\begin{aligned}
			&\lesssim
				|k|_*^{-\kappa}
				\int_{\mu_{[13]}+\nu_{-1}=\mu}
					|\mu_1|_*^{-4}
					|\mu_3|_*^{-4}
					|\nu_{-1}|_*^{-4}
					|\mu_1+\nu_{-1}|_*^{-2+\kappa}\\
			&\lesssim|k|_*^{-\kappa}|\mu|_*^{-4+\kappa}.
		\end{aligned}
	\end{multline*}
	In addition, we have
	\begin{multline*}
		|Q_0^\IACBoAD(\mu_{1,3},\nu_1)-Q_0^{\epsilon,\IACBoAD}(\mu_{1,3},\nu_1)|
		\lesssim
			|\mu_1|_*^{-2}
			|\mu_3|_*^{-2}
			|\nu_{-1}|_*^{-2}
			|k_{[13]}-l_1|_*^{-\frac{\kappa}2}\\
			\times
			\epsilon^{\lambda/2}
			\{|\mu_1|_\ast^\lambda+|\mu_3|_\ast^\lambda+|\nu_{-1}|_\ast^\lambda+|\mu_1+\nu_{-1}|_\ast^\lambda\}^{1/2}
			|\mu_1+\nu_{-1}|_*^{-1+\frac{\kappa}2},
	\end{multline*}
	which implies the conclusion.
	The proof is completed.
\end{proof}

Finally we consider the functions $\mathfrak{R}Q_0^\tau$ for $\tau=\IACDoCD,\IBCCoDD$.
First of all, we have to define the renormalized kernels $\mathfrak{R}f^\tau$. Since
\begin{align*}
	\mathfrak{R}Q_0^{\epsilon,\IACDoCD}
	&=
		Q_0^{\epsilon,\IACDoCD}
		-Q_0^{\epsilon,\ICDoCD}Q_0^{\epsilon,\A},\\
	\mathfrak{R}Q_0^{\epsilon,\IBCCoDD}
	&=
		Q_0^{\epsilon,\IBCCoDD}
		-Q_0^{\epsilon,\ICCoDD}Q_0^{\epsilon,\B},
\end{align*}
we have
\begin{align*}
	\mathfrak{R}Q_0^{\epsilon,\IACDoCD}(\mu_1)
	&=
		\chi^\epsilon(k_1)
		Q(\mu_1)
		\int_{E^2}
			\chi^\epsilon(k_{2,3})^2
			|Q(\mu_2)|^2
			|Q(\mu_3)|^2\\
	&\phantom{=}\qquad
			\times
			\delta_{0,\mu_1}
				\{
					\psi_\circ(\cdot+k_2-k_3,k_3-k_2)
					Q(\cdot+\mu_2-\mu_3)
				\}\,
			d\mu_2
			d\mu_3,\\
	\mathfrak{R}Q_0^{\epsilon,\IBCCoDD}(\nu_1)
	&=
		\chi^\epsilon(k_1)
		\CmplConj{Q(-\nu_{-1})}
		\int_{E^2}
			\chi^\epsilon(k_{1,2})^2
			|Q(\mu_1)|^2
			|Q(\mu_2)|^2\\
	&\phantom{=}\qquad
			\times
			\delta_{0,-\nu_{-1}}
				\{
					\psi_\circ(\cdot+k_{[12]},-k_{[12]})
					Q(\cdot+\mu_{[12]})
				\}\,
			d\mu_1
			d\mu_2,
\end{align*}
where $\delta_{\mu_1,\mu_2}f=f(\mu_2)-f(\mu_1)$ is the difference operator. We set
\begin{align*}
	\mathfrak{R}Q_0^{\IACDoCD}(\mu_1)
	&=
		Q(\mu_1)
		\int_{E^2}
			|Q(\mu_2)|^2
			|Q(\mu_3)|^2\\
	&\phantom{=}\qquad
			\times
			\delta_{0,\mu_1}
				\{
					\psi_\circ(\cdot+k_2-k_3,k_3-k_2)
					Q(\cdot+\mu_2-\mu_3)
				\}\,
			d\mu_2
			d\mu_3,\\
	\mathfrak{R}Q_0^{\IBCCoDD}(\nu_1)
	&=
		\CmplConj{Q(-\nu_{-1})}
		\int_{E^2}
			|Q(\mu_1)|^2
			|Q(\mu_2)|^2\\
	&\phantom{=}\qquad
			\times
			\delta_{0,-\nu_{-1}}
				\{
					\psi_\circ(\cdot+k_{[12]},-k_{[12]})
					Q(\cdot+\mu_{[12]})
				\}\,
			d\mu_1
			d\mu_2,
\end{align*}
if they are well-defined. The required kernels $\mathfrak{R}f^\tau$ is defined by
good kernels with $\mathfrak{R}H_t^\tau:=\FTTime^{-1}(e^{-2\pi\ImUnit Rt}Q_0^\tau)$.
The following proposition implies that these kernels are well-defined and $\mathfrak{R}f^{\epsilon,\tau}$ converges to $\mathfrak{R}f^\tau$.

\begin{proposition}\label{prop_1481534768}
For $\tau=\IACDoCD,\IBCCoDD$, we have
\begin{gather*}
	|\mathfrak{R}Q_0^\tau(\mu)|^2
	\lesssim
		|\mu|_*^{-4+\kappa},\\
	|(\mathfrak{R}Q_0^\tau-\mathfrak{R}Q_0^{\epsilon,\tau})(\mu)|^2
	\lesssim
		\epsilon^\lambda
		|\mu|_*^{-4+\kappa+\lambda}
\end{gather*}
for every small $\kappa>0$ and $\lambda>0$.
\end{proposition}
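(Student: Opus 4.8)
The plan is to exploit the renormalization $\delta_{0,\mu_1}$ (respectively $\delta_{0,-\nu_{-1}}$) appearing in $\mathfrak{R}Q_0^\tau$: without it the inner double integral diverges only logarithmically, and subtracting the value of the bracket at the origin — the Fourier-side counterpart of the positive renormalization of \cite[Definition~10.15]{Hairer2014a} — makes it absolutely convergent and produces a kernel of the claimed homogeneity. I will treat $\tau=\IACDoCD$; the case $\tau=\IBCCoDD$ is parallel after renaming $(\mu_2,\mu_3)\to(\mu_1,\mu_2)$, replacing $k_3-k_2$ by $-k_{[12]}$ and $\mu_1$ by $-\nu_{-1}$, and inserting the conjugations as written. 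First I would factor $\mathfrak{R}Q_0^{\IACDoCD}(\mu_1)=Q(\mu_1)\int_{E^2}|Q(\mu_2)|^2|Q(\mu_3)|^2\,\delta_{0,\mu_1}\Phi_{\mu_2,\mu_3}\,d\mu_2\,d\mu_3$, where $\Phi_{\mu_2,\mu_3}(\zeta)=\psi_\circ(\zeta+k_2-k_3,k_3-k_2)\,Q(\zeta+\mu_2-\mu_3)$ is the bracket regarded as a function of the dummy $\zeta$ (its spatial part entering $\psi_\circ$). Since $|Q(\mu_1)|\lesssim|\mu_1|_*^{-2}$, it suffices to bound the double integral by $|\mu_1|_*^{\kappa}$. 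Substituting $\eta=\mu_2-\mu_3$ and using \lref{section6:-a-b to -a-b+5} to perform the $\mu_3$-integration, $\int_E|Q(\mu_3+\eta)|^2|Q(\mu_3)|^2\,d\mu_3\lesssim|\eta|_*^{-3}$, reduces matters to estimating $\int_E|\eta|_*^{-3}\,|\delta_{0,\mu_1}\Phi_\eta|\,d\eta$ with $\Phi_\eta$ the corresponding single-variable bracket.

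Next I would split $\int_E$ into the near region $\{|\eta|_*\le|\mu_1|_*\}$ and the far region $\{|\eta|_*>|\mu_1|_*\}$. On the near region, bound $|\delta_{0,\mu_1}\Phi_\eta|\le|\Phi_\eta(\mu_1)|+|\Phi_\eta(0)|\lesssim|\mu_1+\eta|_*^{-2}+|\eta|_*^{-2}$ using boundedness of $\psi_\circ$; the resulting integral is $O(\log|\mu_1|_*)=O(|\mu_1|_*^\kappa)$ because $\int_{|\eta|_*\le R}|\eta|_*^{-3}|\mu_1+\eta|_*^{-2}\,d\eta\lesssim1$ and $\int_{|\eta|_*\le R}|\eta|_*^{-5}\,d\eta\lesssim\log R$ for $R\sim|\mu_1|_*$. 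On the far region I use the Leibniz decomposition $\Phi_\eta(\mu_1)-\Phi_\eta(0)=\psi_\circ(\mu_1+\cdots)\bigl(Q(\mu_1+\eta)-Q(\eta)\bigr)+\bigl(\psi_\circ(\mu_1+\cdots)-\psi_\circ(\cdots)\bigr)Q(\eta)$, together with the elementary difference estimate $|Q(\mu_1+\eta)-Q(\eta)|\lesssim|\mu_1|_*^\theta\bigl(|\mu_1+\eta|_*^{-2-\theta}+|\eta|_*^{-2-\theta}\bigr)$ and the companion bound $|\psi_\circ(\mu_1+\cdots)-\psi_\circ(\cdots)|\lesssim\min(1,|\mu_1|_*^\theta|\eta|_*^{-\theta})$, the latter combined with the support and scaling properties of $\psi_\circ$ from \lref{section6:reduce singularity}. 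In either term one gains $|\mu_1|_*^\theta$ against a kernel whose tail is integrable, and $\int_{|\eta|_*>|\mu_1|_*}|\eta|_*^{-5-\theta}\,d\eta\lesssim|\mu_1|_*^{-\theta}$ cancels the gain, so the far region contributes $O(1)$. Taking $\theta$ small and absorbing the exponent loss from \lref{section6:reduce singularity} into $\kappa$ yields the first inequality (and in particular shows $\mathfrak{R}f^{\IACDoCD}$ is well defined).

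For the second inequality I would expand $\mathfrak{R}Q_0^{\IACDoCD}-\mathfrak{R}Q_0^{\epsilon,\IACDoCD}$ by inserting the factors $1-\chi^\epsilon(k_1)$ and $1-\chi^\epsilon(k_2,k_3)^2$ and applying \eqref{eq_20161206035717}: each factor costs $\epsilon^\lambda$ and an extra weight $|\mu_i|_*^\lambda$, which the strictly positive integration margins of the preceding step absorb, the weight $|\mu_1|_*^\lambda$ producing the $|\mu|_*^{+\lambda}$ in the statement (after relabelling $\lambda$); this also gives the convergence $\mathfrak{R}f^{\epsilon,\tau}\to\mathfrak{R}f^\tau$. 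Combined with \pref{section6:from Q to X}, which converts such kernel bounds into the corresponding Besov estimates, this completes the proof.

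I expect the only delicate point to be the far-region analysis — extracting a genuine H\"older gain $|\mu_1|_*^\theta$ \emph{simultaneously} from the difference of the singular factor $Q$ and from the difference of the non-scale-invariant cut-off $\psi_\circ$ while keeping an $\eta$-kernel that is integrable at infinity. Everything else is a routine repetition of the estimates used for \pref{prop_1481179726000} and \pref{prop_1481179916}, built on \lref{section6:-a-b to -a-b+5} and \lref{section6:reduce singularity}.
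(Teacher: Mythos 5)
Your strategy is correct in spirit but genuinely different in execution from the paper's. The paper does not split into near/far regions: it writes the renormalized bracket as $\delta_{0,\mu_1}\{\cdots\}=\int_0^1\frac{d}{d\tau}\{\psi_\circ(\cdot,k_3-k_2)Q\}(\tau^s\mu_1+\mu_2-\mu_3)\,d\tau$ along the parabolic scaling $\tau^s\mu=(\tau^2\sigma,\tau k)$, invokes the derivative bounds of \lref{lem_1481250926} ($|\partial_{k^\alpha}\psi_\circ(k,l)|\lesssim\indicator{|k|\approx|l|}|k|_*^{-1+\kappa}$, $|\partial_\sigma Q|\lesssim|\cdot|_*^{-4}$, $|\partial_{k^\alpha}Q|\lesssim|\cdot|_*^{-3}$), and estimates the three resulting terms $A_1,A_2,A_3$ on the full double $(\mu_2,\mu_3)$-integral, extracting the loss $|\mu_1|_*^\kappa$ from $|\tau^s\mu|_*\ge\tau|\mu|_*$ and $\int_0^1\tau^{\kappa-1}d\tau<\infty$. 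Your variant — integrating out $\mu_3$ first via \lref{section6:-a-b to -a-b+5} to reduce to the one-variable kernel $|\eta|_*^{-3}$, then a zeroth-order bound on $\{|\eta|_*\le|\mu_1|_*\}$ and a first-order difference bound on $\{|\eta|_*>|\mu_1|_*\}$ — is a legitimate alternative Taylor-with-remainder argument, and it makes the origin of the logarithm more transparent. (Your near-region bound $\int_{|\eta|_*\le R}|\eta|_*^{-3}|\mu_1+\eta|_*^{-2}\,d\eta\lesssim1$ is correct, but only because of the truncation at $R\sim|\mu_1|_*$; the untruncated integral is the borderline case of \lref{section6:-a-b to -a-b+5}.)

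One step is false as written, exactly at the point you flagged as delicate: the companion bound $|\psi_\circ(k_1+h,-h)-\psi_\circ(h,-h)|\lesssim\min(1,|\mu_1|_*^\theta|\eta|_*^{-\theta})$ cannot hold, because the left-hand side depends only on the spatial components while $|\eta|_*$ also carries the temporal one — take $\eta$ with huge temporal part and small spatial part $h$ with $\psi_\circ(k_1+h,-h)\neq1$; the right side is then arbitrarily small while the left side is of order one. The correct substitute is a support restriction, which is precisely what the indicator in \lref{lem_1481250926} delivers for the paper's term $A_1$: since the $\DyaPartOfUnit[i]$ are radial with $(\sum_i\DyaPartOfUnit[i])^2=1$ one has $\psi_\circ(h,-h)\equiv1$, and $1-\psi_\circ(k_1+h,-h)=\sum_{|i-j|\ge2}\DyaPartOfUnit[i](k_1+h)\DyaPartOfUnit[j](-h)$ vanishes unless $|h|\lesssim|k_1|\le|\mu_1|_*$. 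On that set the temporal integral still converges, and
\begin{align*}
	\sum_{|h|\lesssim|\mu_1|_*}
		\int_\RealNum
			(1+|\varsigma|^{1/2}+|h|)^{-5}\,
			d\varsigma
	\lesssim
		\sum_{|h|\lesssim|\mu_1|_*}
			|h|_*^{-3}
	\lesssim
		\log|\mu_1|_*,
\end{align*}
which is admissible. With this repair your far-region argument closes; the rest (the $Q$-difference term, the absorption of the $\epsilon^\lambda$-weights from \eqref{eq_20161206035717} into the integration margins for the second inequality, and the case $\IBCCoDD$) is sound and matches the paper's "in a similar way" steps.
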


In order to prove this proposition,
we extend the domains of $\psi_\circ=\psi_\circ(k,l)$ and $Q=Q(\sigma,k)$ into $\RealNum^6$ and $\RealNum^4$
in a natural way, respectively.
We write $k=(k^1,k^2,k^3)\in\RealNum^3$.
Then, we have the following estimate of their derivatives:
\begin{lemma}\label{lem_1481250926}
	For every $0<\kappa<1$, it holds that
	\begin{align*}
		|\partial_{k^\alpha}\psi_\circ(k,l)|
		&\lesssim
			\indicator{|k|\approx|l|}
			|k|_\ast^{-1+\kappa},
		&
		|\partial_\sigma Q(\mu)|
		&\lesssim
			|\mu|_*^{-4},
		&
		|\partial_{k^\alpha}Q(\mu)|
		&\lesssim
			|\mu|_*^{-3}.
	\end{align*}
\end{lemma}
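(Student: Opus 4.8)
The plan is to reduce all three estimates to elementary pointwise bounds on the explicit symbol
\[
  D(\sigma,k)=-2\pi\ImUnit\sigma+4\pi^2(\ImUnit+\mu)|k|^2+1 ,
\]
for which $Q(\mu)=D(\sigma,k)^{-1}$, together with the dyadic structure of $\psi_\circ$. For $Q$ and its derivatives everything follows from a single lower bound on $|D|$; for $\psi_\circ$ everything follows from differentiating its defining series \eqref{eq_20161206032529} term by term and exploiting the disjointness/overlap pattern of the supports of the $\DyaPartOfUnit[m]$.

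First I would establish $|D(\sigma,k)|\gtrsim 1+|\sigma|+|k|^2\approx|\mu|_\ast^2$. Since $\mu>0$, one has $\operatorname{Re}D(\sigma,k)=4\pi^2\mu|k|^2+1\gtrsim 1+|k|^2$, while $\operatorname{Im}D(\sigma,k)=4\pi^2|k|^2-2\pi\sigma$. If $|\sigma|\le 4\pi|k|^2$, then $1+|\sigma|+|k|^2\lesssim 1+|k|^2\lesssim\operatorname{Re}D\le|D|$. If $|\sigma|>4\pi|k|^2$, then $4\pi^2|k|^2<\pi|\sigma|$, so $|\operatorname{Im}D|\ge 2\pi|\sigma|-4\pi^2|k|^2\ge\pi|\sigma|$, and combining with $|D|\ge\operatorname{Re}D\gtrsim 1+|k|^2$ gives $|D|\gtrsim 1+|\sigma|+|k|^2$ again. (This also re-proves $|Q(\mu)|\lesssim|\mu|_\ast^{-2}$.) Then, differentiating $Q=1/D$, $\partial_\sigma Q=2\pi\ImUnit D^{-2}$ and $\partial_{k^\alpha}Q=-8\pi^2(\ImUnit+\mu)k^\alpha D^{-2}$, whence $|\partial_\sigma Q|\lesssim|D|^{-2}\lesssim|\mu|_\ast^{-4}$ and $|\partial_{k^\alpha}Q|\lesssim|k|\,|D|^{-2}\lesssim|\mu|_\ast\cdot|\mu|_\ast^{-4}=|\mu|_\ast^{-3}$, using $|k|\le|\mu|_\ast$. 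This settles the last two inequalities.

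For $\psi_\circ$, I extend each $\DyaPartOfUnit[m]$ to $\RealNum^3$ as the given smooth function and differentiate \eqref{eq_20161206032529} to get $\partial_{k^\alpha}\psi_\circ(k,l)=\sum_{|i-j|\le1}\partial_{k^\alpha}\DyaPartOfUnit[i](k)\,\DyaPartOfUnit[j](l)$. For $i\ge0$, $\partial_{k^\alpha}\DyaPartOfUnit[i](k)=2^{-i}(\partial_{k^\alpha}\DyaPartOfUnit[0])(2^{-i}k)$ is bounded by $C2^{-i}$ and supported where $|k|\approx 2^i$; the term $i=-1$ is bounded directly, $\partial_{k^\alpha}\DyaPartOfUnit[-1]$ being bounded with support in $\{|k|\lesssim 1\}$. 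So each $\partial_{k^\alpha}\DyaPartOfUnit[i]$ is bounded by $C\min(2^{-i},1)$ on a set where $|k|_\ast\approx 2^{\max(i,0)}$, i.e. $|\partial_{k^\alpha}\DyaPartOfUnit[i](k)|\lesssim|k|_\ast^{-1}$ on its support. For fixed $(k,l)$ only $O(1)$ indices $i$ contribute (overlapping annuli force $2^{\max(i,0)}\approx|k|_\ast$), and for each such $i$ the constraint $|i-j|\le1$ together with $l\in\supp\DyaPartOfUnit[j]$ leaves at most three indices $j$, forcing $|l|_\ast\approx|k|_\ast$ on their union. Summing these $O(1)$ terms gives $|\partial_{k^\alpha}\psi_\circ(k,l)|\lesssim\indicator{|k|\approx|l|}\,|k|_\ast^{-1}$, which implies the claimed bound since $|k|_\ast\ge1$.

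The computation is essentially routine; the only points requiring a little care are the case split that produces the lower bound on $|D|$ (so that $\mu>0$ really gives the polynomial gain in all regimes of $\sigma$ versus $|k|^2$) and the bookkeeping of overlapping dyadic annuli in the $\psi_\circ$ estimate — neither constitutes a genuine obstacle.
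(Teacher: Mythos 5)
Your proof is correct and follows essentially the same route as the paper: for $\psi_\circ$ you differentiate the dyadic sum term by term and use the scaling factor $2^{-i}\approx|k|_\ast^{-1}$ on $\supp\DyaPartOfUnit[i]$ together with the finite overlap of the annuli (the paper sums a convergent geometric series instead of counting $O(1)$ terms, a cosmetic difference), and the $\kappa$-loss is absorbed via $|k|_\ast\ge1$ exactly as you do. The two estimates on $Q$ are dismissed in the paper as easy; your explicit lower bound $|D(\sigma,k)|\gtrsim1+|\sigma|+|k|^2\approx|\mu|_\ast^2$ and the differentiation of $D^{-1}$ correctly supply those details.
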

\begin{proof}
	The latter two inequality can be shown easily. We show the first inequality.
	Note that $(k,l)\in\supp\psi_\circ$ implies $|k|\approx|l|$.
	We see
	\begin{align*}
		\partial_{k^\alpha}\psi_\circ(k,l)
		=
			\sum_{|i-j|\leq 1}
				2^{-i}
				\partial_{k^\alpha}
				\DyaPartOfUnit[0](2^{-i}k)
				\DyaPartOfUnit[j](l)
		=
			\sum_{i\geq -1}
				2^{-i}
				\partial_{k^\alpha}
				\DyaPartOfUnit[0](2^{-i}k)
				\sum_{j;|i-j|\leq 1}
					\DyaPartOfUnit[j](l).
	\end{align*}
	In this calculation, we abused the symbols $\DyaPartOfUnit[-1]$ and $\DyaPartOfUnit[0]$.
	We see that the compactness of $\supp\DyaPartOfUnit[0]$ implies
	$
		|
			\partial_{k^\alpha}
			\DyaPartOfUnit[0](2^{-i}k)
		|
		\lesssim
			|2^{-i}k|_\ast^{-1+\kappa}
		\leq
			2^{-i(1-\kappa)}
			|k|_\ast^{-1+\kappa}
	$
	and
	the summation
	$
		\sum_{j;|i-j|\leq 1}
		\DyaPartOfUnit[j](l)
	$
	has an upper bound independent of $i$.
	Hence,
	\begin{align*}
		|\partial_{k^\alpha}\psi_\circ(k,l)|
		\lesssim
			\sum_{|i-j|\leq 1}
				2^{-i(2-\kappa)}
				|k|_\ast^{-1+\kappa}
		\lesssim
			|k|_\ast^{-1+\kappa}.
	\end{align*}
	The proof is completed.
\end{proof}

\begin{proof}[Proof of \pref{prop_1481534768}]
	We focus on $\tau=\IACDoCD$.

	First we esimate  the difference operator part in $\mathfrak{R}Q_0^\IACDoCD$.
	We write $\tau^s\mu=(\tau^2\sigma,\tau k)$ for $\mu\in E$ and $\tau\in[0,1]$.
	The fundamental theorem of calculus and \lref{lem_1481250926} imply
	\begin{multline*}
		|\delta_{0,\mu_1}\{\psi_\circ(\cdot+k_2,-k_2)Q(\cdot+\mu_2)\}|
		=
			\left|
				\int_0^1
					\frac{d}{d\tau}
						\{\psi_\circ(\cdot,-k_2)Q\}
					(\tau^s\mu_1+\mu_2)\,
					d\tau
			\right|\\
		\lesssim
			|k_1|
			\int_0^1
				\indicator{|\tau k_1+k_2|\approx|k_2|}
				|\tau k_1+k_2|_\ast^{-1+\kappa}
				|\tau^s\mu_1+\mu_2|_*^{-2}\,
				d\tau\\
			+
			|\sigma_1|
			\int_0^1
				\tau
				|\tau^s\mu_1+\mu_2|_*^{-4}\,
				d\tau
			+
			|k_1|
			\int_0^1
				|\tau^s\mu_1+\mu_2|_*^{-3}\,
				d\tau.
	\end{multline*}
	Hence we have
	\begin{multline*}
		\int_{E^2}
			|Q(\mu_2)|^2
			|Q(\mu_3)|^2
			|\delta_{0,\mu_1}\{\psi_\circ(\cdot+k_2-k_3,k_3-k_2)Q(\cdot+\mu_2-\mu_3)\}|\,
			d\mu_2d\mu_3\\
		\lesssim
			A_1+A_2+A_3,
	\end{multline*}
	where
	\begin{align*}
		A_1
		&=
			\int_{E^2}
				d\mu_2d\mu_3\,
				|\mu_2|_*^{-4}
				|\mu_3|_*^{-4}
				|k_1|
				\int_0^1
					\indicator{|\tau k_1+k_2-k_3|\approx|k_2-k_3|}\\
			&\phantom{=}\quad\qquad\qquad\qquad
					\times
					|\tau k_1+k_2-k_3|_\ast^{-1+\kappa}
					|\tau^s\mu_1+\mu_2-\mu_3|_*^{-2}\,
					d\tau,\\
		A_2
		&=
			\int_{E^2}
				d\mu_2d\mu_3\,
				|\mu_2|_*^{-4}
				|\mu_3|_*^{-4}
				|\sigma_1|
				\int_0^1
					\tau
					|\tau^s\mu_1+\mu_2-\mu_3|_*^{-4}\,
					d\tau,\\
		A_3
		&=
			\int_{E^2}
				d\mu_2d\mu_3\,
				|\mu_2|_*^{-4}
				|\mu_3|_*^{-4}
				|k_1|
				\int_0^1
					|\tau^s\mu_1+\mu_2-\mu_3|_*^{-3}\,
					d\tau.
	\end{align*}

	We estimate the terms $A_1$, $A_2$ and $A_3$.
	Note that \lref{section6:-a-b to -a-b+5} holds even if $\nu\in\RealNum^4$.
	We start the estimates with $A_1$.
	By changing variables with $\mu'_2=\mu_2$ and $\mu'_3=\mu_2-\mu_3$
	and the Fubini theorem, we have
	\begin{align*}
		A_1
		&=
			|k_1|
			\int_{E^2}
				d\mu'_2
				d\mu'_3\,
				|\mu'_2|_\ast^{-4}
				|\mu'_2-\mu'_3|_\ast^{-3}\\
		&\phantom{=}\quad\qquad
				\times
				\left(
					\int_0^1
						\indicator{|\tau k_1+k'_3|\approx|k'_3|}
						|\tau k_1+k'_3|_\ast^{-1+\kappa}
						|\tau^s\mu_1+\mu'_3|_\ast^{-2}\,
						d\tau
				\right)\\
		&\lesssim
			|k_1|
			\int_E
				d\mu'_3\,
				|\mu'_3|^{-3}\,
				\left(
					\int_0^1
						\indicator{|\tau k_1+k'_3|\approx|k'_3|}
						|\tau k_1+k'_3|_\ast^{-1+\kappa}
						|\tau^s\mu_1+\mu'_3|_\ast^{-2}\,
						d\tau
				\right)\\
		&=
			|k_1|
			\int_0^1
				d\tau\,
				\int_E
					d\mu'_3\,
					\indicator{|\tau k_1+k'_3|\approx|k'_3|}
					|\tau k_1+k'_3|_\ast^{-1+\kappa}
					|\tau^s\mu_1+\mu'_3|_\ast^{-2}
					|\mu'_3|^{-3}.
	\end{align*}
	The Young inequality implies
	\begin{align*}
		\int_\RealNum
			d\sigma'_3\,
			|\tau^s\mu_1+\mu'_3|_\ast^{-2}
			|\mu'_3|_\ast^{-3}
		&\leq
			\int_\RealNum
				d\sigma'_3\,
				\left(
					|\tau^s\mu_1+\mu'_3|_\ast^{-5}
					+
					|\mu'_3|_\ast^{-5}
				\right)\\
		&=
			|\tau k_1+k'_3|_\ast^{-3}
			+
			|k'_3|_\ast^{-3}.
	\end{align*}
	Hence
	\begin{multline*}
		\int_E
			d\mu'_3\,
			\indicator{|\tau k_1+k'_3|\approx|k'_3|}
			|\tau k_1+k'_3|_\ast^{-1+\kappa}
			|\tau^s\mu_1+\mu'_3|_\ast^{-2}
			|\mu'_3|^{-3}\\
		\lesssim
			\sum_{k'_3}
				\indicator{|\tau k_1+k'_3|\approx|k'_3|}
				|\tau k_1+k'_3|_\ast^{-1+\kappa}
				\left(
					|\tau k_1+k'_3|_\ast^{-3}
					+
					|k'_3|_\ast^{-3}
				\right)\\
		\lesssim
			\sum_{k'_3}
				\indicator{|\tau k_1+k'_3|\approx|k'_3|}
				|k'_3|_\ast^{-4+\kappa}
		\lesssim
			\sum_{k'_3:|\tau k_1|\lesssim |k'_3|}
				|k'_3|_\ast^{-4+\kappa}
		\lesssim
			|\tau k_1|_\ast^{-1+\kappa}.
	\end{multline*}
	Here, we used that
	$|\tau k_1|\leq |\tau k_1+k'_3|+|k'_3|\lesssim |k'_3|$
	in the case that $|\tau k_1+k'_3|\approx|k'_3|$.
	Combining them and using that $|\tau^s\mu|_*\ge\tau|\mu|_*$ for every $\tau\in[0,1]$, we obtain
	\begin{align*}
		A_1
		\lesssim
			|k_1|
			\int_0^1
				d\tau
				(\tau |k_1|_\ast)^{-1+\kappa}
		\lesssim
			|k_1|_\ast^\kappa
		\leq
			|\mu_1|_\ast^\kappa.
	\end{align*}
	The estimate of $A_1$ has finished.

	The estimates of $A_2$ and $A_3$ is easy.
	Indeed, we have
	\begin{align*}
		A_2
		&\lesssim
			|\sigma_1|
			\int_0^1
				\tau
				|\tau^s\mu_1|_*^{-2}\,
				d\tau,
		&
		A_3
		&\lesssim
			|k_1|
			\int_0^1
				|\tau^s\mu_1|_*^{-1}\,
				d\tau.
	\end{align*}
	Since $|\tau^s\mu|_*\ge\tau|\mu|_*$ for every $\tau\in[0,1]$, we have
	\begin{gather*}
		A_2
		\lesssim
			|\sigma_1|
			\int_0^1
				\tau|\tau^s\mu_1|_*^{\kappa-2}\,
				d\tau
		\lesssim
			|\sigma_1|
			|\mu_1|_*^{\kappa-2}
			\int_0^1
				\tau^{\kappa-1}\,
				d\tau
		\lesssim
			|\mu_1|_*^\kappa,\\
		A_3
		\lesssim
			|k_1|
			\int_0^1
				|\tau^s\mu_1|_*^{\kappa-1}\,
				d\tau
		\lesssim
			|k_1|
			|\mu_1|_*^{\kappa-1}
			\int_0^1
				\tau^{\kappa-1}\,
				d\tau
		\lesssim
			|\mu_1|_*^\kappa
	\end{gather*}
	for every $\kappa\in(0,1)$.

	Hence
	$$
	|\mathfrak{R}Q_0^\tau(\mu)|^2\lesssim|\mu|_*^{-4+\kappa}.
	$$
	We obtain the estimate of $|(\mathfrak{R}Q_0^\tau-\mathfrak{R}Q_0^{\epsilon,\tau})(\mu)|^2$ in a similar way.
	We can see the assertion is valid for $\tau=\IBCCoDD$ in the same way.
\end{proof}

\begin{proof}[Proof of \tref{thm_20161128051824} for $\IAABoA$, $\IAABoB$, $\IAAoAB$, $\IAAoBB$, $\IABoAB$, $\IABoBB$, $\IAABoAB$, $\IAABoBB$]
	We will use \pref{section6:from Q to X}.
	The constant $(\gamma_\tau,\delta_\tau)$ in \pref{prop_1481179894} satisfies
	\begin{align}\label{eq_1481534569}
		\gamma_\tau+\delta_\tau
		=
			\begin{cases}
				\frac{5}{2},	&\tau=\IAABoA,\IAABoB,\\
				\frac{5}{2},	&\tau=\IAAoAB,\IAAoBB,\IABoAB,\IABoBB,\\
				2,	&\tau=\IAABoAB,\IAABoBB.
			\end{cases}
	\end{align}
	The assertions for the case $\IAABoA$ and $\IAABoB$ follow from \eqref{eq_1481534569} and \pref{prop_1481179916}.
	For $\IAAoAB$, $\IAAoBB$, $\IABoAB$ and $\IABoBB$, we see the assertion from \eqref{eq_1481534569} and \pref{prop_1481179943}.
	For $\IAABoAB$ and $\IAABoBB$, we use \eqref{eq_1481534569}, \pref[prop_1481179962]{prop_1481534768}.
\end{proof}

\begin{remark}\label{0550 X is invariant under diff approx}
We can construct another sequence $\{\tilde{X}^\epsilon\}$ of driving vectors from the space-time smeared noise
$\tilde{\whiteNoise}^\epsilon$ defined by \eqref{eq_1503279335}.
As stated in \rref{0550 tx regularize is ok},
the limit driving vector does not change.
In order to show this fact, for simplicity, we consider the case that temporal and spatial variables are separated: $\varrho^\epsilon(t,x)=\varrho_0^\epsilon(t)\varrho_1^\epsilon(x)$.
Here, $\varrho_0^\epsilon(t)=\epsilon^{-2}\varrho_0(\epsilon^{-2}t)$ and $\varrho_1^\epsilon(x)=\epsilon^{-3}\varrho_1(\epsilon^{-1}x)$
for even functions $\varrho_0$ and $\varrho_1$.
Since the noise is smeared in time, the solution $\tilde{Z}^\epsilon$ of
$\partial_t\tilde{Z}^\epsilon=\{(\ImUnit+\mu)\LaplaceOp-1\} \tilde{Z}^\epsilon+\tilde{\xi}^\epsilon$
is given by the same formula as \eqref{0550 Z is OU with xi^epsilon}, with $\hat{\xi}_s(k)$ replaced by the convolution $\int\hat{\xi}_u(k)\varrho_0^\epsilon(s-u)\,du$. By shifting the mollifier to the heat kernel, we have the formula
$$
\tilde{Z}^\epsilon_t
	=
		\sum_{k\in\Integers^3}
			\int_{-\infty}^\infty
				\tilde{H}_t^\epsilon(s,k)
				\FourierBase[k]
				\FourierCoeff{\whiteNoise_s}(k)\,
				ds,
$$
where
$
	\tilde{H}_t^\epsilon(s,k)
	=
		\chi^\epsilon(k)
		\int
			H_t(u,k)
			\varrho^\epsilon_0(s-u)\,
			du
$
and
$
	\chi=\FourierTrans\varrho_1
$.
Then the corresponding Fourier transform $\tilde{Q}_t^\epsilon=\FTTime\tilde{H}_t^\epsilon$ is given by
$$
\tilde{Q}_t^\epsilon(\sigma,k)
=
	\varphi_0(\epsilon^2 \sigma)
	\chi^\epsilon(k)
	Q_t(\sigma,k),
$$
where $\varphi_0=\FTTime\varrho_0$.
This implies $Q_0-\tilde{Q}_0^\epsilon$ has the good estimate as in \pref{prop_1481179726000},
so that $\tilde{Z}^\epsilon$ converges to the same limit $Z$ as that of $Z^\epsilon$.
In the proof, we replace $\chi^\epsilon(k)$ in \pref{prop_1481179726000} by $\varphi_0(\epsilon^2 \sigma)\chi^\epsilon(k)$.
By similar arguments, we can see the invariance of the limit for all other elements of $\tilde{X}^\epsilon$.
Since $\tilde{Z}^\epsilon$ is stationary in time, we can define the new renormalization constants
$\tilde{\mathfrak{c}}^\epsilon_1$, $\tilde{\mathfrak{c}}^\epsilon_{2,1}$ and $\tilde{\mathfrak{c}}^\epsilon_{2,2}$
in the same way as \eqref{eq_20161015073329} and see the new constants depend only on $\epsilon$.
However, they may not coincide with $\constRe[1]{\epsilon}$, $\constRe[2,1]{\epsilon}$ and $\constRe[2,2]{\epsilon}$.
\end{remark}

\subsection{Properties of renormalization constants}
In this subsection, we study the renormalization constants
$\constRe[1]{\epsilon}$, $\constRe[2,1]{\epsilon}$ and $\constRe[2,2]{\epsilon}$
defined by \eqref{eq_20161015073329}.
We use \pref[prop_20161205074106]{section6:expansion of tau1 rs tau2}
to show that they are independent of $(t,x)$
and the choice of the dyadic partition $\{\DyaPartOfUnit[m]\}_{m=-1}^\infty$ of unity (\pref{prop_20161107071120})
and obtain the divergence rate (\pref{prop_20161107072011}).
\pref{prop_20161107071120} and \tref{thm_20160920094302} imply the renormalized equation \eqref{eq:rcgl}
does not depend on the choice of the dyadic partition of unity.
Hence the solution to \eqref{eq:rcgl} is independent of the partition.

\subsubsection{Expression of renormalization constants}
We obtain explicit expressions of $\constRe[1]{\epsilon}$, $\constRe[2,1]{\epsilon}$ and $\constRe[2,2]{\epsilon}$ as follows:
\begin{proposition}\label{prop_20161107071120}
	We have the following:
	\begin{align}
		\label{eq_20161107064910}
		\constRe[1]{\epsilon}
		&=
			\sum_{k\in\Integers^3}
				\frac{\chi^\epsilon(k)^2}{2(4\pi^2|k|^2+1)},\\
		\label{eq_20161015075934}
		\constRe[2,1]{\epsilon}
		&=
			\sum_{k_1,k_2\in\Integers^3}
				\frac{\chi^{\epsilon}(k_1,k_2)^2}{4(4\pi^2\mu|k_1|^2+1)(4\pi^2\mu|k_2|^2+1)(\alpha_1+\ImUnit\beta_1)},\\
		\label{eq_20161015075943}
		\constRe[2,2]{\epsilon}
		&=
			\sum_{k_1,l_1\in\Integers^3}
				\frac{\chi^{\epsilon}(k_1,l_1)^2}{4(4\pi^2\mu|k_1|^2+1)(4\pi^2\mu|l_1|^2+1)(\alpha_2+\ImUnit\beta_2)},
	\end{align}
	where
	\begin{align*}
		\alpha_1
		&=
			4\pi^2\mu
			(
				|k_1+k_2|^2
				+
				|k_1|^2
				+
				|k_2|^2
			)
			+
			3,
		&
		\beta_1
		&=
			4\pi^2
			(
				|k_1+k_2|^2
				-
				|k_1|^2
				-
				|k_2|^2
			),\\
		\alpha_2
		&=
			4\pi^2\mu
			(
				|k_1-l_1|^2
				+
				|k_1|^2
				+
				|l_1|^2
			)
			+
			3,
		&
		\beta_2
		&=
			4\pi^2
			(
				|k_1-l_1|^2
				-
				|k_1|^2
				 +
				|l_1|^2
			).
	\end{align*}
\end{proposition}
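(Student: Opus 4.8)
The plan is to compute each of the three renormalization constants directly from its definition \eqref{eq_20161015073329} using the It\^o–Wiener integral expressions of the driving vectors established in \pref[prop_20161205074106]{section6:expansion of tau1 rs tau2}, together with the isometry property of the integrals $\WienerIntCmpl{p}{q}$ recalled in \secref{sec:itowienerintegral}. First I would treat $\constRe[1]{\epsilon}$. By \pref{prop_20161205074106}, $X^{\epsilon,\A}_{(t,x)}=\WienerIntCmpl{1}{0}(f^{\epsilon,\A}_{(t,x)})$ and $X^{\epsilon,\B}_{(t,x)}=\WienerIntCmpl{0}{1}(f^{\epsilon,\B}_{(t,x)})$, with $f^{\epsilon,\A}_{(t,x)}(s,k)=\FourierBase[k](x)\chi^\epsilon(k)H_t(s,k)$ and $f^{\epsilon,\B}$ its complex conjugate. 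Hence $\expect[X^{\epsilon,\A}_{(t,x)}X^{\epsilon,\B}_{(t,x)}]$ equals the $L^2_{1,1}$-pairing of these two kernels, which collapses the spatial phase $\FourierBase[k](x)\FourierBase[-k](x)=1$ and leaves $\sum_{k}\chi^\epsilon(k)^2\int_\RealNum|H_t(s,k)|^2\,ds$. Since $H_t(s,k)=\indicator{[0,\infty)}(t-s)h(t-s,k)$ with $|h(u,k)|=e^{-(4\pi^2\mu|k|^2+1)u}$, the time integral is $\int_0^\infty e^{-2(4\pi^2\mu|k|^2+1)u}\,du=\frac{1}{2(4\pi^2\mu|k|^2+1)}$; this is manifestly independent of $t$ and $x$, and gives \eqref{eq_20161107064910}. (I note the paper writes $4\pi^2|k|^2+1$ in \eqref{eq_20161107064910}; one should double-check whether the intended denominator carries a $\mu$—in any case the computation is the same modulo that constant.)

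Next, for $\constRe[2,1]{\epsilon}=\tfrac12\expect[X^{\epsilon,\IAA}_{(t,x)}\reso X^{\epsilon,\BB}_{(t,x)}]$ and $\constRe[2,2]{\epsilon}=\expect[X^{\epsilon,\IAB}_{(t,x)}\reso X^{\epsilon,\AB}_{(t,x)}]$, I would use \pref{section6:expansion of tau1 rs tau2}, which expands the resonant products $X^{\epsilon,\tau_1}\reso X^{\epsilon,\tau_2}$ into sums of It\^o–Wiener integrals over contraction graphs $g$. Taking expectations kills all terms with $\#g<p+q$ (a nonzero-order chaos has zero mean); only the maximal contraction survives. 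Reading off the relevant lines of \tblref{tbl_contraction_symbols}, the surviving graphs for $(\IAA,\BB)$ are $\{(1,1),(2,2)\},\{(1,2),(2,1)\}$ (symbol $\ICCoDD$), and for $(\IAB,\AB)$ the graph $\{(1,2),(2,1)\}$ (symbol $\ICDoCD$). Using the kernel formulas of \pref{section6:expansion of tau1 rs tau2}(2) — Fourier transform in time, the factor $Q(\mu_{[\cdots]}+\nu_{[\cdots]})$ coming from the $I$-operation applied to $X^{\epsilon,\AA}$ resp.\ $X^{\epsilon,\AB}$, the factors $Q_0^\A,Q_0^\B$ from the circles/squares, and the $\delta$-functions from the contraction — each constant reduces to a double sum over $k_1,k_2$ (resp.\ $k_1,l_1$) of $\chi^\epsilon(k_1,k_2)^2$ (resp.\ $\chi^\epsilon(k_1,l_1)^2$) times a product of three resolvent-type factors. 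Explicitly, $Q(\sigma,k)=\{-2\pi\ImUnit\sigma+4\pi^2(\ImUnit+\mu)|k|^2+1\}^{-1}$, so each $\int_\RealNum|Q(\sigma,k)|^2\,d\sigma$ from the contracted legs gives $\frac{1}{2(4\pi^2\mu|k|^2+1)}$, while the outermost factor $Q$ evaluated on-shell (at zero time frequency, since the resonant diagonal forces $\sigma_{[\cdots]}+\tau_{[\cdots]}=0$) contributes $\{4\pi^2(\ImUnit+\mu)(|k_{[12]}|^2+|k_1|^2+|k_2|^2)+3\}^{-1}=(\alpha_1+\ImUnit\beta_1)^{-1}$ and similarly $(\alpha_2+\ImUnit\beta_2)^{-1}$. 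Collecting the factor $\tfrac12$ and the numerical prefactors yields \eqref{eq_20161015075934} and \eqref{eq_20161015075943}; again the result depends only on $\epsilon$, since the $x$-phases cancel in the $L^2$-pairing and the $t$-dependence cancels because of the stationarity identity $Q_t(\mu)=e^{-2\pi\ImUnit Rt}Q_0(\mu)$ built into the notion of a good kernel.

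For independence of the dyadic partition $\{\DyaPartOfUnit[m]\}$: the only place $\{\DyaPartOfUnit[m]\}$ enters the resonant product is through the cutoff $\psi_\circ(k,l)=\sum_{|i-j|\le1}\DyaPartOfUnit[i](k)\DyaPartOfUnit[j](l)$ of \eqref{eq_20161206032529}. In the maximally contracted terms, the two arguments of $\psi_\circ$ become equal and opposite (the contraction forces $k'_1+k'_2=0$ for $(\IAA,\BB)$, and likewise for $(\IAB,\AB)$), so $\psi_\circ(k,-k)=\sum_{|i-j|\le1}\DyaPartOfUnit[i](k)\DyaPartOfUnit[j](-k)=\sum_{|i-j|\le1}\DyaPartOfUnit[i](k)\DyaPartOfUnit[j](k)$; by the radial symmetry and the partition-of-unity property $\sum_m\DyaPartOfUnit[m]=1$ one checks $\sum_i\DyaPartOfUnit[i](k)\bigl(\sum_{j:|i-j|\le1}\DyaPartOfUnit[j](k)\bigr)=1$ for every $k$ (the blocks of distance $>1$ have disjoint supports), so $\psi_\circ(k,-k)\equiv1$ and the partition drops out entirely. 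This is the step I expect to be the main (albeit mild) obstacle: one must verify carefully that on the contracted diagonal $\psi_\circ$ evaluates to exactly $1$, and bookkeep the combinatorial multiplicities of the surviving contraction graphs (the factors $2$, $\tfrac12$, etc.) so that the constants in \eqref{eq_20161015075934}–\eqref{eq_20161015075943} come out with the stated coefficients. Everything else is a routine Gaussian computation.
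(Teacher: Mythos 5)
Your proposal is correct and follows essentially the same route as the paper's proof: both reduce each constant, via the It\^o--Wiener expressions of \pref[prop_20161205074106]{section6:expansion of tau1 rs tau2}, to the fully contracted (zeroth-chaos) kernel, use $\psi_\circ(k,-k)=1$, and evaluate the resulting Gaussian integrals explicitly — the paper merely performs the last step in the time variable via convolution identities for $H_t$ rather than in the frequency variable $Q$, which is the same computation by Plancherel. Your remark about the apparently missing $\mu$ in \eqref{eq_20161107064910} is well taken, since the computation indeed produces $2(4\pi^2\mu|k|^2+1)$ in the denominator.
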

\begin{proof}
	From \pref{prop_20161205074106}, we have
	\begin{multline*}
		\constRe[1]{\epsilon}
		=
			\sum_{k\in\Integers^3}
				\int_\RealNum
					f^{\epsilon,\A}_{(t,x)}(s,k)
					f^{\epsilon,\B}_{(t,x)}(s,k)\,
					ds\\
		=
			\sum_{k\in\Integers^3}
				\int_\RealNum
					\chi^\epsilon(k)^2
					|H_t(s,k)|^2\,
					ds
		=
			\sum_{k\in\Integers^3}
				\frac{\chi^\epsilon(k)^2}{2(4\pi^2|k|^2+1)},
	\end{multline*}
	which is \eqref{eq_20161107064910}.

	We show \eqref{eq_20161015075934}.
	From \pref[prop_20161205074106]{section6:expansion of tau1 rs tau2}, we have
	\begin{align*}
		\constRe[2,1]{\epsilon}
		&=
			H_t^{\epsilon,(\IAA,\BB,\{(1,1),(2,2)\})}\\
		&=
			\int_{E^2}
				\psi_\circ(k_1+k_2,-(k_1+k_2))
				H_t^{\epsilon,\IAA}(m_{1,2})
				H_t^{\epsilon,\BB}(m_{1,2})\,
				dm_1
				dm_2.
	\end{align*}
	By using $\psi_\circ(k_1+k_2,-(k_1+k_2))=1$ and
	\begin{gather*}
		\int_\RealNum
			ds\,
			H^\A_u(s,k)
			H^\B_t(s,k)
		=
			\frac{1}{2(4\pi^2\mu|k|^2+1)}
			H^\B_t(u,k),\\
		\int_\RealNum
			dt\,
			H^\B_u(t,l)
			H^\A_t(t,l)
		=
			\frac{1}{2(4\pi^2\mu|l|^2+1)}
			H^\A_t(u,l),
	\end{gather*}
	we obtain the assertion.
	We can show \eqref{eq_20161015075943} in the same way.
\end{proof}

\subsubsection{Divergence rate of renormalization constants}
Here, we show the following proposition concerning divergence rate of the the renormalization constants:
\begin{proposition}\label{prop_20161107072011}
	There exists a positive constant $\const$ such that
	\begin{gather}
		\label{eq_20161107071738}
		\const^{-1}\epsilon^{-1}
		\leq
			\constRe[1]{\epsilon}
		\leq
			\const\epsilon^{-1},\\
		\label{eq_20161107071756}
		\const^{-1}
		\log\epsilon^{-1}
		\leq
			|\constRe[2,1]{\epsilon}|
		\leq
			\const
			\log\epsilon^{-1},\\
		\label{eq_20161107071819}
		\const^{-1}
		\log\epsilon^{-1}
		\leq
			|\constRe[2,2]{\epsilon}|
		\leq
			\const
			\log\epsilon^{-1}
	\end{gather}
	for any $0<\epsilon<1$.
\end{proposition}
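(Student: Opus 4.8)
The plan is to work directly from the closed-form expressions for $\constRe[1]{\epsilon}$, $\constRe[2,1]{\epsilon}$, $\constRe[2,2]{\epsilon}$ obtained in \pref{prop_20161107071120} and to reduce all three estimates to elementary lattice sums over $\Integers^3$. The only facts I would use about the cutoff are that $\chi$ is smooth, supported in $B(0,1)$, and satisfies $\chi(0)=1$: hence $0\le\chi^\epsilon(k)^2\le\|\chi\|_{L^\infty}^2\,\indicator{|k|<\epsilon^{-1}}$, and by continuity of $\chi$ at the origin there are $c_0>0$ and $r_0\in(0,1)$ with $\chi^\epsilon(k)^2\ge c_0\,\indicator{|k|\le r_0\epsilon^{-1}}$; together with $\chi^\epsilon(k_1,k_2)=\chi^\epsilon(k_1)\chi^\epsilon(k_2)$ this squeezes each of the three sums between the same sum carrying the frequency cutoff $|k|\le r_0\epsilon^{-1}$ and the one carrying $|k|<\epsilon^{-1}$. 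Since the target rates are all of the form ``bounded between two constants times a divergent quantity'', it suffices to analyse the regime $\epsilon\downarrow0$, where the divergence occurs.

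For \eqref{eq_20161107071738}: from the above, $\constRe[1]{\epsilon}\approx\sum_{|k|\lesssim\epsilon^{-1},\,k\in\Integers^3}(1+|k|^2)^{-1}$, and I would decompose dyadically using $\#\{k\in\Integers^3:|k|\approx2^j\}\approx2^{3j}$: the $j$-th shell contributes $\approx2^{3j}\cdot2^{-2j}=2^j$, so summing over $0\le j\lesssim\log_2\epsilon^{-1}$ gives a geometric sum of order $\epsilon^{-1}$. Using the upper cutoff $|k|<\epsilon^{-1}$ gives the upper bound and the lower cutoff $|k|\le r_0\epsilon^{-1}$ the lower bound.

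For \eqref{eq_20161107071756} and \eqref{eq_20161107071819} I would treat $\constRe[2,2]{\epsilon}$ in detail; $\constRe[2,1]{\epsilon}$ is identical after renaming $l_1\mapsto k_2$ and replacing $(\alpha_2,\beta_2)$ by $(\alpha_1,\beta_1)$, which have the same size, namely $\alpha_j\approx1+|k_1|^2+|l_1|^2$ and $|\beta_j|\lesssim1+|k_1|^2+|l_1|^2$ (here using $\mu>0$ and $|k_1-l_1|^2\le2|k_1|^2+2|l_1|^2$). The upper bound is termwise: since $|\alpha_2+\ImUnit\beta_2|\ge\alpha_2\gtrsim1+|k_1|^2+|l_1|^2$, one gets $|\constRe[2,2]{\epsilon}|\lesssim\sum_{|k_1|,|l_1|\lesssim\epsilon^{-1}}[(1+|k_1|^2)(1+|l_1|^2)(1+|k_1|^2+|l_1|^2)]^{-1}$; writing $|k_1|\approx2^i$, $|l_1|\approx2^j$, the $(i,j)$-block contributes $\approx2^{3i+3j}\cdot2^{-2i-2j-2\max(i,j)}=2^{-|i-j|}$, and $\sum_{0\le i,j\lesssim\log_2\epsilon^{-1}}2^{-|i-j|}\approx\log\epsilon^{-1}$. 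The lower bound is the delicate point, since the summands are complex and a naive estimate from below is destroyed by cancellation of phases. My plan is to pass to the real part: for $\mu>0$ one has $\alpha_2>0$, hence $\Re(\alpha_2+\ImUnit\beta_2)^{-1}=\alpha_2(\alpha_2^2+\beta_2^2)^{-1}>0$, and every remaining factor ($\chi^\epsilon$ squared and the two positive denominators) is positive, so $|\constRe[2,2]{\epsilon}|\ge\Re\constRe[2,2]{\epsilon}$ is a sum of positive terms. From $\alpha_2\approx1+|k_1|^2+|l_1|^2$ and $|\beta_2|\lesssim1+|k_1|^2+|l_1|^2$ one gets $\alpha_2(\alpha_2^2+\beta_2^2)^{-1}\gtrsim(1+|k_1|^2+|l_1|^2)^{-1}$, so $\Re\constRe[2,2]{\epsilon}\gtrsim\sum_{|k_1|,|l_1|\le r_0\epsilon^{-1}}[(1+|k_1|^2)(1+|l_1|^2)(1+|k_1|^2+|l_1|^2)]^{-1}\approx\log\epsilon^{-1}$ by the dyadic computation above; the same argument with $(\alpha_1,\beta_1)$ yields \eqref{eq_20161107071756}.

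The main obstacle is precisely the two lower bounds \eqref{eq_20161107071756} and \eqref{eq_20161107071819}: one cannot discard the phases of the complex summands, and the remedy is to restrict to $\Re$, which is exactly where positivity of $\alpha_1,\alpha_2$—hence the hypothesis $\mu>0$—is essential. Everything else, the reduction to lattice sums via the elementary properties of $\chi$ and the shell-by-shell estimate of sums over $\Integers^3$, is routine bookkeeping.
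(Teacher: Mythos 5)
Your proposal is correct and follows essentially the same route as the paper: the upper bounds come from $|\alpha_j+\ImUnit\beta_j|^{-1}\lesssim(1+|k_1|^2+|k_2|^2)^{-1}$, and the lower bounds come from passing to the real part, where $\Re(\alpha_j+\ImUnit\beta_j)^{-1}=\alpha_j(\alpha_j^2+\beta_j^2)^{-1}\gtrsim(1+|k_1|^2+|k_2|^2)^{-1}$ makes every summand positive (this is exactly the paper's Lemma \ref{lem_20161016033338}). The only difference is cosmetic: you evaluate the resulting lattice sums by dyadic shells, giving $\sum 2^{-|i-j|}\approx\log\epsilon^{-1}$, whereas the paper splits the summation region according to which of $|k_1|,|k_2|$ is larger; your explicit lower cutoff $\chi^\epsilon(k)^2\ge c_0$ for $|k|\le r_0\epsilon^{-1}$ is in fact slightly more careful than what the paper writes.
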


Since the estimate \eqref{eq_20161107071738} follows easily from \eqref{eq_20161107064910},
we show \eqref{eq_20161107071756} and \eqref{eq_20161107071819} for the rest of this subsection.
\begin{lemma}\label{lem_20161016033338}
	Let $\alpha_1$, $\beta_1$, $\alpha_2$ and $\beta_2$ be as in \pref{prop_20161107071120}.
	There exist positive constants $\const[1]$ and $\const[2]$ such that
	\begin{gather*}
		\begin{aligned}
			\Re \frac{1}{\alpha_1+\ImUnit\beta_1}
			&\geq
				\const[1]
				(|k_1|^2+|k_2|^2+1)^{-1},
			&
			\left|
				\frac{1}{\alpha_1+\ImUnit\beta_1}
			\right|
			\leq
				\const[2]
				(|k_1|^2+|k_2|^2+1)^{-1},
		\end{aligned}\\
		\begin{aligned}
			\Re \frac{1}{\alpha_2+\ImUnit\beta_2}
			&\geq
				\const[1]
				(|k_1|^2+|l_1|^2+1)^{-1},
			&
			\left|
				\frac{1}{\alpha_2+\ImUnit\beta_2}
			\right|
			\leq
				\const[2]
				(|k_1|^2+|l_1|^2+1)^{-1},
		\end{aligned}
	\end{gather*}
	for any $k_1$, $k_2$ and $l_1$.
\end{lemma}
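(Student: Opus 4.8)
The plan is to prove \lref{lem_20161016033338} by reducing everything to elementary estimates on the quadratic quantities $\alpha_j$ and $\beta_j$, in which the standing hypothesis $\mu>0$ plays an essential role. First I would record the algebraic identities $\Re\frac{1}{\alpha+\ImUnit\beta}=\frac{\alpha}{\alpha^2+\beta^2}$ and $\bigl|\frac{1}{\alpha+\ImUnit\beta}\bigr|=(\alpha^2+\beta^2)^{-1/2}$, valid for $\alpha>0$. Both claimed bounds then follow at once from two facts, with implicit constants depending only on $\mu$: (i) $\alpha_1\approx|k_1|^2+|k_2|^2+1$ (and likewise $\alpha_2\approx|k_1|^2+|l_1|^2+1$), and (ii) $|\beta_1|\lesssim\alpha_1$ (and likewise $|\beta_2|\lesssim\alpha_2$).

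For (i), the lower bound is immediate from discarding the nonnegative term $|k_1+k_2|^2$: one gets $\alpha_1\geq 4\pi^2\mu(|k_1|^2+|k_2|^2)+3\geq(4\pi^2\mu\wedge 3)(|k_1|^2+|k_2|^2+1)$; the upper bound uses $|k_1+k_2|^2\leq 2|k_1|^2+2|k_2|^2$, so that $\alpha_1\leq 12\pi^2\mu(|k_1|^2+|k_2|^2)+3\lesssim|k_1|^2+|k_2|^2+1$. The identical computation handles $\alpha_2$ with $(k_1,l_1)$ in place of $(k_1,k_2)$.

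For (ii), I would expand the relevant difference of squares. Since $|k_1+k_2|^2-|k_1|^2-|k_2|^2=2k_1\cdot k_2$, Young's inequality gives $|\beta_1|=8\pi^2|k_1\cdot k_2|\leq 4\pi^2(|k_1|^2+|k_2|^2)$, and dividing by the lower bound in (i) yields $|\beta_1|\leq\mu^{-1}\alpha_1$. Likewise $|k_1-l_1|^2-|k_1|^2+|l_1|^2=2l_1\cdot(l_1-k_1)$, whence $|\beta_2|\leq 8\pi^2|l_1|\,|l_1-k_1|\leq 8\pi^2(|l_1|^2+|l_1|\,|k_1|)\leq 12\pi^2(|k_1|^2+|l_1|^2)\leq 3\mu^{-1}\alpha_2$. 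Combining (i) and (ii) one has $\alpha_j^2+\beta_j^2\leq(1+9\mu^{-2})\alpha_j^2$, so $\Re\frac{1}{\alpha_j+\ImUnit\beta_j}=\frac{\alpha_j}{\alpha_j^2+\beta_j^2}\geq\frac{1}{(1+9\mu^{-2})\alpha_j}\gtrsim(|k_1|^2+|k_2|^2+1)^{-1}$ by the upper bound in (i), while $\bigl|\frac{1}{\alpha_j+\ImUnit\beta_j}\bigr|\leq\alpha_j^{-1}\lesssim(|k_1|^2+|k_2|^2+1)^{-1}$ by the lower bound in (i) (with $l_1$ in place of $k_2$ for $j=2$).

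There is no genuine obstacle here — the argument is a short computation — but the one step to watch is (ii): it is precisely where $\mu>0$ is needed. If $\mu=0$, then $\alpha_j$ would be a bounded constant while $|\beta_j|$ grows quadratically in the frequencies, so $\Re\frac{1}{\alpha_j+\ImUnit\beta_j}$ would decay like $|k|^{-4}$ rather than $|k|^{-2}$, and the subsequent summability arguments establishing the divergence rates of $\constRe[2,1]{\epsilon}$ and $\constRe[2,2]{\epsilon}$ in \pref{prop_20161107072011} would fail. Everything else is routine bookkeeping with explicit constants.
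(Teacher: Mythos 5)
Your proof is correct and follows essentially the same route as the paper's: establish $\alpha_j\approx|k_1|^2+|k_2|^2+1$ (resp.\ $|k_1|^2+|l_1|^2+1$) and $|\beta_j|\lesssim\alpha_j$, then apply the identities $\Re\frac{1}{\alpha+\ImUnit\beta}=\frac{\alpha}{\alpha^2+\beta^2}$ and $\bigl|\frac{1}{\alpha+\ImUnit\beta}\bigr|\le\alpha^{-1}$. You merely supply the explicit constants and the Cauchy--Schwarz/Young steps that the paper leaves implicit, and your closing remark about the role of $\mu>0$ is accurate.
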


\begin{proof}
	We show the assertion for $1/(\alpha_1+\ImUnit\beta_1)$.
	Since
	$
		|k_1|^2+|k_2|^2+1
		\lesssim
			\alpha_1
		\lesssim
			|k_1|^2+|k_2|^2+1
	$
	and
	$
		|\beta_1|
		\lesssim
			|k_1|^2+|k_2|^2+1
	$,
	we see
	\begin{align*}
		\Re\frac{1}{\alpha_1+\ImUnit\beta_1}
		=
			\Re\frac{\alpha_1-\ImUnit\beta_1}{\alpha_1^2+\beta_1^2}
		=
			\frac{\alpha_1}{\alpha_1^2+\beta_1^2}
		\gtrsim
			\frac{|k_1|^2+|k_2|^2+1}{(|k_1|^2+|k_2|^2+1)^2}
	\end{align*}
	and
	\begin{align*}
		\left|
			\frac{1}{\alpha_1+\ImUnit\beta_1}
		\right|
		=
			\left|
				\frac{\alpha_1-\ImUnit\beta_1}{\alpha_1^2+\beta_1^2}
			\right|
		\lesssim
			\frac{|k_1|^2+|k_2|^2+1}{(|k_1|^2+|k_2|^2+1)^2}.
	\end{align*}
	The assertion is verified.
	We can show the assertion for $1/(\alpha_2+\ImUnit\beta_2)$ in the same way.
\end{proof}

\begin{proof}[Proof of \pref{prop_20161107072011}]
	We first prove of the lower estimate.
	We show that there exists a constant $\const[1]$ such that
	\begin{align*}
		\Re \constRe[2,1]{\epsilon},
		\Re \constRe[2,2]{\epsilon}
		\geq
			\const[1] \log\epsilon^{-1},
	\end{align*}
	for any $0<\epsilon<1$.

	We consider only $\constRe[2,1]{\epsilon}$
	and estimate the real part of summands in \eqref{eq_20161015075934}.
	\pref{prop_20161107071120} and \lref{lem_20161016033338} imply
	\begin{align*}
		\Re \constRe[2,1]{\epsilon}
		&\gtrsim
			\sum_{k_1,k_2\in\Integers^3}
				\chi^\epsilon(k_1)^2
				\chi^\epsilon(k_2)^2
				(|k_1|^2+1)^{-1}
				(|k_2|^2+1)^{-1}
				(|k_1|^2+|k_2|^2+1)^{-1}\\
		&\geq
			\sum_{k_1,k_2\in\Integers^3}
				\chi^\epsilon(k_1)^2
				\chi^\epsilon(k_2)
				(|k_1|^2+|k_2|^2+1)^{-3}\\
		&\geq
			1+\log\epsilon^{-1},
	\end{align*}
	which implies the lower estimate of $\Re \constRe[2,1]{\epsilon}$.
	We can obtain that of $\Re \constRe[2,2]{\epsilon}$ by the same way.

	Next we prove the upper estimate.
	We show that there exists a constant $\const[1]$ such that
	\begin{align*}
		|\constRe[2,1]{\epsilon}|,
		|\constRe[2,2]{\epsilon}|
		\leq
			\const[2] \log\epsilon^{-1}
	\end{align*}
	for any $0<\epsilon<1$.

	We consider only $\constRe[2,1]{\epsilon}$.
	\pref{prop_20161107071120} and \lref{lem_20161016033338} imply
	\begin{align*}
		|\constRe[2,1]{\epsilon}|
		&\lesssim
			\sum_{k_1,k_2\in\Integers^3}
				\chi^\epsilon(k_1)^2
				\chi^\epsilon(k_2)^2
				(|k_1|^2+1)^{-1}
				(|k_2|^2+1)^{-1}
				(|k_1|^2+|k_2|^2+1)^{-1}\\
		&\leq
			\sum_{|k_1|,|k_2|\leq\epsilon^{-1}}
				(|k_1|^2+1)^{-1}
				(|k_2|^2+1)^{-1}
				(|k_1|^2+|k_2|^2+1)^{-1}.
	\end{align*}
	In this estimate, we used $\supp\chi\subset B(0,1)$.
	We divide the region of the summation
	$
		\{
			(k_1,k_2);
			\text{$|k_1|\leq\epsilon^{-1}$ and $|k_2|\leq\epsilon^{-1}$}
		\}
		\subset
			\Integers^3\times\Integers^3
	$
	into
	\begin{align*}
		A_1
		&=
			\{(k_1,k_2);\text{$|k_1|\leq 2$ or $|k_2|\leq 2$}\},\\
		A_2
		&=
			\{(k_1,k_2);2\leq|k_1|\leq|k_2|\leq\epsilon^{-1}\},\\
		A_3
		&=
			\{(k_1,k_2);2\leq|k_2|\leq|k_1|\leq\epsilon^{-1}\}.
	\end{align*}
	The summation over $A_1$ is estimated as follows:
	\begin{align*}
		\sum_{(k_1,k_2)\in A_1}
			(|k_1|^2+1)^{-1}
			(|k_2|^2+1)^{-1}
			(|k_1|^2+|k_2|^2+1)^{-1}
		&\lesssim
			\sum_{k\in\Integers^3}
			(|k|^2+1)^{-2}\\
		&<
			\infty.
	\end{align*}
	For the summation over $A_2$, we have
	\begin{multline*}
		\sum_{(k_1,k_2)\in A_2}
			(|k_1|^2+1)^{-1}
			(|k_2|^2+1)^{-1}
			(|k_1|^2+|k_2|^2+1)^{-1}\\
		\begin{aligned}
			&\leq
				\sum_{(k_1,k_2)\in A_2}
					|k_1|^{-2}
					|k_2|^{-2}
					|k_2|^{-2}\\
			&\leq
				\sum_{k_2:2\leq|k_2|\leq\epsilon^{-1}}
					\left(
						\sum_{k_1:2\leq|k_1|\leq|k_2|}
							|k_1|^{-2}
					\right)
					|k_2|^{-4}\\
			&\lesssim
				\sum_{2\leq|k_2|\leq\epsilon^{-1}}
					|k_2|
					|k_2|^{-4}\\
			&\lesssim
				\log\epsilon^{-1}.
		\end{aligned}
	\end{multline*}
	The summation over $A_3$ has the same upper bound.
	Hence we see $|\constRe[2,1]{\epsilon}|\lesssim\log\epsilon^{-1}$.
	We can prove $|\constRe[2,2]{\epsilon}|\lesssim\log\epsilon^{-1}$ by the same way.
	The proof is completed.
\end{proof}

\appendix
%!TEX root = ComplexGinzburgLandauEq.tex

\section{Complex multiple It\^o-Wiener integral}\label{sec:itowienerintegral}

We recall some notations and properties of complex multiple Wiener integrals from \cite{Ito1952}.

A complex random variable $Z$ is called isotropic complex normal
if $\Re Z$ and $\Im Z$ are independent, has the same law with mean $0$ and $(\Re Z,\Im Z)$ is jointly normal.
A system of complex random variables $\{Z_\lambda\}$ is called jointly isotropic complex normal
if $\sum_{i=1}^nc_iZ_{\lambda_i}$ is isotropic complex normal
for any $n$, any $c_1,\dots,c_n\in\CmplNum$, and any indices $\lambda_1,\dots,\lambda_n$.
Note that the isotropic complex normal $\{Z_\lambda\}$ satisfies $\expect[Z_\lambda Z_\mu]=0=\expect[\CmplConj{Z_\mu}\CmplConj{Z_\nu}]$.
The distribution of jointly isotropic complex normal system $\{Z_\lambda\}$
is uniquely determined by the positive-definite matrix
$V_{\lambda\mu}=\expect[Z_\lambda\overline{Z_\mu}]$ (\cite[Theorem~2.3]{Ito1952}).

Let $(E,\mathcal{B},\mathfrak{m})$ be a $\sigma$-finite, atomless measure space,
and $\mathcal{B}^\ast$ be the set of all elements $A\in\mathcal{B}$ such that $\mathfrak{m}(A)<\infty$.
Then there exists a jointly isotropic complex normal system $\{M(A)\,;A\in\mathcal{B}^*\}$
defined on a probability space $(\probSp,\sigmaField,\prob)$ such that
\begin{align*}
	\expect[M(A)\CmplConj{M(B)}]=\mathfrak{m}(A\cap B),
\end{align*}
and its distribution is uniquely determined (\cite[Theorem~3.1]{Ito1952}).

Now the complex multiple It\^o-Wiener integral of $f\in L_{p,q}^2=L^2(E^p\times E^q)$ is defined as follows.
Let $S_{p,q}$ be the set of $L_{p,q}^2$ functions of the form
\begin{align*}
	f
	=
		\sum_{i_1,\dots,i_p,j_1,\dots,j_q=1}^n
			a_{i_1\dots i_pj_1\dots j_q}
			\indicator{E_{i_1}\times\dots\times E_{i_p}\times E_{j_1}\times\dots\times E_{j_q}}
\end{align*}
with $n\in\Integers_+$, where $E_1,\dots,E_n$ are any disjoint sets of $\mathcal{B}^*$
and $\{a_{i_1\dots i_pj_1\dots j_q}\}$ is a set of complex numbers such that $=0$
unless $i_1,\dots,i_p,j_1,\dots,j_q$ are all different.
For $f$ of this form, we define
\begin{align*}
	\WienerIntCmpl{p}{q}(f)
	=
		\sum_{i_1,\dots,i_p,j_1,\dots,j_q=1}^n
			a_{i_1\dots i_pj_1\dots j_q}
			M(E_{i_1})
			\cdots
			M(E_{i_p})
			\CmplConj{M(E_{j_1})}
			\cdots
			\CmplConj{M(E_{j_q})}.
\end{align*}
This functional has the property
$
	\expect[|\WienerIntCmpl{p}{q}(f)|^2]
	\leq
		p!q!\|f\|_{L_{p,q}^2}^2
$.
We defined the It\^o-Wiener integral for non-symmetric functions,
hence, we cannot expect the equality in this inequality.
Since $S_{p,q}$ is dense in $L_{p,q}^2$,
the integral $\WienerIntCmpl{p}{q}$ is uniquely extended into continuous linear map from $L_{p,q}^2$ to $L^2(\prob)$.
We set $L_{0,0}^2=\CmplNum$ and $\WienerIntCmpl{0}{0}(c)=c$.
From \cite[Theorem~7]{Ito1952}, we have
\begin{gather*}
	\expect[|\WienerIntCmpl{p}{q}(f)|^2]
	\leq p!q!\|f\|_{L_{p,q}^2},\\
	\expect[\WienerIntCmpl{p}{q}(f)\CmplConj{\WienerIntCmpl{r}{s}(g)}]
	=
		0\quad \text{for $(p,q)\neq(r,s)$}.
\end{gather*}

The product formula is important.
For $0\leq r_1\leq p_1\wedge q_2$ and $0\leq r_1\leq q_1\wedge p_2$,
we denote by $\mathcal{F}(p_1,q_1;p_2,q_2;r_1,r_2)$ the set of graphs consisting of disjoint $r_1+r_2$ edges
\begin{align*}
	(p',q')\in\{1,\dots,p_1\}\times\{q_1+1,\dots,q_1+q_2\}\cup\{p_1+1,\dots,p_1+p_2\}\times\{1,\dots,q_1\}.
\end{align*}
For $(f,g)\in L_{p_1,q_1}^2\times L_{p_2,q_2}^2$ and $\gamma\in\mathcal{F}(p_1,q_1;p_1,q_2;r_1,r_2)$,
we define $f\otimes_\gamma g\in L_{p_1+p_2-(r_1+r_2),q_1+q_2-(r_1+r_2)}^2$ by
\begin{multline*}
	(f\otimes_\gamma g)
		(t_1,\dots, t_{p_1+p_2},s_1,\dots, s_{q_1+q_2}\setminus\{(t_{p'},s_{q'})\}_{(p',q')\in\gamma})\\
	=
		\int_{E^{r_1+r_2}}
			h(\{(t_{p'},s_{q'})\}_{(p',q')\in\gamma})\,
			\prod_{(p',q')\in\gamma}dm(t_{p'},s_{q'}),
\end{multline*}
where $h:E^{r_1+r_2}\to\CmplNum$ is defined by
\begin{multline*}
	h(\{(t_{p'},s_{q'})\}_{(p',q')\in\gamma})
	=
		f(t_1,\dots,t_{p_1},s_1,\dots,s_{q_1})\\
		\times
		g(t_{p_1+1},\dots,t_{p_1+p_2},s_{q_1+1},\dots,s_{q_1+q_2})|_{t_{p'}=s_{q'}, (p',q')\in\gamma}.
\end{multline*}

\begin{theorem}[{\cite[Theorem~9]{Ito1952}}, {\cite[Proposition~1.1.2]{Nualart2006}}]\label{2:product}
	For every $f\in L_{p_1,q_1}^2$ and $g\in L_{p_2,q_2}^2$, we have
	\begin{multline*}
		\WienerIntCmpl{p_1}{q_1}(f)
		\WienerIntCmpl{p_2}{q_2}(g)\\
		=
			\sum_{r_1=0}^{p_1\wedge q_2}
			\sum_{r_2=0}^{p_2\wedge q_1}
			\sum_{\gamma\in\mathcal{F}(p_1,q_1;p_2,q_2;r_1,r_2)}
				\WienerIntCmpl{p_1+p_2-(r_1+r_2)}{q_1+q_2-(r_1+r_2)}(f\otimes_\gamma g).
	\end{multline*}
\end{theorem}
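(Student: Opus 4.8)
Since this is the classical product formula for complex multiple Wiener integrals, the plan is to follow \cite{Ito1952} (see also \cite{Nualart2006}), adapting the bookkeeping to the non-symmetrized kernels used here.

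First I would reduce to simple kernels. Both sides of the claimed identity are continuous bilinear maps of $(f,g)\in L^2_{p_1,q_1}\times L^2_{p_2,q_2}$: for the right-hand side this is immediate from $\|f\otimes_\gamma g\|_{L^2}\le\|f\|_{L^2_{p_1,q_1}}\|g\|_{L^2_{p_2,q_2}}$ (Cauchy--Schwarz in the contracted variables) together with boundedness of $\WienerIntCmpl{p}{q}$, while for the left-hand side one uses that on a fixed inhomogeneous Wiener chaos all $L^r$-norms are equivalent, so $\WienerIntCmpl{p_i}{q_i}(f_i)\in L^4(\prob)$ with norm controlled by $\|f_i\|_{L^2}$, whence the product lies in $L^2(\prob)$ and depends continuously on the kernels. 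As $S_{p_1,q_1},S_{p_2,q_2}$ are dense, it suffices to prove the identity for $f\in S_{p_1,q_1}$, $g\in S_{p_2,q_2}$. Passing to the common refinement of the two underlying finite families of disjoint sets and using bilinearity, I would further reduce to
\[
f=\indicator{E_{a_1}\times\cdots\times E_{a_{p_1}}\times E_{b_1}\times\cdots\times E_{b_{q_1}}},\qquad
g=\indicator{E_{c_1}\times\cdots\times E_{c_{p_2}}\times E_{d_1}\times\cdots\times E_{d_{q_2}}},
\]
where $E_1,\dots,E_N\in\mathcal{B}^\ast$ are pairwise disjoint, $a_1,\dots,a_{p_1},b_1,\dots,b_{q_1}$ are pairwise distinct, and $c_1,\dots,c_{p_2},d_1,\dots,d_{q_2}$ are pairwise distinct; distinctness within each block survives refinement since disjoint sets share no atom.

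The core is then a computation with a Gaussian monomial. Writing $M_i=M(E_i)$, the product $\WienerIntCmpl{p_1}{q_1}(f)\WienerIntCmpl{p_2}{q_2}(g)$ equals $M_{a_1}\cdots M_{a_{p_1}}\,\overline{M_{b_1}}\cdots\overline{M_{b_{q_1}}}\,M_{c_1}\cdots M_{c_{p_2}}\,\overline{M_{d_1}}\cdots\overline{M_{d_{q_2}}}$, and the plan is to expand it in the It\^o--Wiener (Wick) basis using only $\expect[M_iM_j]=0$ and $\expect[M_i\overline{M_j}]=\DiracDelta{i}{j}\mathfrak{m}(E_i)$. For an index $m$, its exponent on $M_m$ is at most two and its exponent on $\overline{M_m}$ is at most two; distinctness within each block forces that if $m$ occurs unconjugated in both factors then it occurs conjugated in neither, and symmetrically, so the only contractible configuration is $m$ occurring once unconjugated and once conjugated, necessarily in different factors. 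Thus the Wick expansion is a sum over subsets of indices to be contracted, i.e.\ over graphs $\gamma$ of disjoint edges joining the unconjugated slots of one factor to the conjugated slots of the other --- precisely the families $\mathcal{F}(p_1,q_1;p_2,q_2;r_1,r_2)$. Each edge contributes a factor $\mathfrak{m}(E_m)=\int_{E^2}\indicator{E_m}(t)\indicator{E_m}(s)\,\delta(t-s)\,dt\,ds$, so the contracted part reproduces $f\otimes_\gamma g$ evaluated on indicators, and the residual normally ordered monomial has pairwise distinct indices and equals $\WienerIntCmpl{p_1+p_2-\#\gamma}{q_1+q_2-\#\gamma}(f\otimes_\gamma g)$. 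Summing over $\gamma$ gives the formula; because the kernels are not symmetrized, no combinatorial prefactors appear, the usual multiplicities being absorbed into the enumeration of graphs.

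The main obstacle will be making this last passage fully rigorous: checking that self-products $M_m^2$ (whose kernels do not vanish, the diagonal of $E_m\times E_m$ being $\mathfrak{m}\otimes\mathfrak{m}$-null) genuinely survive as uncontracted slots sitting outside $\gamma$, that each index is contracted at most once, that every edge is forced to connect an unconjugated slot of one factor with a conjugated slot of the other, and that the surviving coefficient is exactly $\WienerIntCmpl{p_1+p_2-\#\gamma}{q_1+q_2-\#\gamma}(f\otimes_\gamma g)$ with nothing left over. This is routine but delicate combinatorics; the functional-analytic part (density, continuity, reduction to indicators) is soft.
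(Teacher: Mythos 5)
The paper does not prove this statement at all: it is imported verbatim from It\^o and Nualart, so there is no internal proof to compare against, and your attempt has to stand on its own. As an outline it is the standard diagram-formula argument, and the soft part is fine: bilinear continuity of both sides (hypercontractivity on a fixed chaos for the left, Cauchy--Schwarz for $\|f\otimes_\gamma g\|_{L^2}\le\|f\|_{L^2_{p_1,q_1}}\|g\|_{L^2_{p_2,q_2}}$ on the right), density of $S_{p,q}$, and reduction to indicators over a common disjoint refinement. Your observation that distinctness of indices within each block forces every contraction to join an unconjugated slot of one factor to a conjugated slot of the other is also correct, and is exactly why the graphs range over $\mathcal{F}(p_1,q_1;p_2,q_2;r_1,r_2)$ with no contractions internal to $f$ or to $g$.

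The gap is that the step you defer as ``routine but delicate combinatorics'' is the entire theorem in the monomial case, and one claim you make there is false: after contracting the pairs in $\gamma$, the residual monomial need \emph{not} have pairwise distinct indices (if $a_1=d_1$ and $a_2=d_2$ and $\gamma$ contracts only the first pair, the residue still contains $M_{a_2}\overline{M_{a_2}}$), so it is not covered by the definition of $\WienerIntCmpl{p}{q}$ on $S_{p,q}$ and does not equal $\WienerIntCmpl{p_1+p_2-\#\gamma}{q_1+q_2-\#\gamma}(f\otimes_\gamma g)$ term by term; only the sum over all $\gamma$ reproduces the plain product. To close this you need one genuinely new input. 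Either argue by induction on $p_2+q_2$: first prove $\WienerIntCmpl{p}{q}(f)\,M(E)=\WienerIntCmpl{p+1}{q}(f\otimes\indicator{E})+\sum_{j}\WienerIntCmpl{p}{q-1}(\text{$j$-th contraction of $f$ with }\indicator{E})$ and its conjugate analogue by the same partition-refinement computation that gives $M(E)^2=\WienerIntCmpl{2}{0}(\indicator{E\times E})$ and $|M(E)|^2=\WienerIntCmpl{1}{1}(\indicator{E\times E})+\mathfrak{m}(E)$, then peel off the slots of $g$ one at a time. Or follow It\^o's original route: identify $\WienerIntCmpl{p}{q}(\indicator{E^p\times E^q})$ with the complex Hermite polynomial $H_{p,q}$ evaluated at $M(E)$ and invoke the product formula for the $H_{p,q}$. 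Either supplies the recursion that your ``expand in the It\^o--Wiener basis'' step silently assumes; without it the argument is circular, since expanding a product of Gaussian monomials in the chaos basis is precisely what is to be proved.
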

For example, we have
\begin{align*}
	\WienerIntCmpl{2}{1}(f)
	\WienerIntCmpl{0}{2}(g)
	&=
		\WienerIntCmpl{2}{3}(f\otimes_{\emptyset}g)\\
	&\phantom{=}\quad
		+\WienerIntCmpl{1}{2}(f\otimes_{\{(1,1)\}}g)
		+\WienerIntCmpl{1}{2}(f\otimes_{\{(1,2)\}}g)\\
	&\phantom{=}\quad
		+\WienerIntCmpl{1}{2}(f\otimes_{\{(2,2)\}}g)
		+\WienerIntCmpl{1}{2}(f\otimes_{\{(2,3)\}}g)\\
	&\phantom{=}\quad
		+\WienerIntCmpl{0}{1}(f\otimes_{\{(1,2),(2,3)\}}g)
		+\WienerIntCmpl{0}{1}(f\otimes_{\{(1,3),(2,2)\}}g).
\end{align*}

\section*{Acknowledgement}
The authors thank Professor Reika Fukuizumi of Tohoku University
for stimulating discussions and valuable advice.
They also thank Professor Makoto Katori
of Chuo University for helpful comments.

The first named author was partially supported by JSPS KAKENHI Grant Number JP16J03010.
The second named author was partially supported by JSPS KAKENHI Grant Number JP15K04922.
The third named author was partially supported by JSPS KAKENHI Grant Number JP17K14202.

%%%%%%%%%%%%%%%%%%%%%%%%%%%%%%%%%%%

\bigskip
%%%%%%%%%%%% affiliations %%%%%%%%%%%%%
\address{Masato Hoshino}
{
School of Fundamental Science and Engineering\\
Waseda University\\
Okubo, Shinjuku-ku, Tokyo, 169-8555\\
Japan
}
{hoshino@ms.u-tokyo.ac.jp}

\address{Yuzuru Inahama}
{
Graduate School of Mathematics\\
Kyushu University\\
Motooka, Nishi-ku, Fukuoka, 819-0395\\
Japan
}
{inahama@math.kyushu-u.ac.jp}

\address{Nobuaki Naganuma}
{
Graduate School of Engineering Science\\
Osaka University\\
Machikaneyama, Toyonaka, Osaka, 560-8531\\
Japan
}
{naganuma@sigmath.es.osaka-u.ac.jp}

\end{document}